\theoremstyle{plain}
\newtheorem{thm}{Theorem}[section]
\newtheorem{prop}[thm]{Proposition}
\newtheorem{lem}[thm]{Lemma}
\newtheorem{cor}[thm]{Corollary}
\newtheorem{conj}[thm]{Conjecture}
\theoremstyle{definition}
\newtheorem{defn}[thm]{Definition}
\newtheorem{remark}[thm]{Remark}
\newtheorem{example}[thm]{Example}
\newtheorem{question}[thm]{Question}
 \def\cB{{\mathcal{B}}}   \def\cE{{\mathcal{E}}} \def\cF{{\mathcal{F}}} \def\cG{{\mathcal{G}}}      \def\cM{{\mathcal{M}}}  \def\cO{{\mathcal{O}}} \def\cP{{\mathcal{P}}} \def\cQ{{\mathcal{Q}}} \def\cR{{\mathcal{R}}} \def\cS{{\mathcal{S}}}  \def\cU{{\mathcal{U}}}   \def\cX{{\mathcal{X}}} \def\cY{{\mathcal{Y}}} \def\cZ{{\mathcal{Z}}}
\newcommand\Cay{\operatorname{Cay}}
\newcommand\id{\operatorname{id}}
\DeclareMathOperator{\SO}{SO}
\newcommand\Sym{\operatorname{Sym}}
\newcommand\SL{\operatorname{SL}}
\newtheorem*{rep@theorem}{\rep@title}
\newcommand{\newreptheorem}[2]{%
\newenvironment{rep#1}[1]{%
 \def\rep@title{#2 \ref{##1}}%
 \begin{rep@theorem}}%
 {\end{rep@theorem}}}
\begin{document}

\author{Mikołaj Fr\k{a}czyk}
\address{\parbox{\linewidth}{Faculty of Mathematics and Computer Science, Jagiellonian University, ul. Łojasiewicza 6, 30-348 Krak{\'o}w, Poland
}}
\email{mikolaj.fraczyk@uj.edu.pl}
\urladdr{https://sites.google.com/view/mikolaj-fraczyk/home}

\title[]{Infinitesimal containment and sparse factors of iid}
\begin{abstract}

We introduce infinitesimal weak containment for measure-preserving actions of a countable group $\Gamma$: an action $(X,\mu)$ is infinitesimally contained in $(Y,\nu)$ if the statistics of the action of $\Gamma$ on small measure subsets of $X$ can be approximated inside $Y$. We show that the Bernoulli shift $[0,1]^\Gamma$ is infinitesimally contained in the left-regular action of $\Gamma$. For exact groups, this implies that sparse factor-of-iid subsets of $\Gamma$ are approximately hyperfinite. We use it to quantify a theorem of Chifan--Ioana on measured subrelations of the Bernoulli shift of an exact group. For the proof of infinitesimal containment we define \emph{entropy support maps}, which take a small subset $U$ of $\{0,1\}^I$ and assign weights to coordinates above every point of $U$, according to how ''important'' they are for the structure of the set.

\end{abstract}
\maketitle

\section{Introduction}
\subsection{Motivation from measured group theory}
Our goal is to study the dynamics of very small measure subsets in probability measure preserving actions of a countable group $\Gamma$. Our motivation comes from the problem of estimating the cost of p.m.p. actions \cite{Gab1}, where wanted to identify new obstructions to some strategies of proving fixed price. We are not the only ones to make that connection, a similar motivation to study the sparse factor of iid subsets was given in \cite{pete2025nonamenable} and \cite{csokapetemester}.
Let us illustrate it on the example of fixed price problem for fundamental groups of hyperbolic 3-manifolds \cite{Gab1, abert2012rank}. 

The rank of a countable group $\Lambda$, denoted ${\rm rk}(\Lambda)$ is the minimal number of generators of $\Lambda$. The relation between the rank and the index of finite index subgroups was investigated in \cite{lackenby2005expanders}, where Lackenby considered the rank gradient
$\lim_{n\to\infty}\frac{\rm rk(\Lambda_i)}{[\Lambda:\Lambda_i]},$ where $\Lambda_i$ is a sequence of finite index subgroups with $[\Lambda:\Lambda_i]\to\infty.$ 
Now, let $\Gamma$ be the fundamental group of a compact hyperbolic $3$-manifold $M$. Should we expect that the rank of finite index subgroups of $\Gamma$ grows linearly or sub-linearly in the index? This question was highlighted in the work of Abert-Nikolov \cite{abert2012rank}, where it was related to the fixed price question \cite{Gab1} and Heegaard genus conjecture \cite{lackenby2002heegaard}. For context, the rank of fundamental groups of hyperbolic surfaces grows linearly in the volume, hence linearly in the index of a subgroup, while for higher rank lattices it was recently shown \cite{fraczyk2023poisson} that the rank grows sub-linearly in the index. The status of lattices in rank one groups still remains unresolved, with the exception of the surface groups.

The notion of rank has a measured analogue called cost, which turned out to be a key tool to study the rank growth. Let $\Gamma\curvearrowright (X,\mu)$ be a probability measure preserving action on a standard Borel probability space $X$, which we will abbreviate as p.m.p. action. The \emph{orbit equivalence relation} $\cR$ is the measurable subset $\{(x,\gamma x)\mid x\in X, \gamma\in \Gamma\}\subset X\times X.$ A subset $\cG\subset \cR$ is a \emph{graphing} of $\cR$ if for any pair $(x,y)\in \cR$ there exists a finite chain $(x_0,x_1,\ldots, x_k)$ such that $x=x_0, y=x_k$ and $(x_i,x_{i+1})$ or $(x_{i+1},x_i)\in \cG$ for all $i=0,\ldots, k-1.$ One might think of $\cG$ as a collection of bridges between the points of $X$ that allow us to move between any two elements in the same equivalence class of $\cR$, or as a measurable collection of directed graphs on equivalence classes of $\cR$ that are connected as undirected graphs. The out-degree of a point $x\in X$ is defined as $\overrightarrow{\deg}_{\cG}(x):=|\{y\in X\mid (x,y)\in \cG\}$. We can invert the graphing by flipping the coordinates $\cG^{-1}:=\{(y,x)\in \cR\mid (x,y)\in \cG\}.$ The in-degree $\overleftarrow{\deg}_{\cG}(x)$ of $x$ in $\cG$ is the out-degree in $\cG^{-1}.$ The total degree is the sum of in-and-out-degrees.

The cost of the graphing is given by $${\rm cost}(\cG)=\int \overrightarrow{\deg}_{\cG}(x) d\mu(x)$$ and the cost of the  action $(X,\mu)$ is 
${\rm cost}(X,\mu):=\inf_{\cG}{\rm cost}(\cG),$ where the infimum is taken over all graphings $\cG$ of $\cR$. If $S\subset \Gamma$ generates $\Gamma$, then $\cG=\{(x,sx)\mid s\in S\}$ is a graphing so $\rm cost(X,\mu)\leq |S|.$ A group is said to have the fixed price property if any two essentially free actions have the same cost. It is known that free groups, surface groups \cite{Gab1}, higher rank lattices \cite{fraczyk2023poisson} and products of infinite countable groups \cite{khezeli2025products,bevilacqua2025metric} have fixed price. On the other hand, it is still open whether the hyperbolic 3-manifold groups have fixed price. Similarly for lattices in rank one real Lie groups, except those isogenous to $\SO(2,1)$. No examples of groups that provably fail fixed price are known. 

Going back to the original problem on ranks of subgroups, in \cite{abert2012rank}, it was observed that the asymptotic ratio of the rank over index can be recovered from the cost of a suitable profinite p.m.p. action. Let $\Gamma_n\subset \Gamma$ be a sequence of finite index subgroups with $\Gamma_{n+1}\subset\Gamma_n.$ We can form a p.m.p. action os the inverse limit $\lim_{\leftarrow}\Gamma/\Gamma_n $ of coset actions $\Gamma/\Gamma_n$. 
This is a profinite action that extends to a transitive action by the profinite completion of $\Gamma$. By \cite{abert2012rank}, $${\rm cost}(\lim_{\leftarrow}\Gamma/\Gamma_n,\mu)-1=\lim_{n\to\infty}\frac{{\rm rk}(\Gamma_n)}{[\Gamma:\Gamma_n]}.$$ If the cost of $(\lim_{\leftarrow}\Gamma/\Gamma_n,\mu)$ were $1$, we could conclude that the rank of $\Gamma_n$ grows sublinearly in the index. Unfortunately, for sequences of arithmetic interest, like e.g. the principal congruence subgroups of arithmetic lattices in $\SO(3,1)$, there are currently no methods to compute the cost of the relevant profinite actions. This is where the fixed price property could help by reducing the problem to computing the cost of a more "friendly" action. Indeed, hyperbolic $3$-manifold groups always do admit cost one actions. Let us explain how to construct them. By Agol's virtual fibering theorem \cite{agol2008criteria}, there is a finite index subgroup $\Gamma'\subset \Gamma$, a surface group $N\triangleleft \Gamma'$, normal in  $\Gamma'$ such that $\Gamma'=N\rtimes \langle s\rangle $, where $s\in \Gamma'$ induces a pseudo-Anosov automorphism on $\Gamma'.$ For simplicity assume $\Gamma=\Gamma'$, otherwise we would need to induce the p.m.p. action from $\Gamma'$ to $\Gamma$ to construct an example.
Let $\rho\colon \Gamma\to \Gamma/N\simeq \mathbb Z$ be the quotient map and let $(Y,\nu)$, $(Z,\tau)$ be ergodic p.m.p. actions of $\Gamma$ such that $(Y,\nu)$ is essentially free and $(Z,\tau)$ factors through $\rho$ and is essentially free as an action of $\mathbb Z=\Gamma/N$. We claim that the action $(Y\times Z,\nu\times \tau)$ has cost one. Fix a finite generating set $W\subset N$. For any positive measure subset $U\subset Z$ we construct a graphing $$\cG_U=\{((y,z),(wy,z))|y\in Y, z\in U, w\in W\}\cup \{((y,z),(sy,sz)), y\in Y, z\in Z\},$$ as illustrated in Figure \ref{fig1}. 
The fact that it is really a graphing is a pleasant exercise left to the reader. 
The graphs induced by the graphing $\cG_U$ on the orbits have a particular shape.  As $\tau(U)$ goes to $0$ we see a sparse set of cosets of $N$, each connected by a fixed generating set of $N$ (the high degree part). In between, we see long bridges built from $s$ (the low degree part). 
The cost of the graphing $\cG_U$ is $|W|\tau(U)+1,$ so by making $\tau(U)$ smaller we can drive it as close to $1$ as we want. This means that $(Y\times Z,\nu\times \tau)$ has cost one, because the cost of any essentially free action is at least $1$. A graphing with cost close to $1$ will be called \emph{cheap graphing}.
\begin{figure}
\includegraphics[height=10cm, trim={2cm 5.5cm 2cm 0cm},clip]{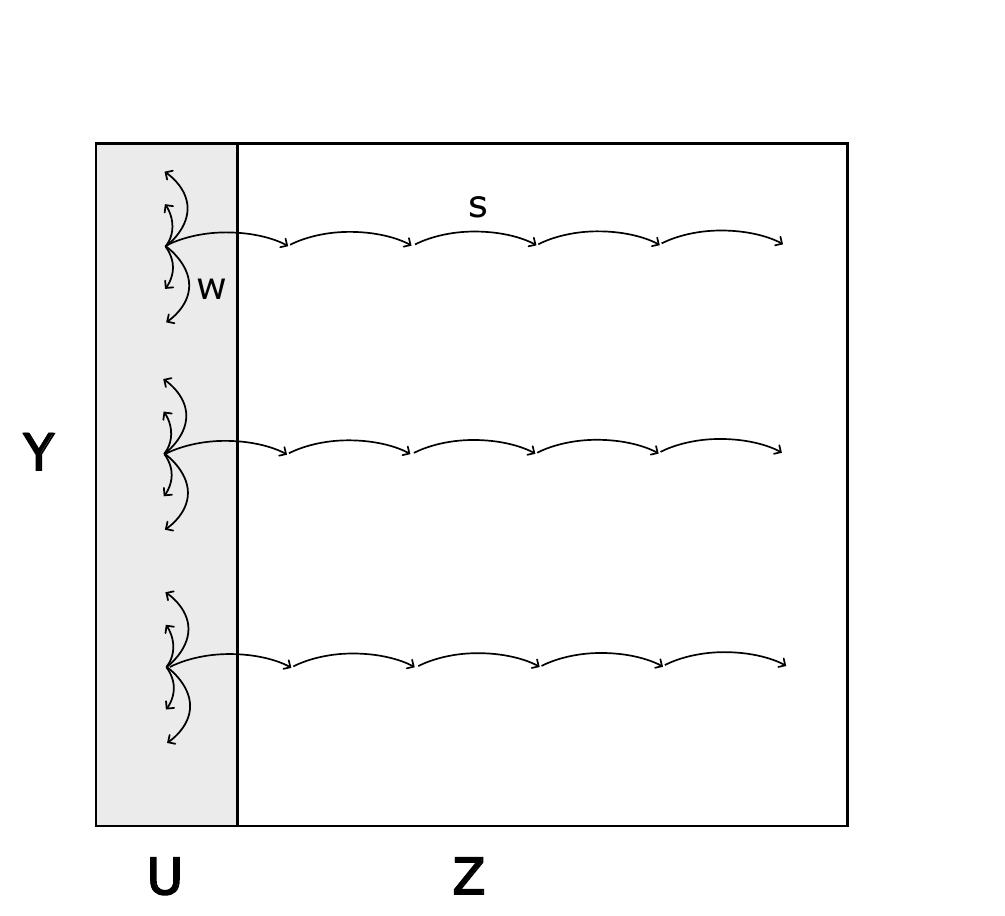}
\caption{Graphing $\cG_U$}
\label{fig1}
\end{figure}

Fixed price property predicts that we should somehow be able to "transplant" these cheap graphings to any free p.m.p. action of $\Gamma$. Let us highlight two features of the graphing $\cG_U$ that we have just constructed, which are shared by all cheap graphings of p.m.p. actions.
\begin{enumerate}
    \item There is a small part of the space where the graphing has high ($\geq 3)$ total degree. We expect that it has a special structure. In our last example this was $Y\times U,$ preserved by the relatively large subgroup $N\triangleleft \Gamma.$
    \item The rest of the space witnesses small total degree, equal $2$ or $1$.
\end{enumerate}

Motivated by the example of $\cG_U$, we believe that the geometry of the high degree part plays a key role in the final connectivity properties of any cheap graphing. Hence, whether we want to prove or disprove the existence of cheap graphing, it is important to understand the geometry of small subsets of p.m.p. actions.

By \cite{AbertWeiss2013BernoulliWeaklyContained}, the Bernoulli shift $\Gamma\curvearrowright ([0,1]^\Gamma, {\rm Leb}^\Gamma)$ has maximal cost among all essentially free p.m.p. actions. Since we know $\Gamma$ has some cost one actions, the fixed price property holds for $\Gamma$ if and only if ${\rm cost}([0,1]^\Gamma,{\rm Leb}^\Gamma)=1$, i.e. if we are able to find cheap graphings generating the orbit equivalence relation on $([0,1]^\Gamma, {\rm Leb}^\Gamma)$. Could these graphings look approximately like  $\cG_U$, that is, be a sparse family of non-amenable subgraphs connected by long bridges? Our Theorem \ref{mthm-approxhyp} implies that the answer is no. In fact, we can do it for a large class of groups, including all Gromov hyperbolic groups. We hope our results can open new ways to lower bound the cost of p.m.p. actions for hyperbolic groups.


\subsection{Infinitesimal containment} Let $\Gamma$ be a countable group. The weak containment was introduced by Kechris \cite{Kechris2010GlobalAspects}. It is a relation between probability measure preserving actions of $\Gamma$. 
\begin{defn}\cite{Kechris2010GlobalAspects,BurtonKechris2020WeakContainment} $(X,\mu)$ is weakly contained in $(Y,\nu)$ if for any finite set $F\subset \Gamma$, $k\in \mathbb N$, any finite measure subsets $A_1,\ldots, A_k$ and $\varepsilon>0$ we can find subsets $B_1,\ldots, B_k\subset Y$ such that 
$$|\mu(A_i\cap \gamma A_j)-\nu(B_i\cap \gamma B_j)|\leq \varepsilon\max_{i=1,\ldots,k}\{\mu(A_i)\}.$$
\end{defn}
If an action $(X,\mu)$ is weakly contained in $(Y,\nu)$, we write $(X,\mu)\xhookrightarrow{w}(Y,\nu)$.
Weak containment introduces a useful order on the set of p.m.p. actions. Certain invariants, most notably the cost of an action \cite{GaboriauSurvey}, are monotone with respect to weak containment \cite{AbertWeiss2013BernoulliWeaklyContained}.
We introduce two weaker types of containment, called the \emph{projective} and the \emph{infinitesimal containment}. Projective containment is arguably better suited for comparisons between infinite measure preserving actions while the infinitesimal containment is designed to capture the dynamics of small subsets in a p.m.p. action. 

In what follows, $(X,\mu),(Y,\nu),(Z,\tau)$ stand for measure preserving actions of a countable group $\Gamma$ on standard Borel spaces. We do not assume these are finite measure, but we'll assume they are $\sigma$-finite. All sets appearing in the sequel will be measurable.

\begin{defn} $(X,\mu)$ is projectively contained in $(Y,\nu)$ if for any finite set $F\subset \Gamma$, $k\in \mathbb N$, any finite measure subsets $A_1,\ldots, A_k$ and $\varepsilon>0$ we can find $\lambda>0$ and subsets $B_1,\ldots, B_k\subset Y$ such that 
$$|\mu(A_i\cap \gamma A_j)-\lambda\nu(B_i\cap \gamma B_j)|\leq \varepsilon\max_{i=1,\ldots,k}\{\mu(A_i)\}.$$
\end{defn}
We will write $(X,\mu)\xhookrightarrow{proj} (Y,\nu)$ to indicate that $(X,\mu)$ is projectively contained in $(Y,\nu)$. To recover the definition of weak containment we would have to fix $\lambda=1$. As opposed to weak containment, we can find instances of infinite measure preserving actions projectively contained in a p.m.p. action.  Throughout the paper all actions of $\Gamma$ on discrete spaces are always endowed with the counting measure. We suppress the measure from notation, or write $|\cdot|$ if needed. 
\begin{example}
\begin{enumerate}
    \item if $(X,\mu)$ is essentially free, then $\Gamma\xhookrightarrow{proj} (X,\mu).$ 
    \item Let $\Gamma\curvearrowright \Xi$ be an action on a countable set. Then $\Xi\xhookrightarrow{proj} ([0,1]^\Xi,{\rm Leb}^\Xi).$
    \item (Theorem \ref{thm-exactprojcont}) If $\Gamma$ is an exact group, then $(X,\mu)\xhookrightarrow{proj} \Gamma$ if and only if the action $(X,\mu)$ is amenable \cite{Zimmer3}.
\end{enumerate}    
\end{example}
   Weak containment is often compared to the weak containment between unitary representations in the sense of Zimmer \cite[Section 1]{BurtonKechris2020WeakContainment}. This is the version of weak containment where one approximates matrix coefficients of the first representation by the matrix coefficients of the second.
   It is well known that weak containment of p.m.p. actions $(X,\mu)\xhookrightarrow{w}(Y,\nu)$ implies the weak containment of unitary representations $L^2(X,\mu)\xhookrightarrow{w} L^2(Y,\nu)$ \cite[Prop 10.5]{Kechris2010GlobalAspects}. It is also the case that $(X,\mu)\xhookrightarrow{proj}(Y,\nu)$ implies $L^2(X,\mu)\xhookrightarrow{w}L^2(Y,\nu).$

\begin{remark}
During final stages of the writeup of this paper we learned that our results on the particular type of projective containment $(Y,\nu)\xhookrightarrow{proj}\Gamma$, have significant overlap with the recent paper \cite{bevilacqua2025metric} of Bevilacqua and Bowen. They introduce the notion of limit regular actions, which in our language are exactly the actions projectively contained in $\Gamma$. Our Theorem \ref{thm-exactprojcont} stating that for exact groups $\Gamma$ the containment $(Y,\nu)\xhookrightarrow{proj}\Gamma$ holds if and only if $(Y,\nu)$ is amenable is equivalent to the corresponding statements on limit regular actions in \cite{bevilacqua2025metric}. We believe most of our results in section \ref{sec-containment} on projective containment in the action of the group on itself could be extracted from \cite{bevilacqua2025metric}. Still, some of the tools we develop to prove Theorem \ref{thm-exactprojcont} are valid for general projective containment, so we decided to leave our original argument unchanged.
\end{remark}
   
We can further relax the definition of the projective containment by requiring that the approximation only holds for small enough subsets. In this way we arrive at the \emph{infinitesimal containment}, which is the main focus of this work.  
\begin{defn}\label{def-infcont}
$(X,\mu)$ is infinitesimally contained in $(Y,\nu)$ if for any finite set $F\subset \Gamma$, $k\in \mathbb N$, $\varepsilon>0$ there exists $\delta>0$ with the following property. For any positive measure sets $A_1,\ldots, A_k\subset X$ satisfying $\mu(A_i)\leq \delta$ we can find $\lambda>0$ and subsets $B_1,\ldots, B_k\subset Y$ such that 
$$|\mu(A_i\cap \gamma A_j)-\lambda\nu(B_i\cap \gamma B_j)|\leq \varepsilon\max_{i=1,\ldots k} \mu(A_i).$$
\end{defn}

 We write $(X,\mu)\xhookrightarrow{inf} (Y,\nu)$ to indicate that $(X,\mu)$ is infinitesimally contained in $(Y,\nu)$. It is quite difficult to find even a single non-trivial instance of infinitesimal containment (i.e. where it would not follow from weak or projective containment). 
\begin{thm}\label{mthm-infcontBernouilli}
The Bernoulli shift $([0,1]^\Gamma, {\rm Leb}^\Gamma)$ is infinitesimally contained in $\Gamma.$
\end{thm}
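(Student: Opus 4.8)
Here is the approach I would take.

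The guiding idea is that infinitesimal containment in $\Gamma$ is, modulo negligible terms, a purely finitary statement about \emph{sparse} subsets of cubes $\{0,1\}^S$ with $S\subset\Gamma$ finite, and that this statement is resolved by the entropy support maps. A first, representation-theoretic remark explains why Theorem~\ref{mthm-infcontBernouilli} should hold and also why it is not formal. Writing $f_i:=\mathbf 1_{A_i}-\mu(A_i)\in L^2_0([0,1]^\Gamma)$, one has
\[
\mu(A_i\cap\gamma A_j)=\langle f_i,\gamma f_j\rangle+\mu(A_i)\mu(A_j),
\]
and the last term is $\le\delta\max_i\mu(A_i)$, hence harmless once $\delta\le\varepsilon$. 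Since the Koopman representation $L^2_0([0,1]^\Gamma)$ is weakly contained in the left–regular representation $\ell^2(\Gamma)$, the numbers $\langle f_i,\gamma f_j\rangle$ can be matched by $\langle\xi_i,\gamma\xi_j\rangle_{\ell^2(\Gamma)}$ for suitable $\xi_i\in\ell^2(\Gamma)$. The real content is that, when $\mu(A_i)\le\delta$ with $\delta$ small, the $\xi_i$ can be taken to be rescalings of \emph{indicator functions} of subsets of $\Gamma$. Turning $\ell^2$-vectors into genuine sets is exactly what forces the ``infinitesimal'' hypothesis, and it is what the entropy support maps are built to accomplish.

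First I would discretize. Refining a partition of $[0,1]$ and enlarging the window, one reduces to alphabet $\{0,1\}$ with each $A_i$ an exact finite union of cylinders over a common finite $S\subset\Gamma$ (with $S$ large enough that $FS\cup S$ is resolved; the $L^1$-cost of this approximation is a relative error and is absorbed into $\varepsilon\max_i\mu(A_i)$). Writing $A_i=\bigsqcup_{w\in W_i}C_w$ with $W_i\subset\{0,1\}^S$, one has $|W_i|=2^{|S|}\mu(A_i)\le 2^{|S|}\delta$ and
\[
\mu(A_i\cap\gamma A_j)=2^{-|S\cup\gamma S|}\cdot\#\bigl\{(w,w')\in W_i\times W_j:\ w|_{S\cap\gamma S}=(\gamma\cdot w')|_{S\cap\gamma S}\bigr\}.
\]
The ``generic'' part of this pair count contributes precisely $\mu(A_i)\mu(A_j)\le\delta\max_i\mu(A_i)$, again negligible, so the task reduces to producing $\lambda>0$ and subsets $B_i\subset\Gamma$ whose translate–overlaps $\lambda\,|B_i\cap\gamma B_j|$ reproduce the \emph{excess} pattern–agreement counts (equivalently, $\langle f_i,\gamma f_j\rangle$) up to $\varepsilon\max_i\mu(A_i)$.

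Now comes the core construction. Given the sparse set $W_i\subset\{0,1\}^S$, the entropy support map attaches to each $w\in W_i$ a weight function on $S$ recording how strongly each coordinate constrains membership in $W_i$ near $w$. The two features I would extract are: (i) a Shearer/submodularity-type control of these weights under restriction to sub-windows $S\cap\gamma S$, and (ii) the fact that the weight carried by ``boundary'' coordinates $\gamma S\setminus S$ is of lower order than $\max_i\mu(A_i)$ because the $W_i$ are sparse. Using the inclusion $S\subset\Gamma$ and left translation, one converts $W_i$, with these weights as (rounded) multiplicities at a common blow-up scale $L$, into a finite gadget $\beta_i\subset\Gamma$ whose translate–overlaps match the excess counts after the normalization $\lambda\asymp L^{-1}2^{-|S|}$; one may take $B_i=\bigsqcup_{g\in P}g\beta_i$ for a sufficiently $F$-separated finite $P\subset\Gamma$ (using that $\Gamma$ is infinite), which changes the overlap counts only negligibly by the argument of the previous step. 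Checking that the resulting $B_i$ and $\lambda$ work is then bookkeeping resting on properties (i)--(ii).

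The main obstacle is the construction and analysis of the entropy support map itself: one needs the weights to be simultaneously rich enough to separate distinct configurations of $W_i$ (so the overlaps of the $B_i$ are as large as required) and concentrated enough away from the boundary coordinates (so the mismatch caused by $\gamma S\ne S$ is $o(\max_i\mu(A_i))$). Balancing these two demands is where the sparseness $\mu(A_i)\le\delta$ with $\delta\to 0$ is indispensable; indeed the analogous assertion for fixed $\delta$ is false, since $([0,1]^\Gamma,\mathrm{Leb}^\Gamma)$ is not weakly contained in $\Gamma$.
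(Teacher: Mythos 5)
Your proposal correctly identifies, from the abstract, that the entropy support maps are the engine of the proof, and your intuitive description of the weights (``how strongly coordinate $i$ constrains membership near $w$'') is a fair paraphrase of the quantity $\Delta_U(w,i)/\varepsilon_U(w,i)$ the paper works with. However, the two features (i)--(ii) you single out as the crucial ones are not the properties the paper actually proves or uses, and this is a genuine gap rather than a stylistic difference. There is no Shearer-type restriction lemma anywhere in the argument, and there is no need to show that weight on ``boundary coordinates'' $\gamma S\setminus S$ is small: the map $\cE$ is $\Sym(\Gamma)$-equivariant by construction, which makes translation by $\gamma$ automatic and renders any boundary bookkeeping moot. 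The hard analytic input is something else entirely, namely approximate union-additivity on sparse sets,
\[
m\bigl((\cE_\prec(U)\cup\cE_\prec(V))\,\Delta\,\cE_\prec(U\cup V)\bigr)\le\varepsilon\bigl(m(\cE_\prec(U))+m(\cE_\prec(V))\bigr)\qquad\text{when }\mu(U),\mu(V)\le\delta,
\]
which is the content of the paper's Theorem \ref{thm-almostcontained} and is by far the most technical step of the whole paper. Combined with the exact disjointness-preservation and the mass identity $m(\cE_\prec(U))=H(U)$, this is what makes $\cE$ a \emph{conformal} homomorphism; your outline never states anything equivalent. Relatedly, you do not mention the normalization $\lambda\asymp|\log\mu(U)|^{-1}$, which is forced by $H(U)\approx\mu(U)|\log\mu(U)|$ and is what lets a map whose image has mass $H(U)$ approximate intersections at the scale of $\mu(U)$.

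On the surrounding architecture, your route is substantially different from the paper's and less developed. You propose to discretize $[0,1]$ directly, realize the weights as rounded multiplicities at some blow-up scale $L$ to produce a finite gadget $\beta_i\subset\Gamma$, and then spread copies over an $F$-separated set $P\subset\Gamma$. The paper instead proceeds abstractly: the conformal homomorphism gives $(\{0,1\}^\Gamma,\mu)\xhookrightarrow{inf}\Gamma\times Z$ for an auxiliary p.m.p.\ space $Z$ (the image space $\mathfrak O\times\{0,1\}^\Gamma\times[0,\log 2]$); Lemma \ref{lem-gammaprod} gives $\Gamma\times Z\xhookrightarrow{proj}\Gamma$ (and its proof is essentially your $F$-separated-translates idea, packaged once and for all); transitivity (Lemma \ref{lem-transitivity}) yields $(\{0,1\}^\Gamma,\mu)\xhookrightarrow{inf}\Gamma$; and finally Abert--Weiss plus transitivity lifts this to $([0,1]^\Gamma,\mathrm{Leb}^\Gamma)$, avoiding any cylinder approximation with relative-error control. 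Your manual version could likely be made to work, but as written it leans on the entropy support map's properties without naming the one that actually matters, and you yourself flag the construction and analysis of that map as the unresolved ``main obstacle.'' So the proposal is a plausible research outline pointing at the right tool, but it does not contain the key lemma, and the features it does list would not carry the argument.
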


When it comes to the right-hand side of the containment, Theorem \ref{mthm-infcontBernouilli} is strongest possible. Being infinitesimally contained in $\Gamma$ implies infinitesimal containment in any essentially free measure preserving action. As a corollary we obtain an infinitesimal analogue of the Abert-Weiss theorem \cite{AbertWeiss2013BernoulliWeaklyContained}.
\begin{cor}
The Bernoulli shift $([0,1]^\Gamma, {\rm Leb}^\Gamma)$ is infinitesimally contained in any essentially free measure preserving action of $\Gamma.$   
\end{cor}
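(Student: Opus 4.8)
The plan is to obtain this corollary formally, by composing Theorem~\ref{mthm-infcontBernouilli} with the projective containment $\Gamma\xhookrightarrow{proj}(X,\mu)$ furnished (for $(X,\mu)$ essentially free) by item~(1) of the Example above. The mechanism is a general composition principle: \emph{if $(X,\mu)\xhookrightarrow{inf}(Y,\nu)$ and $(Y,\nu)\xhookrightarrow{proj}(Z,\tau)$, then $(X,\mu)\xhookrightarrow{inf}(Z,\tau)$.} Granting this, one takes $(X,\mu)=([0,1]^\Gamma,\mathrm{Leb}^\Gamma)$, $(Y,\nu)=\Gamma$, and $(Z,\tau)$ an arbitrary essentially free action, and the corollary is immediate.

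To prove the composition principle I would argue as follows. Fix data $(F,k,\varepsilon)$ for the desired containment $(X,\mu)\xhookrightarrow{inf}(Z,\tau)$; enlarging $F$, we may assume $e\in F$. First apply infinitesimal containment $(X,\mu)\xhookrightarrow{inf}(Y,\nu)$ with data $(F,k,\varepsilon/2)$ to get $\delta>0$; for any positive measure $A_1,\dots,A_k\subset X$ with $\mu(A_i)\le\delta$ this produces $\lambda_1>0$ and $B_1,\dots,B_k\subset Y$ with $|\mu(A_i\cap\gamma A_j)-\lambda_1\nu(B_i\cap\gamma B_j)|\le(\varepsilon/2)\max_i\mu(A_i)$ for all $\gamma\in F$. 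Taking $\gamma=e$ gives $\lambda_1\nu(B_i)\le(1+\varepsilon/2)\max_j\mu(A_j)$, so in particular each $\nu(B_i)$ is finite and $\lambda_1\max_i\nu(B_i)$ is bounded by a fixed multiple of $\max_j\mu(A_j)$. Next apply projective containment $(Y,\nu)\xhookrightarrow{proj}(Z,\tau)$ to the finite-measure sets $B_1,\dots,B_k$ with data $(F,k,\varepsilon')$, where $\varepsilon':=\varepsilon/(2+\varepsilon)$ is fixed in advance; this yields $\lambda_2>0$ and $D_1,\dots,D_k\subset Z$ with $|\nu(B_i\cap\gamma B_j)-\lambda_2\tau(D_i\cap\gamma D_j)|\le\varepsilon'\max_i\nu(B_i)$. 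Put $\lambda:=\lambda_1\lambda_2$; the triangle inequality gives $|\mu(A_i\cap\gamma A_j)-\lambda\tau(D_i\cap\gamma D_j)|\le(\varepsilon/2)\max_i\mu(A_i)+\lambda_1\varepsilon'\max_i\nu(B_i)\le\bigl(\varepsilon/2+\varepsilon'(1+\varepsilon/2)\bigr)\max_j\mu(A_j)=\varepsilon\max_j\mu(A_j)$, which is exactly the inequality required of $(X,\mu)\xhookrightarrow{inf}(Z,\tau)$.

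The step that needs genuine care — rather than a real obstacle — is the normalization bookkeeping: projective containment controls its error relative to $\max_i\nu(B_i)$, whereas infinitesimal containment must end up with an error relative to $\max_i\mu(A_i)$, and the sets $B_i$ only become available after $\delta$ and the $A_i$ are fixed. The resolution is precisely the observation that the scaling factor $\lambda_1$ translates the $\nu(B_i)$-scale back to the $\mu(A_i)$-scale via the $\gamma=e$ comparison, together with the fact that $\varepsilon'$ depends only on $\varepsilon$ and so can be chosen before the $B_i$ appear; a secondary routine point is that the $B_i$ are automatically of finite measure (for $Y=\Gamma$, finite subsets), so that projective containment is applicable to them at all. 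With the composition principle in hand, combining $([0,1]^\Gamma,\mathrm{Leb}^\Gamma)\xhookrightarrow{inf}\Gamma$ (Theorem~\ref{mthm-infcontBernouilli}) with $\Gamma\xhookrightarrow{proj}(X,\mu)$ for any essentially free $(X,\mu)$ yields the corollary.
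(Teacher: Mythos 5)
Your proof is correct and follows essentially the same route as the paper: compose $([0,1]^\Gamma,\mathrm{Leb}^\Gamma)\xhookrightarrow{inf}\Gamma$ (Theorem~\ref{mthm-infcontBernouilli}) with $\Gamma\xhookrightarrow{proj}(X,\mu)$ for essentially free $(X,\mu)$, using the transitivity principle that is item~(2) of Lemma~\ref{lem-transitivity}. The only difference is cosmetic: the paper declares that transitivity ``clear,'' whereas you spell out the $\lambda_1,\lambda_2,\varepsilon'$ bookkeeping explicitly (and correctly).
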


On the other hand, Bernoulli shift is formally the "easiest" case of infinitesimal containment for a p.m.p. action. Let us explain. Suppose $(X,\mu)\xhookrightarrow{inf}\Gamma$ and $(X,\mu)$ is essentially free. By \cite{AbertWeiss2013BernoulliWeaklyContained}, Bernoulli shift is weakly contained in  $\Gamma\curvearrowright(X,\mu)$. By the transitivity properties established in Lemma \ref{lem-transitivity}, $([0,1]^\Gamma,{\rm Leb}^\Gamma)\xhookrightarrow{w}(X,\mu)\xhookrightarrow{inf}\Gamma$ yields $([0,1]^\Gamma,{\rm Leb}^\Gamma)\xhookrightarrow{inf}\Gamma$. 
This means that if any essentially free p.m.p. action is infinitesimally contained in $\Gamma$, then we must have $([0,1]^\Gamma, {\rm Leb}^\Gamma)\xhookrightarrow{inf}\Gamma.$ 
Fortunately, this is true thanks to Theorem \ref{mthm-infcontBernouilli}, so the theory is not vacuous. By the same transitivity properties, any action weakly contained in Bernoulli shift is infinitesimally contained in $\Gamma$. This begs the question whether there are other such actions. Is there a profinite action of $\Gamma$ infinitesimally contained in $\Gamma$?  We cautiously hope that there are many more interesting cases of infinitesimal containment between p.m.p. actions and actions on $\Gamma$ on countable sets. We list some questions in Section \ref{sec-questions}.

Theorem \ref{mthm-infcontBernouilli} is generalized to Bernoulli shifts $([0,1]^{\Gamma/H},{\rm Leb}^{\Gamma/H})$ where $H\subset \Gamma$ is a subgroup. This is the contents of Theorem \ref{thm-infcontcosetB}.

\subsection{Sparse factor of iid subsets}
 We say that a random subset $F$ of $\Gamma$ is a factor of iid if it agrees in distribution with the set $\{\gamma\in\Gamma\mid \gamma^{-1} x\in  U\}$ where $U\subset [0,1]^\Gamma$ is a positive measure subset and $x\in [0,1]^\Gamma$ is ${\rm Leb}^\Gamma$-random. In general, any additional structure on $\Gamma$ is a factor of iid if it depends measurably on a uniform random point $w\in [0,1]^\Gamma$. There is an extensive literature on what can or cannot be done as a factor of iid \cite{HatamiLovaszSzegedy2014LocalGlobal} and these questions connect to the limitation of local algorithms \cite{GamarnikSudan2017Limits} running on Cayley graphs. Our contribution is towards describing the \emph{sparse} factor of iid subsets of $\Gamma.$ The density of a factor of iid subset 
$\{\gamma\in\Gamma\mid \gamma^{-1} x\in  U\}$ is defined as the measure  $\rm Leb^\Gamma(U)$, and sparse subsets are those where this measure is very small. In a recent work \cite{pete2025nonamenable} Rokob and Pete give several constructions of sparse factor of iid subsets using so called Poisson zoos, which are sparse unions of independently sampled translates of finite subsets.

{In an even more reecnt work Cs\'oka, Mester and Pete \cite{csokapetemester} use entropy inequalities on trees (see \cite{BackhauszGerencserHarangi2019EntropyFIID}) to show that sparse factor of iid subsets of a regular tree have average degree close to $2$. Having expected degree close to $2$ implies that these sets can be cheaply cut into finite pieces -- by removing a relatively small proportion of the set we are left with only finite connected components. This places strong restrictions on the geometry of sparse factor of iid subsets of a tree.} For general Cayley graphs the property of being able to cheaply cut the set into finite pieces is called \emph{approximate hyperfiniteness}.

\begin{defn}
Let $\Gamma$ be a countable group and let $S\subset \Gamma$ be a finite set. An invariant random subset\footnote{This means that the distribution of $E$ is a $\Gamma$-invariant probability measure on $\{0,1\}^\Gamma$.} $E\subset \Gamma$ is called $(S,k,\varepsilon)$-hyperfinite if there exists an invariant random coupling $C\subset E$, ${\mathbb P}(1\in C)\leq \varepsilon \mathbb P(1\in E)$ such that the connected components of the Cayley graph $\rm Cay(\Gamma,S)$ restricted to $E\setminus C$ are all finite of size at most $k$. A sequence of factor of iid subsets is called approximately hyperfinite if they are eventually $(S,k,\varepsilon)$-hyperfinite for any $\varepsilon>0$, any finite $S\subset \Gamma$ and some $k>0$. 
\end{defn}

Note that here we are making the definition independent of the choice of the Cayley graph, by requiring that approximately hyperfinite sequences can be cheaply cut for any generating set. We prove that in arbitrary exact groups, sparse factor of iid subsets are approximately hyperfinite. {This answers  \cite[Question 1.4]{csokapetemester}}.

\begin{thm}\label{mthm-approxhyp}
Let $\Gamma$ be an exact group. For any $\varepsilon>0$ and any finite subset $S\subset \Gamma$ there exist $k>0$ and $\delta>0$ such that all factor of iid subsets of density at most $\delta$ are $(S,k,\varepsilon)$-hyperfinite. In particular, any sequence of factor of iid subsets with vanishing density is approximately hyperfinite.
\end{thm}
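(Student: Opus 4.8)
The plan is to translate the assertion into a statement about a fixed positive-measure subset of the Bernoulli shift, push the local structure of that subset into a finite piece of $\Gamma$ using Theorem~\ref{mthm-infcontBernouilli}, cut that finite piece using exactness, and pull the cut back. Set up the dictionary first. A factor of iid subset of density $d$ is, in distribution, $E(\omega)=\{\gamma\in\Gamma:\gamma^{-1}\omega\in U\}$ for a fixed Borel $U\subseteq([0,1]^\Gamma,{\rm Leb}^\Gamma)$ with ${\rm Leb}^\Gamma(U)=d$ and $\omega$ chosen at random. Since the Bernoulli shift is essentially free, for a.e.\ $\omega$ the orbit map $\gamma\mapsto\gamma^{-1}\omega$ is injective and identifies $\Cay(\Gamma,S)|_{E(\omega)}$ with the subgraph induced on $[\omega]\cap U$ of the measured bounded-degree graph $\cH=\cH_{S,U}$ on $U$ whose edges are the pairs $(x,sx)$ with $x,sx\in U$ and $s\in S\cup S^{-1}$. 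An invariant coupling $C\subseteq E$ with ${\mathbb P}(1\in C)\le\varepsilon\,{\mathbb P}(1\in E)$ is the same datum as a Borel set $V\subseteq U$ with ${\rm Leb}^\Gamma(V)\le\varepsilon\,{\rm Leb}^\Gamma(U)$, via $C(\omega)=\{\gamma:\gamma^{-1}\omega\in V\}$ (invariance is automatic, $V$ being a factor). So it suffices to prove: for every $\varepsilon>0$ and finite $S$ there are $k$ and $\delta>0$ such that if ${\rm Leb}^\Gamma(U)\le\delta$, then some Borel $V\subseteq U$ with ${\rm Leb}^\Gamma(V)\le\varepsilon\,{\rm Leb}^\Gamma(U)$ leaves every connected component of $\cH_{S,U}$ restricted to $U\setminus V$ of size at most $k$.

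Next comes the transfer into $\Gamma$. Fix a radius $r=r(S,\varepsilon)$, to be pinned down below, and let $F$ be the union of $\{e\}$ with the ball of radius $r$ about the identity in $\Cay(\Gamma,S\cup S^{-1})$. Partition $U$ by the $F$-incidence pattern $x\mapsto(\mathbf 1[\gamma^{-1}x\in U])_{\gamma\in F}$ into pieces $A_1,\dots,A_m$, $m\le 2^{|F|}$; then the rooted radius-$r$ ball of $\cH_{S,U}$ about a point of $A_i$ has an isomorphism type $t_i$ depending only on $i$, being the component of the root in the subgraph of $\Cay(\Gamma,S)$ on $\{\gamma\in F:\gamma^{-1}x\in U\}$. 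Apply Theorem~\ref{mthm-infcontBernouilli}, through Definition~\ref{def-infcont}, with this $F$, with $k=m$ and a small $\varepsilon'=\varepsilon'(S,\varepsilon)$; let $\delta_0>0$ be the resulting threshold (uniform over $m\le 2^{|F|}$). Whenever ${\rm Leb}^\Gamma(U)\le\delta_0$ this yields $\lambda>0$ and finite $B_1,\dots,B_m\subseteq\Gamma$ with
\[
\bigl|{\rm Leb}^\Gamma(A_i\cap\gamma A_j)-\lambda\,|B_i\cap\gamma B_j|\bigr|\le\varepsilon'\,{\rm Leb}^\Gamma(U)\qquad(\gamma\in F,\ 1\le i,j\le m).
\]
Taking $\gamma=e$, the $B_i$ are pairwise disjoint up to a negligible fraction and $\lambda\sum_i|B_i|$ is close to ${\rm Leb}^\Gamma(U)$; discarding the overlaps and the pieces with ${\rm Leb}^\Gamma(A_i)\le\sqrt{\varepsilon'}\,{\rm Leb}^\Gamma(U)$ leaves a finite $B=\bigsqcup_i B_i\subseteq\Gamma$. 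Summing the displayed bound over $j$ and using that $\sum_j{\rm Leb}^\Gamma(A_i\cap\gamma A_j)$ is, by construction, ${\rm Leb}^\Gamma(A_i)$ or $0$ depending on the $\gamma$-coordinate of the pattern of $A_i$, one finds that all but a vanishing fraction of $b\in B_i$ satisfy $\mathbf 1[\gamma^{-1}b\in B]=\mathbf 1[\gamma^{-1}x\in U]$ for every $\gamma\in F$; for such $b$ the radius-$r$ ball of the finite graph $\cG:=\Cay(\Gamma,S)|_B$ about $b$ again has type $t_i$. Hence the empirical distribution of radius-$r$ ball types of $\cG$ is within an error tending to $0$ with $\varepsilon'$ of the radius-$r$ ball type distribution of $\cH_{S,U}$.

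The last step is to cut $\cG$ and pull the cut back, and this is where exactness enters. For exact $\Gamma$, finite $S$ and $\varepsilon$, one has $k=k(S,\varepsilon)$ such that every finite $B\subseteq\Gamma$ admits $C\subseteq B$ with $|C|\le\tfrac{\varepsilon}{2}|B|$ and all components of $\Cay(\Gamma,S)|_{B\setminus C}$ of size at most $k$; this uniform hyperfiniteness of the finite subgraphs of $\Cay(\Gamma,S)$ is a consequence of exactness (it fails for groups coarsely containing expanders), obtainable either via Property~A or, in the present framework, from Theorem~\ref{thm-exactprojcont}. Furthermore, for bounded-degree graphs, admitting such a cut is a local and stable property: it is governed, up to a controlled loss in $\varepsilon$, by the radius-$r_0$ ball type distribution for a suitable $r_0=r_0(S,k)$. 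Choosing the radius $r$ above to be at least $r_0$ and $\varepsilon'$ small enough, the closeness just established transports a cut of $\cG$ back across the dictionary to a Borel $V\subseteq U$ with ${\rm Leb}^\Gamma(V)\le\varepsilon\,{\rm Leb}^\Gamma(U)$ leaving all components of $\cH_{S,U}$ restricted to $U\setminus V$ of size at most $k$; one sets $\delta:=\delta_0$, and unwinding the dictionary produces the required coupling. (Equivalently, these last two steps show that for $U$ small the subrelation $\langle\cH_{S,U}\rangle$ of the Bernoulli relation is projectively contained in $\Gamma$ in the sense adapted to equivalence relations, hence amenable for exact $\Gamma$ by the mechanism of Theorem~\ref{thm-exactprojcont}, hence quantitatively hyperfinite; this is the quantitative version of the Chifan--Ioana theorem alluded to in the abstract.)

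The hard part should be the balance inside the transfer step: a faithful transfer of the local structure of $\cH_{S,U}$ forces the partition of $U$ to resolve radius-$r$ balls, whereas Definition~\ref{def-infcont} provides the comparison only once every piece lies below a threshold which, in the proof of Theorem~\ref{mthm-infcontBernouilli}, degrades as the number of pieces grows. The two are reconciled only because making ${\rm Leb}^\Gamma(U)$ small keeps both the number of incidence classes bounded (by $2^{|F|}$) and each of them below that threshold --- this, and nothing more, is what the sparseness hypothesis buys, which is why the conclusion is for sparse rather than arbitrary factor of iid subsets. A secondary point is that Definition~\ref{def-infcont} controls only the pairwise correlations ${\rm Leb}^\Gamma(A_i\cap\gamma A_j)$, so one must check that these already determine the radius-$r$ ball statistics; this holds because once the pieces are tiny the event $x\in A_i$ essentially fixes the entire $F$-pattern at $x$.
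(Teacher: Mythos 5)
Your proposal takes the same broad strategy as the paper --- use Theorem~\ref{mthm-infcontBernouilli} to transfer the local picture of a sparse $U$ into finite pieces of $\Gamma$, then use exactness to cut --- but you unwind it into a much more concrete, hands-on form, whereas the paper routes through the abstract machinery of thinnings, models of unimodular random subsets (Lemma~\ref{lem-ursweakcont}), projective containment, the amenability characterization for exact groups (Theorem~\ref{thm-exactprojcont}), and only then cites Schramm; the whole argument is the two-line deduction from Proposition~\ref{prop-approxhypinf}. The two approaches are morally the same, and your version has the virtue of making the finite-approximation mechanism visible, but it buries the hardest point.

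The genuine gap is the sentence where you assert, as if it were a routine fact about bounded-degree graphs, that ``admitting such a cut is a local and stable property \ldots governed, up to a controlled loss in $\varepsilon$, by the radius-$r_0$ ball type distribution.'' This is precisely Schramm's theorem (\cite{schramm2011hyperfinite}, restated as Theorem~\ref{thm-Schramm} in the paper), and it is not elementary: it is not a priori true that closeness of finite-radius ball statistics lets you pull a cut from one graph to another. Schramm's argument works because one can construct a near-optimal cut of a hyperfinite unimodular random graph as a factor of an iid labelling by a local randomized rule, and \emph{that} rule is stable under local-statistical perturbation; without naming and invoking this, the last step of your proof is an unsupported assertion. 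The paper avoids this by proving first that all thinnings of the Bernoulli shift are hyperfinite (via $\xhookrightarrow{proj}\Gamma$ and exactness), and only then applying Schramm to the convergent sequence $\cF(X,U_n)$ --- so Schramm appears once, in its standard sequential form, together with a compactness argument to upgrade ``eventually'' to ``uniformly for $\mu(U)\le\delta$.'' Once you cite Theorem~\ref{thm-Schramm} both for the local-stability transfer and for the uniform hyperfiniteness of finite subsets of $\Cay(\Gamma,S)$ (the latter also requiring a compactness argument, since Schramm only gives an eventual statement along sequences), your proof closes.

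A secondary point: your claim that the $\delta_0$ coming from Definition~\ref{def-infcont} is ``uniform over $m\le 2^{|F|}$'' is correct but deserves a word --- the definition's $\delta$ depends on $(F,k,\varepsilon)$, and you simply take the minimum over the finitely many $k=m\le 2^{|F|}$. The parenthetical identification of your argument with the quantitative Chifan--Ioana theorem is a slight misattribution: that is a separate application (Theorem~\ref{thm-QCIinf}), not what this theorem states.
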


We deduce Theorem \ref{mthm-approxhyp} from Theorem \ref{mthm-infcontBernouilli}. While we do not use entropy inequalities, the proof of Theorem \ref{mthm-approxhyp}  draws some inspiration from the proofs of entropy inequalities on Cayley graphs \cite{csoka2020entropy}. Theorem \ref{mthm-approxhyp} can fail for non-exact groups, since these contain sequences of small-scale expanders \cite{BNSWW13,elek2021uniform} so a low density union of independent random translates of such small-scale expanders would be a sparse factor of iid subset which fails to be approximately hyperfinite. We refer to the proof of \cite[Theorem 5.1]{jardon2025exactness} for the argument why small scale expanders cannot be approximately hyperfinite. This example was pointed to us by Tom Hutchcroft over a coffer break in September 2024.

\subsection{Chifan-Ioana's theorem} Infinitesimal containment can be used to strengthen Chifan-Ioana's theorem on subrelations of the orbit equivalence relation of $\Gamma\curvearrowright ([0,1]^\Gamma,{\rm Leb}^\Gamma).$ We learned about the connection between sparse factor of iid subsets and Chifan-Ioana's theorem from Gabor Pete and Tom Hutchcroft.  We recall the statement of \cite{ChifanIoana2010BernoulliSubequivalence}, restricted to the case of Bernoulli shift on $\Gamma$.

\begin{thm}[{\cite{ChifanIoana2010BernoulliSubequivalence}}]
Let $\cS$ be a measured sub-equivalence relation of the orbit equivalence relation $\cR$ on $([0,1]^\Gamma, {\rm Leb}^\Gamma)$. Then $[0,1]^\Gamma$ can be split into countably many sets $[0,1]^\Gamma=\bigsqcup_{i=0}^\infty  X_i$ such that $\cS|_{X_0}$ is hyperfinite, ${\rm Leb}^\Gamma(X_i)>0$ for $i\geq 1$ and all $X_i,i\geq 1$ are strongly $\cS$-ergodic. 
\end{thm}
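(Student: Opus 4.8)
\emph{Step 1 (von Neumann algebraic reformulation).} The plan is to follow Popa's deformation/rigidity method for Bernoulli actions. First I would pass to the crossed product $M=L^\infty(X)\rtimes\Gamma=L\cR$, in which $A=L^\infty(X)$ is a Cartan subalgebra and $\cS$ is encoded by an intermediate algebra $A\subseteq P:=L\cS\subseteq M$; an $\cS$-invariant set $Y\subseteq X$ corresponds to a projection $z_Y$ of the center $Z(P)$, which automatically lies in $A$. By Connes--Feldman--Weiss, ``$\cS|_Y$ hyperfinite'' is the same as ``$Pz_Y$ amenable'', and since an infinite amenable relation is never strongly ergodic, the theorem reduces to three points: \textbf{(i)} there is a largest central projection $z_0\in Z(P)$ with $Pz_0$ amenable (it exists because amenability of relations passes to increasing unions); \textbf{(ii)} $Z(P)(1-z_0)$ is atomic, with atoms $z_1,z_2,\dots$; \textbf{(iii)} each $\cS|_{Y_{z_i}}$, $i\ge1$, is strongly ergodic. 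Then $X_0:=Y_{z_0}$ and $X_i:=Y_{z_i}$ form the required partition, and (ii)--(iii) both reduce to the single \emph{dichotomy}: if $\cS|_Y$ has no nonzero amenable corner, then $\cS|_Y$ is ergodic and strongly ergodic.

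\emph{Step 2 (deformation and spectral gap --- the core).} I would prove the dichotomy using Popa's $s$-malleable deformation of the Bernoulli shift: enlarge $([0,1]^\Gamma,{\rm Leb}^\Gamma)$ to $(\tilde X,\tilde\mu)=(\tilde X_0^\Gamma,\tilde\mu_0^\Gamma)$, where $(\tilde X_0,\tilde\mu_0)$ contains $([0,1],{\rm Leb})$ and carries a one-parameter group of measure automorphisms extending the identity; this gives $M\subseteq\tilde M=L^\infty(\tilde X)\rtimes\Gamma$, a one-parameter group $(\alpha_t)_{t\in\R}\subseteq\Aut(\tilde M)$ fixing $L\Gamma$ pointwise with $\alpha_t\to{\rm id}$ in $\|\cdot\|_2$, and a flip $\theta$ with $\theta\alpha_t\theta=\alpha_{-t}$, $\theta|_M={\rm id}$, whence Popa's transversality inequality $\|x-E_M\alpha_{2t}(x)\|_2\le 2\|\alpha_t(x)-x\|_2$ for $x\in M$. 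The analytic input is the spectral gap of Bernoulli shifts in groupoid form: the ``new'' module $L^2(\tilde X)\ominus L^2(X)$, regarded as a representation of the orbit groupoid of $\cS$, has no sequence of almost invariant vectors orthogonal to the $\cS$-invariant ones once $\cS$ is non-amenable --- the subrelation analogue of the elementary identity $\lambda_\Gamma|_H\cong\lambda_H^{\oplus\infty}$ for subgroups, now for the way $L^2(\tilde X)$ decomposes under the orbit groupoid. Granting this, suppose $\cS|_Y$ is non-amenable but fails to be strongly ergodic relative to its invariant algebra: there is an asymptotically $\cS$-invariant sequence of subsets of $Y$, non-trivial modulo that algebra, hence a non-trivial ``central'' sequence $(p_n)\subseteq A$ inside the tracial ultrapower $P^\omega$. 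Then weak mixing of the Bernoulli shift keeps $\|p_n-E_M\alpha_{2t}(p_n)\|_2$ bounded below (for fixed $t\ne0$) by a multiple of the non-invariant part of $p_n$, transversality keeps $\|\alpha_t(p_n)-p_n\|_2$ bounded below, but $\alpha_t(p_n)-p_n$ is, up to asymptotically negligible terms, an asymptotically $\cS$-invariant vector in the spectral-gap module, so the spectral gap forces $\|\alpha_t(p_n)-p_n\|_2\to0$; this contradiction proves the dichotomy. The same circle of ideas, applied to $1-z_0$, also yields the atomicity in (ii), and applied to each atom gives (iii).

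\emph{Step 3 (assembly and the main obstacle).} Items (i)--(iii) then assemble into the stated partition, with $\cS|_{X_0}$ hyperfinite by Connes--Feldman--Weiss and each $\cS|_{X_i}$ strongly ergodic. The hard part is the groupoid spectral gap used in Step 2: showing that a non-amenable measured subrelation of a Bernoulli relation inherits a spectral gap --- i.e.\ transporting the ``$\lambda_\Gamma|_H\cong\lambda_H^{\oplus\infty}$'' phenomenon from subgroups to arbitrary subrelations --- which requires analyzing $L^2(\tilde X)$ as a module over the orbit groupoid of $\Gamma\curvearrowright X$ via the product structure of the Bernoulli base, together with a Fell-topology/weak-containment argument at the groupoid level. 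This is the step that genuinely uses the deformation $(\alpha_t)$ and the combinatorics of Bernoulli shifts, and it needs no exactness hypothesis on $\Gamma$ (exactness enters only in this paper's quantitative refinement, not in the original statement). The surrounding bookkeeping --- existence of $z_0$, the atomicity argument, and the appeal to Connes--Feldman--Weiss --- is then routine.
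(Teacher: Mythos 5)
The paper does not prove this statement; it imports Chifan--Ioana as a black box (citing \cite{ChifanIoana2010BernoulliSubequivalence}). What the paper \emph{does} prove is a quantitative refinement, Theorem~\ref{thm-QCIinf}, valid for any ergodic p.m.p.\ action infinitesimally contained in $\Gamma$ --- hence in particular for $([0,1]^\Gamma,{\rm Leb}^\Gamma)$ when $\Gamma$ is exact --- and its proof has essentially nothing in common with your sketch. There the decomposition is obtained directly from the ergodic decomposition of $\cS$: one shows the smooth part is hyperfinite by cutting it into $\cS$-invariant pieces of small measure, applying Theorem~\ref{mthm-approxhyp} (approximate hyperfiniteness of sparse factor-of-iid subsets, which rests on the entropy support maps of Section~\ref{sec-esmaps} and Schramm's theorem~\ref{thm-Schramm}), and using Theorem~\ref{thm-pairmodelfree} to replace random cut-sets by deterministic ones; strong ergodicity of the non-hyperfinite atoms then follows by a short bootstrap via Connes--Feldman--Weiss. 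Your sketch is instead a reconstruction of the \emph{original} Chifan--Ioana argument: pass to $L\cS\subseteq L\cR$, use Popa's $s$-malleable deformation of the Bernoulli crossed product, transversality, and a groupoid-level spectral gap to run the amenable/strongly-ergodic dichotomy. The trade-offs are genuine and you identify the main one correctly: your route needs no exactness and applies to every countable $\Gamma$, at the cost of the von Neumann machinery and the delicate spectral-gap-for-subrelations step (which is where the real content of \cite{ChifanIoana2010BernoulliSubequivalence} lives and which your Step~2 leaves as an acknowledged stub); the paper's route is ergodic-theoretic and delivers a \emph{quantitative} conclusion ($(S,k,\varepsilon)$-hyperfiniteness of small ergodic components with explicit $\delta$), but only for exact $\Gamma$ --- and the exactness hypothesis in the paper is not removable, because the approximate-hyperfiniteness input fails for non-exact groups (small-scale expanders). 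One small imprecision worth flagging in your Step~1: the ``single dichotomy'' as stated --- no nonzero amenable corner implies $\cS|_Y$ is ergodic --- is not literally true (a subrelation with several non-amenable ergodic components is a counterexample), so the atomicity claim (ii) needs its own argument rather than being an immediate corollary of the dichotomy; the actual Chifan--Ioana paper handles this with a separate localization step.
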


In particular, it follows that the smooth part of the $\cS$-ergodic decomposition must be hyperfinite. This statement does not imply however, that the small $\cS$-ergodic components must be approximately hyperfinite. We are able to prove that this is indeed the case in the case when $\Gamma$ is exact. 

\begin{cor}
For any $\varepsilon>0$ and any finite $S\subset \Gamma$ there exists $\delta,k>0$ with the following property. For any measured sub-equivalence relation $\cS$ of the orbit equivalence relation $\cR$ on $[0,1]^\Gamma$ the $\cS$-ergodic components of measure less than $\delta$ are $(S,k,\varepsilon)$-hyperfinite. In particular, the number of $\cS$-ergodic components that fail to be  $(S,k,\varepsilon)$-hyperfinite is bounded by $\delta^{-1}.$
\end{cor}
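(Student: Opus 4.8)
The plan is to deduce this corollary from Theorem \ref{mthm-approxhyp} by encoding the small $\cS$-ergodic components as factor-of-iid subsets of $\Gamma$ and applying the hyperfiniteness estimate uniformly. First I would recall the mechanism behind Chifan--Ioana's theorem: given a measured subrelation $\cS\subset\cR$ on $([0,1]^\Gamma,\mathrm{Leb}^\Gamma)$, the $\cS$-saturation of a point is contained in its $\Gamma$-orbit, so on each $\cS$-ergodic component $X_i$ the relation $\cS|_{X_i}$ is generated by a countable graphing which is itself a factor of iid; more precisely, the indicator of whether $(x,\gamma x)\in\cS$ depends measurably on $x\in[0,1]^\Gamma$, so the ``$\cS$-neighborhood pattern'' of $x$ inside $\Gamma$ is a factor of iid. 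Restricting attention to a single ergodic component $X_i$ with $\mathrm{Leb}^\Gamma(X_i)=:\alpha<\delta$, the set $X_i$ itself is (up to null sets) of the form $\{x : x\in U_i\}$ for a positive measure $U_i$, and the pointed set $E_i:=\{\gamma\in\Gamma : \gamma^{-1}x\in U_i\}$ is a factor-of-iid subset of $\Gamma$ of density exactly $\alpha=\mathrm{Leb}^\Gamma(U_i)<\delta$.

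The key step is to realize the $\cS$-structure on the small component as (encoded by) a sparse factor of iid subset so that Theorem \ref{mthm-approxhyp} applies with the \emph{same} $\delta,k,\varepsilon$ regardless of which component or which $\cS$ we picked. I would argue as follows. Fix $\varepsilon>0$ and finite $S\subset\Gamma$; enlarge $S$ if necessary so that it generates $\Gamma$ together with a finite symmetric generating set $S_0$. Apply Theorem \ref{mthm-approxhyp} to $\varepsilon/2$ and $S\cup S_0\cup (S\cup S_0)^2$ (say), obtaining $\delta_0,k_0$. Now for a subrelation $\cS$ and an ergodic component $X_i$ with $\mathrm{Leb}^\Gamma(X_i)<\delta_0$, the factor-of-iid set $E_i$ defined above has density $<\delta_0$, hence is $(S\cup S_0,k_0,\varepsilon/2)$-hyperfinite: there is an invariant random coupling deleting an $\varepsilon/2$-fraction of $E_i$ after which all $\mathrm{Cay}(\Gamma,S\cup S_0)$-components of $E_i$ have size $\le k_0$. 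Because a measured subrelation $\cS|_{X_i}$ restricted to the return of a point to $U_i$ is generated by the ``jumps'' $\gamma$ with $\gamma^{-1}x\in U_i$ and $(x,\gamma x)\in\cS$, the connected pieces of the graphing of $\cS|_{X_i}$ live inside bounded-size balls of $E_i$, so cutting a small proportion of $E_i$ cuts $\cS|_{X_i}$ into finite pieces of uniformly bounded size; this gives the desired $(S,k,\varepsilon)$-hyperfiniteness of $\cS|_{X_i}$ with $k=k(k_0,S,S_0)$ and $\delta=\delta_0$.

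The main obstacle, and the step I would spend the most care on, is the passage from ``$E_i$ is hyperfinite as a subset of $\Gamma$'' to ``$\cS|_{X_i}$ is hyperfinite as a measured equivalence relation'': one must check that the graphing of $\cS|_{X_i}$ does not create long-range connections that are invisible at the level of the set $E_i$. The resolution is that $\cS\subset\cR$ forces every $\cS$-edge $(x,\gamma x)$ to have $\gamma$ in a fixed window relative to the ``times'' $x$ spends in $U_i$, so the combinatorial support of $\cS|_{X_i}$ is controlled by the Cayley-graph geometry of $E_i$ with respect to an enlarged generating set; this is precisely why I inflate $S$ before invoking Theorem \ref{mthm-approxhyp}. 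A secondary point is bookkeeping the ergodic decomposition measurably in $\cS$ so that the coupling one produces is genuinely invariant and the union over the (at most $\delta^{-1}$ many) small components is legitimate; this is routine once the single-component statement is in hand, and the final count $\le\delta^{-1}$ follows because distinct ergodic components are disjoint and each failing one must have measure $\ge\delta$. Finally, the ``in particular'' clause is immediate: since the small components are pairwise disjoint subsets of the probability space $[0,1]^\Gamma$, at most $\delta^{-1}$ of them can have measure $\ge\delta$, and all the others are $(S,k,\varepsilon)$-hyperfinite by the main assertion.
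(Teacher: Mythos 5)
You have conflated two statements, and your second paragraph addresses a different (and stronger) claim than the corollary makes. In the paper's terminology, $(S,k,\varepsilon)$-hyperfiniteness is a property of an invariant (unimodular) random subset of $\Gamma$, not of a measured equivalence relation. What the corollary asserts is that for each $\cS$-ergodic component $X_i$ with $\mu(X_i)<\delta$, the factor-of-iid subset $\cF([0,1]^\Gamma,X_i)$ --- exactly the set $E_i$ you write down in your first paragraph --- is $(S,k,\varepsilon)$-hyperfinite. Since $E_i$ has density $\mu(X_i)<\delta$, this is literally Theorem~\ref{mthm-approxhyp} applied with the given $S$ and $\varepsilon$, and the argument is finished. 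No enlargement of $S$, no halving of $\varepsilon$, and no passage from $E_i$ to a graphing of $\cS|_{X_i}$ is needed; the $\cS$-structure enters only through knowing that the ergodic decomposition is countable and the components are pairwise disjoint, which is what gives the $\delta^{-1}$ bound.

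Your second paragraph is aimed at proving hyperfiniteness of $\cS|_{X_i}$ as a measured equivalence relation, which is the content of Theorem~\ref{thm-QCIinf}, not of this corollary. Moreover, the step you wrote there has a real gap: the claim that ``the connected pieces of the graphing of $\cS|_{X_i}$ live inside bounded-size balls of $E_i$'' is false. The $\cS$-edges between consecutive returns to $U_i$ are given by arbitrary group elements $\gamma$ with $(x,\gamma x)\in\cS$, and these need not have bounded word length with respect to $S$ (or $S\cup S_0$ or any fixed ball). Cutting $E_i$ so that its $\mathrm{Cay}(\Gamma,S)$-components become finite therefore does not cut $\cS|_{X_i}$ into finite pieces, because $\cS$ can connect across the cut via long jumps. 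The paper's route to the stronger statement (Theorem~\ref{thm-QCIinf}) avoids this trap: it derandomizes the cut via Theorem~\ref{thm-pairmodelfree} so that the cut set lives inside the action itself, restricts $\cS$ to the subrelation $\cS_F$ generated by a finite $F$ (so that only $F$-short jumps can occur), shows each $\cS_F$ is hyperfinite on the smooth part, and takes the increasing union over $F$. For the corollary, none of that is required --- your first paragraph plus the disjointness count is the whole proof.
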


{This formulation of quantitative Chifan-Ioana's theorem is marginally stronger than the property (qI) defined by Cs\'oka-Mester-Pete \cite{csokapetemester} and proved there for free groups}. The difference is that in \cite{csokapetemester} one does not require an upper bound $k$ on the sizes of connected components. For exact groups it is not difficult to show that the two versions are equivalent. For non-exact groups the difference could be substantial. Our approach also extends Chifan-Ioana's original theorem to weak factors of the Bernoulli shift. 

\begin{thm}\label{mthm-wCI}
Let $\Gamma$ be an exact group and let $(X,\mu)$ be a p.m.p. action of $\Gamma$, weakly contained in $([0,1]^\Gamma, {\rm Leb}^\Gamma)$. Let $\cS$ be a measured sub-equivalence relation of the orbit equivalence relation $\cR$ on $(X,\mu)$. Then $X$ can be split into countably many sets $X=\bigsqcup_{i=0}^\infty  X_i$ such that $\cS|_{X_0}$ is hyperfinite and all $X_i,i\geq 1$ are strongly $\cS$-ergodic. 
\end{thm}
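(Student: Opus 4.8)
\textbf{Proof plan for Theorem \ref{mthm-wCI}.}

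The plan is to reduce the weak-factor case to the original Chifan--Ioana theorem on $([0,1]^\Gamma,{\rm Leb}^\Gamma)$ by transporting the sub-equivalence relation $\cS$ along the infinitesimal containment, using the same mechanism that yields Theorem \ref{mthm-approxhyp}. The key observation is that, since $(X,\mu)$ is weakly contained in $([0,1]^\Gamma,{\rm Leb}^\Gamma)$ and the latter is infinitesimally contained in $\Gamma$, the transitivity of Lemma \ref{lem-transitivity} gives $(X,\mu)\xhookrightarrow{inf}\Gamma$. So whatever structural consequences infinitesimal containment in $\Gamma$ has for small subsets (namely approximate hyperfiniteness, via the proof of Theorem \ref{mthm-approxhyp} adapted to exact $\Gamma$), those consequences hold for $(X,\mu)$ too. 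The content of the theorem is then to organize this into an ergodic-decomposition statement for $\cS$.

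First I would take the $\cS$-ergodic decomposition of $(X,\mu)$: write $X=X_{\rm smooth}\sqcup X_{\rm nonsmooth}$, where on $X_{\rm smooth}$ the relation $\cS$ is smooth (equivalently, admits a Borel transversal) and $X_{\rm nonsmooth}$ decomposes into its ergodic, non-smooth components. The non-smooth ergodic components $\{X_i\}_{i\ge 1}$ with $\mu(X_i)>0$ are automatically at most countable in number; after absorbing the ergodic components of measure zero and the smooth part into a single piece $X_0$, the remaining issue is to upgrade ``$\cS|_{X_0}$ is smooth + the leftover'' to ``$\cS|_{X_0}$ is hyperfinite'' and to argue that the $X_i$, $i\ge 1$, are strongly $\cS$-ergodic. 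Strong $\cS$-ergodicity of the positive-measure ergodic components is essentially formal from their being non-smooth ergodic components of a relation that is sub-equivalence of a p.m.p. one — but to be safe I would check it by the standard argument that an $\cS$-ergodic component either contains an $\cS$-invariant set of intermediate measure (contradicting ergodicity) or is strongly ergodic, using that $(X,\mu)$ is p.m.p. The real work is the claim that $\cS|_{X_0}$ is hyperfinite: here $\mu(X_0)$ need not be small, so I cannot directly invoke Theorem \ref{mthm-approxhyp}. Instead I would decompose $X_0$ into countably many pieces of arbitrarily small measure, apply the quantitative approximate-hyperfiniteness statement on each small piece to cut out a small coupling and leave finite components, and then assemble these cuts. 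The smoothness of $\cS|_{X_0}$ guarantees that what remains after such cuts, being a sub-relation of a smooth relation with uniformly bounded finite classes, is hyperfinite; more precisely, approximate hyperfiniteness with a $\emph{uniform}$ bound $k$ on class sizes, combined with the absence of non-smooth ergodic pieces inside $X_0$, forces genuine hyperfiniteness by a standard exhaustion/diagonal argument.

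The main obstacle I expect is precisely this last step: turning ``every small-measure subset is $(S,k,\varepsilon)$-hyperfinite with uniform $k$'' into ``$\cS|_{X_0}$ is hyperfinite.'' One has to be careful that the couplings removed on different small pieces can be combined coherently (they can, since one works within the fixed p.m.p. space and invariance is preserved under countable unions of disjoint pieces), and that the bound $k$ does not blow up when passing from one generating set $S$ to a larger one — this is handled by the definition of approximate hyperfiniteness, which is quantified over all finite $S\subset\Gamma$. A secondary subtlety is that Theorem \ref{mthm-approxhyp} is phrased for factor-of-iid subsets of $\Gamma$, not for sub-equivalence relations of $(X,\mu)$ directly; bridging this requires encoding $\cS$ (or rather a graphing of it, on a small piece) as a factor of the Bernoulli structure pulled back through the weak containment, which is exactly where $(X,\mu)\xhookrightarrow{inf}\Gamma$ and the entropy-support-map machinery underlying Theorem \ref{mthm-infcontBernouilli} do the heavy lifting. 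I would set this up by choosing, on each small piece, an $\cS$-graphing with bounded degree and approximating its component statistics inside the left-regular action, then transferring the hyperfinite cut back. Once the hyperfiniteness of $\cS|_{X_0}$ is established, the packaging into $X=\bigsqcup_i X_i$ with the stated properties is routine.
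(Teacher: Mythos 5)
Your overall reduction — transitivity gives $(X,\mu)\xhookrightarrow{inf}\Gamma$, then decompose by the $\cS$-ergodic decomposition, use approximate hyperfiniteness on $\cS$-invariant small pieces of the smooth part — is the same skeleton as the paper's Theorem \ref{thm-QCIinf}. But there is a genuine gap in your treatment of the positive-measure ergodic components. You write that strong $\cS$-ergodicity of those components is ``essentially formal'' and, as a safety net, propose the dichotomy ``an $\cS$-ergodic component either contains an $\cS$-invariant set of intermediate measure (contradicting ergodicity) or is strongly ergodic.'' That dichotomy is false: ergodicity rules out invariant sets of intermediate measure, while strong ergodicity rules out \emph{almost} invariant sequences of sets, and these are not the same thing. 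Concretely, take $\Gamma=F_2=\langle a,b\rangle$, $(X,\mu)$ the Bernoulli shift, and $\cS$ the orbit relation of $\langle a\rangle$; then $\cS$ is ergodic on the whole space, so the whole space is a single positive-measure ergodic component, yet $\cS$ is hyperfinite and hence not strongly ergodic. Such hyperfinite positive-mass components must be absorbed into $X_0$ (this is the role of the paper's $X_0''$, so that $X_0=X_0'\sqcup X_0''$), and for the remaining non-hyperfinite positive-mass components one needs the non-trivial argument via Connes--Feldman--Weiss: if $\cS|_{X_i}$ were not strongly ergodic there would be a class-preserving factor onto a hyperfinite p.m.p.\ relation $(Z,\cQ,\tau)$, and pulling back an exhausting sequence of finite subrelations $\cQ_n$ yields subrelations $\cS_n\nearrow\cS|_{X_i}$ each with smooth ergodic decomposition; applying the smooth-part hyperfiniteness already proved to each $\cS_n$ forces $\cS|_{X_i}$ to be hyperfinite, a contradiction.

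Two smaller points: the decomposition of $X_0'$ into small pieces must be into $\cS$-\emph{invariant} pieces of measure $\le\delta$ (this is exactly what the smooth ergodic decomposition permits, and it is what makes the local cut-sets glue into a global one); and the step from ``$\cF(X,U)$ is $(S,k,\varepsilon)$-hyperfinite via a coupling'' to ``there is a measurable cut-set $V\subset U$'' is not automatic — this is where Theorem \ref{thm-pairmodelfree} (deterministic cut-sets for essentially free actions) is doing real work, and your sketch should cite it rather than fold it into the entropy-support-map machinery.
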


Families of non-obvious examples actions weakly contained in the Bernoulli shift were constructed in \cite{hayes2019weak,hayes2021harmonic,hayes2021max}.
We actually prove that the conclusions of Chifan-Ioana's theorem hold for all p.m.p. actions which are infinitesimally contained in $\Gamma$. This is Theorem \ref{thm-QCIinf}. At present we do not know any such action which is not weakly contained in the Bernoulli shift. We suspect that Theorem \ref{mthm-wCI} can fail for non-exact groups.

\subsection{Unimodular random subsets and thinnings}\label{sec-urs}
There is an unconditional statement about sparse factor of iid subsets which works for non-exact groups as well. Roughly speaking, the local statistics of sparse factor of iid subsets of $\Gamma$ are approximated by the local statistics of finite subsets of $\Gamma.$ To state it formally we need to introduce \emph{unimodular random subsets}  of countable groups and \emph{thinnings} of a measure preserving action. {Unimodular random subsets were introduced by Hutchcroft in \cite{hutchcroft_2020, hutchcroft2024percolation} under the name of locally unimodular random graphs. We opted for a different name since we do not consider the additional graph structure but are instead interested in how these sets sit in $\Gamma$.} Unimodular random subsets are closely tied to cross sections considered in \cite{bjorklund2025intersection}, since they appear naturally as return times to a cross section. We can also point out to \cite{thorisson1999point} and \cite[Example 2.7]{baccelli2021unimodular} as early definitions of unimodular random subsets of $\mathbb R^n$. Since the related notions circulated in the community for some time, we don't claim any originality in the definition or establishing the basic properties. At the same time we are not aware of any earlier work introducing unimodular random subsets through the lens of measured equivalence relations. 

Unimodular random subsets simulaneously generalize subgroups, finite subsets and connected components of a percolation on a group. While it is tempting to abbreviate unimodular random subsets as URS, this name is already taken by the uniformly recurrent subgroups \cite{glasner2015uniformly}. 

Let $\cM(\Gamma),\cM_o(\Gamma)$ denote respectively the space of non-empty subsets of $\Gamma$ and the space of subsets of $\Gamma$ containing identity. They are endowed with the standard product topology and the corresponding Borel $\sigma$-algebra. Define the \emph{re-rooting equivalence relation} $\cO$ on $\cM_o(\Gamma)$ by $E\sim \gamma^{-1}E$ for all $\gamma\in E$. This is a countable Borel equivalence relation which can also be obtained as restriction of the orbit equivalence relation of $\Gamma$ acting on $\cM(\Gamma)$ by left translations.

\begin{defn}
A random subset $E\in \cM_o(\Gamma)$ is unimodular if its distribution is an $\cO$-invariant probability measure. We use the name unimodular random subset both for a $\cO$-invariant probability measure on $\cM_o(\Gamma)$ and the random set itself. 
\end{defn}

Unimodular random subsets arise naturally as return times to a positive measure set in a measure preserving action of $\Gamma$. Indeed, given a measure preserving action $\Gamma\curvearrowright (Y,\nu)$ and a finite positive measure set $U$, the random set $\cF(Y,U):=\{\gamma\in \Gamma\mid \gamma^{-1} y\in U\}$ obtained by choosing $\nu$-random $y\in U$ is unimodular. In section \ref{sec-urs}, we show that all unimodular random subsets arise in this way. There are many other ways to construct them, for example as connected components of a $\Gamma$-invariant percolation of the Cayley graph, via sub-equivalence relations of the orbit equivalence relation of a probability measure preserving $\Gamma$-actions. We can also turn any finite subset $F\subset \Gamma$ to a unimodular random subset by taking a translate $\gamma^{-1}F$ where $\gamma\in F$ is uniform random. The sets obtained in this way will be called \emph{finite unimodular random subsets}.

A unimodular random subset $E\subset\Gamma$ is \emph{finite covolume} if it can be realized as $\cF(Y,U)$ for a p.m.p. action $(Y,\nu)$. We show in Lemma \ref{lem-model} that this fact does not depend on the choice of $(Y,\nu)$ or the set $U$. We say that the set $E$ is \emph{infinite covolume} if it is not finite covolume. 

The space of unimodular random subsets is closed under the weak-* convergence. We can therefore speak about weak-* limits of unimodular random subsets. 

\begin{defn}
Let $(Y,\nu)$ be a measure preserving action. A unimodular random subset of $\Gamma$ is weakly contained in $Y$ if it arises as weak-* limit of the sets $\cF(Y,U)$, $U\subset Y$. If we can obtain it as a limit of  $\cF(U,Y)$ with $\nu(U)\to 0$, we call it a thinning of $Y$.
\end{defn}

Thinnings are meant to capture the statistics of very small measure subsets of the measure preserving action. We can think of them as a formal way of taking limits of a measure preserving action as we "zoom-in" on smaller and smaller subsets. If an infinite covolume unimodular random subset is weakly contained in a p.m.p. action $(X,\mu)$, it is automatically a thinning of $(X,\mu).$ 

A \emph{factor of iid thinning} will be a thinning of $([0,1]^\Gamma, {\rm Leb}^\Gamma).$ We prove:
\begin{thm}\label{mthm-fiidfinite}
Any factor of iid thinning on $\Gamma$ is a weak-* limit of finite unimodular random subsets.
\end{thm}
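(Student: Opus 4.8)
The plan is to derive the theorem from the infinitesimal containment $([0,1]^\Gamma,{\rm Leb}^\Gamma)\xhookrightarrow{inf}\Gamma$ of Theorem~\ref{mthm-infcontBernouilli}, after first recasting the conclusion in a matching form. A factor of iid thinning $E$ is, by definition, a weak-$*$ limit of $\cF([0,1]^\Gamma,U_n)$ with ${\rm Leb}^\Gamma(U_n)\to 0$; since the set of unimodular random subsets is a compact metrizable subspace of $\Prob(\{0,1\}^\Gamma)$ whose topology is generated by the cylinder (``$F$-name'') events, it suffices to produce, for every finite symmetric $F\ni 1$ and every $\varepsilon>0$, a finite unimodular random subset whose $F$-name distribution lies within $\varepsilon$ of that of $E$. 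For a measure preserving $(Y,\nu)$ and finite $V\subseteq Y$ one computes $\mathbb P\big(P\subseteq\cF(Y,V)\big)={\nu(\bigcap_{\gamma\in P}\gamma V)}/{\nu(V)}$ for every finite $P\ni 1$; in particular the finite unimodular random subset $\cF(\Gamma,T)$ attached to a finite $T\subseteq\Gamma$ gives the pattern $P$ the weight ${|\bigcap_{\gamma\in P}\gamma T|}/{|T|}$, and by inclusion--exclusion its $F$-name distribution is determined by these numbers over $P\subseteq F$. Hence, using $\cF([0,1]^\Gamma,U_n)\to E$ together with a triangle inequality (picking $n$ so that the $F$-name distribution of $\cF([0,1]^\Gamma,U_n)$ is within $\varepsilon/2$ of that of $E$ \emph{and} ${\rm Leb}^\Gamma(U_n)$ is below the threshold produced next), Theorem~\ref{mthm-fiidfinite} reduces to the following.

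\emph{Claim.} For every finite symmetric $F\ni 1$ and $\varepsilon>0$ there is $\delta>0$ such that for every $U\subseteq[0,1]^\Gamma$ of positive measure ${\rm Leb}^\Gamma(U)\le\delta$ there is a finite $T\subseteq\Gamma$ with
\[
\Big|\ \frac{\big|\bigcap_{\gamma\in P}\gamma T\big|}{|T|}-\frac{{\rm Leb}^\Gamma\big(\bigcap_{\gamma\in P}\gamma U\big)}{{\rm Leb}^\Gamma(U)}\ \Big|<\varepsilon\qquad\text{for all }P\subseteq F\text{ with }1\in P.
\]

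To prove the Claim I would enlarge $F$ to a finite symmetric $F'=F^3$ chosen so that for $x\in[0,1]^\Gamma$ and $\gamma\in F$ the $F$-name of $\gamma^{-1}x$ is determined by the $F'$-name of $x$, and apply Theorem~\ref{mthm-infcontBernouilli} to the finite family of $F'$-name atoms $\widehat{U}_S:=\bigcap_{\gamma\in S}\gamma U\cap\bigcap_{\gamma\in F'\setminus S}(\gamma U)^c$ indexed by $S\subseteq F'$ with $1\in S$. Each such $\widehat{U}_S$ is contained in $U$, hence has measure $\le{\rm Leb}^\Gamma(U)$, so once ${\rm Leb}^\Gamma(U)$ is small enough infinitesimal containment supplies $\lambda>0$ and (necessarily finite) sets $C_S\subseteq\Gamma$ with
\[
\big|{\rm Leb}^\Gamma(\widehat{U}_S\cap\gamma\widehat{U}_{S'})-\lambda\,|C_S\cap\gamma C_{S'}|\big|\le\varepsilon_0\,{\rm Leb}^\Gamma(U)\qquad(\gamma\in F',\ 1\in S,S'\subseteq F'),
\]
where $\varepsilon_0$ is at our disposal. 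Taking $\gamma=1$ and $S=S'$ gives $\lambda|C_S|\approx{\rm Leb}^\Gamma(\widehat{U}_S)$, so the right-hand quantities of the Claim already appear (as $\sum_{S\supseteq P}|C_S|$ divided by $|C_{\{1\}}|$).

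The genuine obstacle is that the family $\{C_S\}$ a priori only matches the \emph{pairwise} correlations of the translates of $U$, whereas the Claim asks for a single finite set $T$ whose honest $F'$-name atoms have the prescribed sizes; this is precisely the step upgrading the pairwise form of containment to its name-distribution form, and it is where I expect the work to lie. The lever is that the sets $\widehat{U}_S$, being name atoms, obey rigid combinatorial constraints: they are pairwise disjoint, and $\widehat{U}_S\cap\gamma\widehat{U}_{S'}=\varnothing$ unless $S'$ agrees with $\gamma S$ on the overlap $\gamma F'\cap F'$ — for instance if $\gamma\in F'\setminus S$ then $\widehat{U}_S\subseteq(\gamma U)^c$ while $\gamma\widehat{U}_{S'}\subseteq\gamma U$ since $1\in S'$. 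Every such vanishing identity transfers to $\{C_S\}$ up to an additive error $O(2^{|F'|}\varepsilon_0\,{\rm Leb}^\Gamma(U))$. Setting $T:=\bigsqcup_{1\in S}C_S$ after deleting the small pairwise overlaps, I would then check from exactly these transferred relations that, for every $S$, all but an $O(2^{2|F'|}\varepsilon_0)$-fraction of the points of $C_S$ have $F'$-name equal to $S$ with respect to $T$; consequently $\big|\bigcap_{\gamma\in P}\gamma T\big|\approx\sum_{S\supseteq P}|C_S|\approx\lambda^{-1}{\rm Leb}^\Gamma\big(\bigcap_{\gamma\in P}\gamma U\big)$ and $|T|\approx\lambda^{-1}{\rm Leb}^\Gamma(U)$, which yields the Claim once $\varepsilon_0$ is chosen small relative to $\varepsilon\,2^{-2|F'|}$. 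The fiddly points are the window-enlargement bookkeeping and controlling how the $\varepsilon_0$-errors accumulate through iterated intersections; notably, none of this uses exactness of $\Gamma$, consistent with the theorem being unconditional.
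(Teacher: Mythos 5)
Your approach is genuinely different from the paper's and would work, but it is considerably longer than necessary because it bypasses two tools the paper has already set up. The paper's own proof is a short chain through the abstract framework: by Lemma \ref{lem-ursweakcont}(2) the model $(Y,\nu)$ of a thinning $\kappa$ satisfies $(Y,\nu)\xhookrightarrow{proj}([0,1]^\Gamma,{\rm Leb}^\Gamma)$; composing with $([0,1]^\Gamma,{\rm Leb}^\Gamma)\xhookrightarrow{inf}\Gamma$ via Lemma \ref{lem-transitivity}(4) gives $(Y,\nu)\xhookrightarrow{proj}\Gamma$; and Lemma \ref{lem-ursweakcont}(1) together with the observation that the unimodular random subsets contained in $\Gamma$ are precisely the finite ones converts this directly into the desired weak-$*$ approximation. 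You instead unfold everything into $F$-name statistics and try to construct a finite $T\subset\Gamma$ directly. The reformulation and the reduction to your Claim are fine, but the ``genuine obstacle'' you identify — upgrading pairwise correlations from Definition \ref{def-infcont} to the full $F$-name distribution — is already resolved in the paper by Lemma \ref{lem-InfContCond}: applying part (2) of that lemma with $k=1$ and the single set $U$ immediately produces one finite $T\subseteq\Gamma$ and $\lambda>0$ with $\bigl|\mu(U_E)-\lambda|T_E|\bigr|\le\varepsilon\mu(U)$ for all $E\subseteq F$, which after dividing by $\mu(U)\approx\lambda|T|$ is exactly your Claim with no need for the name-atom construction. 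Your workaround — building $T$ as a disjoint union of approximants $C_S$ to the atoms $\widehat{U}_S$ and arguing that the combinatorial rigidity of name atoms forces most points of $C_S$ to acquire $F'$-name $S$ in $T$ — is plausible on the arithmetic I can check (the pairwise disjointness constraints do propagate), but it is essentially a re-proof of Lemma \ref{lem-InfContCond} in this special case, with more error bookkeeping and no complete verification. In short: the proposal's strategy is sound and unconditional (correctly noting that exactness plays no role), but citing Lemma \ref{lem-InfContCond} — or better, Lemma \ref{lem-ursweakcont} as the paper does — would have made the argument both shorter and watertight.
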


As a consequence of Theorem \ref{mthm-fiidfinite} we learn that the construction of Poisson zoos from \cite{pete2025nonamenable} already exhausts all possible local statistics of sparse factor of iid subsets. 
The proof of Theorem \ref{mthm-fiidfinite} is a relatively quick consequence of Theorem \ref{mthm-infcontBernouilli} and a few properties of projective and infinitesimal containment. In fact, more is true: given any p.m.p. action $(X,\mu)\xhookrightarrow{inf}\Gamma$ we prove that the thinnings of $(X,\mu)$ are weak-* limits of finite unimodular subsets. As mentioned before, all examples of such p.m.p. actions  that we currently know are weakly contained in Bernoulli shift, so as of this moment this generalization is not very exciting. This could change if some of the candidate actions listed in Section \ref{sec-questions} are indeed infinitesimally contained in $\Gamma$. 

\subsection{Conformal $\sigma$-algebra homomorphisms and the entropy support maps}
To prove Theorem \ref{mthm-infcontBernouilli} we will use conformal $\sigma$-algebra homomorphisms. 
\begin{defn}Let $(X,\mu),(Y,\nu)$ be measure preserving actions of $\Gamma.$ We will say that a map $\varphi\colon \cB(X,\mu)\to \cB(Y,\nu)$ is a \emph{conformal homomorphism} if for any $\varepsilon>0$ there is a $\delta>0$ such that for all $U,V\in\cB(X)$ with $\mu(U),\mu(V)\leq \delta$ there is $\lambda>0$ satisfying 
$$|\mu(U)-\lambda\nu(\varphi(U))|,|\mu(V)-\lambda\nu(\varphi(V))|, |(\mu(U\cap V)-\lambda\nu(\varphi(U)\cap\varphi(V))|\leq \varepsilon\max\{\mu(U),\mu(V)\}$$
\end{defn}
For small sets, these maps approximate isometric isomorphisms between $\sigma$-algebras up to measure scaling, hence the name conformal. The relation to infinitesimal containment is not surprising but not entirely trivial to prove. Let $(X,\mu),(Y,\nu)$ be measure preserving actions of $\Gamma$. Any $\Gamma$-equivariant conformal homomorphism $\varphi\colon \cB(X,\mu)\to \cB(Y,\nu)$ gives rise to infinitesimal containment $(X,\mu)\xhookrightarrow{inf}(Y,\nu)$, realized by taking $B_i=\varphi(A_i)$. In fact, having an equivariant conformal $\sigma$-algebra homomorphism implies certain uniformity in the quality of approximation which is not guaranteed by Definition \ref{def-infcont}. Indeed, we can make $\varepsilon$ uniform in $|F|$. We leave exploring the consequences of that fact for future work. 

To shorten notation, from now on we shall write $\mu$ for the measure $(\frac{1}{2}\delta_0+\frac{1}{2}\delta_2)^I$ on $\{0,1\}^I.$ In view towards Theorem \ref{mthm-infcontBernouilli}, we would like to find a $\Gamma$-equivariant conformal homomorphism $\cB([0,1]^\Gamma,{\rm Leb}^\Gamma)\to \cB(\Gamma,|\cdot|).$ This is a bit much to ask, so instead we construct a conformal homomorphism $\cE\colon \cB(\{0,1\}^\Gamma,\mu)\to \cB(\Gamma\times \{0,1\}^\Gamma\times Z, |\cdot|\times\mu\times \tau)$, where $(Z,\tau)$ is an auxiliary p.m.p. action. This yields $(\{0,1\}^\Gamma,\mu)\xhookrightarrow{inf}\Gamma\times (Z,\tau)$. Using general properties of projective and infinitesimal containment combined with Abert-Weiss theorem \cite{AbertWeiss2013BernoulliWeaklyContained} we deduce the desired containment $([0,1]^\Gamma,{\rm Leb^\Gamma})\xhookrightarrow{inf}\Gamma$.


The $\Gamma$-equivariant conformal homomorphism $\cE\colon\cB(\{0,1\}^\Gamma,\mu)\to \cB(\Gamma\times \{0,1\}^\Gamma\times Z, {|\cdot|\times \mu\times \tau})$ does the heavy lifting in this paper. We find it as a special case of conformal homomorphisms for products $\{0,1\}^I,$ where $I$ is a countable set, that are equivariant with respect to the full group of permutations of $I.$  
\begin{thm}\label{mthm-confhom}
Let $I$ be a countable set. Let $\mathfrak O$ be the space of linear orders on $I$ equipped with some ${\rm Sym}(I)$-invariant probability measure $\tau$ and let $m$ be the product of counting measure on $I$, $\mu$ on $\{0,1\}^I$ and $\rm Leb$ on $[0,\log 2].$ There is a ${\rm Sym}(I)$-equivariant conformal homomorphism $$\cE\colon \cB(\{0,1\}^I,\mu)\to \cB(\mathfrak O\times \{0,1\}^I\times I \times [0,\log 2] ,\tau\times m).$$
\end{thm}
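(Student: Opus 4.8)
The plan is to build $\cE$ by a greedy, coordinate-by-coordinate "peeling'' of a small set $U\subset\{0,1\}^I$ along a randomly chosen linear order, distributing an entropy budget of $\log 2$ per coordinate over the points of $U$. Fix a small set $U$ and, for a linear order $<$ on $I$, process the coordinates of $I$ one at a time in increasing order. At each coordinate $i$, partition the current "alive'' part of $U$ according to the value at $i$; each point $u\in U$ receives a weight $w_i(u)\in[0,\log 2]$ measuring how much the conditional entropy of the $i$-th bit drops along $u$ (equivalently, $-\log$ of the conditional probability of the observed bit, truncated at $\log 2$). The image $\cE(U)\subset\mathfrak O\times\{0,1\}^I\times I\times[0,\log 2]$ is then the set of tuples $(<,x,i,t)$ such that $x$ agrees with some $u\in U$ on all coordinates $\le i$ (in the order $<$) and $t\le w_i(u)$. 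The point of the extra $[0,\log 2]$ factor is exactly to turn these real weights into honest subsets of positive measure, and the point of the $I$-factor together with the random order is to spread a single point of $U$ — which may carry information in many coordinates — across many "slots'' so that the images of distinct small sets are nearly disjoint when the original sets are nearly disjoint.

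The key steps, in order, are: (1) Define the weights $w_i(u)$ precisely and check the \emph{total mass identity}: for every $u\in U$ and every order $<$, $\sum_i w_i(u)$ equals, up to controlled error, $-\log \mu_U(\{u\text{-cylinder}\})$, so that integrating over $\mu$ and summing over $I$ gives $m(\cE(U)) \approx \mu(U)\cdot(-\log\mu(U)) \cdot(\text{something})$; more carefully, one normalizes so that $m(\cE(U))$ is comparable to $\mu(U)$ up to a factor depending only on $\mu(U)$, which becomes the scaling constant $\lambda$. This is where the restriction to \emph{small} $U$ enters: for $\mu(U)\le\delta$, the "atomic'' pieces of the cylinder decomposition are themselves small, so the truncation at $\log 2$ is essentially never active and the entropy bookkeeping is clean. (2) Check equivariance: since the weights $w_i(u)$ are defined intrinsically from the cylinder structure of $U$ and a permutation of $I$ just relabels coordinates and acts on $\mathfrak O$ compatibly, $\cE(\sigma U)=\sigma\cE(U)$ on the nose. (3) The heart: show that for $\mu(U),\mu(V)\le\delta$ one has $m(\cE(U)\cap\cE(V))\approx \lambda\mu(U\cap V)$ with the same $\lambda$. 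The inclusion $\supseteq$ (up to small error) is easy — points of $U\cap V$ contribute to both images with the same weights. The reverse requires arguing that a tuple $(<,x,i,t)$ lying in both $\cE(U)$ and $\cE(V)$ forces, with high probability over $<$, the witnessing points $u\in U$, $v\in V$ to agree on a long enough prefix that they coincide — this is the crucial \emph{disjointification} estimate, and it is where the randomness of the order does real work: a coordinate where $U$ and $V$ genuinely differ will, in a random order, typically be seen \emph{before} the entropy budget is exhausted, so the "overlap slots'' are confined to the common part.

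The main obstacle I expect is step (3), specifically the quantitative control of spurious overlaps $\cE(U)\cap\cE(V)$ coming from pairs $u\in U\setminus V$, $v\in V\setminus U$ that happen to share a long random-order prefix. One needs a union bound over such pairs weighted by their prefix-agreement probabilities, and this must beat the normalization $-\log\mu(U)$ hidden in $\lambda$; the natural tool is to think of the random order as generating, for each pair $(u,v)$, a random "separation time'' whose distribution is explicit (it is essentially the position of the first differing coordinate among the relevant ones), and to show the expected wasted mass is $o(\mu(U\cap V))$ — or, when $U\cap V$ is empty or tiny, genuinely $o(\max(\mu(U),\mu(V)))$. Getting the constants to line up so that a single $\lambda=\lambda(\mu(U),\mu(V))$ works simultaneously for $U$, $V$, and $U\cap V$ — rather than three different scalings — is the delicate point, and I would handle it by first proving the single-set mass estimate $m(\cE(U))=\lambda\mu(U)(1+O(\varepsilon))$ with $\lambda$ depending only on a crude bound on $\mu(U)$, then absorbing all pair-interaction errors into the same $\varepsilon$.
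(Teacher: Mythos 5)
The broad strategy is the same as the paper's — reveal the coordinates of $\{0,1\}^I$ in a random linear order, track an entropy increment per coordinate, and use the auxiliary interval $[0,\log 2]$ to turn the resulting weights into honest sets. However, the specific construction you describe differs from the paper's entropy support map in two ways that matter, and I think one of your claims of ``easy'' is where the actual difficulty lives.

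First, the paper's map puts the $\{0,1\}^I$-coordinate of the image inside $U$ itself, not inside a prefix-cylinder of $U$: explicitly $\cE_\prec(U)=\{(w,i,t)\mid 0\le t<1_U(w)\,\Delta_U(w,i)/\varepsilon_U(w,i)\}$, so $\cE_\prec(U)\subset U\times I\times[0,\log 2]$ always. This makes the disjointness property — $\cE(U)\cap\cE(V)=\emptyset$ for disjoint $U,V$ — \emph{automatic}, with no estimate needed; that is property (2) of Theorem \ref{thm-ESM}. Your version, where the image is a union of cylinders ``$x$ agrees with some $u\in U$ on a prefix,'' sacrifices this and forces you to prove a disjointification bound from scratch. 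It also makes the $\{0,1\}^I$-marginal of $\cE(U)$ much larger than $\mu(U)$, so the total-mass calculation is not the clean telescoping $m(\cE_\prec(U))=H(U)$ of Lemma \ref{lem-totalmass}; you would need a compensating density factor in the weight, which is exactly the $1/\varepsilon_U(w,i)$ appearing in the paper's formula and absent from your description.

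Second, the weight you propose is not the one that works. You write it as ``how much the conditional entropy of the $i$-th bit drops'' and then say this is ``equivalently $-\log$ of the conditional probability of the observed bit.'' These are not equivalent: the first is an average over $Z_i$ given $U$ and the history, the second is the pointwise information content of the observed bit, and for an atom $\{u\}$ the first gives $\log 2$ while the second gives $0$. The paper uses neither; it uses the increment $\Delta_U(w,i)=H_{w,i}(U)-H_{w,i}(U|Z_i)$ of the entropy of the \emph{indicator} $1_U$ under revealing $Z_i$, divided by the relative density $\varepsilon_U(w,i)$. Both the $1_U$ (rather than $Z_i$) and the $1/\varepsilon_U$ normalization are essential for Lemma \ref{lem-totalmass} and for the disjointness.

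Third, and most importantly: you flag ``disjointification'' as the hard step and declare the inclusion $\cE(U\cap V)\subset\cE(U)\cap\cE(V)$ ``easy'' because ``points of $U\cap V$ contribute to both images with the same weights.'' That is false: the weight at $(w,i)$ depends on the ambient set through $\Delta_U$ and $\varepsilon_U$, so the weight of a common point inside $\cE(U)$ is generally different from its weight inside $\cE(U\cap V)$. Showing that nevertheless $\cE_\prec(W)$ is approximately contained in $\cE_\prec(U)$ whenever $W\subset U$ and $\mu(U)$ is small — quantitatively, that $m(\cE_\prec(U)\cup\cE_\prec(W))-m(\cE_\prec(U))=o(m(\cE_\prec(U)))$ — is precisely Theorem \ref{thm-almostcontained}, the multi-page, case-by-case entropy estimate that does the real work. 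In the paper's framework there is no disjointification estimate at all (disjointness is built in), and the entire difficulty is concentrated in the near-additivity estimate you treat as the trivial direction. So the plan, as written, misses the actual obstacle.
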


The conformal homomorphism $\cE$ in Theorem \ref{mthm-confhom} is explicit and we call it the \emph{entropy support map}. The definition is somewhat technical, so we refer the reader to Section \ref{sec-esmaps}, where we construct $\cE$ and show that it is indeed a conformal homomorphism. Here we only give the formula in the case $|I|<\infty$. Write $\mathfrak O$ for the set of all linear orders on $I$, $\prec$ for an element of $\mathfrak O$, $\mu$ for the uniform probability measure on $\{0,1\}^I$ and $Z_i\colon \{0,1\}^I\to \{0,1\}$ for the $i$-th coordinate function. The map $\cE$ takes subsets of $\{0,1\}^I$ to subsets of $\mathfrak O \times\{0,1\}^I\times I \times [0,\log 2]$. For any $U\subset \{0,1\}^I$ we put 
$$\cE(U)=\left\{(\prec,w,i,t)\mid 0\leq t\leq 1_U(w)\frac{H(1_U|Z_j=w_j,j\prec i)-H(1_U|Z_j=w_j,j\preceq i)}{\mu(U\cap \{Z_j=w_j,j\prec i\})}\right\}.$$

Very roughly, these maps use entropy to quantify how fast we learn about small subsets of $\{0,1\}^I$ by revealing the $i$-th bit in order determined by $\prec$. The formula distributes this entropy as a mass over $U$ according to how ''sensitive'' the parts of $U$ are to flipping the $i$-th bit. The shape of the entropy support map is inspired by CP-processes defined by Furstenberg \cite{Furstenberg2008,Furstenberg2014}, local entropy averages \cite{HochmanShmerkin2012} developed to study fractal sets and crucially the paper \cite{csoka2020entropy}. In \cite{csoka2020entropy}, Cs\'oka, Harangi, and Vir\'ag considered the averaged entropy increments when we reveal bits in a random order and applied them to obtain entropy inequalities in arbitrary graphs. The entropy support maps are probably the most important contribution of the paper, but their highly technical nature prevents us from giving adequate feeling in this introduction. We hope that the reader will be tempted to look at Section \ref{sec-esmaps} where the details are fleshed out. 
\subsection{Structure of the paper}
In Section 2 we build the basic theory of projective and infinitesimal containment. We prove transitivity properties between different kinds of containment in Lemma \ref{lem-transitivity} and show that a measure preserving action of an exact group $\Gamma$ is projectively contained in $\Gamma$ if and only if it is amenable (Theorem \ref{thm-exactprojcont}). Section 3 is devoted to unimodular random subsets and thinnings. Here, we prove that any thinning of a p.m.p. action infinitesimally contained in $\Gamma$ must be a limit of finite unimodular random subsets ({Proposition \ref{prop-approxhypinf}). Approximate hyperfiniteness is an important ingredient in the proof of Theorem \ref{mthm-approxhyp} and we introduce it in Section \ref{sec-approxhyp}. In Theorem \ref{thm-pairmodelfree} we prove that the cut sets involved in the definition of approximate hyperfiniteness can be chosen deterministically if the unimodular random set comes from an essentially free action.  
In Section \ref{sec-conf} we show how conformal $\sigma$-algebra homomorphisms lead to infinitesimal containment. The entropy support maps are constructed in Section \ref{sec-esmaps}, where we also verify that they are conformal homomorphisms. The proof that the Bernoulli shift is infinitesimally contained in the action of the group on itself occupies Section \ref{sec-infcontproof}. We then deduce Theorems \ref{mthm-fiidfinite} and \ref{mthm-approxhyp}. In Section \ref{sec-QCI} we prove that for an exact group $\Gamma$ the p.m.p. action infinitesimally contained in $\Gamma$ satisfies the conclusions of Chifan-Ioana's theorem. Finally in Section \ref{sec-questions} we describe several conjectural instances of infinitesimal containment and discuss potential corollaries. 
\subsubsection{Acknowledgment}
The author thanks Miklos Abert, Tom Hutchcroft, H\'ector Jard\'on-Sanchez, Sam Mellick and Gabor Pete for valuable discussions. The author was supported by the Dioscuri program, initiated by the Max Planck Society, jointly managed with the National Science Center in Poland, and mutually funded by Polish the Ministry of Education and Science and the German Federal Ministry of Education and Research.  For the purpose of open access, the
authors have applied a CC BY public copyright licence to any author accepted manuscript arising from this submission.
\section{Basic properties of the projective and infinitesimal containment}\label{sec-containment}

In this section we will develop the basic theory of projective and infinitesimal containment. This will include transitivity properties and the relation to Zimmer amenability. We also produce some examples. 

\begin{lem}
Suppose $(X,\mu)\xhookrightarrow{proj}(Y,\nu)$. Then $L^2(X,\mu)\xhookrightarrow{w}L^2(Y,\nu)$ in the sense of Zimmer \cite[Section 1]{BurtonKechris2020WeakContainment}
\end{lem}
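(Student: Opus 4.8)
The plan is to reduce everything to simple functions and then push the indicator sets through the hypothesis $(X,\mu)\xhookrightarrow{proj}(Y,\nu)$, absorbing the scaling constant into the norm of the comparison vectors. Recall that $L^2(X,\mu)\xhookrightarrow{w}L^2(Y,\nu)$ in the sense of Zimmer means: for every finite $F\subset\Gamma$, all $\xi_1,\dots,\xi_n\in L^2(X,\mu)$ and every $\varepsilon_0>0$ there exist $\eta_1,\dots,\eta_n\in L^2(Y,\nu)$ with $|\langle\gamma\xi_i,\xi_j\rangle-\langle\gamma\eta_i,\eta_j\rangle|<\varepsilon_0$ for all $\gamma\in F$ and $1\le i,j\le n$, where $\Gamma$ acts on $L^2$ by the Koopman representation $(\gamma f)(x)=f(\gamma^{-1}x)$.

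First I would fix such $F,\xi_1,\dots,\xi_n,\varepsilon_0$ and enlarge $F$ to $F\cup F^{-1}\cup\{e\}$, still finite. Since $\mu$ is $\sigma$-finite, simple functions supported on finite-measure sets are dense in $L^2(X,\mu)$, so I can choose a finite measurable partition $A_1,\dots,A_N$ of a finite-measure subset of $X$ and scalars $c_{i,l}$ such that $\tilde\xi_i:=\sum_{l=1}^N c_{i,l}\mathbf{1}_{A_l}$ satisfies $\|\xi_i-\tilde\xi_i\|_2<\delta$ for a $\delta>0$ to be fixed. By Cauchy--Schwarz and unitarity of the Koopman representation, $|\langle\gamma\xi_i,\xi_j\rangle-\langle\gamma\tilde\xi_i,\tilde\xi_j\rangle|\le\delta(\|\xi_i\|_2+\|\xi_j\|_2+\delta)$ for every $\gamma$, so a small enough $\delta$ makes this $<\varepsilon_0/2$ for all $i,j$. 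Then I would apply the definition of projective containment to the sets $A_1,\dots,A_N$, the finite set $F$ and a parameter $\varepsilon>0$, obtaining $\lambda>0$ and $B_1,\dots,B_N\subset Y$ with
$$|\mu(A_l\cap\gamma A_m)-\lambda\,\nu(B_l\cap\gamma B_m)|\le\varepsilon\max_{1\le l\le N}\mu(A_l)\qquad(\gamma\in F,\ 1\le l,m\le N).$$
Taking $\gamma=e$, $m=l$ shows $\lambda\nu(B_l)<\infty$, hence $\mathbf{1}_{B_l}\in L^2(Y,\nu)$, and I set $\eta_i:=\sqrt\lambda\sum_{l=1}^N c_{i,l}\mathbf{1}_{B_l}$.

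It then remains to compare matrix coefficients. Since $\langle\gamma\mathbf{1}_{A_l},\mathbf{1}_{A_m}\rangle=\mu(A_l\cap\gamma^{-1}A_m)$ and similarly on the $Y$ side, sesquilinearity gives
$$\langle\gamma\tilde\xi_i,\tilde\xi_j\rangle=\sum_{l,m}c_{i,l}\overline{c_{j,m}}\,\mu(A_l\cap\gamma^{-1}A_m),\qquad\langle\gamma\eta_i,\eta_j\rangle=\lambda\sum_{l,m}c_{i,l}\overline{c_{j,m}}\,\nu(B_l\cap\gamma^{-1}B_m),$$
so, because $\gamma^{-1}\in F$, termwise estimation yields $|\langle\gamma\tilde\xi_i,\tilde\xi_j\rangle-\langle\gamma\eta_i,\eta_j\rangle|\le\varepsilon\,\big(\max_l\mu(A_l)\big)\big(\sum_l|c_{i,l}|\big)\big(\sum_m|c_{j,m}|\big)$; note the right-hand side does not involve $\lambda$. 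Since $N$ and the $c_{i,l}$ were already fixed, choosing $\varepsilon$ small enough makes this $<\varepsilon_0/2$ for all $i,j$, and combining with the previous paragraph gives $|\langle\gamma\xi_i,\xi_j\rangle-\langle\gamma\eta_i,\eta_j\rangle|<\varepsilon_0$, as required. This is essentially the classical proof that weak containment of actions implies weak containment of Koopman representations; the only new feature is the scaling constant $\lambda$, and the one thing to be careful about is that it must be extracted from \emph{both} entries of the inner product — which forces the normalization $\eta_i=\sqrt\lambda\cdot(\text{simple function})$ — and that, fortunately, the final error bound is independent of $\lambda$, so no control on the size of $\lambda$ is needed. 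I expect no serious obstacle here: the only slightly delicate point is the density-of-simple-functions reduction in the $\sigma$-finite setting, which is routine.
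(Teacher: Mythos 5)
Your proof is correct and follows essentially the same route as the paper, which only sketches the argument (reduce to simple functions supported on finite-measure sets, apply projective containment to the cells of a common partition, and pass back by bilinearity); you have simply filled in the details, in particular the normalization $\eta_i=\sqrt{\lambda}\sum_l c_{i,l}\mathbf{1}_{B_l}$ that absorbs the scaling constant symmetrically and the observation that the resulting error bound is independent of $\lambda$.
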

\begin{proof}
The argument is completely standard so we only sketch the proof. Suppose a tuple $u_1,\ldots,u_k$ of vectors in $L^2(X,\mu)$ has the property that the matrix coefficients $\langle u_i,\gamma u_j\rangle$ can be approximated uniformly on finite subsets of $\Gamma$ by the matrix coefficients $\langle v_i,\gamma v_j\rangle$ for some tuple $v_1,\ldots,v_k\in L^2(Y,\nu).$ Then, the same is true for any tuples consisting of linear combinations of $u_1,\ldots,u_k$. The projective containment informs us that all tuples of characteristic functions of disjoint sets have this property. Any finite tuple $f_1,\ldots,f_k\in L^2(X,\mu)$ can be approximated arbitrarily well by a tuple of linear combinations of characteristic functions of some finite collection of disjoint sets, so their matrix coefficients can also be uniformly approximated on finite set of $\Gamma$ by matrix coefficients of some tuple of vectors in $L^2(Y,\nu).$

\end{proof}


 
Weak containment says that we can approximate measures of intersections of pair of translates, but the approximation actually holds for arbitrary finite intersections. This fact is rather standard, see for example \cite{aaserud2018approximate}, where it is deduced from \cite[Prop. 10.1] {Kechris2010GlobalAspects}. We do not have the analogue of \cite[Prop. 10.1] {Kechris2010GlobalAspects} at our disposal so we give an elementary direct argument for projective or infinitesimal containment. The proof is just a tedious calculation so the reader might wish to skip it, To shorten notation write $U_E:=\bigcap_{\gamma\in E}\gamma U,$ where $U$ is a subset of some space $X$ with right $\Gamma$ action and $E\subset \Gamma$ is a finite set. Write $U_{\emptyset}:=X$.

\begin{lem}\label{lem-ProjContCond}
Let $(X,\mu),(Y,\nu)$ be measure preserving actions of $\Gamma$. The following conditions are equivalent 
\begin{enumerate}
    \item $(X,\mu)\xhookrightarrow{proj}(Y,\nu).$
    \item For any finite positive measure sets $U^1,\ldots,U^k\subset X$, finite $F\subset\Gamma$ and any $\varepsilon>0$ we can find $V^1,\ldots,V^k\subset Y$ and $\lambda>0$ such that for any $E_1,\ldots,E_k\subset F$, not all empty, we have
   $$\left|\mu(U_{E_1}^1\cap\ldots\cap U^k_{E_k})-\lambda \nu(V_{E_1}^1\cap\ldots\cap V^k_{E_k})\right|\leq \varepsilon\max_{i=1,\ldots,k}{\mu(U_i)}.$$
   \item Same as above but allow arbitrary finite measure, finite unions and intersections of translates of $U_i,i=1,\ldots,k$ or their complements $X\setminus U_i$ by $\gamma\in F$.
    \item Let $U\subset X$ be a finite measure subset, such that $\gamma U,\gamma\in\Gamma$ generate a dense algebra in the $\sigma$-algebra of $X$. For any finite $F\subset \Gamma$ and any $\varepsilon>0$ we can find a subset $V\subset Y$ and $\lambda>0$ such that 
    $$\left| \mu(U_E)-\lambda\nu(V_E)\right|\leq \varepsilon.$$
\end{enumerate}
\end{lem}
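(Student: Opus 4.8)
The plan is to prove the chain of implications $(2)\Rightarrow(3)\Rightarrow(1)\Rightarrow(2)$, together with the equivalence $(2)\Leftrightarrow(4)$, so that all four conditions coincide. The implications $(3)\Rightarrow(2)$, $(3)\Rightarrow(1)$ and $(1)\Rightarrow(2)$ where one only asks about pairs are trivial or amount to the definition, so the real content is the step $(1)\Rightarrow(2)$: upgrading the pairwise approximation of $\mu(A_i\cap\gamma A_j)$ to the simultaneous approximation of all multi-fold intersections $\mu(U^1_{E_1}\cap\cdots\cap U^k_{E_k})$.

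First I would fix the data $U^1,\dots,U^k$, $F$, $\varepsilon$. The key observation is that the finite collection of sets $\{\gamma U^i : 1\le i\le k,\ \gamma\in F\cup\{1\}\}$ generates a finite Boolean algebra $\mathcal A\subset\cB(X)$ with finitely many atoms, and every set $U^i_{E_i}$ (and hence every intersection appearing in (2)) is a union of atoms of $\mathcal A$. So it suffices to approximate the measures of the atoms. An atom is a set of the form $\bigcap_{(\gamma,i)\in P}\gamma U^i \cap \bigcap_{(\gamma,i)\notin P}(X\setminus\gamma U^i)$; by inclusion–exclusion its measure is a fixed $\pm 1$-combination of measures of \emph{positive} intersections $\mu\bigl(\gamma_{1}U^{i_1}\cap\cdots\cap\gamma_{m}U^{i_m}\bigr)$. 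Thus the task reduces to: approximate each positive intersection of translates of the $U^i$ by the corresponding intersection of translates of candidate sets $V^i$, up to $\varepsilon'$ (with $\varepsilon'$ chosen much smaller than $\varepsilon$, by a factor depending only on $k$ and $|F|$, to absorb the inclusion–exclusion bookkeeping). This last approximation is exactly where I invoke projective containment, but applied not to $U^1,\dots,U^k$ directly: instead I apply the definition to the \emph{enlarged tuple} consisting of all the atoms of $\mathcal A$ themselves — these are finitely many disjoint finite-measure sets, so (1) gives $\lambda>0$ and sets $\{W_a\}$ (one per atom $a$) with $|\mu(a)-\lambda\nu(W_a)|\le\varepsilon'\max\mu(U^i)$ and $|\mu(a\cap\gamma b)-\lambda\nu(W_a\cap\gamma W_b)|\le\varepsilon'\max\mu(U^i)$ for all atoms $a,b$ and $\gamma$ in a suitably enlarged finite set $F'$. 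One then \emph{builds} $V^i:=\bigcup_{a\subset U^i}W_a$. The point is that, up to small measure error, the $W_a$ behave like disjoint sets partitioning the relevant part of $Y$ (disjointness of their $\Gamma$-translates within a finite window can be forced by a standard trick: replace $W_a$ by $W_a\setminus\bigcup_{b\ne a}W_b$, changing $\nu$ by at most $\sum_{a\ne b}\nu(W_a\cap W_b)$, which is controlled by the pairwise estimates), and then $\mu(U^1_{E_1}\cap\cdots\cap U^k_{E_k})$, being a sum of $\mu(a)$ over the atoms $a$ it contains, is approximated by $\lambda\nu$ of the corresponding union of (nearly disjoint) $W_a$'s, which is $\lambda\nu(V^1_{E_1}\cap\cdots\cap V^k_{E_k})$ up to the accumulated error. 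Keeping track of how many atoms and how many error terms arise (both bounded in terms of $k$ and $|F|$) lets me choose $\varepsilon'$ and $F'$ at the outset so the final error is at most $\varepsilon\max\mu(U^i)$.

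For the infinitesimal version, the same argument runs verbatim, but I must check the size constraint: the atoms $a$ of $\mathcal A$ satisfy $\mu(a)\le\mu(U^i)$ for the relevant $i$, hence are $\le\delta$ whenever all $\mu(U^i)\le\delta$, so the hypothesis of Definition \ref{def-infcont} applies to the enlarged tuple of atoms with the same $\delta$; one then feeds in a $\delta$ coming from $\varepsilon'$ and the enlarged $F'$. Finally, $(4)$ is the special case of $(2)$ where all $U^i$ equal the single generating set $U$, rephrased via a density argument: any finite-measure set can be approximated in measure by an element of the algebra generated by $\{\gamma U\}$, and projective/infinitesimal containment is clearly stable under such approximation (replacing a set by one within $\varepsilon''$ changes all the quantities by at most a controlled multiple of $\varepsilon''$), so $(4)\Rightarrow(1)$; the reverse $(1)\Rightarrow(4)$ is immediate since (4) is a special instance of the definition applied to translates of $U$. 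The main obstacle is the bookkeeping in $(1)\Rightarrow(2)$: one must be careful that the number of error terms introduced by inclusion–exclusion and by the disjointification of the $W_a$ is bounded by a quantity depending only on $k$ and $|F|$ (not on the sets), so that a single choice of $\varepsilon'$ at the start suffices; I expect this to be a finite but slightly tedious combinatorial estimate rather than a conceptual difficulty.
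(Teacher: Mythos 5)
Your proposal takes a genuinely different route from the paper. You apply the definition of projective containment to the \emph{atoms} of the finite Boolean algebra generated by $\{\gamma U^i : \gamma\in F\cup\{1\},\, i\le k\}$, obtain approximants $W_a$ with pairwise control, and then build $V^i:=\bigcup_{a\subset U^i}W_a$. The paper instead applies projective containment to the (nested, not disjoint) family of \emph{all} multi-fold intersections $U^*_{\bf E}$, obtains approximants $V_{\bf E}$ with pairwise control, and proves by an inductive Claim that $\nu(V_{\bf E}\,\Delta\, V^*_{\bf E})\leq 14|{\bf E}|\kappa$, where $V^i:=V_{\{1\}^i}$. Both arguments enlarge the tuple before invoking the definition; yours exploits disjointness of atoms, the paper exploits nesting of the $U^*_{\bf E}$ in an induction on $|{\bf E}|$. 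The two are of comparable difficulty.

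However, the crux of your argument is asserted rather than proved, and this is exactly where care is required. You claim that $V^1_{E_1}\cap\cdots\cap V^k_{E_k}$ is, up to small error, the union of $W_a$ over atoms $a\subset U^1_{E_1}\cap\cdots\cap U^k_{E_k}$; but pairwise estimates on the $W_a$ do not, in general, determine $m$-fold intersections of their translates for $m\geq 3$ (pairwise intersection data of sets does not determine triple intersections). What rescues the argument is the atom structure together with \emph{cross-translate} pairwise estimates: for an atom $a\subset\gamma U^i$ one has $\nu(W_a\cap\gamma V^i)=\nu\bigl(\bigcup_{b\subset U^i}(W_a\cap\gamma W_b)\bigr)\approx\sum_b\mu(a\cap\gamma b)/\lambda=\mu(a)/\lambda\approx\nu(W_a)$, using that the sets $\gamma W_b$ are approximately pairwise disjoint because $\nu(W_b\cap W_{b'})\approx\mu(b\cap b')/\lambda=0$; so $W_a$ lies approximately inside $\gamma V^i$. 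For $a\cap\gamma U^i=\emptyset$ the same computation gives $\nu(W_a\cap\gamma V^i)\approx 0$. One must also show that $\bigcup_a W_a$ approximately covers each $\gamma V^i$, again by summing cross-translate estimates $\nu(\gamma W_b\cap W_a)\approx\mu(\gamma b\cap a)/\lambda$ over atoms $a$. Only after these three facts are in place can you decompose $V^1_{E_1}\cap\cdots\cap V^k_{E_k}$ atom by atom and conclude; without them the phrase \emph{nearly disjoint union of $W_a$'s} is not a proof. Two smaller points: the parenthetical about forcing disjointness of the $\Gamma$-translates of the $W_a$ is off, since the trick $W_a\mapsto W_a\setminus\bigcup_{b\ne a}W_b$ disjointifies only the $W_a$ themselves (genuine disjointness of $\gamma W_a$ from $\gamma' W_b$ for $\gamma\ne\gamma'$ would destroy the dynamics you are modelling); and the claim that $(1)\Rightarrow(4)$ is immediate is not quite right, since $(4)$ already involves multi-fold intersections and so needs the content of $(1)\Rightarrow(2)$, rather than merely the definition. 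Also make sure $F'$ contains $F\cup F^{-1}\cup\{1\}$ so that all the cross-translate estimates used above are available.
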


\begin{proof}
(3)$\Rightarrow$(2)$\Rightarrow$(1) is clear. Let us prove (1)$\Rightarrow$(2). Let $U^1,\ldots,U^k\subset X$ be positive finite measure sets. To shorten notation even further, write ${U^*}_{\bf E}:=U^1_{E_1}\cap \ldots \cap U^k_{E_k},$ where ${\bf E}=(E_1,\ldots, E_k)$ is a collection of finite subsets of $\Gamma$. We will need the following claim.\\
\textbf{Claim.} Let $F_1,\ldots,F_k\subset \Gamma$ be finite subsets containing $1$ and let $\kappa>0$. Suppose a collection of subsets $V_{\bf E}\subset  Y, {\bf E}=(E_i)_{i=1}^k, E_i\subset F_i$ satisfies 
$$|\mu({U^*}_{\bf E}\cap \gamma U^*_{\bf E'})-\nu(V_{\bf E}\cap\gamma V_{\bf E'})|\leq \kappa$$ for all ${\bf E}=(E_i)_{i=1}^k, {\bf E'}=(E'_i)_{i=1}^k,  E_i,E_i'\subset F$. Put $V^i:=V_{(\emptyset,\ldots,\{1\},\ldots,\emptyset)}$ where the singleton $\{1\}$ appears on $i$-th place. Define $V^*_{\bf E}:=V^1_{E_1}\cap \ldots \cap V^k_{E_k}.$ Then, 
$$|\nu(V_{\bf E}\Delta V^*_{\bf E})\leq 14 |{\bf E}|\kappa,$$ where $|{\bf E}|:=\sum_{i=1}^k|E_i|.$

We prove the claim by induction on $|{\bf E}|$. The case $|{\bf E}|=1$ corresponds to $E_i=\{1\}$ for some $i=1,\ldots,k$ and all other $E_j=\emptyset$. Here there is nothing to show as $V_{\bf E}=V^i.$ Suppose the claim holds for all $|{\bf E}|\leq m.$ Let ${\bf E}$ be a tuple of subsets with $|{\bf E}|=m+1.$ Write ${\bf E}=(E_1,\ldots, E_j'\sqcup\{\lambda\},\ldots, E_k)$ for some $1\leq j\leq k$ and put ${\bf E'}:=(E_1,\ldots, E_j',\ldots, E_k).$ Then $|{\bf E'}|=m.$ To shorten notation, write $\{\gamma\}^j:=(\emptyset,\ldots,\{\gamma\},\ldots,\emptyset)$, where the unique non-empty entry is on $j$-th position. We have 
\begin{align*}
|\mu(U^j)-\nu(V^j)|\leq&\kappa\\
|\mu({U^*}_{\{\lambda\}^j})-\nu(V_{\{\lambda\}^j})|=&|\mu(U^j)-\nu(V_{\{\lambda\}^j})|\leq \kappa\\
|\mu({U^*}_{\{\lambda\}^j}\cap \lambda U^j)-\nu({V}_{\{\lambda\}^j}\cap \lambda V^j)|=& |\mu(U^j)-\nu({V}_{\{\lambda\}^j}\cap \lambda V^j)|\leq \kappa.
\end{align*}
We deduce $\nu(V_{\{\lambda\}^j}\Delta \lambda V^j)\leq 4\kappa.$ Similarly,
\begin{align*}
|\mu({U^*}_{\bf E'})-\nu(V_{\bf E'})|\leq& \kappa\\
|\mu({U^*}_{\bf E'}\cap {U^*}_{\{\lambda\}^j})-\nu({V}_{\bf E'}\cap {V}_{\{\lambda\}^j})|=& |\mu({U^*}_{\bf E})-\nu({V}_{\bf E'}\cap {V}_{\{\lambda\}^j})|\leq \kappa\\
|\mu({U^*}_{\bf E})-\nu(V_{\bf E})|\leq&\kappa\\
|\mu({U^*}_{\bf E}\cap {U^*}_{\{\lambda\}^j})-\nu(V_{\bf E}\cap V_{\{\lambda\}^j})|=&|\mu({U^*}_{\bf E})-\nu(V_{\bf E}\cap V_{\{\lambda\}^j})|\leq \kappa\\
|\mu({U^*}_{\bf E}\cap {U^*}_{{\bf E'}})-\nu(V_{\bf E}\cap V_{{\bf E'}})|=&|\mu({U^*}_{\bf E})-\nu(V_{\bf E}\cap V_{{\bf E'}})|\leq \kappa.
\end{align*}
Together these yield $\nu(V_{\bf E}\Delta (V_{\{\lambda\}}\cap V_{\bf E'}))\leq 10\kappa$. By the inductive hypothesis we get 
\begin{align*}\nu(V_{\bf E}\Delta V^*_{\bf E'})\leq& \nu(V_{\bf E}\Delta (V_{\bf E'}\cap V_{\{\lambda\}^j}))+\nu(V_{\{\lambda\}^j}\Delta \lambda V^j)\\ +&\nu(V_{\bf E'}\Delta V^*_{\bf E'})\\
\leq& 10\kappa +4\kappa+14\kappa|{\bf E'}|=14\kappa |{\bf E}|.\end{align*} This proves the Claim. 

We can now prove the implication (1)$\Rightarrow$(2). Using the definition of projective containment we can find a family of sets $V_{\bf E}$, where $\bf E$ runs over all $k$-tuples of subsets of $F$ (not all empty) and $\lambda>0$ such that 
$$ |\mu({U^*}_{\bf E}\cap \gamma U^*_{\bf E'})-\lambda\nu(V_{\bf E}\cap\gamma V_{\bf E'})|\leq \kappa\max_{i=1,\ldots, k}\{\mu(U^i)\},$$ for any $\kappa>0.$ 
Put $V^i:=V_{\{1\}^i}$. By the Claim, $\lambda\nu(V_{\bf E}\Delta V^*_{\bf E})\leq 14\kappa |{\bf E}| \max_{i=1,\ldots, k}\{\mu(U^i)\},$ so 
$$|\mu(U^1_{E_1}\cap\ldots\cap U^k_{E_k})-\lambda\nu((V^1_{E_1}\cap\ldots\cap V^k_{E_k})|\leq (14|{\bf E}|+1)\kappa \max_{i=1,\ldots, k}\{\mu(U^i)\}.$$
Taking $\kappa$ small enough we prove (2).

The implication (2)$\Rightarrow$(3) follows from the inclusion-exclusion formula. For example \begin{align*}\mu(\gamma_1U_1\cup\gamma_2U_2\setminus\gamma_3U_3)=&\mu(\gamma_1U_1)+\mu(\gamma_2U_2) -\mu(\gamma_1U_1\cap\gamma_2U_2) \\-&\mu(\gamma_1U_1\cap \gamma_3U_3)-\mu(\gamma_2U_2\cap\gamma_3U_3)+\mu(\gamma_1U_1\cap \gamma_2U_2\cap \gamma_3U_3),\end{align*} so once we projectively approximate all finite measure, finite intersections of translates by $\gamma\in F$ we automatically projectively approximate all expressions using unions, intersections and complements. It remains to show that (4) implies (2) or (3). By (4) and the inclusion-exclusion principle, the approximations from (2) or (3) will hold as long as $U_i$ are in the algebra generated by the translates of $U$. By our assumption, this algebra is dense in the full $\sigma$-algebra of $X$, so the approximation holds for all tuples $U_1,\ldots, U_k$.
 \end{proof}

There is an analogous lemma for infinitesimal containment, with exactly the same proof. 

\begin{lem}\label{lem-InfContCond}
Let $(X,\mu),(Y,\nu)$ be measure preserving actions of $\Gamma$. The following conditions are equivalent 
\begin{enumerate}
    \item $(X,\mu)\xhookrightarrow{inf}(Y,\nu).$
    \item For any $k\in\mathbb N, \varepsilon>0$ and finite $F\subset \Gamma$ there exists $\delta>0$ with the following property. For any finite positive measure sets $U^1,\ldots,U^k\subset X$ satisfying $\mu(U_i)\leq \delta$ we can find $V^1,\ldots,V^k\subset Y$ and $\lambda>0$ such that for any $E_1,\ldots,E_k\subset F$, not all empty, we have
   $$\left|\mu(U_{E_1}^1\cap\ldots\cap U^k_{E_k})-\lambda \nu(V_{E_1}^1\cap\ldots\cap V^k_{E_k})\right|\leq \varepsilon\max_{i=1,\ldots,k}{\mu(U_i)}.$$
    \item Same as above but allow arbitrary finite measure, finite unions and intersection of translates of $U_i, X\setminus U_i, i=1,\ldots,k$ and their complements by $\gamma\in F$.
\end{enumerate}
\end{lem}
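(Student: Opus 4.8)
The plan is to transcribe the proof of Lemma~\ref{lem-ProjContCond} almost word for word, since every step there is insensitive to the distinction between ``for all finite-measure sets'' and ``for all sufficiently small sets''. As before, the implications (3)$\Rightarrow$(2)$\Rightarrow$(1) are immediate from the definitions; (1)$\Rightarrow$(2) runs through the same Claim and its induction on $|{\bf E}|$; and (2)$\Rightarrow$(3) is the same inclusion--exclusion bookkeeping. The only genuinely new task is that each application of the defining property of infinitesimal containment (Definition~\ref{def-infcont}) now also hands us a threshold, so we must keep track of these thresholds and check that every auxiliary set we feed into the definition has measure below the relevant one.

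For (1)$\Rightarrow$(2) I would fix $k,\varepsilon$ and a finite $F\subset\Gamma$, enlarge $F$ so that $1\in F$, and set $\kappa:=\varepsilon/(14k|F|+1)$. Applying Definition~\ref{def-infcont} with the same finite set $F$, with its integer parameter taken to be $2^{k|F|}$, and with its error parameter taken to be $\kappa$, we obtain a $\delta>0$ depending only on $(k,\varepsilon,F)$; this is the $\delta$ we output. Now suppose $U^1,\dots,U^k\subseteq X$ satisfy $\mu(U^i)\le\delta$. The crucial observation is that for every $k$-tuple ${\bf E}=(E_1,\dots,E_k)$ of subsets of $F$ which is not all empty, the set ${U^*}_{\bf E}:=U^1_{E_1}\cap\dots\cap U^k_{E_k}$ is contained in a single translate $\gamma U^i$ (pick $i$ with $E_i\neq\emptyset$ and $\gamma\in E_i$), so $\mu({U^*}_{\bf E})\le\max_i\mu(U^i)\le\delta$. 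We may therefore feed the finite family $\{{U^*}_{\bf E}\}_{\bf E}$ (assigning $\emptyset$ to, and discarding, the null members) into the definition, obtaining sets $V_{\bf E}\subseteq Y$ and a scalar $\lambda>0$ with
\[
|\mu({U^*}_{\bf E}\cap\gamma\, {U^*}_{\bf E'})-\lambda\nu(V_{\bf E}\cap\gamma\, V_{\bf E'})|\le\kappa\max_i\mu(U^i)
\qquad\text{for all }{\bf E},{\bf E'}\text{ and }\gamma\in F.
\]
From here the Claim in the proof of Lemma~\ref{lem-ProjContCond} applies verbatim --- its induction on $|{\bf E}|$ uses only the supplied pairwise estimates and never re-invokes a containment --- and gives $\lambda\nu(V_{\bf E}\,\Delta\,V^*_{\bf E})\le 14|{\bf E}|\kappa\max_i\mu(U^i)$, where $V^*_{\bf E}$ is assembled from the singleton sets $V^i:=V_{\{1\}^i}$. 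Hence
\[
|\mu(U^1_{E_1}\cap\dots\cap U^k_{E_k})-\lambda\nu(V^1_{E_1}\cap\dots\cap V^k_{E_k})|\le(14|{\bf E}|+1)\kappa\max_i\mu(U^i)\le\varepsilon\max_i\mu(U^i),
\]
using $|{\bf E}|\le k|F|$. That is statement~(2).

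For (2)$\Rightarrow$(3) I would reproduce the inclusion--exclusion argument of Lemma~\ref{lem-ProjContCond}: apply~(2) with a slightly smaller error to absorb the boundedly many signs, and expand the measure of any finite-measure Boolean combination of the translates $\gamma U^i$ ($\gamma\in F$) and their complements into a signed sum of measures of finite intersections of translates of the $U^i$, each of which is supplied by~(2); as the family of intersections occurring depends only on $k$ and $F$, the resulting $\delta$ again depends only on $(k,\varepsilon,F)$. I do not anticipate a serious obstacle here --- the Claim's induction is measure-agnostic and Definition~\ref{def-infcont} is invoked only for one fixed finite family of uniformly small sets --- so the sole thing to watch is the propagation of smallness, namely the elementary observation that ${U^*}_{\bf E}\subseteq\gamma U^i$ (and likewise that the finite intersections of translates appearing in the last step are small), which is exactly what keeps every invocation within its allotted threshold.
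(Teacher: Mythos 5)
Your proposal is correct and takes exactly the route the paper intends: the paper's own proof of Lemma~\ref{lem-InfContCond} is the single sentence ``Same as for Lemma~\ref{lem-ProjContCond},'' and you have filled in precisely the bookkeeping that this pointer leaves implicit --- namely, that the auxiliary sets ${U^*}_{\bf E}$ inherit the smallness bound (each lies in a translate of some $U^i$), so every invocation of Definition~\ref{def-infcont} stays within the threshold $\delta$, which is chosen once from $(k,\varepsilon,F)$ before the $U^i$ are given.
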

\begin{proof}
Same as for Lemma \ref{lem-ProjContCond}
\end{proof}
Weak containment implies projective containment implies infinitesimal containment. In general none of these implications can be reversed. There are several transitivity properties between different kinds of containment. 
\begin{lem}\label{lem-transitivity}
 \begin{enumerate}
     \item $(X,\mu)\xhookrightarrow{proj}(Y,\nu)\xhookrightarrow{proj}(Z,\tau)$ implies $(X,\mu)\xhookrightarrow{proj}(Z,\tau).$
     \item $(X,\mu)\xhookrightarrow{inf}(Y,\nu)\xhookrightarrow{proj}(Z,\tau)$ implies $(X,\mu)\xhookrightarrow{inf}(Z,\tau).$
     \item $(X,\mu)\xhookrightarrow{w}(Y,\nu)\xhookrightarrow{inf}(Z,\tau)$ implies $(X,\mu)\xhookrightarrow{inf}(Z,\tau).$
     \item If $(X,\mu)$ is infinite ergodic, $(Y,\nu)$ is p.m.p. and $(X,\mu)\xhookrightarrow{proj}(Y,\nu)\xhookrightarrow{inf}(Z,\tau)$ then $(X,\mu)\xhookrightarrow{proj}(Z,\tau).$
     \item $(X_1,\mu_1)\xhookrightarrow{proj}(Y_1,\nu_1)$ and $(X_2,\mu_2)\xhookrightarrow{proj}(Y_2,\nu_2)$ implies $(X_1\times X_2,\mu_1\times\mu_2)\xhookrightarrow{proj}(Y_1\times Y_2,\nu_1\times\nu_2).$
 \end{enumerate}   
\end{lem}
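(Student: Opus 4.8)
\textbf{Proof proposal for Lemma \ref{lem-transitivity}.}

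The plan is to prove the five items one at a time, using at each stage the equivalent reformulations from Lemmas \ref{lem-ProjContCond} and \ref{lem-InfContCond}, which let us work with arbitrary finite Boolean combinations of translates rather than just pairwise intersections. Throughout, the main structural issue to watch is the coordination of the scaling factors $\lambda$: when we compose two containments we must compose the two scalings, and when an intermediate action has infinite measure we must make sure the output scaling is positive and finite, which is exactly where the $\sigma$-finiteness hypotheses and, in item (4), the ergodicity hypothesis enter.

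For item (1): given $(X,\mu)\xhookrightarrow{proj}(Y,\nu)\xhookrightarrow{proj}(Z,\tau)$, finite $F\subset\Gamma$, sets $A_1,\dots,A_k\subset X$ and $\varepsilon>0$, first apply projective containment of $X$ in $Y$ with error $\varepsilon/2$ to obtain $\lambda_1>0$ and $B_1,\dots,B_k\subset Y$. Since the $B_i$ have finite $\nu$-measure, apply projective containment of $Y$ in $Z$ to the family $\{B_i\}$ with error chosen small enough that, after multiplying through by $\lambda_1$, the total error is at most $\varepsilon\max_i\mu(A_i)$; this produces $\lambda_2>0$ and $C_1,\dots,C_k\subset Z$, and the composite scaling $\lambda_1\lambda_2$ works by the triangle inequality. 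Item (2) is the same argument but where the first step is infinitesimal rather than projective: given the target data $(F,k,\varepsilon)$ we first invoke projective containment $(Y,\nu)\xhookrightarrow{proj}(Z,\tau)$ to understand the quality of approximation available on the $Y$-side, then choose the threshold $\delta$ for $(X,\mu)\xhookrightarrow{inf}(Y,\nu)$ small enough that the resulting $B_i$ (which have $\nu$-measure controlled by $\lambda_1\mu(A_i)$, hence small) can be fed into the $Y$-to-$Z$ step with acceptable error; composing scalings finishes it. For item (3): weak containment into $Y$ gives $B_i\subset Y$ with $\nu(B_i)$ close to $\mu(A_i)$, in particular arbitrarily small, so the given $\delta$ from $(Y,\nu)\xhookrightarrow{inf}(Z,\tau)$ can be met, and then we compose as before.

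Item (4) is the one I expect to require the most care, and it is the main obstacle. The subtlety is that $(X,\mu)$ has infinite measure, so we cannot just take $\max_i\mu(A_i)$ as a harmless normalizing constant, and we need to land at \emph{projective} containment in $(Z,\tau)$, not merely infinitesimal. The strategy is to exploit ergodicity of $(X,\mu)$: given finite measure sets $A_1,\dots,A_k\subset X$, use a Rokhlin-type or exhaustion argument to find $\gamma_1,\dots,\gamma_N\in\Gamma$ such that translating the $A_i$ around by the $\gamma_j$ and taking a suitable disjointification produces, for a large integer $M$, a partition-like family of $X$ into $M$ pieces each of small measure which still records (via its Boolean combinations, cf.\ Lemma \ref{lem-ProjContCond}(3)) all the statistics $\mu(A_i\cap\gamma A_j)$ for $\gamma\in F$. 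Concretely, one wants the $A_i$ to be expressible, up to $\varepsilon$-error, as unions of small-measure sets from this refined family. Then $(X,\mu)\xhookrightarrow{proj}(Y,\nu)$ transports this refined family with a single global scaling $\lambda_1$ to small-measure sets in the p.m.p.\ action $(Y,\nu)$; since those are small, infinitesimal containment $(Y,\nu)\xhookrightarrow{inf}(Z,\tau)$ applies and moves them into $(Z,\tau)$ with scaling $\lambda_2$; reassembling the pieces via inclusion–exclusion yields approximations of $\mu(A_i\cap\gamma A_j)$ by $\lambda_1\lambda_2\,\tau(C_i\cap\gamma C_j)$. The point where $\sigma$-finiteness and ergodicity are genuinely used is in guaranteeing that such a small-measure refinement of the relevant finite subalgebra of $X$ exists at all; I would isolate this as a small preliminary claim (''every finite measure subset of an infinite ergodic p.m.p.\ action, together with finitely many of its translates, lies within $\varepsilon$ of a subalgebra generated by sets of measure $\le\eta$, for any $\eta>0$''), proved by a standard ergodic exhaustion, and then the rest of item (4) is bookkeeping.

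Finally, item (5) is essentially formal: given finite measure sets $A_1\times A_1',\dots$ — more precisely, given a finite collection of finite measure subsets of $X_1\times X_2$, approximate each within $\varepsilon$ by a finite union of rectangles $P\times Q$ with $P\subset X_1$, $Q\subset X_2$ of finite measure, which is possible since such rectangles generate the product $\sigma$-algebra. Apply $(X_1,\mu_1)\xhookrightarrow{proj}(Y_1,\nu_1)$ to the $P$'s (with error $\varepsilon'$) to get scaling $\lambda_1$ and sets $P'\subset Y_1$, and independently apply $(X_2,\mu_2)\xhookrightarrow{proj}(Y_2,\nu_2)$ to the $Q$'s to get $\lambda_2$ and $Q'\subset Y_2$; then the rectangles $P'\times Q'\subset Y_1\times Y_2$ with global scaling $\lambda_1\lambda_2$ approximate the product measures of intersections, because $\mu_1\times\mu_2\bigl((P\times Q)\cap\gamma(R\times S)\bigr)=\mu_1(P\cap\gamma R)\,\mu_2(Q\cap\gamma S)$ and the diagonal $\Gamma$-action on $Y_1\times Y_2$ respects the rectangle structure; a short estimate controlling how the two independent errors multiply (using that the measures involved are bounded) completes the argument. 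Reassembling the rectangle-unions back to the original sets via inclusion–exclusion, as in Lemma \ref{lem-ProjContCond}, gives the claim.
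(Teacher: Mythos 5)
Items (1)--(3) and (5) of your proposal match the paper's intent (the paper simply declares (1)--(3) clear, and for (5) observes as you do that finite unions of rectangles are dense in the product $\sigma$-algebra, so it suffices to approximate tuples of rectangles factorwise). The substantive content is item (4), and there your approach both diverges from the paper's and contains a gap.

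Your plan for (4) is to refine the $A_i$ into $\mu$-small pieces inside $X$, transport them to $Y$ via projective containment, and then assert that ``the resulting $B_i$ \ldots have small $\nu$-measure'' so that infinitesimal containment $(Y,\nu)\xhookrightarrow{inf}(Z,\tau)$ applies. That assertion is unjustified. Projective containment only produces \emph{some} $\lambda_1>0$ with $\lambda_1\nu(B_i)\approx\mu(P_i)$; the $\nu$-measures of the images are therefore roughly $\mu(P_i)/\lambda_1$, and nothing in the definition forces $\lambda_1$ to be large. Making the pieces $\mu$-small in $X$ does not by itself make their images $\nu$-small in $Y$ — that requires a normalization, which you never supply. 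Your stated ``preliminary claim'' (that a finite-measure set of an infinite ergodic p.m.p.\ action lies near a subalgebra generated by $\eta$-small sets) is in fact true without ergodicity in any standard space, and is not where the hypothesis is actually used.

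The paper's argument is shorter and goes the other way around. Keep the tuple $A_1,\dots,A_k$ as is, but enlarge $F$ to a finite $F'\supset F$ with $\mu(F'A_i)\ge\frac{2}{\delta}\mu(A_i)$ for all $i$ — this is exactly where ergodicity and infinite total measure of $X$ enter, since the $\Gamma$-translates of $A_i$ cover a set of infinite measure. Now apply projective containment of $X$ in $Y$ to the enlarged tuple (via Lemma~\ref{lem-ProjContCond}(3)) with small error $\varepsilon$, getting $B_i\subset Y$ and $\lambda>0$ with $\lambda\nu(B_i)\approx\mu(A_i)$ and $\lambda\nu(F'B_i)\approx\mu(F'A_i)$. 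Since $(Y,\nu)$ is p.m.p., $\nu(F'B_i)\le1$, so $\lambda\gtrsim\mu(F'A_i)\ge\frac{2}{\delta}\mu(A_i)$, which forces $\nu(B_i)\approx\mu(A_i)/\lambda\le\delta$. This is the normalization your refinement scheme was missing: adding a single $\mu$-large set $F'A_i$ to the tuple pins down $\lambda$ from below, rather than trying to make the test sets $\mu$-small. If you insist on your route, you would need to build a comparably large-measure set into the refined family and run essentially the same normalization, at which point the refinement is superfluous.
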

\begin{proof}
(1),(2),(3) are clear. For (4) we need to argue that for any $A_1,\ldots, A_k\subset X$ and $\delta>0$ the approximation $B_1,\ldots, B_k\subset Y$ can be chosen so that $\nu(B_i)\leq \delta$. Then, infinitesimal containment would guarantee existence of an approximation $C_1,\ldots,C_k\subset Z$, which would be the desired projective approximation for $A_1,\ldots, A_k.$ Let $A_1,\ldots, A_k\subset X$, let $F\subset \Gamma$ be finite and let $\delta>0.$
Choose an $F'\supset F$ such that $\mu(F'A_i)\geq \frac{2}{\delta}\mu(A_i)$ for all $i=1,\ldots, k$. This can be done because the ergodicity of the action guarantees that the union of all translates of $A_i$ covers almost all $X$, which has infinite measure. By Lemma \ref{lem-ProjContCond} (3), for any $\varepsilon>0$ we can find $B_1,\ldots,B_k\subset Y, \lambda>0$ such that 
\begin{align*}|\mu(A_i)-\lambda\nu(B_i)|\leq& \varepsilon \max_{i=1,\ldots,k}\mu(A_i)\\
|\mu(F'A_i)-\lambda\nu(F'B_i)|\leq& \varepsilon\max_{i=1,\ldots,k}\mu(A_i),
\end{align*} for all $i=1,\ldots,k.$ For $\varepsilon$ small enough we will have $\nu(B_i)\leq \delta\nu(F'B_i)\leq \delta\mu(X)=\delta$.

For (5), we observe that we can find an approximation for any finite tuple of product sets $A_i=A_i^1\times A_i^2, A_i^1\subset X_1, A_i^2\subset X_2$. These give rise to approximations for all tuples of finite unions of product sets, which in turn are dense in the $\sigma$-algebra of $X_1\times X_2$, so the approximation exists for all tuples.  
\end{proof}

\begin{cor}
Let $(X,\mu)$ be an essentially free p.m.p. action of $\Gamma$ and suppose that $(X,\mu)\xhookrightarrow{inf}(Y,\nu).$ Then $([0,1]^\Gamma,{\rm Leb}^\Gamma)\xhookrightarrow{inf}(Y,\nu).$
\end{cor}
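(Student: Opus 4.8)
The statement is an immediate consequence of two facts already available: the Abert--Weiss theorem and the transitivity property recorded in Lemma \ref{lem-transitivity}(3). The plan is simply to chain them together. First I would recall that by \cite{AbertWeiss2013BernoulliWeaklyContained} the Bernoulli shift is weakly contained in every essentially free p.m.p. action of the countable group $\Gamma$; in particular, since $(X,\mu)$ is essentially free and p.m.p., we have $([0,1]^\Gamma,{\rm Leb}^\Gamma)\xhookrightarrow{w}(X,\mu)$. Note that this input requires no hypothesis on $\Gamma$ (no amenability, exactness, etc.), which is why the corollary holds for arbitrary $\Gamma$.

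\textbf{Key step.} Now I would apply Lemma \ref{lem-transitivity}(3) with the triple of actions $([0,1]^\Gamma,{\rm Leb}^\Gamma)$, $(X,\mu)$, $(Y,\nu)$ playing the roles of $(X,\mu)$, $(Y,\nu)$, $(Z,\tau)$ in that lemma. The hypothesis $([0,1]^\Gamma,{\rm Leb}^\Gamma)\xhookrightarrow{w}(X,\mu)$ was just established, and $(X,\mu)\xhookrightarrow{inf}(Y,\nu)$ is assumed. Lemma \ref{lem-transitivity}(3) then yields $([0,1]^\Gamma,{\rm Leb}^\Gamma)\xhookrightarrow{inf}(Y,\nu)$, which is exactly the desired conclusion.

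\textbf{Main obstacle.} There is essentially no obstacle: the whole content is packaged into the two cited results, and the argument is the one already sketched informally in the introduction just after the corollary's analogue for the target $\Gamma$. The only points worth a moment's care are (i) that the Abert--Weiss weak containment is invoked in its general form (Bernoulli shift weakly contained in \emph{any} essentially free p.m.p. action), and (ii) that Lemma \ref{lem-transitivity}(3) applies even though $(Y,\nu)$ is only assumed $\sigma$-finite rather than probability-preserving --- but the proof of that part of the lemma is insensitive to finiteness of the target measure, so nothing extra is needed. Hence the corollary follows in one line.
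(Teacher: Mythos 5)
Your proof is correct and matches the paper's own argument exactly: invoke Abert--Weiss to get $([0,1]^\Gamma,{\rm Leb}^\Gamma)\xhookrightarrow{w}(X,\mu)$ and then apply Lemma \ref{lem-transitivity}(3). Nothing further is needed.
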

\begin{proof}
By Abert-Weiss theorem \cite{AbertWeiss2013BernoulliWeaklyContained}, $([0,1]^\Gamma,{\rm Leb}^\Gamma)\xhookrightarrow{w}(X,\mu).$ Then, $([0,1]^\Gamma,{\rm Leb}^\Gamma)\xhookrightarrow{inf}(Y,\nu),$ by Lemma \ref{lem-transitivity} (3).
\end{proof}


\begin{lem}\label{lem-gammaprod}
Let $(Z,\tau)$ be a measure preserving action of $\Gamma$. Then, $\Gamma\xhookrightarrow{proj}\Gamma\times Z$ and $\Gamma\times Z\xhookrightarrow{proj}\Gamma.$
\end{lem}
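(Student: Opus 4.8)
The plan is to prove the two containments separately. The first, $\Gamma\xhookrightarrow{proj}\Gamma\times Z$, is immediate. Fix $W\subseteq Z$ with $0<\tau(W)<\infty$ (the degenerate case $\tau\equiv0$ being trivial), and given finite $A_1,\dots,A_k\subseteq\Gamma$ and finite $F\subseteq\Gamma$ set $\lambda:=\tau(W)^{-1}$ and $B_i:=\bigsqcup_{a\in A_i}\{a\}\times aW\subseteq\Gamma\times Z$. Then $\gamma B_j=\bigsqcup_{a\in A_j}\{\gamma a\}\times\gamma aW$, so $B_i\cap\gamma B_j=\bigsqcup_{c\in A_i\cap\gamma A_j}\{c\}\times cW$, and invariance of $\tau$ gives $(|\cdot|\times\tau)(B_i\cap\gamma B_j)=|A_i\cap\gamma A_j|\,\tau(W)$, hence $\lambda\,(|\cdot|\times\tau)(B_i\cap\gamma B_j)=|A_i\cap\gamma A_j|=\mu(A_i\cap\gamma A_j)$ with no error at all. (This is also a special case of the earlier example that $\Gamma\xhookrightarrow{proj}(X,\mu)$ for every essentially free action, since the left-regular action of $\Gamma$ on $\Gamma\times Z$ is free.)

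The substantive half is $\Gamma\times Z\xhookrightarrow{proj}\Gamma$. I would begin with the $\Gamma$-equivariant measure isomorphism $\Gamma\times Z\cong\Gamma\times Z_{\mathrm{triv}}$, $(g,z)\mapsto(g,g^{-1}z)$, under which $\Gamma$ acts by left translation on the first coordinate only; so from now on assume the action of $\Gamma$ on $Z$ is trivial. Fix finite-measure $A_1,\dots,A_k\subseteq\Gamma\times Z$, finite $F\subseteq\Gamma$ and $\varepsilon>0$, and write $A_i=\bigsqcup_g\{g\}\times(A_i)_g$ for the slices. Since $\sum_g\tau((A_i)_g)=\nu(A_i)<\infty$, there is a finite $S\subseteq\Gamma$ for which $A_i':=A_i\cap(S\times Z)$ is as close to $A_i$ in measure as we like, uniformly in $i$; it therefore suffices to approximate the finitely many numbers $\nu(A_i'\cap\gamma A_j')$, $\gamma\in F$, up to $\tfrac12\varepsilon\max_i\nu(A_i)$. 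The slices $(A_i')_g$ ($g\in S$, $i\le k$) generate a finite Boolean algebra; let $Q_1,\dots,Q_s$ be its finite-measure atoms and $q_l:=\tau(Q_l)$, so that $A_i'=\bigsqcup\{\{g\}\times Q_l:\ g\in S,\ Q_l\subseteq(A_i')_g\}$.

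Now fix a large integer $N$, put $n_l:=\lfloor Nq_l\rfloor$, and --- using that $\Gamma$ is infinite --- choose greedily finite sets $T_1,\dots,T_s\subseteq\Gamma$ with $|T_l|=n_l$ that are \emph{separated} relative to the fixed finite set $\mathcal D:=S^{-1}FS\cup S^{-1}S$, meaning $T_lT_l^{-1}\cap\mathcal D=\{1\}$ for each $l$ and $T_lT_{l'}^{-1}\cap\mathcal D=\emptyset$ for $l\ne l'$. Set $\lambda:=N^{-1}$ and $B_i:=\bigsqcup\{gT_l:\ g\in S,\ Q_l\subseteq(A_i')_g\}\subseteq\Gamma$. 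Because left translation commutes with right multiplication, the separation property forces $gT_l\cap\gamma g'T_{l'}=\emptyset$ unless $l=l'$ and $g=\gamma g'$, in which case it equals $gT_l$; hence $B_i\cap\gamma B_j=\bigsqcup\{gT_l:\ g\in S\cap\gamma S,\ Q_l\subseteq(A_i')_g\cap(A_j')_{\gamma^{-1}g}\}$, so that $|B_i\cap\gamma B_j|=\sum_{g\in S\cap\gamma S}\sum_{l:\,Q_l\subseteq(A_i')_g\cap(A_j')_{\gamma^{-1}g}}n_l$, while $\nu(A_i'\cap\gamma A_j')$ is the same double sum with $n_l$ replaced by $q_l$. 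Since $|n_l/N-q_l|\le N^{-1}$, this gives $\bigl|\lambda|B_i\cap\gamma B_j|-\nu(A_i'\cap\gamma A_j')\bigr|\le|S|\,s\,N^{-1}$, which is $\le\tfrac12\varepsilon\max_i\nu(A_i)$ once $N$ is large, proving $\Gamma\times Z\xhookrightarrow{proj}\Gamma$. (Alternatively one could invoke Lemma \ref{lem-ProjContCond} to reduce to product test sets $E\times C$ first, but the direct verification of the definition is just as quick.)

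The only step that needs real care is the construction of the separated sets $T_l$ and the bookkeeping showing that separation makes every intersection $B_i\cap\gamma B_j$ split as a disjoint union of whole translates $gT_l$; the isomorphism, the truncation and the passage to atoms are routine. This is also exactly where the infiniteness of $\Gamma$ enters, and it is genuinely needed: the second containment fails for finite $\Gamma$ (already for the trivial group, by taking two disjoint positive-measure subsets of $Z$), so this hypothesis --- implicit throughout the paper --- is essential.
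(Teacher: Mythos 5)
Your proof is correct, and the core mechanism is the same as the paper's: untwist the action so $\Gamma$ acts only on the first coordinate, approximate the $Z$-measures by rational multiples of a common $\lambda$, and replicate the $\Gamma$-dynamics by placing right-translated copies of the $\Gamma$-parts at pairwise ``separated'' locations, so that intersections with $F$-translates decouple. The one place you genuinely diverge from the paper's exposition is in handling general finite-measure sets. The paper first proves the approximation for product tuples $A_i \times U_j$ (with $U_j$ disjoint) and then invokes density of finite unions of such products in the $\sigma$-algebra of $\Gamma\times Z$ (implicitly via Lemma~\ref{lem-ProjContCond}); you instead make the reduction explicit by truncating each $A_i$ to $S\times Z$ for a large finite $S$ and then atomizing the finite Boolean algebra of $Z$-slices, approximating the arbitrary tuple directly. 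Both routes are fine; yours is a little more self-contained and keeps the error bookkeeping visible, while the paper's is shorter but leans on the density step without spelling it out. Two small remarks: your inclusion of $S^{-1}S$ in $\mathcal D$ is needed precisely so that the $gT_l$ composing each $B_i$ are pairwise disjoint (this matters when $1\in F$, i.e.\ for the measures $\nu(B_i)$ themselves), so do not drop it; and your closing observation that infiniteness of $\Gamma$ is essential --- the second containment already fails for the trivial group --- is a nice sanity check that the paper leaves implicit.
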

\begin{proof}
The action on $\Gamma\times Z$ can be always untwisted, so we can assume without loss of generality that $\Gamma$ acts on $(Z,\tau)$ trivially. 
The containment $\Gamma\xhookrightarrow{proj}\Gamma\times Z$ is clear, we can model the action on $A_1,\ldots, A_k$ by taking $B_i:=A_i\times U$ where $U$ is an arbitrary fixed positive finite measure set. Now we argue the containment $\Gamma \times Z\xhookrightarrow{proj}\Gamma$. 
Let $F\subset \Gamma$ be finite and let $\varepsilon>0$. Let $A_1,\ldots,A_k$ be non-empty finite subsets of $\Gamma$ and let $U_1,\ldots,U_m$ be disjoint positive finite measure subsets of $Z$. Choose natural numbers $q_j,j=1,\ldots, m$ and $\lambda>0$ such that $|\tau(U_j)-\lambda q_j|\leq \varepsilon/2.$
For any finite set $F\subset \Gamma$ let us choose $\gamma_j^l, j=1,\ldots,m, l=1,\,\ldots,q_j$ such that $F\bigcup_{i=1}^kA_i\gamma_j^l$ are pairwise disjoint for $j=1,\ldots,m, l=1,\ldots,q_j$. Put $B_{i,j}:=\bigcup_{l=1}^{q_j} A_i\gamma_j^l.$ Then, for all $\gamma\in F$ 
$$\lambda|B_{i,j}\cap \gamma B_{i',j'}|=\begin{cases}\lambda q_j |A_i\cap \gamma A_{i'}| &\text{ if }j=j'\\ 0 & \text{ otherwise.}\end{cases}$$
Since $|\lambda q_J-\tau(U_j)|\leq \varepsilon,$ $B_{i,j}$ approximate the dynamics of $A_{i,j}$ on $F$ as $\varepsilon\to 0$.  

As we can approximate the dynamics of tuples $A_i,\times U_j$ for any finite set $F$ up to any positive error, we can also do it for all their finite unions. To finish the proof, note that the finite unions of products set of the form $A_i,\times U_j$, with $U_j$ disjoint, can approximate any finite tuple of sets in $\Gamma \times Z$, so we can indeed model the dynamics of any finite tuple in $\Gamma \times Z$ inside $\Gamma.$
\end{proof}
Using similar ideas we can show:
\begin{lem}\label{lem-projergodic}
Let $(Y,\nu)$ be a measure preserving action. Let $\rho\colon (Y,\nu)\to(Z,\tau)$ be the ergodic decomposition and let $\nu=\int_Z\nu_zd\tau(z).$ be the disintegration of measure $\nu$, so that $\nu_z$ are the ergodic components. The following conditions are equivalent.
\begin{itemize}
    \item $(Y,\nu)\xhookrightarrow{proj}\Gamma$,
    \item $(Y,\nu_z)\xhookrightarrow{proj}\Gamma$ for $\tau$-almost all $z\in Z$.
\end{itemize}
\end{lem}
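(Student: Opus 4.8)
The plan is to prove the two implications separately, leveraging the fact that projective containment $(\,\cdot\,)\xhookrightarrow{proj}\Gamma$ is governed by finite tuples of intersections of translates, as quantified by Lemma \ref{lem-ProjContCond}.

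For the forward direction, suppose $(Y,\nu)\xhookrightarrow{proj}\Gamma$. Fix a generating tuple: let $U\subset Y$ be a finite positive measure set whose $\Gamma$-translates generate a dense algebra in $\cB(Y,\nu)$. For a fixed finite $F\subset\Gamma$ and $\varepsilon>0$, projective containment gives a set $V\subset\Gamma$ and $\lambda>0$ with $|\nu(U_E)-\lambda|V_E||\le\varepsilon$ for all $E\subset F$. The key observation is that the quantities $\nu(U_E)=\int_Z \nu_z(U_E)\,d\tau(z)$ are averages of the corresponding quantities for the ergodic components, and that $|V_E|$ is a fixed nonnegative integer for each $E$. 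So for $\tau$-almost every $z$ we would like to say that $(Y,\nu_z)$ is approximated by the same $V$. This needs a measurability and exhaustion argument: one runs the approximation simultaneously over a countable dense family of finite sets $F_n$ and rational tolerances $\varepsilon_m$, and for each $(n,m)$ obtains an approximating set $V_{n,m}\subset\Gamma$ and scaling $\lambda_{n,m}$ valid for the averaged measure $\nu$. The subtlety is to descend to $\nu_z$: since $\nu(U_E)$ is the $\tau$-average of $\nu_z(U_E)$ and the approximation is one-sided in neither direction, a priori the error could concentrate on a small set of components while being canceled elsewhere. This is the main obstacle, and I expect to resolve it by a Borel selection / Fubini argument: for each fixed $V$ and $\lambda$ the set of $z$ for which the approximation holds is Borel, and one shows that a countable family of candidate pairs $(V,\lambda)$ suffices to cover $\tau$-a.e.\ $z$ for the given $(F,\varepsilon)$, because the approximation for $\nu$ with error $\varepsilon$ forces the approximation for $\nu_z$ with error, say, $\sqrt{\varepsilon}$ on a set of $z$ of measure at least $1-\sqrt{\varepsilon}$, and then one diagonalizes over a sequence $\varepsilon\to 0$ and uses Borel--Cantelli.

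For the converse, suppose $(Y,\nu_z)\xhookrightarrow{proj}\Gamma$ for $\tau$-almost all $z$. Fix $U^1,\dots,U^k\subset Y$ of finite positive measure, finite $F\subset\Gamma$, and $\varepsilon>0$; by Lemma \ref{lem-ProjContCond} it suffices to approximate all the quantities $\mu(U^1_{E_1}\cap\cdots\cap U^k_{E_k})$ for $E_i\subset F$. For each $z$ we get $V^1_z,\dots,V^k_z\subset\Gamma$ and $\lambda_z>0$ doing this for $\nu_z$ with error $\varepsilon/2$; by a Borel selection theorem we may choose these measurably in $z$. Partition $Z$ into countably many pieces $Z=\bigsqcup_j Z_j$ on which the combinatorial type of $(V^1_z,\dots,V^k_z)$ restricted to a large enough ball is constant and $\lambda_z$ is nearly constant (say within a factor $1+\varepsilon$), discarding a set of small $\tau$-measure; this is possible since, after suitable translation, only finitely many combinatorial patterns of $(V^i_z)$ within the relevant ball contribute measure above a threshold, and the tail contributes little. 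On each $Z_j$ use the same trick as in Lemma \ref{lem-gammaprod}: take many disjoint far-apart translates of the common pattern, one bundle of translates for each $Z_j$ with multiplicity proportional to $\tau(Z_j)$, so that the counting measure of intersections inside $\Gamma$ reproduces the $\tau$-weighted average $\int_Z \nu_z(U^1_{E_1}\cap\cdots)\,d\tau(z)=\mu(U^1_{E_1}\cap\cdots)$ up to error $O(\varepsilon)$. Summing over $j$ and taking $\varepsilon$ small gives the desired projective approximation of $(Y,\nu)$ inside $\Gamma$.

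Both directions hinge on the same circle of ideas already present in the excerpt — passing between counting measure on $\Gamma$ and scaled continuous measures via many disjoint translates, as in Lemma \ref{lem-gammaprod}, combined with standard Borel selection for the ergodic decomposition. The genuinely delicate point is the forward implication's descent from the averaged measure to almost every ergodic component, since projective approximation is a two-sided estimate and does not obviously localize; I expect the cleanest route is to prove a quantitative averaging lemma stating that if $\int_Z |f_z|\,d\tau(z)\le\varepsilon$ with $f_z\ge -C$ bounded below, then $f_z\le\sqrt{\varepsilon}$ off a set of measure $\le\sqrt\varepsilon/\text{(something)}$, apply it to the finitely many defect functions $f_z(E)=\nu_z(U_E)-\lambda|V_E|$ with a common $V,\lambda$, and then diagonalize.
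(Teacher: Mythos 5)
Your backward implication is essentially the paper's argument: measurable selection of $(B_i^z,\lambda^z)$ over $Z$, a countable discretization of $Z$ into pieces with nearly-constant data, and then the far-apart-translates trick from Lemma \ref{lem-gammaprod} to turn the $\tau$-average into a counting-measure computation in $\Gamma$ (the paper packages this by first proving $(Y,\nu)\xhookrightarrow{proj}\Gamma\times(Z,\tau)$ and then invoking Lemma \ref{lem-gammaprod}, but the content is the same). That direction is fine.

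The forward implication has a genuine gap, and it is exactly the one you flag without resolving. Projective containment of $(Y,\nu)$ in $\Gamma$ only gives $\bigl|\int_Z\bigl(\nu_z(U_E)-\lambda|V_E|\bigr)\,d\tau(z)\bigr|\le\varepsilon$, a bound on the integral of the \emph{signed} defect. Your proposed averaging lemma needs $\int_Z|f_z|\,d\tau\le\varepsilon$, which is strictly stronger and simply isn't available: the defect $f_z(E)=\nu_z(U_E)-\lambda|V_E|$ can be of order $1$ and of opposite sign on two halves of $Z$ while its $\tau$-average vanishes. A two-atom example with $f_{z_0}=-f_{z_1}$ already defeats the argument, and Borel--Cantelli does not help because the hypothesis of your lemma is never satisfied. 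Nothing in the projective approximation of $\nu$ by a single $(V,\lambda)$ forces any pointwise statement about the individual $\nu_z$.

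The paper avoids this by splitting the forward implication differently. It first proves the unconditional and genuinely local fact that $(Y,\nu_z)\xhookrightarrow{proj}(Y,\nu)$ for $\tau$-a.e.\ $z$: by martingale convergence along $\Gamma$-invariant neighborhoods $\rho^{-1}(W_n)$ shrinking to the fiber over $z$, the ratios $\nu(U_E\cap\rho^{-1}(W_n))/\nu(U_{E'}\cap\rho^{-1}(W_n))$ converge to $\nu_z(U_E)/\nu_z(U_{E'})$, so the sets $B_i:=A_i\cap\rho^{-1}(W_n)$ give the desired projective approximation. Then $(Y,\nu_z)\xhookrightarrow{proj}\Gamma$ is immediate from transitivity, Lemma \ref{lem-transitivity}(1). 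This route localizes automatically, because restricting to $\Gamma$-invariant sets is a one-sided operation and needs no cancellation control. If you wanted to rescue your version, you would have to enlarge the tuple being approximated to include $U\cap\rho^{-1}(A)$ for a dense countable family of $A\subset Z$ and then run a density/martingale argument in $z$ --- at which point you have essentially rediscovered the paper's step.
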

\begin{proof}
$(1)\Rightarrow(2)$. Using martingale convergence theorem one can show that for $\tau$-almost all $z\in Z$ we have $(Y,\nu_z)\xhookrightarrow{proj}(Y,\nu),$ so $(Y,\nu_z)\xhookrightarrow{proj}\Gamma$, by Lemma \ref{lem-transitivity} (1). $(2)\Rightarrow(1)$ In the setting of infinite measure preserving action, only the class of measure $\tau$ is uniquely determined, so we can choose $\tau$ to be a probability measure. Let $A_1,\ldots,A_k\subset Y$ with $\nu(A_i)<\infty.$ Let $\varepsilon>0$ and $F\subset \Gamma$ finite. For $\tau$-almost all $z\in Z$, let $\Theta_z$ be set of tuples $B_1,\ldots, B_k\subset \Gamma, |B_i|<\infty$ and $\lambda>0$ satisfying $$|\nu_z(A_i\cap \gamma A_j)-\lambda|B_i\cap \gamma B_j||\leq \varepsilon/2 \text{ for all }i,j=1,\ldots,k,\gamma\in F.$$ Since $(Y,\nu_z)\xhookrightarrow{proj}\Gamma,$ for $\tau$-almost all $z\in Z$, the set $\Theta_z$ is non-empty $\tau$-almost surely. The assignment $z\mapsto \Theta_z$ is measurable, so by the measurable selection theorem we can find a measurable section $z\mapsto (B_1^z,\ldots, B_k^z,\lambda^z).$ We can find an $\mathbb N$-valued function $q\in L^1(Z,\tau)$ and $\lambda>0$ such that $\int_Z|\lambda^z-\lambda q(z)|\max_{1=1,\ldots,k}|B_i^z|d\tau(z)\leq \varepsilon/2.$ 
For each $z\in Z$ choose $\gamma^z_1.\ldots,\gamma^z_{q(z)}\in\Gamma$ such that $F\bigcup_{i=1,\ldots,k}B^z_i\gamma_j$ are pairwise disjoint for $j=1,\ldots, q(z).$ Again, this can be done measurably in $z.$ Define subsets $B_i\subset Z\times\Gamma$ by
$B_i:=\{(z,\gamma)\mid z\in Z,\gamma\in \bigcup_{j=1}^{q(z)}B_i^z\gamma_j^z\}.$
Let $\kappa$ be the product of the counting measure on $\Gamma$ and $\tau$ on $Z$. Then 
\begin{align*}|\nu(A_i\cap \gamma A_j)-\lambda\kappa(B_i\cap \gamma B_j)|=&\left|\int_Z \left(\nu_z(A_j\cap \gamma A_j)-\lambda q(z)|B_i^z\cap \gamma B_j^z|\right)d\tau(z)\right| \\ \leq& \int_Z \left|\varepsilon/2+|\lambda^z-\lambda q(z)||B_i^z\cap\gamma B_j^z|\right|d\tau(z)\leq \varepsilon.\end{align*}
We have now shown that $(Y,\nu)\xhookrightarrow{proj}\Gamma\times (Z,\tau)$. By Lemma \ref{lem-gammaprod}, $\Gamma\times (Z,\tau)\xhookrightarrow{proj}\Gamma,$ so $(Y,\nu)\xhookrightarrow{proj}\Gamma$, by Lemma \ref{lem-transitivity} (1). 
\end{proof}
\begin{lem}\label{lem-projcontextension}
Let $(X,\mu),(Y,\nu)$ be  measure preserving actions with $\mu(X)=1$. The following conditions are equivalent.
\begin{enumerate}
    \item $(Y,\nu)\xhookrightarrow{proj}\Gamma,$
    \item $(Y\times X,\nu\times \mu)\xhookrightarrow{proj}\Gamma.$
\end{enumerate}
\end{lem}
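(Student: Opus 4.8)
The plan is to reduce both implications to the transitivity properties collected in Lemma \ref{lem-transitivity} together with the absorption principle $\Gamma\times Z\xhookrightarrow{proj}\Gamma$ from Lemma \ref{lem-gammaprod}, so that no genuinely new estimate is required.

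For the implication $(2)\Rightarrow(1)$ I would first record that $(Y,\nu)\xhookrightarrow{proj}(Y\times X,\nu\times\mu)$. This is in fact a weak containment: given finite measure sets $A_1,\dots,A_k\subset Y$, a finite $F\subset\Gamma$ and $\varepsilon>0$, take $B_i:=A_i\times X$ and $\lambda:=1$; since $X$ is $\Gamma$-invariant we have $\gamma B_j=(\gamma A_j)\times X$, hence $(\nu\times\mu)(B_i\cap\gamma B_j)=\nu(A_i\cap\gamma A_j)\,\mu(X)=\nu(A_i\cap\gamma A_j)$, using $\mu(X)=1$. Chaining this with the hypothesis $(Y\times X,\nu\times\mu)\xhookrightarrow{proj}\Gamma$ through Lemma \ref{lem-transitivity}(1) gives $(Y,\nu)\xhookrightarrow{proj}\Gamma$.

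For $(1)\Rightarrow(2)$ I would tensor the hypothesis $(Y,\nu)\xhookrightarrow{proj}\Gamma$ with the trivial containment $(X,\mu)\xhookrightarrow{proj}(X,\mu)$ (witnessed by the identity, with $\lambda=1$) and invoke the product compatibility Lemma \ref{lem-transitivity}(5) to obtain $(Y\times X,\nu\times\mu)\xhookrightarrow{proj}\Gamma\times X$. Now Lemma \ref{lem-gammaprod}, applied with $Z=X$, yields $\Gamma\times X\xhookrightarrow{proj}\Gamma$, and one more application of transitivity (Lemma \ref{lem-transitivity}(1)) completes the chain $(Y\times X,\nu\times\mu)\xhookrightarrow{proj}\Gamma\times X\xhookrightarrow{proj}\Gamma$.

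There is no serious obstacle here: the content is entirely contained in the previously established Lemmas \ref{lem-gammaprod} and \ref{lem-transitivity}, and the only points that need care are elementary — checking that the product set $A_i\times X$ exactly reproduces the pair statistics of $A_i$ (which is where $\mu(X)=1$ enters), and confirming that a stray p.m.p. factor on the ``model'' side $\Gamma\times X$ can always be absorbed back into $\Gamma$. If one wished to avoid citing Lemma \ref{lem-transitivity}(5), one could instead redo the short computation directly on finite unions of product sets $A\times B$ with $B\subset X$, approximating the $Y$-coordinate via $(Y,\nu)\xhookrightarrow{proj}\Gamma$ and keeping the $X$-coordinate fixed, but using the already-proved product lemma is cleaner.
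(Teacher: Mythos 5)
Your proof is correct and follows the same route as the paper: the paper also dispatches $(2)\Rightarrow(1)$ as immediate and proves $(1)\Rightarrow(2)$ by combining Lemma \ref{lem-transitivity}(5) with Lemma \ref{lem-gammaprod} to obtain $(Y\times X,\nu\times\mu)\xhookrightarrow{proj}\Gamma\times(X,\mu)\xhookrightarrow{proj}\Gamma$ and then closing with transitivity. Your additional spelled-out verification that $A_i\times X$ reproduces the pair statistics (using $\mu(X)=1$) is a helpful elaboration of what the paper calls "clear."
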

\begin{proof}
$(2)\Rightarrow(1)$ is clear. For the converse, suppose that $(Y,\mu)\xhookrightarrow{proj}\Gamma.$ By Lemma \ref{lem-transitivity} (5) and Lemma \ref{lem-gammaprod}, $(Y\times X,\nu\times\mu)\xhookrightarrow{proj}\Gamma\times (X,\mu)\xhookrightarrow{proj}\Gamma.$ By Lemma \ref{lem-transitivity} (1), $(Y\times X,\nu\times\mu)\xhookrightarrow{proj}\Gamma.$
\end{proof}
\begin{remark}
It would be tempting to state more general version of this lemma, where instead of $Y\times X$ we consider a general measure preserving extension of $Y$. This is true for exact groups by Theorem \ref{thm-exactprojcont} and Lemma \ref{lem-amenableextensions}. For non-exact groups the question appears to be more subtle.
\end{remark}
Recall that a measure class preserving action $\Gamma\curvearrowright (X,\mu)$ is \emph{amenable} if there exists a $\Gamma$-equivariant conditional expectation $\Phi\colon L^\infty(\Gamma\times Y)\to L^\infty(\Gamma)$ (see \cite{Zimmer3}). 

\begin{lem}\label{lem-amenableextensions}
Let $\rho\colon(Z,\tau)\to(Y,\nu)$ be a measure preserving extension of measure preserving $\Gamma$-actions. Then $(Z,\tau)$ is amenable if and only if $(Y,\nu)$ is.   
\end{lem}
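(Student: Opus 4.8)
The plan is to work with the conditional-expectation form of Zimmer amenability from \cite{Zimmer3}: a measure class preserving action $\Gamma\curvearrowright(X,\mu)$ is amenable exactly when there is a $\Gamma$-equivariant conditional expectation from $L^\infty(\Gamma\times X)$ onto the subalgebra $L^\infty(X)$ of functions constant in the $\Gamma$-coordinate, where $\Gamma$ acts diagonally (by left translation on $\Gamma$ and by the given action on $X$). It is convenient to use the standard reformulation of this data as a $\Gamma$-equivariant measurable field $x\mapsto m_x$ of means (i.e.\ states) on $\ell^\infty(\Gamma)$, via $\Phi_X(h)(x)=m_x\big(g\mapsto h(g,x)\big)$; here equivariance reads $m_{\gamma x}=\gamma_\ast m_x$ for a.e.\ $x$, with $(\gamma_\ast m)(\phi):=m\big(g\mapsto\phi(\gamma g)\big)$. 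Throughout, fix the measure-preserving extension $\rho\colon(Z,\tau)\to(Y,\nu)$ together with the disintegration $\tau=\int_Y\tau_y\,d\nu(y)$ of $\tau$ over $\rho$; because $\rho$ is $\Gamma$-equivariant and measure preserving, uniqueness of the disintegration yields $\gamma_\ast\tau_y=\tau_{\gamma y}$ for a.e.\ $y$ and every $\gamma\in\Gamma$.

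First I would prove ``$(Z,\tau)$ amenable $\Rightarrow$ $(Y,\nu)$ amenable'' by averaging a witnessing field down the fibres of $\rho$. Given a $\Gamma$-equivariant measurable field $z\mapsto m_z$ of means on $\ell^\infty(\Gamma)$ for $Z$, set $m_y:=\int_{\rho^{-1}(y)}m_z\,d\tau_y(z)$, a state on $\ell^\infty(\Gamma)$ (a $\tau_y$-average of states) depending measurably on $y$. Equivariance is then a one-line check combining $\gamma_\ast\tau_y=\tau_{\gamma y}$ with the equivariance of $z\mapsto m_z$: $m_{\gamma y}=\int_{\rho^{-1}(\gamma y)}m_z\,d\tau_{\gamma y}(z)=\int_{\rho^{-1}(y)}m_{\gamma z}\,d\tau_y(z)=\int_{\rho^{-1}(y)}\gamma_\ast m_z\,d\tau_y(z)=\gamma_\ast m_y$. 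In the conditional-expectation language this is $\Phi_Y=\mathbb{E}_Y\circ\Phi_Z\circ(\mathrm{id}_\Gamma\times\rho)^\ast$, where $(\mathrm{id}_\Gamma\times\rho)^\ast\colon L^\infty(\Gamma\times Y)\hookrightarrow L^\infty(\Gamma\times Z)$ is the pullback and $\mathbb{E}_Y\colon L^\infty(Z)\to L^\infty(Y)$ is the conditional expectation of the extension; the one thing to verify is that $\mathbb{E}_Y$ is $\Gamma$-equivariant, which is again precisely the identity $\gamma_\ast\tau_y=\tau_{\gamma y}$, so this is where measure-preservation of $\rho$ (rather than mere nonsingularity) enters.

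For the converse ``$(Y,\nu)$ amenable $\Rightarrow$ $(Z,\tau)$ amenable'' one cannot simply compose conditional expectations in the other order, because integrating out $Z$ first destroys the $Z$-coordinate; instead I would pull the witnessing field back along $\rho$. If $y\mapsto m_y$ is a $\Gamma$-equivariant measurable field of means for $Y$, put $n_z:=m_{\rho(z)}$: this is measurable as a composition of measurable maps and $\Gamma$-equivariant because $\rho$ is, $n_{\gamma z}=m_{\rho(\gamma z)}=m_{\gamma\rho(z)}=\gamma_\ast m_{\rho(z)}=\gamma_\ast n_z$, so $(Z,\tau)$ is amenable. Equivalently one may write $L^\infty(\Gamma\times Z)$ as the relative tensor product $L^\infty(\Gamma\times Y)\,\overline{\otimes}_{L^\infty(Y)}\,L^\infty(Z)$ and take $\Phi_Z:=\Phi_Y\,\overline{\otimes}_{L^\infty(Y)}\,\mathrm{id}_{L^\infty(Z)}$, which is manifestly a $\Gamma$-equivariant conditional expectation onto $L^\infty(Z)$. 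Combining the two implications proves the lemma.

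The argument is short, and the only genuinely technical ingredients are standard: (i) the dictionary between $\Gamma$-equivariant conditional expectations $L^\infty(\Gamma\times X)\to L^\infty(X)$ and $\Gamma$-equivariant measurable fields of means on $\ell^\infty(\Gamma)$, including the measurability conventions for a field of states on the non-separable algebra $\ell^\infty(\Gamma)$, for which I would cite \cite{Zimmer3}; and (ii) the disintegration bookkeeping $\gamma_\ast\tau_y=\tau_{\gamma y}$. The main conceptual point — the closest thing to an obstacle — is that the two implications are not symmetric: ``factor of amenable is amenable'' requires an honest fibrewise average and hence uses that $\rho$ is measure preserving, whereas ``extension of amenable is amenable'' is essentially a formal pullback along $\rho$.
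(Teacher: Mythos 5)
Your proposal is correct and takes essentially the same route as the paper. The paper's proof of the key direction ($Z$ amenable $\Rightarrow$ $Y$ amenable) constructs $\Phi_Y = \mathbb{E}(\,\cdot\,|Y)\circ\Phi_Z\circ(\mathrm{id}_\Gamma\times\rho)^\ast$, which is exactly your formula; the fibrewise averaging of means is the pointwise version of that conditional expectation. For the easy direction ($Y$ amenable $\Rightarrow$ $Z$ amenable) the paper simply cites \cite[Thm.\ 2.4]{Zimmer3} for "amenability passes to extensions," whereas you give the one-line pullback $n_z := m_{\rho(z)}$ directly; this is a fine self-contained substitute but not a genuinely different argument.
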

\begin{proof}
If $(Y,\nu)$ is amenable the so is $(Z,\tau)$, by \cite[Thm 2.4]{Zimmer3}. Suppose now that $(Z,\tau)$ is amenable. Then, there exists a $\Gamma$-equivariant conditional expectation: $\Phi_Z\colon L^\infty(\Gamma\times Z)\to L^\infty(Z).$ Define $\Phi_Y\colon L^\infty(\Gamma\times Y)\to L^\infty(Y)$ by $\Phi_Y(f):=\mathbb  E(\Phi_{Z}(\tilde f)|Y)$, where $\tilde f(z):=f(\rho(z)), f\in L^\infty(Y,\nu)$ and $\mathbb E(\cdot|Y)\colon L^\infty(Y)\to L^\infty(Z)$ is the conditional expectation. Then, $\Phi_Y$ is a $\Gamma$ equivariant conditional expectation, so $(Y,\nu)$ is amenable. 
\end{proof}

\begin{prop}\label{prop-amenableproj}
If $(Y,\nu)$ is amenable then $(Y,\nu)\xhookrightarrow{proj}\Gamma.$    
\end{prop}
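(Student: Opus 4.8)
The plan is to unwind the definition of amenability of the action $\Gamma \curvearrowright (Y,\nu)$ to produce, for every finite $F \subset \Gamma$ and $\varepsilon > 0$, a finite subset of $\Gamma$ whose translation structure approximately models the translation structure of $\nu$ on a prescribed tuple of subsets. Concretely, amenability gives a $\Gamma$-equivariant conditional expectation $\Phi \colon L^\infty(\Gamma \times Y) \to L^\infty(Y)$, which by a standard argument (as in Zimmer's work, or via the reformulation of amenability through approximately invariant means on the fibres) is equivalent to the existence of a net of measurable maps $Y \to \operatorname{Prob}(\Gamma)$, $y \mapsto \beta_y$, with $\|\gamma \cdot \beta_{y} - \beta_{\gamma y}\|_1 \to 0$ in an appropriate averaged sense over $Y$. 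I would fix such an almost-invariant assignment with the invariance defect controlled on the finite set $F$.

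Next I would discretize: after replacing the $\beta_y$ by nearby finitely-supported, rational-valued probability measures, and using a Fubini/exhaustion argument to restrict to a positive measure piece of $Y$ on which the supports all lie in a fixed finite $K \subset \Gamma$ and the rational denominators are bounded, I would partition $Y$ into finitely many cells on which $y \mapsto \beta_y$ is (up to small error) constant. On each cell the constant measure $\beta$ encodes a finite multiset in $\Gamma$; gluing the cells together with disjointifying translations — exactly the bookkeeping trick used in Lemma~\ref{lem-gammaprod} and Lemma~\ref{lem-projergodic}, where one chooses $\gamma_j^l$ so that the finite patterns $F B_i \gamma_j^l$ are pairwise disjoint — produces finite subsets $B_i \subset \Gamma$ and a scale $\lambda > 0$ with $\lambda |B_i \cap \gamma B_j|$ close to $\nu(A_i \cap \gamma A_j)$ for all $\gamma \in F$. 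The almost-invariance of $\beta$ under $F$ is what guarantees that the "weights" attached to the cells are themselves almost $F$-invariant, which is precisely what makes the counting identity for $|B_i \cap \gamma B_j|$ match the measure $\nu(A_i \cap \gamma A_j)$; this is the step where amenability is genuinely used rather than just measurability. Finally, since product sets and their finite unions are dense and Lemma~\ref{lem-ProjContCond} reduces projective containment to approximating such tuples, I conclude $(Y,\nu) \xhookrightarrow{proj} \Gamma$.

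The main obstacle I anticipate is organizing the passage from the abstract conditional expectation $\Phi$ to a genuinely finite, cellwise-constant, almost-$F$-invariant family of probability vectors without losing control of the error: one must simultaneously truncate the supports, round the probabilities to a common denominator, and partition $Y$, and these three approximations interact, so the bookkeeping of how small each has to be (in terms of $\varepsilon$, $|F|$, $k$, and $\max_i \nu(A_i)$) needs care. A clean way to sidestep some of this is to invoke the known equivalence between Zimmer amenability and the existence, for each $F$ and $\varepsilon$, of a $\Gamma$-set-like "approximately invariant tiling" — but since the excerpt only gives us the conditional-expectation definition, I would instead route through the standard fact that amenability of the action is equivalent to $L^\infty(Y)$ admitting a $\Gamma$-equivariant mean on $\ell^\infty(\Gamma) \otimes L^\infty(Y)$ and extract the Følner-type vectors from that. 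Once the finite almost-invariant data is in hand, the disjointification-and-counting endgame is routine and parallels Lemma~\ref{lem-projergodic} essentially verbatim.
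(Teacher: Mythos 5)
Your route is genuinely different from the paper's. The paper first reduces to an essentially free, ergodic action (by taking a product with a free action and using Lemma~\ref{lem-projcontextension}, then applying Lemma~\ref{lem-projergodic}), and then invokes the Connes--Feldman--Weiss theorem to write the orbit equivalence relation as an increasing union $\bigcup_n \cE_n$ of finite sub-equivalence relations. The finite subsets of $\Gamma$ are then read off directly from the finite classes: $B_i^n := \{\gamma \in \Gamma \mid \gamma y \in A_i \cap [y]_{\cE_n}\}$ for a $\nu$-random $y$, and the approximation is verified via martingale convergence (Lemma~\ref{lem-erghyperfinite}) together with a separate claim that $[y]_{\cE_n}$ becomes $F$-invariant in the limit. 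You propose instead to discretize Reiter-type functions $y \mapsto \beta_y \in \operatorname{Prob}(\Gamma)$ coming directly from Zimmer amenability and to disjointify cells.

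There is a real gap in your plan, and it sits at exactly the point you wave at with ``gluing the cells together with disjointifying translations.'' The disjointification tricks in Lemmas~\ref{lem-gammaprod} and~\ref{lem-projergodic} are tools for \emph{sewing together} finite approximating subsets of $\Gamma$ that have already been constructed, fibre by fibre: in Lemma~\ref{lem-gammaprod} the sets $A_i$ live in the $\Gamma$ factor of $\Gamma \times Z$ to begin with, and in Lemma~\ref{lem-projergodic} the per-component sets $B_i^z \subset \Gamma$ are supplied by the hypothesis $(Y,\nu_z) \xhookrightarrow{proj} \Gamma$. In the present situation there is nothing to sew yet: the $A_i$ are subsets of $Y$, and the Reiter functions $\beta_y$ give you a probability measure on $\Gamma$ for each $y$, not a way of encoding $A_i$ as a finite subset of $\Gamma$ whose translation--intersection statistics mirror $\nu(A_i \cap \gamma A_j)$. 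If you try the natural candidate $B_i := \{\eta \in \operatorname{supp}\beta_y \mid \eta y \in A_i\}$ for random $y$ (mimicking the paper's $B_i^n$), then $B_i \cap \gamma B_j$ acquires the extraneous constraint $\gamma^{-1}\eta \in \operatorname{supp}\beta_y$, and controlling that requires the \emph{supports} of $\beta_y$ to be approximately translation-equivariant --- a F\o lner-type statement that does not follow from $\|\gamma\beta_y - \beta_{\gamma y}\|_1$ being small without a further Namioka/level-set argument, which you do not carry out. That step, promoting approximate $L^1$-invariance of the $\beta_y$ to an approximately equivariant finite ``tile'' structure on the orbits, is precisely the content of the Rokhlin/Connes--Feldman--Weiss machinery that the paper invokes as a black box. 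So your plan is not wrong in spirit --- it essentially re-derives the ingredient the paper imports --- but as written it skips the one step that actually does the work, and the disjointification lemmas you cite do not cover it.
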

For the proof we will need the following Lemma, which is a direct consequence of \cite[Theorem 7.3]{miller2017edge}. We provide a short self contained proof for reader's convenience. 
\begin{lem}[{\cite[Theorem 7.3]{miller2017edge}}]\label{lem-erghyperfinite}
Let $(Y,\nu)$ be a $\sigma$-finite measure space with an ergodic measure preserving countable equivalence relation $\cR$ and let $\cE_n$ be an ascending sequence of finite equivalence relations such that $\cR=\bigcup_{n=1}^\infty \cE_n$. Let $f,g\in L^1(Y,\nu)$ be functions with non-vanishing integrals, supported on a finite measure set. Then for $\nu$-almost every $y\in Y$ 
$$\lim_{n\to\infty} \frac{\sum_{z\in [y]_{\cE_n}}f(z)}{\sum_{z\in [y]_{\cE_n}}g(z)}=\frac{\int f(y)d\nu(y)}{\int g(y)d\nu(y)}.$$
\end{lem}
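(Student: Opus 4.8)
The plan is to reduce this infinite-measure ratio ergodic statement to the ordinary (finite-measure) reversed martingale convergence theorem, by restricting everything to a finite-measure set that carries the supports of $f$ and $g$.

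First I would fix a measurable set $B$ with $\nu(B)<\infty$ and $\supp f\cup\supp g\subseteq B$ (possible since both supports have finite measure), noting $\nu(B)>0$ because $\int f\,d\nu\neq 0$. Restrict the whole picture to $B$: put $\nu_B:=\nu\resto B$, $\cR_B:=\cR\cap(B\times B)$ and $\cE_n^B:=\cE_n\cap(B\times B)$. Then $(B,\nu_B)$ is a finite measure space, the $\cE_n^B$ are measure-preserving finite Borel equivalence relations with $\cE_n^B\subseteq\cE_{n+1}^B$ and $\bigcup_n\cE_n^B=\cR_B$, and $\cR_B$ is ergodic on $(B,\nu_B)$: if $A\subseteq B$ were $\cR_B$-invariant with $0<\nu(A)<\nu(B)$, then both $[A]_\cR$ and $[B\setminus A]_\cR$ would be $\nu$-conull while $[A]_\cR\cap B\subseteq A$ and $[B\setminus A]_\cR\cap B\subseteq B\setminus A$, a contradiction.

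Next I would rewrite the ratio. For $y\in B$ one has $[y]_{\cE_n}\cap B=[y]_{\cE_n^B}$, and since $f,g$ vanish off $B$,
$$\sum_{z\in[y]_{\cE_n}}h(z)=\sum_{z\in[y]_{\cE_n^B}}h(z)=\bigl|[y]_{\cE_n^B}\bigr|\cdot\mathbb E(h|\cF_n)(y)\qquad (h\in\{f,g\}),$$
where $\cF_n$ is the $\sigma$-algebra of $\cE_n^B$-invariant Borel subsets of $B$; here one uses the standard fact that the conditional expectation onto $\cF_n$ is the average over the finite $\cE_n^B$-class, which follows from Feldman--Moore by splitting $B$ into the invariant pieces $\{\,|[y]_{\cE_n^B}|=k\,\}$ and permuting inside classes by the generating measure-preserving maps. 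Hence the ratio in the statement equals $\mathbb E(f|\cF_n)(y)/\mathbb E(g|\cF_n)(y)$ whenever the denominator is non-zero. The sequence $(\cF_n)_n$ is \emph{decreasing}, so the reversed (downward) martingale convergence theorem on the finite measure space $(B,\nu_B)$ yields $\mathbb E(h|\cF_n)\to\mathbb E(h|\cF_\infty)$ $\nu_B$-a.e. for $h\in\{f,g\}$, with $\cF_\infty=\bigcap_n\cF_n$. A set is $\cE_n^B$-invariant for every $n$ iff it is $\bigcup_n\cE_n^B=\cR_B$-invariant, so $\cF_\infty$ is the $\cR_B$-invariant $\sigma$-algebra, which is $\nu_B$-trivial by ergodicity; therefore $\mathbb E(h|\cF_\infty)$ is the constant $\nu(B)^{-1}\int_Y h\,d\nu$. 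Since $\int_Y g\,d\nu\neq 0$, the limit of the denominators is a non-zero constant, so for $\nu_B$-a.e. $y\in B$ the denominators are eventually bounded away from $0$ and
$$\frac{\sum_{z\in[y]_{\cE_n}}f(z)}{\sum_{z\in[y]_{\cE_n}}g(z)}=\frac{\mathbb E(f|\cF_n)(y)}{\mathbb E(g|\cF_n)(y)}\xrightarrow[n\to\infty]{}\frac{\int_Y f\,d\nu}{\int_Y g\,d\nu}.$$
Finally, for $y\notin B$ I would use ergodicity of $\cR$ on $Y$ to pick $y_0\in[y]_\cR\cap B$ at which the convergence holds; then $y_0\in[y]_{\cE_n}$ for all large $n$, forcing $[y]_{\cE_n}=[y_0]_{\cE_n}$, so both ratios coincide and the conclusion holds for $\nu$-a.e. $y\in Y$.

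The one genuinely delicate point I expect is the identification of the conditional expectation with the class average for the auxiliary relations $\cE_n^B$ — together with the implicit check that $\nu$ cannot be concentrated on unequally weighted atoms (which is incompatible with $\cR$ being an \emph{ergodic measure-preserving} equivalence relation), the case where the lemma could otherwise fail. Everything else is bookkeeping; the real content is simply the observation that restricting to $\supp f\cup\supp g$ converts the infinite-measure ratio ergodic theorem into an instance of the finite-measure reversed martingale theorem.
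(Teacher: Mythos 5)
Your proof is correct and takes essentially the same route as the paper: restrict to a finite-measure set carrying the supports of $f$ and $g$, rewrite the class sums as conditional expectations onto the decreasing $\sigma$-algebras of $\cE_n$-invariant subsets, and apply the reversed martingale convergence theorem together with ergodicity of the restricted relation to evaluate the limit. The only cosmetic difference is how one passes from a.e.\ $y$ in the finite-measure set to a.e.\ $y\in Y$ --- the paper exhausts $Y$ by an increasing sequence of such sets $U$, while you instead invoke ergodicity of $\cR$ to find a class representative in $B$ at which the convergence already holds and note the $\cE_n$-classes eventually coincide; both are fine.
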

\begin{proof}
Let $U$ be any finite measure set containing the supports of $f,g$. Let $\cR',\cE_n'$  be the equivalence relations $\cR,\cE_n$ restricted to $U$ and let $\cF',\cF_n'$ be the $\sigma$-algebras of respectively $\cR',\cE_n'$-invariant subsets of $U$. Note that $\cF_n\subset \cF_{n+1}$ and  $\cF'=\bigcap_{n=1}^\infty \cF'_n.$ The ergodicity of $\cR$ implies that $\cR'$ is ergodic on $U,$ hence $\cF'$ consist of null and co-null subsets of $U$. We have 
$$\mathbb E(f\mid\cF_n)(y)=\frac{\sum_{z\in [y]_{\cE_n'}}f(y)}{|[y]_{\cE_n'}|}=\frac{\sum_{z\in [y]_{\cE_n}}f(y)}{|[y]_{\cE_n'}|}.$$  
By the martingale convergence theorem \cite{klenke2020martingale} $$\lim_{n\to\infty}\frac{\sum_{z\in [y]_{\cE_n}}f(y)}{|[y]_{\cE_n'}|}=\lim_{n\to\infty}\mathbb E(f\mid \cE_n')(y)=\mathbb E(f\mid \cF')(y)=\frac{\int f(y)d\nu(y)}{\nu(U)},$$ similarly for $g$.
We get $$\lim_{n\to\infty} \frac{\sum_{z\in [y]_{\cE_n}}f(z)}{\sum_{z\in [y]_{\cE_n}}g(z)}=\frac{\int f(z)d\nu(z)}{\int g(z)d\nu(z)},$$ for $\nu$-almost all $y\in U$. We finish the proof by taking an exhausting sequence of $U$'s.
\end{proof}
\begin{proof}[Proof of Proposition \ref{prop-amenableproj}]
Let $(X,\mu)$ be any essentially free p.m.p. action. Then $(Y\times X,\nu\times \mu)$ is essentially free and amenable, by Lemma \ref{lem-amenableextensions}. By Lemma \ref{lem-projcontextension}, $(Y,\nu)\xhookrightarrow{proj}\Gamma$ if and only if $(Y\times X,\nu\times \mu)\xhookrightarrow{proj}\Gamma.$ This allows us to reduce the problem to the case when the action is essentially free. Using Lemma \ref{lem-projergodic} we can further reduce to the case where $(Y,\nu)$ is ergodic.

Let $(Y,\nu)$ be an essentially free amenable ergodic measure preserving action. Let $\cR$ be the orbit equivalence relation. By \cite{connes1981amenable}, $\cR$ is \emph{hyperfinite}, meaning that there exists a sequence of ascending sub-equivalence relations $\cE_n$ on $Y$ such that $\cR=\bigcup_{n=1}^\infty \cE_n$ and the equivalence classes of $\cE_n$ are finite almost surely. Let $y\in Y$ and put $B_i^n:=\{\gamma\in \Gamma\mid \gamma y\in A_i\cap [y]_{\cE_n}\}.$ We argue that $B_i^n$ approximate the statistics of $A_i$ as $n\to\infty$, for almost all $y$. Let $\gamma_0\in \Gamma$ We have 
\begin{align*}B_i^n\cap \gamma B_j^n=& \{\gamma\in\Gamma\mid \gamma y\in A_i\cap \gamma_0 A_j\cap [y]_{\cE_n}\cap \gamma_0[y]_{\cE_n}\}.\end{align*}
\textbf{Claim.} For any positive finite measure set $U\subset Y,\gamma_0\in\Gamma$ and almost every $y\in Y$ we have 
$$\lim_{n\to\infty}\frac{|\{\gamma\in \Gamma \mid \gamma y\in U\cap [y]_{\cE_n}\cap \gamma_0[y]_{\cE_n}\}|}{|\{\gamma\in \Gamma \mid \gamma y\in U\cap [y]_{\cE_n}\}|}=1.$$
\begin{proof}
$\cR=\bigcup_{n=1}^\infty \cE_n$, so for almost every $y\in Y$ there is a number $n(y)$ such that $\gamma_0^{-1}y\in [y]_{\cE_n}$ for all $n\geq n(y).$
Let $U_m:=\{y\in U\mid \gamma_0^{-1}y\in [y]_{\cE_n} \text{ for all }n\geq m\}.$ For all $n\geq m$, we have $|\{U_m\cap [y]_{\cE_n}\cap \gamma_0^{-1}[y]_{\cE_n}\}|=|\{U_m\cap [y]_{\cE_n}\}|.$ By Lemma \ref{lem-erghyperfinite} $$\liminf_{n\to\infty}\frac{|\{U\cap [y]_{\cE_n}\cap \gamma_0[y]_{\cE_n}\}|}{|\{U\cap [y]_{\cE_n}\}|}\geq \liminf_{n\to\infty}\frac{|\{U_m\cap [y]_{\cE_n}\cap \gamma_0[y]_{\cE_n}\}|}{|\{U\cap [y]_{\cE_n}\}|}=\lim_{n\to\infty}\frac{|\{U_m\cap [y]_{\cE_n}\}|}{|\{U\cap [y]_{\cE_n}\}|}=\frac{\nu(U_m)}{\nu(U)}.$$
Since $\cR=\bigcup_{n=0}^\infty \cE_n$, we have $\lim_{m\to\infty}\frac{\nu(U_m)}{\nu(U)}=1$ so the limit superior above is in fact equal to one. The sequence is bounded by $1$, so the limit exists and is equal to $1$. This proves the claim. 
\end{proof}
Using the claim we get  
$$\lim_{n\to \infty} \frac{|B_i^n\cap \gamma_0 B_j^n|}{|B_l^n\cap \gamma_1 B_m^n|}=\lim_{n\to\infty}\frac{|\{\gamma\in \Gamma \mid \gamma y\in A_i\cap \gamma_0A_j\cap [y]_{\cE_n}\}|}{|\{\gamma\in\Gamma\mid \gamma y\in A_l\cap \gamma_1 A_m\cap [y]_{\cE_n}\}|}=\frac{\nu(A_i\cap \gamma_0 A_j)}{\nu(A_l\cap\gamma_1 A_m)},$$ for all $i,j,l,m=1,\ldots,k$ and $\gamma_0,\gamma_1\in \Gamma.$ Of course we discard the pairs $l,m$ where intersection $A_l\cap\gamma_1 A_m$ is measure zero because then $B_l^n\cap\gamma_1 B_m^n=\emptyset$ for almost all $y$. This proves that $B_i^n$ projectively approximate the dynamics of $A_i$ as $n\to\infty$.
\end{proof}

A discrete group $\Gamma$ is \emph{exact} if it admits a topologically amenable action on a compact space \cite{Oza00} or equivalently that its reduced $C^*$-algebra $C^*(\Gamma)$ is exact. The class of exact groups is stable under many natural operations like taking subgroups, extensions, or under measure equivalence. It contains all amenable groups, all linear groups, Gromov hyperbolic groups, and groups acting properly on finite-dimensional $CAT(0)$ cube complexes and mapping class groups. It is highly non-trivial to find examples of non-exact groups, but they exist nonetheless \cite{Osajda2018}. 

We refer to \cite{BNSWW13} for the definition of a topologically amenable action. We will be using only the following property.

\begin{lem}[{\cite{BNSWW13}}]\label{lem-topamam}
Let $\Gamma\curvearrowright B$ be a topologically amenable action on a compact space $B$. Then, for any quasi-invariant probability measure $\kappa$ on $B$, the action $\Gamma\curvearrowright(B,\kappa)$ is amenable. 
\end{lem}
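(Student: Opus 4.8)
The strategy is to manufacture, straight from the topological amenability data, a $\Gamma$-equivariant conditional expectation $\Phi\colon L^\infty(\Gamma\times B)\to L^\infty(B,\kappa)$, which is precisely what the definition recalled before Lemma~\ref{lem-amenableextensions} demands of an amenable action. By definition of topological amenability (see \cite{BNSWW13}) there are continuous maps $m_n\colon B\to \Prob(\Gamma)$, $n\in\mathbb N$, such that for every $\gamma\in\Gamma$ one has $\sup_{b\in B}\|\gamma_* m_n(b)-m_n(\gamma b)\|_1\to 0$, where $\gamma_*$ denotes push-forward of measures on $\Gamma$. Since $B$ is a standard Borel space, each $m_n$ is in particular Borel, so these maps can serve as input to a measured construction regardless of the choice of $\kappa$.

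First I would form the averaging operators $\Phi_n\colon L^\infty(\Gamma\times B)\to L^\infty(B,\kappa)$, $\Phi_n(f)(b):=\sum_{g\in\Gamma}m_n(b)(g)\,f(g,b)$, where $\Gamma\times B$ carries the diagonal action $\gamma\cdot(g,b)=(\gamma g,\gamma b)$. Each $\Phi_n$ is positive, unital, restricts to the identity on the copy of $L^\infty(B)\subset L^\infty(\Gamma\times B)$ consisting of functions independent of the $\Gamma$-coordinate, and is an $L^\infty(B)$-module map, hence a conditional expectation onto $L^\infty(B)$; the only failure is of equivariance. Reindexing the defining sum by $h=\gamma^{-1}g$ expresses $\Phi_n(\gamma\cdot f)(b)-(\gamma\cdot\Phi_n(f))(b)$ as $\sum_h\big[(\gamma^{-1}_* m_n(b))(h)-m_n(\gamma^{-1}b)(h)\big]f(h,\gamma^{-1}b)$, so that $\|\Phi_n(\gamma\cdot f)-\gamma\cdot\Phi_n(f)\|_\infty\le \|f\|_\infty\,\sup_{b\in B}\|\gamma^{-1}_* m_n(b)-m_n(\gamma^{-1}b)\|_1\to 0$ for each fixed $\gamma\in\Gamma$ and $f\in L^\infty(\Gamma\times B)$.

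Then I would pass to a limit. The set of positive unital $L^\infty(B)$-module maps $L^\infty(\Gamma\times B)\to L^\infty(B,\kappa)$ is compact in the topology of pointwise weak-$*$ convergence (a weak-$*$ closed subset of a product of norm balls in $L^\infty(B)$, compact by Banach--Alaoglu and Tychonoff), so $(\Phi_n)$ has a cluster point $\Phi$, which is again a conditional expectation onto $L^\infty(B)$. Because $\kappa$ is merely quasi-invariant, the $\Gamma$-action on $L^\infty(B,\kappa)$ is by normal $*$-automorphisms — the adjoint of the $\Gamma$-action on the predual $L^1(B,\kappa)$ twisted by the appropriate Radon--Nikodym derivatives — and is therefore weak-$*$ continuous; passing to the limit along a subnet realizing the cluster point, in the estimate above, yields $\Phi(\gamma\cdot f)=\gamma\cdot\Phi(f)$ for all $\gamma\in\Gamma$ and $f\in L^\infty(\Gamma\times B)$. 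Hence $\Phi$ is the desired $\Gamma$-equivariant conditional expectation and $\Gamma\curvearrowright(B,\kappa)$ is amenable in the sense of \cite{Zimmer3}.

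The individual steps are routine; the only point requiring genuine care is the bookkeeping with the Radon--Nikodym cocycle, forced on us because $\kappa$ is only quasi-invariant rather than invariant: one must verify that the $\Gamma$-action on $L^\infty(B,\kappa)$ really is weak-$*$ continuous and that unitality, positivity and the module property survive the weak-$*$ limit. All of this is standard in the theory of amenable actions, and the argument above is merely the concrete shadow, for transformation groups, of the passage from topological amenability to amenability for any quasi-invariant measure at the level of groupoids.
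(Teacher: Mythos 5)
The paper does not prove Lemma~\ref{lem-topamam}; it is stated as a citation to \cite{BNSWW13} and used as a black box. So there is no in-paper argument to compare against.

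Taken on its own, your proposal is the standard argument and it is correct. You take the approximately $\Gamma$-equivariant continuous maps $m_n\colon B\to\Prob(\Gamma)$ from the definition of topological amenability, form the averaging operators $\Phi_n(f)(b)=\sum_g m_n(b)(g)f(g,b)$, observe that each $\Phi_n$ is a positive unital $L^\infty(B)$-bimodule map (hence a conditional expectation onto $L^\infty(B,\kappa)$) with equivariance defect controlled in norm by $\sup_b\|\gamma_* m_n(b)-m_n(\gamma b)\|_1$, and then extract a weak-$*$ cluster point $\Phi$ from the Banach--Alaoglu/Tychonoff compact set of such maps. Your reindexing computation for the defect is right, and your observation that one needs weak-$*$ continuity of $\xi\mapsto\gamma\cdot\xi$ on $L^\infty(B,\kappa)$ to pass to the limit is exactly the point where quasi-invariance of $\kappa$ enters (via the Radon--Nikodym derivative acting on the predual $L^1(B,\kappa)$). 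The properties of being a positive unital $L^\infty(B)$-module map are all weak-$*$ closed, so the limit $\Phi$ is the desired $\Gamma$-equivariant conditional expectation in the sense of Zimmer. One small remark: you note that $B$ standard Borel makes $m_n$ Borel, but this is automatic since the $m_n$ are continuous; the relevant point is only that $\Phi_n$ is well defined modulo $\kappa$-null sets, which is immediate. (Incidentally, the paper's recalled definition of amenability has a typo, writing $\Phi\colon L^\infty(\Gamma\times Y)\to L^\infty(\Gamma)$ where it should target $L^\infty(Y)$; your reading is the correct one.)
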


The following lemmas are probably well known to experts.

\begin{lem}\label{lem-amenabilitychar}
Let $\Gamma$ be an exact group with a compact topologically amenable action $B$. Let $(Y,\nu)$ be a measure preserving action. The following conditions are equivalent. 
\begin{enumerate}
    \item $(Y,\nu)$ is amenable.
    \item There exists a $\Gamma$-invariant measure $\tilde\nu$ on $Y\times B$ which projects down to $\nu$ on $Y$.
\end{enumerate}
\end{lem}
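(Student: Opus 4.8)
The plan is to prove the two implications separately, using the characterization of amenability via $\Gamma$-equivariant conditional expectations $\Phi\colon L^\infty(\Gamma\times Y)\to L^\infty(Y)$ together with the topological amenability of $B$ and Lemma \ref{lem-topamam}.

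First I would show $(2)\Rightarrow(1)$. Suppose $\tilde\nu$ is a $\Gamma$-invariant probability measure on $Y\times B$ projecting to $\nu$ on $Y$. Disintegrate $\tilde\nu$ over $Y$ as $\tilde\nu=\int_Y \kappa_y\,d\nu(y)$, where $y\mapsto \kappa_y$ is a measurable assignment of probability measures on $B$. Invariance of $\tilde\nu$ translates into the equivariance relation $\gamma_*\kappa_y=\kappa_{\gamma y}$ for $\nu$-a.e.\ $y$ and every $\gamma\in\Gamma$. Since $B$ is a topologically amenable $\Gamma$-space, there is a sequence of continuous maps $b\mapsto \beta_n^b\in \operatorname{Prob}(\Gamma)$ which is asymptotically equivariant: $\|\gamma\cdot\beta_n^b-\beta_n^{\gamma b}\|_1\to 0$ uniformly on compact subsets of $\Gamma\times B$. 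Averaging these against $\kappa_y$, set $\lambda_n^y:=\int_B \beta_n^b\,d\kappa_y(b)\in\operatorname{Prob}(\Gamma)$; the equivariance $\gamma_*\kappa_y=\kappa_{\gamma y}$ then forces $\|\gamma\cdot\lambda_n^y-\lambda_n^{\gamma y}\|_1\to 0$ for $\nu$-a.e.\ $y$ and all $\gamma\in\Gamma$. This is exactly a measurable asymptotically invariant system of means along the orbits, i.e.\ a Reiter sequence for the action $\Gamma\curvearrowright(Y,\nu)$; taking a weak-$*$ cluster point in $L^\infty(Y,\ell^\infty(\Gamma))^*$ and dualizing produces the required $\Gamma$-equivariant conditional expectation $L^\infty(\Gamma\times Y)\to L^\infty(Y)$, so $(Y,\nu)$ is amenable.

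For $(1)\Rightarrow(2)$, assume $(Y,\nu)$ is amenable, so we have a $\Gamma$-equivariant conditional expectation $\Phi\colon L^\infty(\Gamma\times Y)\to L^\infty(Y)$. Because $B$ is a compact topologically amenable $\Gamma$-space, its action on any quasi-invariant measure is amenable (Lemma \ref{lem-topamam}); more to the point, topological amenability of $B$ gives a net of continuous maps $\mu_\alpha\colon B\to\operatorname{Prob}(\Gamma)$ that is asymptotically equivariant. For $f\in C(B)$ consider the function $F_f\in L^\infty(\Gamma\times Y)$ given by $F_f(\gamma,y):=\int_B f\,d(\gamma_*\mu_\alpha^{b})$ evaluated appropriately — more precisely, I would build, for each $\alpha$, a $\Gamma$-equivariant unital positive map $C(B)\to L^\infty(\Gamma\times Y)$ and then compose with $\Phi$ to land in $L^\infty(Y)$. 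A weak-$*$ limit over $\alpha$ (using that $\Gamma$-equivariance becomes exact in the limit, by asymptotic equivariance of $\mu_\alpha$ and $\Gamma$-equivariance of $\Phi$) yields a $\Gamma$-equivariant unital positive map $P\colon C(B)\to L^\infty(Y,\nu)$. Integrating $P(f)$ against $\nu$ gives a $\Gamma$-invariant state on $C(B)\otimes L^\infty(Y)$, hence by Riesz representation a $\Gamma$-invariant probability measure $\tilde\nu$ on $Y\times B$; its marginal on $Y$ is $\nu$ because $P$ is unital. This is condition $(2)$.

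The main obstacle is making the limiting arguments on both sides honest: on the $(2)\Rightarrow(1)$ side one must check that the averaged means $\lambda_n^y$ retain the measurable dependence on $y$ and the almost-everywhere asymptotic invariance simultaneously, which requires commuting an integral over $B$ with the $\|\cdot\|_1$ estimates and using Fubini together with the a.e.\ equivariance of the disintegration; on the $(1)\Rightarrow(2)$ side the delicate point is that $\Phi$ is only defined on $L^\infty$, not continuously on all of $\ell^\infty(\Gamma)$-valued functions, so one has to be careful that the composition with the continuous fields $\mu_\alpha\colon B\to\operatorname{Prob}(\Gamma)$ actually produces elements of $L^\infty(\Gamma\times Y)$ and that the weak-$*$ limit exists. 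Both difficulties are standard in the Zimmer-amenability literature (cf.\ \cite{Zimmer3, BNSWW13}), so I expect the write-up to be short, essentially assembling these two limiting arguments.
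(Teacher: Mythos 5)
Your argument takes a different route from the paper's, which keeps the proof short by citing Zimmer's machinery directly. For $(1)\Rightarrow(2)$ the paper invokes \cite[Prop.\ 4.3.9]{Zimmer1984}: amenability of $(Y,\nu)$ gives a $\Gamma$-equivariant measurable map $\kappa\colon Y\to\operatorname{Prob}(B)$ for any compact $\Gamma$-space $B$, and $\tilde\nu:=\int_Y\delta_y\times\kappa(y)\,d\nu(y)$ is the required invariant measure. For $(2)\Rightarrow(1)$ the paper projects a quasi-invariant version of $\tilde\nu$ to $B$, applies Lemma~\ref{lem-topamam} to see that the resulting action on $B$ is amenable, and lifts amenability back via \cite[Thm 2.4]{Zimmer3}. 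You instead re-derive the relevant pieces by hand.

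Your $(2)\Rightarrow(1)$ is correct and is essentially a direct re-derivation of what the paper gets from Zimmer: disintegrating $\tilde\nu$ over $Y$ gives an equivariant family $y\mapsto\kappa_y\in\operatorname{Prob}(B)$, and averaging the asymptotically equivariant $\beta_n^b$ against $\kappa_y$ gives an a.e.\ Reiter sequence since
\[
\bigl\|\gamma\cdot\lambda_n^y-\lambda_n^{\gamma y}\bigr\|_1
\le\int_B\bigl\|\gamma\cdot\beta_n^b-\beta_n^{\gamma b}\bigr\|_1\,d\kappa_y(b)\longrightarrow 0
\]
using $\kappa_{\gamma y}=\gamma_*\kappa_y$. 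A weak-$*$ cluster point then produces the conditional expectation.

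Your $(1)\Rightarrow(2)$, however, has a gap and is also over-engineered. This direction does not use topological amenability of $B$ at all, only compactness. Given the equivariant conditional expectation $\Phi\colon L^\infty(\Gamma\times Y)\to L^\infty(Y)$, fix any $b_0\in B$ and for $f\in C(B)$ set $F_f(\gamma,y):=f(\gamma b_0)$, constant in $y$. Then $P(f):=\Phi(F_f)$ is a unital positive $\Gamma$-equivariant map $C(B)\to L^\infty(Y,\nu)$ (the equivariance is exact, precisely because $F_f$ does not depend on $y$), and integrating against $\nu$ gives the invariant state, hence the measure $\tilde\nu$. The expression you write, $F_f(\gamma,y):=\int_B f\,d(\gamma_*\mu_\alpha^b)$, does not typecheck: $\mu_\alpha^b\in\operatorname{Prob}(\Gamma)$, so $\gamma_*\mu_\alpha^b$ is a measure on $\Gamma$, not on $B$, and there is nothing to integrate $f\in C(B)$ against. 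The whole net $\mu_\alpha$ and the asymptotic-equivariance limit are unnecessary; Zimmer's fixed-point property, which the paper cites, packages the one-line construction above.
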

\begin{proof}
$(1)\Rightarrow(2)$  By \cite[Prop. 4.3.9]{Zimmer1984}, there exists a $\Gamma$-equivariant map $\kappa\colon Y\to {\rm Prob}(B)$. Let $\tilde\nu:=\int_Y\delta_y\times\kappa(y)d\nu(y).$ Then $\tilde\nu$ is invariant and projects down to $\nu.$

$(2)\Rightarrow(1)$ Let $f\in L^1(Y,\nu)$ be a positive function. Let $\kappa$ be the projection of $f\tilde \nu$ to $B$. Then $(B,\kappa)$ is a $\Gamma$-equivariant factor of the measure class preserving action $(Y,f\tilde{\nu}).$ By Lemma \ref{lem-topamam}, $(B,\kappa)$ is amenable, so by \cite[Thm 2.4]{Zimmer3} $(Y,f\tilde\nu)$ is also amenable. Amenability depends only on the class of the measure so $(Y,\nu)$ is amenable as well. 
\end{proof} 

\begin{lem}\label{lem-exactclosure}
Let $\Gamma$ be an exact group and let $Y$ be a locally compact space with a continuous action of $\Gamma$. The set of locally finite $\Gamma$-invariant measures $\nu$ on $Y$ such that $(Y,\nu)$ is amenable, is closed under weak-* convergence.  
\end{lem}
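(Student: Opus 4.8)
The plan is to reduce the statement, via the amenability criterion of Lemma~\ref{lem-amenabilitychar}, to a compactness property of invariant Radon measures on the product of $Y$ with a topologically amenable boundary. Since $\Gamma$ is exact (and countable) we fix once and for all a compact metrizable space $B$ carrying a topologically amenable $\Gamma$-action. Suppose $\nu_n$ is a sequence of locally finite $\Gamma$-invariant measures on $Y$ with each $(Y,\nu_n)$ amenable, converging in the vague (weak-$*$) topology to a locally finite $\Gamma$-invariant measure $\nu$; we must show that $(Y,\nu)$ is amenable. By Lemma~\ref{lem-amenabilitychar} each $\nu_n$ lifts to a $\Gamma$-invariant measure $\tilde\nu_n$ on $Y\times B$ whose pushforward under the projection $\pi_Y\colon Y\times B\to Y$ is $\nu_n$; because $B$ is compact, $\tilde\nu_n$ is finite on compact sets, hence locally finite. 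The goal is to extract a vague limit $\tilde\nu$ of a subsequence of $(\tilde\nu_n)$, check that $\tilde\nu$ is $\Gamma$-invariant with $(\pi_Y)_*\tilde\nu=\nu$, and then invoke the converse direction of Lemma~\ref{lem-amenabilitychar} to conclude that $(Y,\nu)$ is amenable.

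Two points need checking: a uniform mass bound and the stability of invariance and of the projection under the limit. For the bound, given a compact $K\subset Y$ pick $f\in C_c(Y)$ with $1_K\le f\le 1$; then $\tilde\nu_n(K\times B)=\nu_n(K)\le\int f\,d\nu_n\to\int f\,d\nu<\infty$, so $\sup_n\tilde\nu_n(K\times B)<\infty$. Since every compact subset of $Y\times B$ is contained in some $K\times B$ (using compactness of $B$), the family $(\tilde\nu_n)$ is locally uniformly bounded, hence relatively compact in the vague topology, and we may pass to a subsequence with $\tilde\nu_n\to\tilde\nu$. Invariance passes to the limit because each $\gamma$ acts by a homeomorphism of $Y\times B$, so $g\circ\gamma\in C_c(Y\times B)$ for $g\in C_c(Y\times B)$ and $\int g\,d(\gamma_*\tilde\nu)=\lim_n\int g\circ\gamma\,d\tilde\nu_n=\lim_n\int g\,d\tilde\nu_n=\int g\,d\tilde\nu$. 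Finally, for $f\in C_c(Y)$ the function $f\circ\pi_Y$ lies in $C_c(Y\times B)$ precisely because $\supp(f\circ\pi_Y)\subseteq\supp(f)\times B$ is compact, so $\int f\,d((\pi_Y)_*\tilde\nu)=\lim_n\int (f\circ\pi_Y)\,d\tilde\nu_n=\lim_n\int f\,d\nu_n=\int f\,d\nu$, i.e. $(\pi_Y)_*\tilde\nu=\nu$.

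The step I expect to be the real obstacle — and the place where exactness and compactness of $B$ are genuinely used — is ensuring that no mass escapes to infinity in the $Y$-direction, so that the projection of the limit recovers all of $\nu$ rather than a sub-measure; this is exactly what compactness of $B$ provides, since it forces $f\circ\pi_Y$ to stay compactly supported. A minor technical caveat is that vague sequential compactness needs $Y\times B$ (hence $Y$) second countable, which holds in all the situations where we apply this lemma; in general one argues identically with subnets in place of subsequences. Once $\tilde\nu$ is produced, Lemma~\ref{lem-amenabilitychar}~$(2)\Rightarrow(1)$ completes the argument.
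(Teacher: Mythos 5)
Your proof is correct and follows essentially the same route as the paper: lift each $\nu_n$ to $Y\times B$ via Lemma~\ref{lem-amenabilitychar}, use compactness of $B$ to get local uniform bounds and to ensure the projection of the weak-$*$ limit equals $\nu$ (no mass escaping in the $Y$-direction), pass to a convergent subsequence, and apply the converse direction of Lemma~\ref{lem-amenabilitychar}. You have simply filled in the compactness and projection verifications that the paper leaves implicit.
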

\begin{proof}
Let $\nu_n$ be a sequence of $\Gamma$-invariant measures on $Y$ converging weakly-* to $\nu$. Let $B$ be a compact topologically amenable action of $\Gamma$. Suppose that $(Y,\nu_n)$ is amenable for all $n$. By Lemma \ref{lem-amenabilitychar}, we find $\Gamma$-invariant measures $\tilde\nu_n$ on $Y\times B$ which project down to $\nu_n.$ The fact that $\tilde\nu_n$ project to convergent sequence means that the set $\{\tilde\nu_n\}_{n\in\mathbb N}$ is relatively compact, so we can extract a convergent subsequence. Let $\tilde\nu$ be a sub-sequential weak-* limit of $\tilde\nu_n$. Then $\tilde\nu$ in an invariant measure on $Y\times B$ which projects down to $\nu$, so $(Y,\nu)$ is amenable, by Lemma \ref{lem-amenabilitychar}.
\end{proof}

Now we are ready to prove the converse of Proposition \ref{prop-amenableproj} in the exact case.

\begin{thm}\label{thm-exactprojcont}
Let $\Gamma$ be an exact group and let $(Y,\nu)$ be a measure preserving action. Then, $(Y,\nu)$ is amenable if and only if $(Y,\nu)\xhookrightarrow{proj}\Gamma.$
\end{thm}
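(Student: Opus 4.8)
The plan is to prove the two implications separately, reusing the machinery already set up. The forward direction, that amenability of $(Y,\nu)$ implies $(Y,\nu)\xhookrightarrow{proj}\Gamma$, is exactly Proposition~\ref{prop-amenableproj}, which has already been established and does not even use exactness. So the real content is the converse: assuming $(Y,\nu)\xhookrightarrow{proj}\Gamma$ and $\Gamma$ exact, deduce that the action $(Y,\nu)$ is amenable.

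For the converse, I would use the characterization of amenability from Lemma~\ref{lem-amenabilitychar}: fixing a compact topologically amenable action $B$ of $\Gamma$, it suffices to produce a $\Gamma$-invariant measure $\tilde\nu$ on $Y\times B$ projecting down to $\nu$. The idea is to extract such a measure from the approximating data provided by projective containment. First, by Lemma~\ref{lem-projergodic} and Lemma~\ref{lem-amenableextensions}, one can reduce to the case where $(Y,\nu)$ is ergodic (amenability and projective containment in $\Gamma$ both pass to and from ergodic components). Then realize $(Y,\nu)$ concretely: pick a finite-measure generating set $U\subset Y$ so that translates of $U$ generate a dense algebra, and apply Lemma~\ref{lem-ProjContCond}(4) to get, for each finite $F\subset\Gamma$ and each $\varepsilon>0$, a finite subset $V_F\subset\Gamma$ and $\lambda_F>0$ with $|\mu(U_E)-\lambda_F|(V_F)_E||\le\varepsilon$ for all $E\subset F$. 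Normalizing, $\lambda_F|(V_F)_E|$ approximates the "pattern frequencies" $\mu(U_E)$ of the original action. The sets $V_F$ (re-rooted uniformly) give a sequence of finitely supported probability measures on $\{0,1\}^\Gamma$ whose finite-dimensional marginals converge to those of the return-time distribution of $(Y,U)$; since $\Gamma$ is exact, each finite subset of $\Gamma$ sits amenably inside $\Gamma$ (finite subgroups of the regular action are trivially amenable, and more to the point a finite equivalence relation is hyperfinite hence amenable), so each $V_F$ carries an invariant-in-the-appropriate-sense measure on $\Gamma\times B$, or rather: the finite pieces are amenable, and by Lemma~\ref{lem-exactclosure} the weak-$*$ limit of the corresponding measures on the appropriate locally compact model space stays amenable.

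More precisely, I would package this as follows. Build the "return-time model" of $(Y,\nu)$: by the theory in Section~\ref{sec-urs} (anticipated, but we may also argue directly), the statistics $(\mu(U_E))_{E}$ determine a unimodular random subset, and a projective containment $(Y,\nu)\xhookrightarrow{proj}\Gamma$ says this unimodular random subset is a weak-$*$ limit of finite unimodular random subsets (the $V_F$). Each finite unimodular random subset, viewed as a measure-preserving action on a finite space, is amenable; equivalently the associated measure on $\cM_o(\Gamma)\times B$ (using an equivariant $\kappa\colon\cM_o(\Gamma)\to\operatorname{Prob}(B)$, which exists because the re-rooting relation on finite sets is hyperfinite) is invariant. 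Passing to the weak-$*$ limit and invoking Lemma~\ref{lem-exactclosure} — applied to the locally compact $\Gamma$-space $\cM(\Gamma)\times B$ or, after choosing a point realization, to $Y\times B$ — we obtain a $\Gamma$-invariant amenable measure lying over $\nu$. Then Lemma~\ref{lem-amenabilitychar} concludes that $(Y,\nu)$ is amenable.

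The main obstacle I anticipate is the bookkeeping in the passage from the combinatorial approximation $\lambda_F|(V_F)_E|\approx\mu(U_E)$ to an honest $\Gamma$-invariant measure on a locally compact space to which Lemma~\ref{lem-exactclosure} applies: one must set up the right locally compact $\Gamma$-space (a space of pointed subsets, or the Bernoulli space $\{0,1\}^\Gamma$ with a distinguished coordinate) together with the "Palm/unimodular" normalization so that the finite approximants are genuinely $\Gamma$-invariant in that model and so that weak-$*$ convergence of the normalized counting measures holds. Handling the infinite-measure case cleanly (the normalization constants $\lambda_F$ can degenerate) is where I would be most careful; the ergodic reduction plus Lemma~\ref{lem-projergodic} is designed precisely to tame this, but matching the measure-theoretic normalizations across the limit still requires attention. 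Once that scaffolding is in place, everything else is an appeal to the already-proven lemmas (\ref{lem-amenabilitychar}, \ref{lem-exactclosure}, \ref{lem-amenableextensions}, \ref{lem-projergodic}).
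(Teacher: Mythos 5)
Your proposal follows essentially the same route as the paper's proof: reduce to the ergodic case via Lemma~\ref{lem-projergodic}, push forward to $\cM(\Gamma)$ (equivalently, pass to the return-time/unimodular-subset picture), use Lemma~\ref{lem-ProjContCond} to realize the resulting invariant measure as a weak-$*$ limit of orbit measures of finite subsets $B_n\subset\Gamma$, observe each of these finite-orbit actions is amenable, invoke Lemma~\ref{lem-exactclosure} to get amenability of the limit, and transfer amenability back to $(Y,\nu)$ through the factor map $\iota_U$. The paper streamlines the last step slightly by applying Lemma~\ref{lem-exactclosure} directly on $\cM(\Gamma)$ and then citing Zimmer's theorem, rather than explicitly building the invariant measure on $\cM(\Gamma)\times B$, but this is only a bookkeeping difference.
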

\begin{proof}
By Lemma \ref{lem-projergodic}, we can reduce to the case where $(Y,\nu)$ is ergodic, so let us assume ergodicity going forward. 
If the action is amenable, then it is projectively contained in $\Gamma$ by Proposition \ref{prop-amenableproj}. Suppose now that $(Y,\nu)\xhookrightarrow{proj}\Gamma.$ Let $U$ be any positive finite measure set. Consider the map $\iota_U\colon Y\to \cM(\Gamma)$ given by 
$$\iota_U(y)=\{\gamma\in \Gamma\,\mid \gamma^{-1}y\in U\}.$$ Then $\iota_U$ is $\Gamma$-equivariant ($\Gamma$ acts on its subsets on the left). Since $(Y,\nu)$ is ergodic, it is supported on non-empty subsets of $\Gamma.$

Let $\nu_U:=(\iota_U)_*(\nu).$ Let $E\subset \Gamma$ be a finite set. Put $C_{E}=\{S\in \cM(\Gamma)\mid E\subset S\}.$  Then
$\nu_U(C_E)=\nu\left(\bigcap_{\gamma\in E}\gamma^{-1}U\right)$. 
Let $B_n\subset \Gamma, \lambda_n>0$ be sequences of finite subsets of $\Gamma$ and positive reals such that 
$$\lim_{n\to\infty}\left|\nu\left(\bigcap_{\gamma\in E}\gamma^{-1}U\right)-\lambda_n\left|\bigcap_{\gamma\in E}\gamma^{-1}B_n\right|\right|=0,$$ for all finite subsets $E\subset \Gamma.$ The existence of such $B_n, \lambda_n$ is guaranteed by the Lemma \ref{lem-ProjContCond} (3). 
Define measures $\nu_n:=\lambda\sum_{\gamma\in\Gamma}\delta_{\gamma B_n}$ on $\{0,1\}^\Gamma.$ 
Then, for any finite subset $E\subset \Gamma$ 
$\nu_n(C_E)=\lambda_n \left|\bigcap_{\gamma\in E}\gamma^{-1}B_n\right|.$
The measures $\nu_n$ converge to $\nu_U$ because any measure on $\cM(\Gamma)$ is uniquely determined \footnote{Here we use the fact that $\cM(\Gamma)$ consists only of non-empty subsets.} by the measure of the sets $C_E$. Each action $(\cM(\Gamma),\nu_n)$ is (up to measure scaling) isomorphic to $\Gamma$ acting on $\Gamma/H_n$, where $H_n$ is the finite group stabilizing $B_n$. All these actions are amenable. By Lemma \ref{lem-exactclosure}, $(\cM(\Gamma_U),\nu)$ is amenable. It is a factor of $(Y,\nu),$ so $(Y,\nu)$ is amenable by \cite[Thm 2.4]{Zimmer3}. 
\end{proof}
\begin{cor}
Let $\Gamma$ be an exact group. Suppose that $(X,\mu)$ is a p.m.p. action and that $(X,\mu)\xhookrightarrow{inf}\Gamma.$ Then, any infinite measure preserving action $(Y,\nu)$ with $(Y,\nu)\xhookrightarrow{proj}(X,\mu)$ is amenable.
\end{cor}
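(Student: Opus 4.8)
The plan is to reduce everything to the criterion of Theorem~\ref{thm-exactprojcont}: since $\Gamma$ is exact, $(Y,\nu)$ is amenable as soon as $(Y,\nu)\xhookrightarrow{proj}\Gamma$, so I only need to produce this one projective containment out of the two hypotheses $(Y,\nu)\xhookrightarrow{proj}(X,\mu)$ and $(X,\mu)\xhookrightarrow{inf}\Gamma$. If $(Y,\nu)$ happens to be ergodic this is immediate: it is then an infinite ergodic action, $(X,\mu)$ is p.m.p., and the chain $(Y,\nu)\xhookrightarrow{proj}(X,\mu)\xhookrightarrow{inf}\Gamma$ is exactly the hypothesis of Lemma~\ref{lem-transitivity}(4), which outputs $(Y,\nu)\xhookrightarrow{proj}\Gamma$. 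For general $(Y,\nu)$ I would re-run the argument behind Lemma~\ref{lem-transitivity}(4), but replace the use of ergodicity there (used to manufacture sets of arbitrarily large measure via unions of translates) with the plain fact that $\nu$ is $\sigma$-finite of infinite total mass, which already furnishes subsets of $Y$ of prescribed large finite measure. Note one cannot simply invoke transitivity of projective containment, because $(X,\mu)$ is only \emph{infinitesimally}, not projectively, contained in $\Gamma$; the point of the argument is to shrink the sets modelling $(Y,\nu)$ inside $(X,\mu)$ below the threshold required by Definition~\ref{def-infcont}.

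Concretely, fix a finite $F\subset\Gamma$, an integer $k$, a target error $\varepsilon>0$, and finite-measure sets $A_1,\dots,A_k\subset Y$, and put $M_0:=\max_l\nu(A_l)$. First apply $(X,\mu)\xhookrightarrow{inf}\Gamma$ with parameters $(F,k,\varepsilon/4)$ to obtain the threshold $\delta>0$ of Definition~\ref{def-infcont}. Next pick an auxiliary set $A_{k+1}\subset Y$ of finite $\nu$-measure $M$ large enough that $4M_0/M<\delta$, and apply $(Y,\nu)\xhookrightarrow{proj}(X,\mu)$ to the enlarged tuple $(A_1,\dots,A_{k+1})$, to $F$, and to a very small error $\varepsilon_1>0$ to be fixed last. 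This yields $\lambda>0$ and sets $B_1,\dots,B_{k+1}\subset X$ with $|\nu(A_i\cap\gamma A_j)-\lambda\mu(B_i\cap\gamma B_j)|\le\varepsilon_1 M$ for all $i,j\le k+1$ and $\gamma\in F$. Looking at the $(k+1,k+1)$-entry with $\gamma=1$ and using $\mu(B_{k+1})\le\mu(X)=1$ gives $\lambda\ge(1-\varepsilon_1)M$; substituting this into the entries with $i\le k$ (and using $\varepsilon_1 M\le M_0$) forces $\mu(B_i)\le 2M_0/\bigl((1-\varepsilon_1)M\bigr)<\delta$ while keeping $\lambda\,\mu(B_i)\le\nu(A_i)+\varepsilon_1 M\le 2M_0$. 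Now infinitesimal containment $(X,\mu)\xhookrightarrow{inf}\Gamma$ applies to $B_1,\dots,B_k$ and supplies $\lambda'>0$ and finite sets $C_1,\dots,C_k\subset\Gamma$ with $\bigl|\mu(B_i\cap\gamma B_j)-\lambda'|C_i\cap\gamma C_j|\bigr|\le\tfrac{\varepsilon}{4}\max_{l\le k}\mu(B_l)$ (the degenerate case $\mu(B_i)=0$ being handled by $C_i=\emptyset$). Composing the two approximations, $\lambda\lambda'$ together with $C_1,\dots,C_k$ approximates the statistics of $A_1,\dots,A_k$ inside $\Gamma$ with total error at most $\varepsilon_1 M+\lambda\cdot\tfrac{\varepsilon}{4}\max_{l\le k}\mu(B_l)\le\varepsilon_1 M+\tfrac{\varepsilon}{2}M_0$, which is $\le\varepsilon M_0=\varepsilon\max_l\nu(A_l)$ once $\varepsilon_1$ is chosen with $\varepsilon_1 M\le\tfrac{\varepsilon}{2}M_0$. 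As $F,k,\varepsilon$ and the tuple were arbitrary this gives $(Y,\nu)\xhookrightarrow{proj}\Gamma$, and Theorem~\ref{thm-exactprojcont} then yields that $(Y,\nu)$ is amenable.

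The only delicate point, and the one I expect to be the main (routine) obstacle, is the order in which the constants must be chosen: first $\varepsilon$, then $\delta$ from infinitesimal containment, then the large measure $M$ of the auxiliary set, and only at the very end the tiny error $\varepsilon_1$ for the projective approximation of $(Y,\nu)$ inside $(X,\mu)$; together with the bookkeeping that the normalization $\lambda$ produced by the projective step is large \emph{precisely because $(X,\mu)$ has finite total mass}, which is what forces the sets $B_i$ below the infinitesimal-containment threshold and also keeps $\lambda\max_{l\le k}\mu(B_l)$ bounded by $2M_0$ so that the error coming from infinitesimal containment, once multiplied back by $\lambda$, stays of order $\varepsilon M_0$. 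I do not anticipate any genuine difficulty beyond this; in particular no new input is needed, only Definition~\ref{def-infcont}, the definition of projective containment, and Theorem~\ref{thm-exactprojcont}.
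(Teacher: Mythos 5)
Your proof is correct. In the ergodic case you identify exactly the paper's route — apply Lemma~\ref{lem-transitivity}(4) to the chain $(Y,\nu)\xhookrightarrow{proj}(X,\mu)\xhookrightarrow{inf}\Gamma$ to obtain $(Y,\nu)\xhookrightarrow{proj}\Gamma$, then invoke Theorem~\ref{thm-exactprojcont} — and the paper's proof is precisely this one-liner.

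What you add is worth noting. Lemma~\ref{lem-transitivity}(4), as stated and proved in the paper, carries an ergodicity hypothesis: its proof enlarges each $A_i$ by translating ($\mu(F'A_i)\geq\tfrac{2}{\delta}\mu(A_i)$), and that enlargement genuinely uses ergodicity, since a non-ergodic infinite action may have invariant finite-measure pieces whose orbit-closure under translates stays finite. The Corollary, however, only hypothesizes that $(Y,\nu)$ has infinite total $\sigma$-finite mass, so the paper's bare citation leaves the non-ergodic case to the reader. Your auxiliary-set device patches this cleanly: appending a set $A_{k+1}\subset Y$ of large measure $M$ forces the normalizing constant $\lambda$ to be at least $(1-\varepsilon_1)M$ (because $\mu(B_{k+1})\leq\mu(X)=1$), which in turn pushes $\mu(B_i)\leq 2M_0/((1-\varepsilon_1)M)$ below the infinitesimal threshold $\delta$, while $\lambda\max_l\mu(B_l)\leq 2M_0$ keeps the composed error of order $\varepsilon M_0$. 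This uses only $\sigma$-finiteness and infinite total mass of $\nu$, together with the finiteness of $\mu(X)$, so it applies verbatim to non-ergodic $(Y,\nu)$. The bookkeeping — choosing $\varepsilon$, then $\delta$, then $M$, then $\varepsilon_1$, and discarding any $B_i$ of measure zero by setting $C_i=\emptyset$ — is all sound. In short: correct, same essential idea as the paper, and a touch more careful on the ergodicity point.
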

\begin{proof}
By Lemma \ref{lem-transitivity}, $(Y,\nu)\xhookrightarrow{proj}\Gamma$, so it is amenable by Theorem \ref{thm-exactprojcont}.
\end{proof}

We end this section with several examples of projective containment.
\begin{example}
Let $\Gamma$ act on a countable set $\Xi.$ Then $\Xi\xhookrightarrow{proj}([0,1]^\Xi,{\rm Leb}^\Xi).$
\end{example}
\begin{proof}
Let $A_1,\ldots,A_k\subset \Xi.$ For any $\varepsilon>0,\xi\in\Xi$ let $V_{\xi}^{\varepsilon}:=\{w\in [0,1]^\Xi\mid w_{\xi}\in[0,\varepsilon]\}$. The sets $V_\xi^\varepsilon$ have the following properties
\begin{enumerate}
    \item ${\rm Leb}^\Xi(V_\xi^\varepsilon)=\varepsilon,$ 
    \item ${\rm Leb}^\Xi(V_{\xi_1}^{\varepsilon}\cap V_{\xi_2}^\varepsilon)=\varepsilon^2$ for $\xi_1\neq\xi_2,$ 
    \item $\gamma V_{\xi}^\varepsilon=V_{\gamma\xi}^\varepsilon.$
\end{enumerate}
Put $B_i:=\bigcup_{\xi\in A_i}V_{\xi}^\varepsilon.$ Then $|{\rm Leb}^\Xi(B_i\cap \gamma B_j)-\varepsilon|A_i\cap\gamma A_j|\leq C\varepsilon^2,$ where the constant $C$ depends only on the cardinalities of $A_1,\ldots,A_k.$ Taking $\varepsilon$ to $0$, we find that $B_i$ projectively approximate the dynamics of $A_i$.
\end{proof}
\begin{example}
Let $(X,\mu)$ be an essentially free measure preserving action. Then $\Gamma\xhookrightarrow{proj}{(X,\mu)}$
\end{example}
\begin{proof}
By \cite[II.\S 2, Lemma 1]{OrnsteinWeiss1987}, for any finite subset $F\subset \Gamma$ there exists a positive measure set $V\subset X$ such that $\gamma V$ are pairwise disjoint for $\gamma\in F.$ Put $U:=\{1\}\subset\Gamma.$ For all $E\subset F$, $|\bigcap_{\gamma\in E}\gamma U|=\mu(V)^{-1}\mu(\bigcap_{\gamma\in E}\gamma V)$. By Lemma \ref{lem-ProjContCond} (4), we deduce that $\Gamma\xhookrightarrow{proj}(X,\mu).$
\end{proof}
\begin{example}\label{ex-compactactions}
Let $G$ be a compact group with Haar probability measure $\mu$ and let $\rho\colon \Gamma\to G$ be a homomorphism with dense image. For any closed subgroup $H\subset G$ we have 
$\Gamma/\rho^{-1}(H)\xhookrightarrow{proj}(G,\mu)$.
\end{example}
\begin{proof}
Let $W_n$ be a basis of open neighborhoods of $1$ in $G$. Put $V_n:=HW_n$ and let $U=\rho^{-1}(H)\in \Gamma/\rho^{-1}(H).$ Then, $$\lim_{n\to\infty}\frac{\mu(\bigcap_{\gamma\in F}\gamma V_n)}{\mu(V_n)}=\begin{cases}1 &\text{ if } |F|=1\\ 0 & \text{otherwise}\end{cases}=\frac{|\bigcap_{\gamma\in F}\gamma U|}{|U|}.$$
By Lemma \ref{lem-ProjContCond}, $\Gamma/\rho^{-1}(H)\xhookrightarrow{proj}(G,\mu).$
\end{proof}

\section{Unimodular random subsets}

\subsection{Definition and basic properties.}
Recall that $\cM(\Gamma)$ is the space of non-empty subsets of $\Gamma$ and $\cM_o(\Gamma)$ is the space of subsets containing identity.
Let $\cO$ be the orbit equivalence relation of $\Gamma$ acting on $\cM(\Gamma)$ by left translations. We use the same letter for the restriction of the orbit equivalence relation to $\cM_o(\Gamma)$, this is exactly the re-rooting equivalence relation defined in the introduction. Unimodular random subsets are defined as $\cO$-invariant probability measures on $\cM_o(\Gamma)$. As it is often the case, we use the name unimodular random subset both for the probability distribution and the random variable. We stick to the convention that lowercase Greek letters will stand for the distribution while capitalized Latin letters are reserved for the random variable.

We will say that a unimodular subset $\kappa$ is ergodic if $(\cM_o(\Gamma),\cO,\kappa)$ is an ergodic measure preserving equivalence relation. Every unimodular random subset decomposes as a convex combination of ergodic ones. This follows from the ergodic decomposition for p.m.p. countable equivalence relations \cite[Prop. 3.2]{FeldmanMoore1977})

\begin{lem}
Let $(Y,\nu)$ be a measure preserving system. Let $U\subset Y$ be a positive finite measure subset. Let $\iota_U\colon Y\to \cM(\Gamma)\cup\{\emptyset\}$ be the map $\iota_U(y):=\{\gamma\in \Gamma\mid \gamma^{-1}y\in U\}.$ Define the measure $$\cF(Y,U):=\frac{1}{\nu(U)}(\iota_U)_*(\nu|_U).$$ Then $\cF(Y,U)$ is a unimodular random subgroup. Moreover, every unimodular random subgroup arises in this way.
\end{lem}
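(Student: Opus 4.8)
The plan is to establish the lemma in two halves: first that $\cF(Y,U)$ is a unimodular random subset (the statement says ``subgroup'' but clearly means ``subset''), and second that every unimodular random subset arises this way.

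\textbf{The set $\cF(Y,U)$ is unimodular.} First I would note that for $\nu|_U$-almost every $y\in U$ we have $1\in\iota_U(y)$, since $\gamma=1$ satisfies $\gamma^{-1}y=y\in U$; hence $\cF(Y,U)$ is supported on $\cM_o(\Gamma)$. It is a probability measure because of the normalization by $\nu(U)$. The substantive point is $\cO$-invariance. The standard way to check that a measure on $\cM_o(\Gamma)$ is invariant under the re-rooting relation $\cO$ is via the mass-transport principle: it suffices to show that for every Borel function $f\colon \cM_o(\Gamma)\times\Gamma\to[0,\infty)$ (thought of as $f(E,\gamma)$ with $\gamma\in E$) one has $\int\sum_{\gamma\in E}f(E,\gamma)\,d\cF(Y,U)(E)=\int\sum_{\gamma\in E}f(\gamma^{-1}E,\gamma^{-1})\,d\cF(Y,U)(E)$. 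Unwinding the pushforward, the left side equals $\frac{1}{\nu(U)}\int_U\sum_{\gamma\colon\gamma^{-1}y\in U}f(\iota_U(y),\gamma)\,d\nu(y)$. Substituting $z=\gamma^{-1}y$ and using the measure-preserving property of the $\Gamma$-action (so that summing over $\gamma$ with $\gamma^{-1}y\in U$ and integrating $y$ over $U$ is symmetric in the roles of $y$ and $z=\gamma^{-1}y$, via $\gamma\mapsto\gamma^{-1}$ and $y\leftrightarrow z$), together with the equivariance $\iota_U(\gamma^{-1}y)=\gamma^{-1}\iota_U(y)$, transforms the left side into the right side. This is the computation I expect to be the only mildly delicate step; it is essentially the observation that return-time sets to a cross-section are unimodular, and the measure-preservation of the action is exactly what makes the double counting work.

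\textbf{Every unimodular random subset arises this way.} Given a unimodular random subset $\kappa$, i.e.\ an $\cO$-invariant probability measure on $\cM_o(\Gamma)$, I would take the measure preserving system to be $(Y,\nu):=(\cM(\Gamma),\tilde\kappa)$ where $\tilde\kappa$ is the $\Gamma$-invariant $\sigma$-finite measure on $\cM(\Gamma)$ obtained from $\kappa$ by the standard correspondence between $\cO$-invariant probability measures on $\cM_o(\Gamma)$ and $\Gamma$-invariant measures on $\cM(\Gamma)$ of the orbit equivalence relation — concretely, $\tilde\kappa=\sum_{\gamma\in\Gamma}(\gamma)_*(\kappa)$ restricted appropriately, or described via disintegration over the quotient. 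Then take $U:=\cM_o(\Gamma)\subset\cM(\Gamma)$, the set of subsets containing $1$, which has $\tilde\kappa(U)=1$. For $S\in U$ we compute $\iota_U(S)=\{\gamma\in\Gamma\mid \gamma^{-1}S\ni 1\}=\{\gamma\in\Gamma\mid \gamma\in S\}=S$, so $\iota_U$ is the identity on $U$ and $\cF(Y,U)=\kappa$ exactly. The only thing to verify carefully is that $(\cM(\Gamma),\tilde\kappa)$ is a genuine $\sigma$-finite measure preserving action and that $\tilde\kappa$ restricted to $U$ returns $\kappa$; this is routine from the theory of countable p.m.p.\ equivalence relations, using that $\Gamma$ acts on $\cM(\Gamma)$ with the orbit relation generated by left translations and that the restriction of this action's orbit relation to $\cM_o(\Gamma)$ is precisely $\cO$.

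I expect the main obstacle to be purely expository: making the mass-transport computation in the first part clean (keeping track of which variable is summed and which is integrated, and invoking measure-preservation at the right moment), and stating the correspondence $\kappa\leftrightarrow\tilde\kappa$ in the second part with enough precision that ``$\cF(\cM(\Gamma),\cM_o(\Gamma))=\kappa$'' is manifestly correct rather than merely plausible. No new ideas beyond the definitions should be needed.
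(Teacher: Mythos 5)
Your proposal is correct, but your proof of unimodularity in the first half takes a genuinely different route from the paper's. You verify $\cO$-invariance directly via the mass-transport principle, exchanging a sum over $\gamma$ with the integral over $U$, substituting $y\mapsto\gamma y$ using measure-preservation, and then re-indexing $\gamma\mapsto\gamma^{-1}$; this is a fully explicit computation that exhibits the symmetry concretely. The paper instead argues more abstractly: since $\iota_U$ is $\Gamma$-equivariant, $(\iota_U)_*\nu$ is a $\Gamma$-invariant measure on $\cM(\Gamma)$; since $\iota_U^{-1}(\cM_o(\Gamma))=U$, the measure $\cF(Y,U)$ is (up to normalization) the restriction of $(\iota_U)_*\nu$ to the cross-section $\cM_o(\Gamma)$, and the restriction of an invariant measure to a cross-section is invariant under the induced equivalence relation, which here is $\cO$. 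The paper's route is shorter and depends only on the general principle that invariant measures restrict to invariant measures on cross-sections; your route is more self-contained and makes the double-counting transparent without invoking that principle. For the second half both proofs take $(Y,\nu)=(\cM(\Gamma),\tilde\kappa)$ with $U=\cM_o(\Gamma)$; the paper cites a uniqueness-of-extension theorem for the correspondence $\kappa\leftrightarrow\tilde\kappa$, whereas your tentative formula $\tilde\kappa=\sum_\gamma(\gamma)_*\kappa$ ``restricted appropriately'' is not quite right as written (it overcounts by the number of roots), though you rightly flag it as needing care and fall back on the disintegration description, which is the correct construction.
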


\begin{proof}
The map $\iota_U$ is $\Gamma$-equivariant so $(\iota_{U})_*\nu$ is a $\Gamma$-invariant, hence $\cO$-invariant measure. The preimage $\iota_U(\cM_o(\Gamma))$ is exactly $U,$ so $(\iota_U)_*(\nu|_U)$ is the restriction of $(\iota_{U})_*\nu$ to $\cM_o(\Gamma).$ Restriction of $\cO$ invariant measure is invariant under the restricted equivalence relation, which proves that $\cF(Y,U)$ is unimodular. 

Conversely, suppose that $\kappa$ is an $\cO$-invariant probability measure on $\cM_o(\Gamma).$ The set $\cM_o(\Gamma)$ is a complete section of the action of $\Gamma$ on $\cM(\Gamma)$, so by \cite[Theorem 1.14]{bjorklund2025intersection} it admits a unique extension to a $\Gamma$-invariant measure $\tilde\kappa$ on $\cM(\Gamma).$ Then $\kappa=\cF(\cM(\Gamma),\tilde\kappa).$
\end{proof}

Let $\kappa$ be a unimodular random subset. A space $(Y,\nu)$ and a set $U\subset Y$ such that $\kappa=\cF(Y,U)$ and the translates $U$ generate the full $\sigma$-algebra\footnote{If this condition fails, then $(Y,\nu)$ admits a proper factor where we can model $\kappa$.} is called a \emph{model for $\kappa.$} Let $\tilde\kappa$ be the unique extension of $\kappa$ to a $\Gamma$-invariant measure on $\cM(\Gamma).$ Then, the pair $(\cM(\Gamma),\tilde\kappa),\cM_o(\Gamma)$ is a model for $\kappa$. It turns out that up to measure scaling all models are isomorphic.

\begin{lem}\label{lem-model}
Let $(Y,\nu),U$ be a model for a unimodular random subset $\kappa.$ Then, the map $\iota_U\colon Y\to \cM(\Gamma)$ is an isomorphism, and $\tilde\kappa=\frac{1}{\nu(U)}(\iota_U)_*(\nu)$.
\end{lem}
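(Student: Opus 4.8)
The plan is to show that a model $(Y,\nu), U$ of a unimodular random subset $\kappa$ is abstractly the same, up to scaling the measure, as the canonical model $(\cM(\Gamma),\tilde\kappa), \cM_o(\Gamma)$ built in the previous lemma. The map to compare things is $\iota_U\colon Y\to \cM(\Gamma)$, $\iota_U(y)=\{\gamma\in\Gamma\mid \gamma^{-1}y\in U\}$, and I will argue it is injective (almost everywhere), that its image has full $\tilde\kappa$-measure, and that it transports $\nu$ to $\frac{1}{\nu(U)}$ times $\tilde\kappa$ up to the usual measure-scaling bookkeeping — actually, by the normalization convention, $\tilde\kappa(\cM_o(\Gamma))=1$ while $\nu(U)=\nu(U)$, so the pushforward identity reads $\tilde\kappa = \frac{1}{\nu(U)}(\iota_U)_*\nu$ on all of $\cM(\Gamma)$ once we know $\iota_U$ is an isomorphism onto its image. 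The key structural input is that the translates $\gamma U$ generate the full $\sigma$-algebra of $Y$.

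Concretely, first I would establish that $\iota_U$ is essentially injective. For $y,y'\in Y$ with $\iota_U(y)=\iota_U(y')$ we get, for every $\gamma\in\Gamma$, that $\gamma^{-1}y\in U \iff \gamma^{-1}y'\in U$, i.e. $y$ and $y'$ lie in exactly the same translates $\gamma U$. Since the translates $\{\gamma U\}_{\gamma\in\Gamma}$ generate the $\sigma$-algebra of $Y$ (mod $\nu$-null sets) and $Y$ is a standard Borel space, the map $y\mapsto (1_{\gamma U}(y))_{\gamma\in\Gamma}$ separates points modulo a null set; hence $y=y'$ for $\nu$-a.e.\ pair, so $\iota_U$ is injective off a null set. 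Second, equivariance of $\iota_U$ (already used in the previous lemma) combined with the definition of $\cF(Y,U)=\kappa$ shows that $(\iota_U)_*(\nu|_U)=\nu(U)\kappa$ on $\cM_o(\Gamma)$; extending by equivariance, $(\iota_U)_*\nu$ is a $\Gamma$-invariant measure on $\cM(\Gamma)$ whose restriction to $\cM_o(\Gamma)$ equals $\nu(U)\kappa = \nu(U)\,\tilde\kappa|_{\cM_o(\Gamma)}$. Since $\cM_o(\Gamma)$ is a complete section for the $\Gamma$-action on $\cM(\Gamma)$, a $\Gamma$-invariant measure is determined by its restriction there (this is exactly the uniqueness part of \cite[Theorem 1.14]{bjorklund2025intersection} invoked earlier), so $(\iota_U)_*\nu = \nu(U)\,\tilde\kappa$, i.e.\ $\tilde\kappa = \frac{1}{\nu(U)}(\iota_U)_*\nu$. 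In particular $\iota_U$ is measure-preserving up to the scalar $\nu(U)$ and its image has full $\tilde\kappa$-measure.

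Finally, to upgrade "essentially injective and measure-class-preserving onto a conull image" to "isomorphism", I would invoke the standard Borel fact that an injective (mod null) Borel map between standard Borel spaces has Borel image and Borel inverse on that image; pushing the measure forward then gives a measure space isomorphism $(Y,\nu)\to(\cM(\Gamma),\nu(U)\tilde\kappa)$, which is $\Gamma$-equivariant by construction, hence an isomorphism of measure-preserving (up to scaling) $\Gamma$-systems carrying $U$ to $\cM_o(\Gamma)$.

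The main obstacle, I expect, is the injectivity-a.e.\ step: one must be careful that "the translates of $U$ generate the $\sigma$-algebra" genuinely forces point separation modulo null sets. The clean way is to note that a countable family generating the Borel $\sigma$-algebra of a standard Borel space up to $\nu$-null sets induces an injective map into $\{0,1\}^\Gamma$ after discarding a $\nu$-null set (choose a countable generating subalgebra, realize $Y$ as a subset of $\{0,1\}^\Gamma$ via the indicator coordinates, and use that two points mapping to the same sequence cannot be separated by any generator, hence differ only on a null set by a standard atom-of-the-tail argument). Once that is in hand, everything else is the equivariance-plus-complete-section bookkeeping already available from the preceding lemma and \cite{bjorklund2025intersection}.
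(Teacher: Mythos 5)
Your proof is correct and follows essentially the same path as the paper: push forward to get $\nu(U)\kappa$ on $\cM_o(\Gamma)$, use uniqueness of the $\Gamma$-invariant extension to a complete section to identify $(\iota_U)_*\nu$ with $\nu(U)\tilde\kappa$, and then use the fact that the translates of $U$ generate the $\sigma$-algebra to conclude $\iota_U$ is an isomorphism. The one stylistic difference is in the last step: the paper argues directly at the $\sigma$-algebra level (the preimage of $\cB(\cM(\Gamma))$ is a $\Gamma$-invariant $\sigma$-algebra containing $U$, hence the whole $\sigma$-algebra, hence $\iota_U$ is a measure-algebra isomorphism), whereas you descend to pointwise essential injectivity and then invoke the standard Borel injection facts — these are equivalent packagings of the same standard-Borel input, and your worry about point separation is handled exactly by that package.
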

\begin{proof}
By definition of a model, $(\iota_U)_*(\nu|_U)=(\iota_U)_*(\nu)|_{\cM_o(\Gamma)}=\nu(U)\kappa.$ Since $(\iota_U)_*(\nu)$ is a $\Gamma$-invariant extension of $\nu(U)\kappa$, it must be equal to $\nu(U)\kappa.$ To see that $\iota_U$ is an isomorphism, note that the preimage of the Borel $\sigma$-algebra of $\cM(\Gamma)$ is a $\Gamma$-invariant sub-$\sigma$-algebra of $Y$ containing $U=\iota_U^{-1}(\cM_o(\Gamma)),$ so it must be the full $\sigma$-algebra. 
\end{proof}

Now that we know models are unique, we can define several classes of unimodular random subsets. 

\begin{defn}
A unimodular random subset $\kappa$ with a model $(Y,\nu), U$ is:
\begin{enumerate}
    \item infinite covolume if $\nu(Y)=\infty.$ Otherwise we say that it is finite covolume and define the covolume of $\kappa$ as $\nu(Y)/\nu(U),$ 
    \item hyperfinite\footnote{We opt for the name hyperfinite instead of amenable to avoid confusion with amenable graphs. A unimodular random subset with the induced graph structure from the Cayley graph might be amenable as a graph but it is not hyperfinite in our sense, see \cite{KaimanovichAmenability} for the discussion of analogous debacle for graphings and equivalence relations.} if the action of $\Gamma$ on $(Y,\nu)$ is amenable,
    \item finite if $\kappa$ is supported on finite subsets of $\Gamma,$
    \item contained in an action $(Z,\tau)$ if $(Y,\nu)$ is a factor of $(Z,\tau)$ up to measure scaling,
    \item weakly contained in $(Z,\tau)$ if it is a weak-* limit of unimodular random subsets contained in $(Z,\tau),$
    \item a thinning of a p.m.p. action $(Z,\tau)$ if it is a weak-* limit of unimodular random subsets contained in $(Z,\tau)$ with covolumes tending to infinity.
\end{enumerate}
\end{defn}
We note that being contained in the action of $\Gamma$ on itself is tantamount to being finite. In particular, a unimodular random subset is weakly contained in $\Gamma$ if and only if it is a weak-* limit of finite unimodular random subsets. 
\begin{lem}\label{lem-ursweakcont}
\begin{enumerate}
\item  A unimodular random subset $\kappa$ with model $(Y,\nu),U$ is weakly contained in $(Z,\tau)$ if and only if $(Y,\nu)\xhookrightarrow{proj}(Z,\tau).$
\item A unimodular random subset $\kappa$ with model $(Y,\nu),U$ is a thinning of a p.m.p. action $(Z,\tau)$ if and only if $(Y,\nu)\xhookrightarrow{proj}(Z,\tau).$
 \end{enumerate}
\end{lem}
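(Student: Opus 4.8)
\textbf{Proof proposal for Lemma \ref{lem-ursweakcont}.}

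The plan is to prove both statements by unwinding the definitions and reducing everything to the characterization of projective containment in Lemma \ref{lem-ProjContCond}(4), applied to the distinguished generating set $U$ coming from the model. Throughout, $(Y,\nu),U$ is a model for $\kappa$, so by definition the translates $\gamma U$, $\gamma\in\Gamma$, generate a dense algebra in $\cB(Y,\nu)$, and by Lemma \ref{lem-model} the pair $(\cM(\Gamma),\tilde\kappa),\cM_o(\Gamma)$ is another model, with $\iota_U$ an isomorphism $(Y,\nu)\simeq(\cM(\Gamma),\tilde\kappa)$ up to scaling.

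\emph{Statement (1).} First I would make the key translation: for any unimodular random subset $\kappa'$ contained in $(Z,\tau)$, with model $(Z',\tau')$ a factor of $(Z,\tau)$ up to scaling and distinguished set $U'\subset Z'$, the statistics of $\kappa'$ are exactly the numbers $\tau'(\bigcap_{\gamma\in E}\gamma^{-1}U')/\tau'(U')$ over finite $E\subset\Gamma$, because by Lemma \ref{lem-model} these are the values $\tilde\kappa'(C_E)$ and any measure on $\cM(\Gamma)$ is determined by its values on the cylinders $C_E=\{S\mid E\subset S\}$ (using that $\cM(\Gamma)$ has only nonempty sets). Weak-$*$ convergence of a sequence of unimodular random subsets is therefore equivalent to convergence of all these cylinder probabilities. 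For the forward direction: if $\kappa$ is a weak-$*$ limit of $\kappa_n$ contained in $(Z,\tau)$, then for every finite $E$ we have $\nu(U_E)/\nu(U)=\lim_n \tau(B_{n,E})/\tau(B_n)$ where $B_n\subset Z$ realizes $\kappa_n$ and $U_E:=\bigcap_{\gamma\in E}\gamma^{-1}U$; setting $\lambda_n:=\nu(U)/\tau(B_n)$ gives $\nu(U_E)=\lim_n \lambda_n\tau((B_n)_E)$ for all $E$, so by Lemma \ref{lem-ProjContCond}(4) we get $(Y,\nu)\xhookrightarrow{proj}(Z,\tau)$. For the converse: if $(Y,\nu)\xhookrightarrow{proj}(Z,\tau)$, apply Lemma \ref{lem-ProjContCond}(4) with $U$ the distinguished set and exhausting finite sets $F_n\subset\Gamma$, error $\varepsilon_n\to0$, to get $V_n\subset Z$ and $\lambda_n>0$ with $|\nu(U_E)-\lambda_n\tau((V_n)_E)|\le\varepsilon_n$ for all $E\subset F_n$; then $\kappa_n:=\cF(Z,V_n)$ is a unimodular random subset contained in $(Z,\tau)$ whose cylinder probabilities $\tau((V_n)_E)/\tau(V_n)$ converge to $\nu(U_E)/\nu(U)$ (note $\tau(V_n)=\tau((V_n)_\emptyset)$ forces $\lambda_n\tau(V_n)\to\nu(U)$), hence $\kappa_n\to\kappa$ weakly-$*$. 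One technical point I should handle is that $\cF(Z,V_n)$ is, strictly speaking, contained in the factor of $(Z,\tau)$ generated by the translates of $V_n$, which is exactly what ``contained in $(Z,\tau)$'' means by the definition.

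\emph{Statement (2).} Here I notice that the statement as printed has the same right-hand condition $(Y,\nu)\xhookrightarrow{proj}(Z,\tau)$ as (1), which can only be consistent if one interprets it under the standing hypotheses that make thinnings and weak containment coincide --- namely when $\kappa$ is infinite covolume, equivalently (by Lemma \ref{lem-model} and the definition of covolume) when $\nu(Y)=\infty$. Under that hypothesis the argument is: a thinning is by definition a weak-$*$ limit of unimodular random subsets contained in $(Z,\tau)$ with covolumes $\to\infty$, which is in particular a weak-$*$ limit of subsets contained in $(Z,\tau)$, so (2)$\Rightarrow$(1)'s hypothesis gives $(Y,\nu)\xhookrightarrow{proj}(Z,\tau)$ for free. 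Conversely, given $(Y,\nu)\xhookrightarrow{proj}(Z,\tau)$ with $\nu(Y)=\infty$, I run the construction from (1) producing $\kappa_n=\cF(Z,V_n)$ with $\kappa_n\to\kappa$; it remains to check the covolumes of $\kappa_n$ tend to infinity. For this I enlarge the test family: besides the sets $U_E$ I also include, for each $n$, a set $U'_n\subset Y$ with $\nu(U'_n)/\nu(U)\ge n$ (possible since $\nu(Y)=\infty$ and the translates of $U$ are dense), and use Lemma \ref{lem-ProjContCond}(3) to approximate $\nu(U'_n)$ by $\lambda_n\tau(V'_n)$ for a corresponding $V'_n\subset Z$; since the covolume of $\cF(Z,V_n)$ equals (roughly) $\tau(V'_n)/\tau(V_n)$ when $V'_n$ is chosen inside the model of $\kappa_n$, this forces the covolumes to diverge. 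The main obstacle I anticipate is precisely this bookkeeping: ensuring the auxiliary sets $V'_n$ witnessing large covolume live in the same factor of $(Z,\tau)$ as the $V_n$ realizing $\kappa_n$, so that ``covolume'' is computed in the correct model --- once the sets are arranged compatibly (e.g. by taking $U'_n$ in the algebra generated by translates of $U$ and pushing the whole finite subalgebra forward coherently), the divergence of covolumes is immediate, but getting the quantifiers in the right order requires care.
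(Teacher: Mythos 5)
Part (1) of your proposal matches the paper's argument: translate weak-$*$ convergence into convergence of the cylinder probabilities $\nu(U_E)/\nu(U)$, then invoke Lemma \ref{lem-ProjContCond}(4) using that the translates of $U$ generate the $\sigma$-algebra of the model, and reverse the steps for the converse. This is exactly right.

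For part (2), your argument is correct in substance, but you are re-deriving by hand precisely the content of Lemma \ref{lem-transitivity}(4), which the paper simply cites (its proof is the ``enlarge $F$ to force small approximating sets'' trick you reconstruct). Your worry that (2) is only consistent under a standing infinitude assumption is exactly the same point that the hypothesis of Lemma \ref{lem-transitivity}(4) (infinite ergodic source, p.m.p.\ target) takes care of, and it is indeed implicit in the way the lemma is applied in the paper (e.g.\ in Proposition \ref{prop-approxhypinf}). Two minor simplifications you could make: since $(Z,\tau)$ is a probability space, the covolume of $\cF(Z,V_n)$ is just $1/\tau(V_n)$, so you do not need the auxiliary $V'_n$ to sit ``inside the model of $\kappa_n$''; and the divergence can be read off directly from $\lambda_n\tau(V_n)\to\nu(U)$ together with $\lambda_n\tau(V'_n)\to\nu(U'_n)\geq n\nu(U)$ and $\tau(V'_n)\leq 1$, giving $\tau(V_n)\lesssim 1/n$. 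Also, the identity $\tau(V_n)=\tau((V_n)_\emptyset)$ in your write-up is a slip (by the paper's convention $(V_n)_\emptyset=Z$); you want $E=\{1\}$ there. None of this affects correctness.
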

\begin{proof}
(1). Let $U_n$ be a sequence of subsets of $(Z,\tau),$ such that $\cF(Z,U_n)$ converges weakly-* to $\kappa$. For any finite subset $F\subset \Gamma$ let $C_F=\{E\in \cM(\Gamma)\mid F\subset E\}.$ Then 
\begin{equation}\label{eq-ursweak1}\kappa(C_F)=\frac{\nu(\bigcap_{\gamma\in F}\gamma^{-1}U)}{\nu(U)}=\lim_{n\to\infty}\frac{\tau(\bigcap_{\gamma\in F}\gamma^{-1}U_n)}{\tau(U_n)}.\end{equation} Since the translates of $U$ generate the full $\sigma$-algebra of $Y$, we can use Lemma \ref{lem-ProjContCond} (4) to deduce $(Y,\nu)\xhookrightarrow{proj}(Z,\tau).$ For the converse we just reverse the steps. Lemma \ref{lem-ProjContCond}, implies existence of a sequence $U_n\subset Z$ such that (\ref{eq-ursweak1}) holds for all finite $F\subset \Gamma$. Therefore, any subsequential weak-* limit of $\cF(Z,U_n)$ agrees with $\kappa$ on all of the sets $C_F.$ This of course means that the limit is unique and equal $\kappa.$

(2) it follows from (1) combined with Lemma \ref{lem-transitivity} (4).

\end{proof}
\begin{lem}\label{lem-urscontingamma}
Suppose $\Gamma$ is an exact group. Let $\kappa$ be a unimodular random subset of $\Gamma$. The following conditions are equivalent:
\begin{enumerate}
    \item $\kappa$ is weakly contained in $\Gamma.$
    \item $\kappa$ is hyperfinite.
\end{enumerate}
\end{lem}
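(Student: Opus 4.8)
The plan is to deduce Lemma~\ref{lem-urscontingamma} from the machinery already assembled in the excerpt, chiefly Lemma~\ref{lem-ursweakcont}(1), Theorem~\ref{thm-exactprojcont}, and Proposition~\ref{prop-amenableproj}. Let $(Y,\nu),U$ be a model for $\kappa$ in the sense defined before Lemma~\ref{lem-model}, which exists and is essentially unique by Lemma~\ref{lem-model}. The key observation is that by Lemma~\ref{lem-ursweakcont}(1), $\kappa$ is weakly contained in $\Gamma$ if and only if $(Y,\nu)\xhookrightarrow{proj}\Gamma$. Meanwhile, hyperfiniteness of $\kappa$ was \emph{defined} (in item (2) of the definition following Lemma~\ref{lem-model}) to mean that the $\Gamma$-action on the model $(Y,\nu)$ is amenable. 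So the lemma reduces verbatim to the statement: for an exact group $\Gamma$, a measure preserving action $(Y,\nu)$ is amenable if and only if $(Y,\nu)\xhookrightarrow{proj}\Gamma$. But this is exactly Theorem~\ref{thm-exactprojcont}.

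Concretely, I would write: \emph{$(2)\Rightarrow(1)$.} If $\kappa$ is hyperfinite, its model $(Y,\nu)$ is an amenable $\Gamma$-action, so by Proposition~\ref{prop-amenableproj} (which does not even need exactness) we get $(Y,\nu)\xhookrightarrow{proj}\Gamma$, whence $\kappa$ is weakly contained in $\Gamma$ by Lemma~\ref{lem-ursweakcont}(1). \emph{$(1)\Rightarrow(2)$.} Conversely, if $\kappa$ is weakly contained in $\Gamma$, then Lemma~\ref{lem-ursweakcont}(1) gives $(Y,\nu)\xhookrightarrow{proj}\Gamma$, and since $\Gamma$ is exact, Theorem~\ref{thm-exactprojcont} shows $(Y,\nu)$ is amenable, i.e.\ $\kappa$ is hyperfinite.

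There is one small point worth checking to make the argument airtight: hyperfiniteness of a unimodular random subset should be well defined, i.e.\ independent of the chosen model. This is immediate from Lemma~\ref{lem-model}, since any two models are isomorphic up to measure scaling, and amenability of a measure-class-preserving action depends only on the measure class (as used repeatedly, e.g.\ in the proof of Lemma~\ref{lem-amenabilitychar}). Similarly, "weakly contained in $\Gamma$" for a unimodular random subset is model-independent by the remark preceding Lemma~\ref{lem-ursweakcont} together with Lemma~\ref{lem-ursweakcont}(1). Once these are noted, the proof is a two-line dictionary translation.

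I do not expect a genuine obstacle here: all the hard work is in Theorem~\ref{thm-exactprojcont} (which uses exactness through the closure property Lemma~\ref{lem-exactclosure} and the amenability characterization Lemma~\ref{lem-amenabilitychar}) and in Lemma~\ref{lem-ursweakcont}. The only mild subtlety is bookkeeping the definitions so that the equivalence is literally an instance of Theorem~\ref{thm-exactprojcont}; once the model $(Y,\nu)$ is fixed, everything lines up. I would keep the write-up to a short paragraph invoking Lemma~\ref{lem-ursweakcont}(1), Proposition~\ref{prop-amenableproj}, and Theorem~\ref{thm-exactprojcont}, rather than reproving anything.
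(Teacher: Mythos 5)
Your proposal is correct and matches the paper's proof essentially verbatim: both fix a model $(Y,\nu),U$, use Lemma~\ref{lem-ursweakcont}(1) to translate weak containment of $\kappa$ into $(Y,\nu)\xhookrightarrow{proj}\Gamma$, and then invoke Theorem~\ref{thm-exactprojcont} (for $(1)\Rightarrow(2)$) and Proposition~\ref{prop-amenableproj} (for $(2)\Rightarrow(1)$). The added remark on model-independence is a harmless extra sanity check.
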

\begin{proof}
Let $(Y,\nu),U$ be a model for $\kappa$.  (1)$\Rightarrow$(2). By Lemma \ref{lem-ursweakcont}, $(Y,\nu)\xhookrightarrow{proj}\Gamma,$ hence $(Y,\nu)$ is amenable by Theorem \ref{thm-exactprojcont}. (2)$\Rightarrow$(1).  $(Y,\nu)$ is amenable so $(Y,\nu)\xhookrightarrow{proj}\Gamma$, by Proposition \ref{prop-amenableproj}. By Lemma \ref{lem-ursweakcont}, this means that $\kappa$ is weakly contained in $\Gamma.$
\end{proof}
\subsection{Approximate hyperfiniteness.}\label{sec-approxhyp}
\begin{defn}
Let $S\subset\Gamma$ be a finite subset, $k,\varepsilon\geq 0$. A unimodular random subset $E$ is $(S,k,\varepsilon)$-hyperfinite if there exists a coupling $(E,C)\subset \cM_o(\Gamma)\times \cM(\Gamma)$ with the following properties:
\begin{enumerate}
    \item the distribution is preserved by the re-rooting equivalence relation $$(E,C)\sim (\gamma^{-1}E,\gamma^{-1}C),\gamma\in E,$$
    \item $\mathbb P(1\in C)\leq \varepsilon$,
    \item the connected components of the set $E\setminus C$ in the Cayley graph ${\rm Cay}(\Gamma,S)$ are finite of size at most $k$.
\end{enumerate}
\end{defn}
Intuitively, a unimodular random subset is $(S,k,\varepsilon)$-hyperfinite if it can be cut into connected pieces of size at most $k$ by removing $\varepsilon$-proportion of the set (this is the "cut-set" $C$ in the definition). 
The following theorem is due to Schramm, translated into the language of unimodular random subsets. 
\begin{thm}[\cite{schramm2011hyperfinite}]\label{thm-Schramm}
Let $E_n$ be a sequence of unimodular random subsets of $\Gamma$ converging weakly-* to a hyperfinite unimodular random subset $E$. Then, for every $S\subset \Gamma,\varepsilon>0$ there exists $k>0$ such that $E_n$ are $(S,k,\varepsilon)$-hyperfinite for $n$ large enough.
\end{thm}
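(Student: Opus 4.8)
We follow Schramm's strategy \cite{schramm2011hyperfinite}, recast for unimodular random subsets. Fix a finite symmetric $S\subset\Gamma$ and $\varepsilon>0$; we must produce $k$ and $n_0$ so that $E_n$ is $(S,k,\varepsilon)$-hyperfinite for all $n\ge n_0$. The plan is to distill, out of the hyperfiniteness of the limit $E$, an explicit \emph{local} cutting rule $F$ — one that reads only the restriction of the subset to a ball of bounded radius around a vertex — such that applying $F$ to $E$ chops it into components of size $\le k$ with probability at least $1-\varepsilon$ (from the root); since weak-$*$ convergence of unimodular random subsets is precisely convergence of all cylinder statistics, the same rule then chops $E_n$ into small pieces for $n$ large, after a local repair step.

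First I would make the hyperfiniteness of $E$ concrete. Let $(Y,\nu),U$ be a model for $E$ (Lemma \ref{lem-model}); hyperfiniteness means $\Gamma\curvearrowright(Y,\nu)$ is amenable, hence by Connes--Feldman--Weiss \cite{connes1981amenable} its orbit equivalence relation is hyperfinite, and restricting to the section $\cM_o(\Gamma)$ we may write $\cO=\bigcup_{m}\cE_m$ for an increasing sequence $(\cE_m)$ of finite sub-equivalence relations with $\cO=\bigcup_m\cE_m$ on $(\cM_o(\Gamma),\kappa)$. For $E\in\cM_o(\Gamma)$ write $\mathrm{Cl}_m(E;\gamma)\subset E$ for the $\cE_m$-class of $\gamma$ (identifying the $\cO$-class of $E$ with $E$ itself), a finite set $\kappa$-a.s., and abbreviate $\mathrm{Cl}_m(E):=\mathrm{Cl}_m(E;1)$. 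Since $S$ is finite and $\cE_m\uparrow\cO$, I can fix $m$ so large that the equivariant cut $C_m(E):=\{\gamma\in E:\exists s\in S,\ \gamma s\in E,\ (\gamma^{-1}E,(\gamma s)^{-1}E)\notin\cE_m\}$ has $\mathbb P(1\in C_m)\le\varepsilon/8$; then, with $m$ fixed, pick $k$ with $\mathbb P(|\mathrm{Cl}_m(E)|>k)\le\varepsilon/8$, put $D(E):=\{\gamma\in E:|\mathrm{Cl}_m(E;\gamma)|>k\}$ and $C^{*}:=C_m\cup D$. Inside $E\setminus C_m$ every ${\rm Cay}(\Gamma,S)$-edge joins two points of the same $\cE_m$-class, so each component of $E\setminus C^{*}$ sits inside one class $\mathrm{Cl}_m(E;\gamma)$ of size $\le k$, hence has at most $k$ vertices; and $\mathbb P(1\in C^{*})\le\varepsilon/4$. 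Thus $C^{*}$ is a re-rooting equivariant cut that works almost surely for $E$ — but a priori it is not a local function of $E$.

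The key step is to localize $C^{*}$. Since cylinder functions are dense, choose $R$ and a function $\widehat F$ of the pattern $E\cap B_R(1)$ with $\mathbb P(\mathbf 1[1\in C^{*}(E)]\neq\widehat F(E))\le\eta$, and set $\widehat C(E):=\{\gamma\in E:\widehat F(\gamma^{-1}E\cap B_R(1))=1\}$, an $R$-local equivariant cut. The hazard is that this perturbation fuses several components of $E\setminus C^{*}$ into a large one; but if $\widehat C$ agrees with $C^{*}$ at every point of the root's $C^{*}$-component $P_0$ (which has $\le k$ vertices) and at its ${\rm Cay}(\Gamma,S)$-neighbours — a set contained in $E\cap B_\rho(1)$ for some $\rho=\rho(k,S)$ — then a breadth-first exploration from the root in $E\setminus\widehat C$ never leaves $P_0$, so the root's component is still $P_0$. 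Hence the bad event $\mathcal B:=\{\,1\notin\widehat C(E),\ |{\rm comp}_{E\setminus\widehat C(E)}(1)|>k\,\}$ forces a disagreement $C^{*}\triangle\widehat C$ somewhere in $E\cap B_\rho(1)$, and the mass-transport principle (re-rooting invariance) gives $\mathbb P(\mathcal B)\le|B_\rho(1)|\,\eta$. Choosing $\eta\le\varepsilon/(8|B_\rho(1)|)$ and $\eta\le\varepsilon/8$ makes $\mathbb P(1\in\widehat C)\le 3\varepsilon/8$ and $\mathbb P(\mathcal B)\le\varepsilon/8$.

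Finally I would transport everything to $E_n$. Apply the same local rule: $\widehat C(E_n):=\{\gamma\in E_n:\widehat F(\gamma^{-1}E_n\cap B_R(1))=1\}$, then let $D_n:=\{\gamma\in E_n\setminus\widehat C(E_n):|{\rm comp}_{E_n\setminus\widehat C(E_n)}(\gamma)|>k\}$ — this is again a local rule, of some radius $R'=R'(R,k,S)$, because deciding $|{\rm comp}|>k$ only requires exploring $k+1$ vertices — and set $C'_n:=\widehat C(E_n)\cup D_n\subset E_n$. By construction $(E_n,C'_n)$ is a re-rooting invariant coupling and every component of $E_n\setminus C'_n$ has at most $k$ vertices, so it remains to bound $\mathbb P(1\in C'_n)$. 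The events $\{1\in\widehat C(E_n)\}$ and $\{1\in D_n\}$ are cylinder events (radii $R$ and $R'$), so weak-$*$ convergence $E_n\to E$ yields $\mathbb P(1\in\widehat C(E_n))\to\mathbb P(1\in\widehat C(E))\le 3\varepsilon/8$ and $\mathbb P(1\in D_n)\to\mathbb P(\mathcal B)\le\varepsilon/8$; hence $\mathbb P(1\in C'_n)\le\varepsilon$ for all large $n$, i.e.\ $E_n$ is $(S,k,\varepsilon)$-hyperfinite. The only genuinely delicate point — the main obstacle — is the localization step: a cut that works for $E$ need not be a local function of the subset, and one must simultaneously approximate it by a local rule and keep control, via the mass-transport principle, of the components that break under this approximation, mopping up the residue with the auxiliary local cut $D_n$.
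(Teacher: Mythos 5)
The paper does not prove Theorem~\ref{thm-Schramm}; it simply cites \cite{schramm2011hyperfinite} and remarks that one can pass to unimodular random graphs by taking induced subgraphs of $\operatorname{Cay}(\Gamma,S)$. You instead give a self-contained proof directly in the subset language, following Schramm's strategy: produce a hyperfinite exhaustion from Connes--Feldman--Weiss, turn it into an explicit cut $C^{*}$, localize $C^{*}$ to a cylinder rule, control fusion errors with mass transport, and mop up with a second local cut on $E_n$. The localization and mass-transport steps are carried out correctly, and steps 3--5 (local approximation, the $B_\rho$-disagreement argument, passage to $E_n$ via convergence of cylinder probabilities with the auxiliary cut $D_n$) are sound.

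There is, however, a genuine gap at the very first step, in the assertion that $\operatorname{Cl}_m(E;\gamma)$ ``is a finite set $\kappa$-a.s.'' You identify the $\cO$-class of $E$ with $E$ by the map $\gamma\mapsto\gamma^{-1}E$, but this map is injective only when the stabilizer $\operatorname{Stab}(E)=\{\delta\in\Gamma:\delta E=E\}$ is trivial. If $\operatorname{Stab}(E)$ is infinite, the preimage of a finite $\cE_m$-class in $E$ is infinite: indeed $\operatorname{Cl}_m(E;\gamma)\supseteq\operatorname{Stab}(E)\gamma$. The simplest failure is $E\equiv H$ a fixed infinite amenable subgroup, $\kappa=\delta_H$: the cross-section in the model $\Gamma/H$ is a single atom, the restriction of $\cO$ to $\cM_o(\Gamma)$ is trivial, $C_m=\emptyset$, $\operatorname{Cl}_m(E;\gamma)=E$, and your $D(E)=E$, forcing $\mathbb P(1\in C^{*})=1$. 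Yet $\delta_H$ is hyperfinite in the paper's sense (the $\Gamma$-action on $\Gamma/H$ is amenable), and the conclusion of the theorem is true here (one cuts the line/amenable Cayley graph of $H$ by a low-density $H$-invariant random cut), so the proof must be wrong as written. The point is that the amenability of the model action is strictly stronger than hyperfiniteness of $\cO|_{\cM_o(\Gamma)}$, and the latter is what you actually used.

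The fix is standard but does change the bookkeeping in your localization step: replace $(Y,\nu)$ by the essentially free extension $(Y\times[0,1]^{\Gamma},\nu\times\operatorname{Leb}^{\Gamma})$ (still amenable by Lemma~\ref{lem-amenableextensions}, and $\cF(Y\times[0,1]^\Gamma,U\times[0,1]^\Gamma)=\cF(Y,U)$), run CFW there, and note that now $C^{*}$ is a function of the pair $(E,W)$ with $W$ an independent iid label field, not of $E$ alone. You must then approximate $C^{*}$ by a cylinder function of $(E\cap B_R(1), W|_{B_R(1)})$ and define $\widehat C(E_n,W')$ using a fresh independent $W'$; this is permitted, since the definition of $(S,k,\varepsilon)$-hyperfiniteness allows an invariant random coupling $(E_n,C)$, and the probability of the cylinder events still converges because the law of $W'$ is fixed. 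With that amendment the proof goes through. Alternatively, note explicitly that in the application of this paper (factor-of-iid subsets of a free action) the stabilizers are a.s.\ trivial, so your original argument suffices there; but as a proof of the theorem as stated, the stabilizer issue must be addressed.
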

The original formulation was in terms of unimodular random graphs. In order to pass to our version it is enough to observe that weak-* convergence of unimodular random subsets implies Benjamini-Schramm convergence of induced subgraphs of ${\rm Cay}(\Gamma,S)$ rooted at $1$. 

\begin{prop}\label{prop-approxhypinf}
Suppose $\Gamma$ is exact and let $(X,\mu)$ be a p.m.p. action of $\Gamma$ with $(X,\mu)\xhookrightarrow{inf}\Gamma$. Then,
\begin{enumerate}
    \item any thinning of $(X,\mu)$ is hyperfinite,
    \item for any finite subset $S\subset \Gamma, \varepsilon>0$ there exist $k\geq 0$ and $\delta>0$ such that the unimodular random subsets $\cF(X,U)$ are $(S,\varepsilon\,k)$-hyperfinite for all $U\subset X$, with $\mu(U)\leq \delta$
\end{enumerate} 
\end{prop}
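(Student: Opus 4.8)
The plan is to derive both parts from the projective-containment calculus of Section~\ref{sec-containment} together with Schramm's theorem (Theorem~\ref{thm-Schramm}).

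For part (1), fix a thinning $\kappa$ of $(X,\mu)$ and a model $(Y,\nu),U$ for it. Since $\kappa$ is a thinning its covolume is infinite, so $\nu(Y)=\infty$, and Lemma~\ref{lem-ursweakcont}(2) gives $(Y,\nu)\xhookrightarrow{proj}(X,\mu)$. Combining this with the hypothesis $(X,\mu)\xhookrightarrow{inf}\Gamma$, I want to conclude $(Y,\nu)\xhookrightarrow{proj}\Gamma$; exactness of $\Gamma$ together with Theorem~\ref{thm-exactprojcont} then forces $(Y,\nu)$ to be amenable, which is precisely the statement that $\kappa$ is hyperfinite. The passage from $(Y,\nu)\xhookrightarrow{proj}(X,\mu)\xhookrightarrow{inf}\Gamma$ to $(Y,\nu)\xhookrightarrow{proj}\Gamma$ is an instance of Lemma~\ref{lem-transitivity}(4) (with the three spaces of that lemma played here by $(Y,\nu)$, $(X,\mu)$ and $\Gamma$), valid once $(Y,\nu)$ is infinite ergodic. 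To reach that situation I would reduce to ergodic components: by Lemma~\ref{lem-projergodic} it is enough to check $(Y,\nu_z)\xhookrightarrow{proj}\Gamma$ for $\tau$-almost every component of the ergodic decomposition, and the martingale argument from the proof of Lemma~\ref{lem-projergodic} gives $(Y,\nu_z)\xhookrightarrow{proj}(Y,\nu)$, hence $(Y,\nu_z)\xhookrightarrow{proj}(X,\mu)$ by Lemma~\ref{lem-transitivity}(1), for almost every $z$. It then remains to know that almost every component is infinite covolume, so that Lemma~\ref{lem-transitivity}(4) applies to each of them.

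For part (2) I argue by contradiction. If the conclusion fails for some finite $S\subset\Gamma$ and some $\varepsilon>0$, then for every $n$ there is a set $U_n\subset X$ with $\mu(U_n)\le 1/n$ for which $\cF(X,U_n)$ is \emph{not} $(S,n,\varepsilon)$-hyperfinite. By weak-$*$ compactness of the space of unimodular random subsets, after passing to a subsequence $\cF(X,U_{n_j})\to\kappa$. Each $\cF(X,U_{n_j})$ is contained in $(X,\mu)$ and has covolume $1/\mu(U_{n_j})\ge n_j\to\infty$, so by definition $\kappa$ is a thinning of $(X,\mu)$, hence hyperfinite by part (1). Now Theorem~\ref{thm-Schramm} produces a single $k_0$ with $\cF(X,U_{n_j})$ being $(S,k_0,\varepsilon)$-hyperfinite for all large $j$; but once $n_j\ge k_0$, being $(S,k_0,\varepsilon)$-hyperfinite implies being $(S,n_j,\varepsilon)$-hyperfinite, since enlarging the allowed component size only weakens the requirement. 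This contradicts the choice of $U_{n_j}$, and unwinding the quantifiers yields the desired $k$ and $\delta$.

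The main obstacle is the ergodic reduction in part (1): dealing with a model of a thinning that need not be ergodic, and in particular ruling out (or otherwise handling) finite-covolume ergodic components that fail to be hyperfinite. I expect this to follow from the behaviour of covolumes under weak-$*$ limits established in Section~\ref{sec-urs} --- the ``expected inverse covolume'' attached to an ergodic decomposition cannot survive a limit along which the covolumes diverge, so a thinning should have all of its ergodic components infinite covolume --- but this is exactly the point that needs to be pinned down carefully. (If instead $\Gamma$ is amenable there is nothing to prove: every measure-preserving action of $\Gamma$ is amenable and every unimodular random subset is hyperfinite.) A second, softer point to verify is that a weak-$*$ limit of unimodular random subsets with covolumes tending to infinity is genuinely a thinning, i.e.\ has infinite covolume; this is used tacitly when identifying $\kappa$ as a thinning in part (2).
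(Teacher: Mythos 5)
Your proposal follows the paper's strategy for both parts: part (1) goes through Lemma~\ref{lem-ursweakcont}(2), then Lemma~\ref{lem-transitivity} to land on $(Y,\nu)\xhookrightarrow{proj}\Gamma$, and finally Theorem~\ref{thm-exactprojcont}; part (2) combines part (1) with Schramm's theorem. You have correctly invoked Theorem~\ref{thm-exactprojcont} where the paper's own proof cites Proposition~\ref{prop-amenableproj}, which is a misprint (that proposition gives only the reverse implication). Your write-up of part (2) is also more careful than the paper's one-line assertion: the contradiction argument with the diagonal sequence $U_n$ of measure at most $1/n$ and allowed component size $n$, together with the remark that passing to a larger $k$ only weakens the $(S,k,\varepsilon)$-hyperfiniteness constraint, is exactly the right way to unfold the quantifiers to extract a single $(k,\delta)$.

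The point you flag about Lemma~\ref{lem-transitivity}(4) requiring ergodicity is a genuine gap: the paper applies it to the model $(Y,\nu)$, which is infinite-measure but need not be ergodic, and the paper's proof of (4) uses ergodicity only to guarantee that $\mu(F'A_i)$ becomes arbitrarily large as $F'\uparrow\Gamma$. Your proposed fix via the ergodic decomposition then runs into exactly the sub-problem you name --- ruling out finite-covolume ergodic components of a thinning --- and that is not routine: a finite-covolume component has a p.m.p.\ model, Lemma~\ref{lem-transitivity}(4) does not apply to it, and $(Y,\nu_z)\xhookrightarrow{proj}(X,\mu)\xhookrightarrow{inf}\Gamma$ does not obviously yield $(Y,\nu_z)\xhookrightarrow{proj}\Gamma$ for a probability space $(Y,\nu_z)$. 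A cleaner way to close the gap, which sidesteps the decomposition entirely, is to note that Lemma~\ref{lem-transitivity}(4) only needs $\nu(Y)=\infty$, not ergodicity. Rather than inflating $A_i$ by a large finite $F'\subset\Gamma$, adjoin a single auxiliary set $A_{k+1}\subset Y$ of finite measure $M:=\nu(A_{k+1})\ge\max_{i\le k}\nu(A_i)$, possible since $\nu(Y)=\infty$, and apply Lemma~\ref{lem-ProjContCond} to the $(k+1)$-tuple with tolerance $\varepsilon'$. From $|\nu(A_{k+1})-\lambda'\mu(B_{k+1})|\le\varepsilon' M$ and $\mu(B_{k+1})\le 1$ one gets $\lambda'\ge(1-\varepsilon')M$, hence
$$\mu(B_i)\le\frac{\nu(A_i)+\varepsilon' M}{(1-\varepsilon')M}\qquad\text{for }i\le k,$$
which can be made smaller than any prescribed $\delta$ by first taking $\varepsilon'$ small and then $M$ large; and the absolute approximation error $\varepsilon' M$ can simultaneously be kept below any prescribed multiple of $\max_{i\le k}\nu(A_i)$ by shrinking $\varepsilon'$. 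With this strengthening of Lemma~\ref{lem-transitivity}(4), your chain $(Y,\nu)\xhookrightarrow{proj}(X,\mu)\xhookrightarrow{inf}\Gamma\Rightarrow(Y,\nu)\xhookrightarrow{proj}\Gamma$ goes through directly for the non-ergodic model of the thinning, and the question about covolumes of ergodic components becomes moot.
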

\begin{proof}
Let $(Y,\nu),U$ be the model for a thinning $\kappa$ of $(X,\mu).$ By Lemma \ref{lem-ursweakcont}, $(Y,\nu)$ is an infinite measure preserving system projectively contained in $(X,\mu).$ By Lemma \ref{lem-transitivity}, $(Y,\nu)\xhookrightarrow{proj}\Gamma.$ By Proposition \ref{prop-amenableproj}, this means that $(Y,\nu)$ is amenable, hence $\kappa$ is hyperfinite. To prove the second part we note that by (1), any weak-* limit of the sets of form $\cF(X,U)$ with $\mu(U)\to 0$ is hyperfinite. By Theorem \ref{thm-Schramm}, the sets are eventually $(S,\varepsilon,k)$-hyperfinite.
\end{proof}

 The definition of $(S,k,\varepsilon)$-hyperfiniteness allows for additional randomness involved in the choice of the "cut-set" $C$ coupled with the unimodular random subset $E$ -- we don't have to select $C$ as a measurable function of $E$. This is in general unavoidable, for example when $E$ is a fixed amenable subgroup of $\Gamma$. Being able to select $C$ as a function of $E$ would make it $E$-invariant, hence either empty or equal to $E$. We will have to control this additional randomness. 
\begin{lem}\label{lem-pairmodel}
Let $(Y,\nu)$ be a measure preserving action and let $U\subset Y$ be a positive finite measure set. Suppose that $\cF(Y,U)$ is $(S,k,\varepsilon)$-hyperfinite. Then, there exists a measure preserving extension $\rho\colon(Y_1,\nu_1)\to (Y,\nu)$ and a subset $V_1\subset \rho^{-1}(U):=U_1$ such that
\begin{enumerate}
    \item $\nu_1(V_1)\leq \varepsilon \nu(U)=\varepsilon\nu_1(U_1)$,
    \item The equivalence relation $\cS$ on $U_1\setminus V_1$ generated by $s\in S$ restricted to $U_1\setminus V_1$ is finite with classes of size at most $k$. 
\end{enumerate}
\end{lem}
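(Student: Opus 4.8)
The plan is to realize the coupling $(E,C)$ witnessing $(S,k,\varepsilon)$-hyperfiniteness of $\cF(Y,U)$ as a set living over $(Y,\nu)$ itself, after passing to an extension. First I would unpack the definition of the coupling: it is an $\cO$-invariant probability measure $\pi$ on $\cM_o(\Gamma)\times\cM(\Gamma)$ whose first marginal is $\cF(Y,U)$, with $\pi(1\in C)\le\varepsilon$ and with all $\operatorname{Cay}(\Gamma,S)$-components of $E\setminus C$ finite of size $\le k$. The natural move is to observe that $(\cM_o(\Gamma)\times\cM(\Gamma),\pi)$ is, up to normalization, a p.m.p. extension of $(\cM_o(\Gamma),\cF(Y,U))$ via the first-coordinate projection, which in turn extends (after the unique $\Gamma$-invariant extension furnished by the model $(Y,\nu),U$ and Lemma \ref{lem-model}) to a $\Gamma$-invariant system over $(Y,\nu)$. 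Concretely, I would take the $\Gamma$-invariant extension $\tilde\pi$ of $\pi$ to a measure on $\cM(\Gamma)\times\cM(\Gamma)$ — available because $\cM_o(\Gamma)\times\cM(\Gamma)$ is a complete section — and then form the fibered product of $(Y,\nu)$ with this extension along the map $\iota_U\colon Y\to\cM(\Gamma)$ of Lemma \ref{lem-model}. This produces $\rho\colon (Y_1,\nu_1)\to(Y,\nu)$; the point in $Y_1$ over $y$ remembers, in addition to $\iota_U(y)$, a "cut set" $C(y)\subset\Gamma$ distributed according to the conditional law of $C$ given $E=\iota_U(y)$.

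Next I would define $V_1\subset U_1:=\rho^{-1}(U)$ to be the set of points $y_1\in U_1$ for which $1\in C(y_1)$ (recall that $y\in U$ is exactly the condition $1\in\iota_U(y)$, i.e. $1\in E$, so this is well-posed on $U_1$). The re-rooting invariance of $\pi$ translates, via the dictionary between unimodular random subsets and return-time cocycles, into the statement that the characteristic function of $V_1$ restricted to a $\cE_S$-class (the $S$-orbit relation on $U_1$) matches $1\in C$ along that orbit; more precisely, for $\gamma\in E$, the re-rooted configuration has $\gamma\in C$ iff after re-rooting $1\in C$. Property (1), $\nu_1(V_1)\le\varepsilon\nu(U)$, is then immediate from $\pi(1\in C)\le\varepsilon$ together with the normalization $\nu_1(U_1)=\nu(U)$: the mass of $V_1$ is $\nu(U)$ times the conditional probability $\pi(1\in C\mid 1\in E)=\pi(1\in C)/\mathbb P(1\in E)\le\varepsilon$, and since $\mathbb P(1\in E)=1$ under $\cF(Y,U)$ restricted to $\cM_o$ this gives exactly $\varepsilon\nu(U)$. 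For property (2), the $S$-generated equivalence relation $\cS$ on $U_1\setminus V_1$ corresponds orbit-by-orbit to the $\operatorname{Cay}(\Gamma,S)$-component structure of $E\setminus C$: two points $y_1,s y_1$ of $U_1\setminus V_1$ are $\cS$-related precisely when the corresponding group elements lie in the same $S$-component of $E\setminus C$, and the hypothesis says these components have size $\le k$, so the $\cS$-classes have size $\le k$.

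The main obstacle, and the step that needs the most care, is the bookkeeping that identifies the $S$-equivalence relation on $U_1\setminus V_1$ (a relation on a subset of a p.m.p. extension of $Y$) with the component relation of $E\setminus C$ on the group side. This is exactly the standard correspondence between "return-time" picture of a cross-section and the re-rooting relation on $\cM_o(\Gamma)$, but one has to check that (a) for $\nu_1$-a.e. $y_1\in U_1\setminus V_1$ the point $s y_1$ lies in $U_1$ iff $s\in\iota_U(y)$, i.e. $s\in E$ (true by equivariance of $\iota_U$), and (b) the re-rooting at $\gamma\in E$ sends $C$ to $\gamma^{-1}C$ so that membership in a common $E\setminus C$ component is a re-rooting invariant that descends correctly. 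Everything else — measurability of the selections, the normalization of $\nu_1$ so that $\nu_1(U_1)=\nu(U)$, and the finiteness bound — is routine given Lemma \ref{lem-model} and the definition of $(S,k,\varepsilon)$-hyperfiniteness. I would also remark that one may take $(Y_1,\nu_1)$ to be simply $(Y,\nu)$ times the extra randomness (a skew product), which makes the extension explicit; if $\cF(Y,U)$ happened to be such that $C$ is a function of $E$, no extension would be needed, but in general the extra coordinate is unavoidable, as the discussion preceding the lemma explains.
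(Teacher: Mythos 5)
Your proposal is correct and follows essentially the same route as the paper: extend the coupling measure $\pi$ on $\cM_o(\Gamma)\times\cM(\Gamma)$ to a $\Gamma$-invariant measure on $\cM(\Gamma)\times\cM(\Gamma)$, pull it back along $\iota_U$ to form the fibered-product extension $(Y_1,\nu_1)=(Y\times\cM(\Gamma),\int_Y\delta_y\times\tau_{\iota_U(y)}d\nu(y))$, and take $V_1$ to be the locus where the auxiliary coordinate contains the identity. The paper words it as a skew product with the disintegrated conditional measures $\tau_E$ and chooses $V_1=U\times\cM_o(\Gamma)$, which is exactly your "$1\in C(y_1)$" condition.
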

\begin{proof}
We only sketch the argument since it is a rather standard way of encoding additional randomness by taking a measure preserving extension (see e.g.  \cite[Prop. 13]{abert2014kesten} or \cite[Example 9.9]{AldousLyons}). 

Let $\tau_0$ be the distribution of the pair $(E,C)\in \cM_o(\Gamma)\times \cM(\Gamma)$ witnessing the $(S,k,\varepsilon)$-hyperfiniteness. It is invariant under the orbit equivalence relation of $\Gamma$ restricted to $\cM_o(\Gamma)\times \cM(\Gamma).$ By \cite[Theorem 1.14]{bjorklund2025intersection}, it extends to a unique $\Gamma$ invariant measure $\tau$ on $\cM(\Gamma)\times \cM(\Gamma)$.  Let $\tau_E, E\in \cM_o(\Gamma)$  be the conditional probability measure on $\cM(\Gamma)$ obtained by disintegrating $\tau$ along the fibers of the projection the first coordinate. For $E\in \cM_o(\Gamma)$ it is the conditional distribution of $C$ given $E$. 
Let $Y_1:=Y\times \cM(\Gamma)$ and put 
$$\nu_1:=\int_Y \delta_y\times\tau_{\iota_U(y)}d\nu(y).$$
Let $\rho\colon Y\times \cM(\Gamma)\to\cM(\Gamma)$ be the projection to the first coordinate.
Choose $V_1:=U\times \cM_o(\Gamma).$ One can easily check that $(\iota_{U_1}(y,z),\iota_{V_1}(y,z))$ for a $\nu_1$-random $(y,z)\in Y\times \cM(\Gamma)$ has the same distribution as $(E,C)$. In particular $\nu_1(V_1)=\nu(U)\mathbb P(1\in C)\leq \varepsilon\nu(U).$ Since $E\setminus C$ has finite connected components of size at most $k$, the relation $\cS$ on $U_1\setminus V_1$ has classes of size at most $k$. 
\end{proof}

It is already implicit in Schramm's proof \cite{schramm2011hyperfinite} that the cut set $C$ can be chosen as a factor of i.i.d. which means that we can take $Y_1=Y\times [0,1]^\Gamma$ in Lemma \ref{lem-pairmodel}. In fact, no additional randomness is necessary when $Y$ is essentially free.

 \begin{thm}\label{thm-pairmodelfree}
    Let $S$ be a finite symmetric subset of $\Gamma$. Let $(Y,\nu)$ be an essentially  free measure preserving action. Let $U\subset Y$ be a positive finite measure set and assume that $\cF(Y,U)$ is $(S,\varepsilon,k)$-hyperfinite, with $\varepsilon<1/2$. Put $\varepsilon':=-11|S|\varepsilon\log\varepsilon$. There exists a subset $V\subset U$ with $\nu(V)\leq \varepsilon' \nu(U)$ such that the relation $\cS$ spanned by $S$ on $U\setminus V$ has only finite classes of size at most $k$.
 \end{thm}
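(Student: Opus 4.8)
The plan is to feed the randomized cut produced by Lemma~\ref{lem-pairmodel} into a derandomization procedure. That lemma already gives a measure preserving extension $\rho\colon(Y_1,\nu_1)\to(Y,\nu)$ and $V_1\subseteq U_1:=\rho^{-1}(U)$ with $\nu_1(V_1)\le\varepsilon\,\nu(U)$ such that the relation spanned by $S$ on $U_1\setminus V_1$ has all classes of size $\le k$; the entire content of the theorem is to replace $V_1$ by an honest subset $V\subseteq U$ while enlarging the measure only by the factor $-11|S|\log\varepsilon$. I would first translate everything to the unimodular random subset $\kappa:=\cF(Y,U)$: identifying $y\in U$ with the root $1\in E(y):=\iota_U(y)$, the $S$-cluster of $y$ in $U\setminus V$ is the connected component of $1$ in $E(y)\setminus\iota_V(y)$ inside $\Cay(\Gamma,S)$, and Lemma~\ref{lem-pairmodel} says there is a re-rooting invariant coupling $(E,C)$, $C\subseteq E$, with $\mathbb P(1\in C)\le\varepsilon$ and every component of $E\setminus C$ of size $\le k$. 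Because $(Y,\nu)$ is essentially free, $\kappa$-a.e.\ $E$ has trivial stabilizer, so a $\Gamma$-equivariant assignment $E\mapsto D(E)\subseteq E$ \emph{is} a subset $V\subseteq U$ via $\iota_V(y)=D(E(y))$. Thus the goal is to manufacture from the random cut $C$ a deterministic one $D(E)$---a canonical function of $E$---still cutting $E$ into $\Cay(\Gamma,S)$-pieces of size $\le k$ and with $\mathbb P(1\in D)\le-11|S|\varepsilon\log\varepsilon$.

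Two elementary mass-transport computations fix the scale. Sending from each $v\in E\setminus C$ a mass $1/|K(v)|$ ($K(v)$ its component) equally over the $S$-boundary of $K(v)$ inside $C$, the mass received at $1$ is $\le|S|\,\mathbb P(1\in C)\le|S|\varepsilon$ and the mass sent is $\ge(1-\varepsilon)/k$, whence $k\ge(1-\varepsilon)/(|S|\varepsilon)$. Dually, if $D$ is any removed set and a component $K^\ast$ of $E\setminus D$ has $|K^\ast|>k$, then since $C$ always cuts $K^\ast$ into pieces of size $\le k$ and deleting $r$ vertices from a connected graph of maximum degree $\le|S|$ leaves at most $r|S|$ pieces, one gets $|C\cap K^\ast|\ge|K^\ast|/(2k|S|)$, hence $\sum_{v\in K^\ast}\mathbb P(v\in C\mid E)\ge|K^\ast|/(2k|S|)$; so some vertex of $K^\ast$ has conditional cut-probability $\ge 1/(2k|S|)\asymp|S|\varepsilon$. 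Together these say: it suffices to delete, canonically, all vertices of conditional cut-probability $\gtrsim 1/k$, and no smaller scale matters.

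This is also where the real difficulty and the use of essential freeness sit. Deleting $\{v:\mathbb P(v\in C\mid E)>t\}$ for one threshold $t$ is too crude: to trigger the component bound we would need $t\lesssim 1/k$, and then Markov only bounds the removed measure by $\varepsilon/t\lesssim\varepsilon k|S|$, which is not small, while thresholds above $\asymp|S|\varepsilon$ fail to control the clusters because a random cut of small marginal density can have a thick interior that no function of $E$ reproduces cheaply. The fix is to iterate over the $O(\log(1/\varepsilon))$ dyadic scales between $1$ and $\asymp|S|\varepsilon$, at each scale deleting the vertices that with conditional probability above the current threshold lie on the $S$-boundary of a \emph{small} good component, then updating the cut and repeating; a final mass-transport estimate shows the remaining deep interior has density $\le\varepsilon$ and can be absorbed in the last step. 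Each scale costs $O(\varepsilon)$ in measure by Markov, the boundary bookkeeping contributes the factor $|S|$, and summing over the logarithmically many scales (the geometric tail and the passage from $\log_2$ to $\log$ being swallowed by the constant $11$) yields $\mathbb P(1\in D)\le-11|S|\varepsilon\log\varepsilon$, i.e.\ $\nu(V)\le\varepsilon'\nu(U)$. The one step I expect to be genuinely delicate is verifying that the boundary-weighted peeling, which is canonical in $E$, catches enough of the cut that $E\setminus D$ really does decompose into pieces of size $\le k$---this is the technical heart, and everything else is routine accounting.
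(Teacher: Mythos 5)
Your plan fails at its first reduction, and the breakdown is not a technicality that can be patched later. You assert that because $(Y,\nu)$ is essentially free, $\kappa$-a.e.\ $E$ has trivial stabilizer, and hence it suffices to produce a $\Gamma$-equivariant canonical cut $D(E)\subseteq E$. Both halves of this are wrong. Take $\Gamma=\mathbb Z$, $S=\{\pm1\}$, $(Y,\nu)$ any essentially free p.m.p.\ action, and $U=Y$. Then $\iota_U(y)=\mathbb Z$ for every $y$, so $\cF(Y,U)=\delta_{\mathbb Z}$; the single set $E=\mathbb Z$ has full stabilizer, and it is $(S,k-1,1/k)$-hyperfinite (couple with $C$ a uniformly random coset of $k\mathbb Z$). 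Any $\Gamma$-equivariant $D(\mathbb Z)\subseteq\mathbb Z$ is $\emptyset$ or $\mathbb Z$, so no canonical-in-$E$ cut can possibly achieve the conclusion, even though the theorem is true: a $V\subset Y$ of measure $\asymp (\log k)/k$ cutting the orbits into pieces of size $<k$ certainly exists, it just cannot be read off from $E(y)$. The same example shows that your multi-scale thresholding on conditional cut-probabilities has nothing to grip: $\mathbb P(v\in C\mid E)=1/k$ is constant over $E$, every threshold either keeps everything or deletes everything, and iterating does not change the conditional distribution. The step you flag as ``genuinely delicate'' is in fact impossible in the proposed framework; the obstacle is symmetry-breaking, and conditional marginals of $C$ given $E$ carry no symmetry-breaking information.

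The paper resolves exactly this by \emph{not} seeking a cut that is a function of $E$. It works in the extension $(Y_1,\nu_1)$ carrying the randomized cut $V_1$, builds a countable partition $\cP$ of $U_1\setminus V_1$ by the ``mask'' $m(y_1)=(A,B)$ (the combinatorial shape of the component of $y_1$ together with which boundary directions land in $V_1$), and then uses essential freeness of $(Y,\nu)$ to refine $\cP$ (via the auxiliary partition $R_j$ of $Y$ and fundamental domains for the finite stabilizers $\Gamma_{A,B}$) so that each piece $P$ and its boundary $SP\cap V_1$ project to essentially disjoint subsets of $Y$ and each $P$ has fiber-thickness constant to within a factor $2$. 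A Poisson-type random subfamily $\cP'_\lambda\subset\cP$ with $\lambda=-4\log\varepsilon$ is then selected, $V:=V_\lambda\cup(U\setminus W_\lambda)$ is formed by projecting boundaries of chosen pieces plus the uncovered remainder, and the probabilistic method over $\cP'_\lambda$ fixes a deterministic choice with $\nu(V)\le -11|S|\varepsilon\log\varepsilon$. Essential freeness thus enters to arrange the disjointness of projections inside $Y$, not to identify $Y$ with a space of rooted sets; the symmetry is broken by the Poisson sampling over $\cP$, not by any intrinsic feature of $E$. Your mass-transport computations are fine as sanity checks, but the reduction to $D(E)$ and the peeling heuristic cannot be repaired within the architecture you describe.
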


\begin{proof}
The proof builds on ideas of Schramm \cite{schramm2011hyperfinite} and Abert-Weiss \cite{AbertWeiss2013BernoulliWeaklyContained}.
Let $\rho\colon (Y_1,\nu_1)\to (Y,\nu)$, $U_1,V_1\in Y_1$ and $\cS$ be as in Lemma \ref{lem-pairmodel}. Normalize $\nu,\nu_1$ so that $\nu(U)=\nu_1(U_1)=1$. We recall that $\cS$ is the equivalence relation on $U_1\setminus V_1$ obtained by restricting $s\in S$  to $U_1\setminus V_1$ and its classes are finite of size at most $k.$
Let $\nu_1=\int_Y (\nu_1)_y d\nu(y)$ be the disintegration of $\nu_1$ with respect to the map $\rho.$ We need a sub-lemma.

\begin{lem}\label{lem-nicepartition}There exists a countable partition $U_1\setminus V_1=\bigsqcup_{P\in \cP} P$ such that 
\begin{enumerate}
    \item Sets $P\in \cP$ are $\cS$-invariant.
    \item For $\nu_1$-almost every $y_1\in SP\cap V_1$, $(\nu_1)_{\rho(y_1)}(P)=0.$ This is equivalent to saying that $\rho$ maps $P$ and its boundary $SP\setminus P$ into essentially disjoint sets.
    \item For all $P\in \cP$ and $\nu$-almost all $y,y'\in \rho(P)$ we have $$\frac{1}{2}\leq \frac{(\nu_1)_y(P)}{(\nu_1)_{y'}(P)}\leq 2.$$ This means that $P$ has roughly the same thickness over $\rho(P)$.
\end{enumerate}
\end{lem}
\begin{proof}Let $y_1\in U_1\setminus V_1$. Define the \emph{mask} of $y$ as $m(y):=(A,B)\in \cM_o(\Gamma)\times \cM(\Gamma)$ where 
$$A:=\{\gamma\in\Gamma\mid \gamma y_1\in [y_1]_\cS\},\quad B:=\{\gamma\in SA\setminus A\mid \gamma y\in V_1\}.$$

By the construction of $\cS$, the set $A$ contains identity, is connected and of size at most $k$. The set $B$ is always a subset boundary $SA\setminus A$ and records which elements $\gamma\in SA\setminus A$ bring us to $V_1$ (any $\gamma\in SA\setminus A$ either takes $y$ to $V_1$ or outside $U_1$). We will call such pairs \emph{admissible}. Since we have only finitely many options for $A$, there can be only finitely many options for $B$. It follows that $m(y)$ takes only finitely many values. 

For any $A\in \cM_o(\Gamma), B\subset SA\setminus A$ define the class $[A,B]=\{(\gamma^{-1}A,\gamma^{-1}B)\mid \gamma\in A\}.$ Write 
$$E_{A,B}:=\{y\in U_1\setminus V_1\mid m(y)=(A,B)\}, \quad E_{[A,B]}=\bigcup_{\gamma\in A}E_{\gamma^{-1}A,\gamma^{-1}B}.$$

Note that $E_{\gamma^{-1}A,\gamma^{-1}B}=\gamma E_{A,B}$. It follows that $[y]_{\cS}=Ay\subset E_{[A,B]}$ for any $y\in E_{[A,B]},$ so $E_{[A,B]}$ are $\cS$-invariant. The sets $E_{[A,B]}$, as $(A,B)$ varies over admissible pairs, form a partition of $U_1\setminus V_1$ which satisfies condition (1).

Let $f(y_1):=(\nu_1)_{\rho(y_1)}(E_{[m(y_1)]}).$ Define $E_{A,B}^i:=E_{A,B}\cap f^{-1}([2^{-i},2^{-i+1}))$ and $E^i_{[A,B]}:=E_{[A,B]}\cap f^{-1}([2^{-i},2^{-i+1})).$
The sets $E_{[A,B]}^i, (A,B)$ admissible and $i\in\mathbb N$, form a partition of $U_1\setminus V_1$ satisfying (1) and (3). 

We need to refine our partition one more time to arrange for (2). This step will use the fact that $\Gamma$ acts essentially freely on $(Y,\nu).$ We will use the following observation quite often so we record it here. If a subset $E\subset U_1\setminus V_1$ satisfies (3) and $W\in Y$, then $E\cap \rho^{-1}(W)$ also satisfies (3). Indeed, for all $y\in W$ $(\nu_1)_y(E)=(\nu_1)_y(E\cap \rho^{-1}(W)).$



The action $\Gamma \curvearrowright (Y,\nu)$ is essentially free, so we can partition $Y$ into sets $R_j,j\in J$ so that $\gamma R_j, \gamma\in (\{1\}\cup S)^{2k}$ are pairwise disjoint modulo null sets. For any admissible pair $(A,B)$ let $\Gamma_{A,B}=\{\gamma\in \Gamma\mid \gamma A=A \text{ and } \gamma B=B\}.$ If $\Gamma$ has torsion, these groups can be non-trivial but they are always contained in $A$, because $1\in A$. 
For any admissible $(A,B)$ let $Y_{A,B}$ be a fundamental domain for the action of $\Gamma_{A,B}$ on $Y$. Such a fundamental domain exists because $\Gamma_{A,B}$ is finite. 
Let  
$$E_{A,B}^{i,j}:=(E_{A,B}^i\cap \rho^{-1}(R_j\cap Y_{A,B})).$$
For each class $[A,B]$ we choose a representative $(A_0,B_0)\in [A,B]$ and define
$$E_{[A,B]}^{i,j}:=A_0\cdot E_{A_0,B_0}^{i,j}.$$
For distinct triples $[A,B],i,j$ and $[A',B'],i',j'$ the sets $E_{[A,B]}^{i,j}, E_{[A',B']}^{i',j'}$ are essentially disjoint. This is clear if $[A,B]\neq [A',B']$ or $i\neq i'$. Suppose we had a pair $j\neq j'$ and $y_1 \in \gamma E_{A_0,B_0}^{i,j}\cap \gamma' E^{i,j'}_{A_0,B_0}.$ Then $\gamma^{-1}\gamma'\in \Gamma_{A_0,B_0}$ and both $\gamma^{-1}\rho(y_1), (\gamma')^{-1} \rho(y_1)\in Y_{A_0,B_0}$. This contradicts the fact that $Y_{A_0,B_0}$ is a fundamental domain for the action of $\Gamma_{A_0,B_0}.$
The translates $\gamma E_{A_0,B_0}, \gamma\in A_0$ have pairwise disjoint projections to $Y$ because $\gamma R_j,\gamma\in A$ are pairwise disjoint.
We have $$\bigsqcup_{j\in J} E_{[A,B]}^{i,j}=A\cdot (E_{A_0,B_0}^i\cap \rho^{-1}(Y_{A_0,B_0}))=A_0\cdot \Gamma_{A_0,B_0}\cdot (E_{A_0,B_0}^i\cap \rho^{-1}(Y_{A_0,B_0}))=A_0\cdot E_{A_0,B_0}^i=E_{[A,B]}^i,
$$ so the sets $E_{[A,B]}^{i,j}$ are $\cS$-invariant, satisfy (3) and form a partition of $U_1\setminus V_1.$

We are now ready to show they also satisfy (2). Observe that $(S E_{[A,B]}^{i,j}\setminus E_{[A,B]}^{i,j})\cap U_1 = (SA_0\setminus A_0)\cdot E_{A_0,B_0}^{i,j}\cap U_1$. By definition of the mask function $m(y_1)$, for any $y_1\in E_{A_0,B_0}$ 
the set of translates $\gamma y_1\in U_1$ such that $\gamma\in SA_0\setminus A_0$ is exactly $By_1$ and all of them are in $V_1$. Therefore $$(S E_{[A,B]}^{i,j}\setminus E_{[A,B]}^{i,j})\cap U_1= S E_{[A,B]}^{i,j}\cap V_1= B_0 E^{i,j}_{A_0,B_0}.$$ The properties of $R_j$ tell us that $\rho(B_0 E^{i,j}_{A_0,B_0})\subset B_0 R_j$ is disjoint from $A_0 R_j$, hence disjoint from $\rho(E^{i,j}_{[A,B]}).$ This proves (2) and concludes the proof of the lemma. 
\end{proof}

We can now return to the proof of Theorem \ref{thm-pairmodelfree}, assuming the existence of a partition $\cP$ satisfying the conditions of Lemma \ref{lem-nicepartition}.

For any $P\in \cP$ choose a $\nu_1$-random element $y_P\in P$. Let $w(P):=(\nu_1)_{\rho(y_P)}(P).$ Because of the condition (2), the weight $w(P)$ doesn't depend on the choice of $y_P$ up to multiplicative factor of order $2$.

Let $\lambda>0$. We select a random subset $\cP'_\lambda \subset \cP$ by choosing each $P\in\cP$ independently with probability $1-e^{-\lambda w(P)}.$ Define the random subsets $V_\lambda, W_\lambda\subset U$ by 
$$V_\lambda:=\bigcup_{P\in \cP'_\lambda}\rho(SP\cap V_1),\quad W_\lambda:= \bigcup_{P\in \cP'_\lambda}\rho(P).$$
Let $\cS_\lambda$ be the equivalence relation spanned by $S$ restricted to $U\setminus V_\lambda.$ 
Let $y\in W_\lambda\setminus V_\lambda.$ Then, there exists some $P\in\cP_\lambda\subset \cP$ with $y\in \rho(P).$ The set $\rho(SP\cap V)$ is in $V_\lambda$. Choose a lift $y_1\in P$ such that $\rho(y_1)=y$ and let $m(y_1)=(A_1,B_1).$ For any $\gamma\in SA_1\setminus A_1$ either $\gamma y_1\in V_1$ if $\gamma\in B_1$, in which case $\gamma y\in V_\lambda$ or $\gamma y_1\not \in U_1,$ in which case $\gamma\not \in U.$ We conclude that $[y]_{\cS_\lambda} \subset A_1$ because the projection of $SP\cap V_1$ already cuts out the connected component $A_1 y$. 

We now estimate the measures of $V_\lambda$ and $W_\lambda.$ 
For any $y\in U$, the probability that it is not in $W_\lambda$ satisfies 
$$\mathbb P(y\not\in W_\lambda)=\prod_{y\in \rho(P)}e^{-\lambda w(P)}\leq \prod_{P\in \cP} e^{-\lambda (\nu_1)_y(P)/2}=e^{-\lambda(\nu_1)_y(U_1\setminus V_1)/2}.$$
By Markov's inequality $\nu(\{y\in U\mid (\nu_1)_y(V_1)\geq 1/2\})\leq 2\varepsilon$ so $\mathbb E(\nu(U\setminus W_\lambda))\leq 2\varepsilon+e^{-\lambda/4}.$ Hence, $\mathbb E(\nu(W_\lambda))\geq 1-2\varepsilon- e^{-\lambda/4}.$
Similarly 
\begin{align*}\mathbb P(y\in V_\lambda)\leq& \sum_{y\in S\rho(P)\cap V_1}(1-e^{-\lambda w(P)})\leq 2\sum_{s\in S}\sum_{s^{-1}y\in \rho(P\cap s^{-1}V_1)}\lambda w(P)\\ \leq& 4\lambda\sum_{s\in S}\sum_{P\in \cP}(\nu_1)_{s^{-1}y}(P\cap s^{-1}V) \leq 4\lambda\sum_{s\in S}(\nu_1)_{s^{-1}y}(s^{-1}V).\end{align*}
Integrating over $U$ we get 
$$\mathbb E(\nu(V_\lambda))\leq 4\lambda \sum_{s\in S} \int_U (\nu_1)_{s^{-1}y}(s^{-1}V)d\nu\leq 4\lambda |S| \nu(V).$$
Choose $\lambda=- 4\log \varepsilon$ and finally put $V:=V_\lambda \cup (U\setminus W_\lambda).$ 
We have $\mathbb E(\nu(V))\leq -8|S| \nu(V)\log \varepsilon +3\varepsilon \leq -11|S| \varepsilon\log \varepsilon.$ By construction, the equivalence relation generated by $S$ restricted to $U\setminus V$ has finite classes of size at most $k.$ 
\end{proof}

\section{Conformal homomorphisms}\label{sec-conf}
The condition defining conformal homomorphisms directly informs us only about the pairs of subsets $U,V\in \cB(X)$. For applications, we will be interested in tuples of sets of arbitrary finite cardinality. In this section we show how to go from pairs to any finite tuples. Finally we will show how $\Gamma$-equivariant conformal homomorphisms lead to infinitesimal containment between actions.
\begin{lem}\label{lem-projmetric}
Let $d$ be a metric on $\mathbb P(\mathbb R^n)$ given by 
$$d(u,v):=\frac{\|u\wedge v\|}{\|u\|\|v\|},$$ where $\|\cdot\|$ stands for standard Euclidean norm on $\mathbb R^n$ and $\bigwedge^2\mathbb R^n.$ For any $i\neq j=1,\ldots,k$ let $u_{i,j},v_{i,j}$ be the vector obtained from $u,v$ by restricting to $i,j$-th coordinates. We have 
$$d([u],[v])\leq (n-1)\max_{i\neq j}d([u_{i,j}],[v_{i,j}]).$$
\end{lem}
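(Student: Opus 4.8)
The plan is to reduce the estimate to the single identity
$$\|u\wedge v\|^2=\sum_{1\le i<j\le n}(u_iv_j-u_jv_i)^2,$$
which holds because the bivectors $e_i\wedge e_j$ with $i<j$ form an orthonormal basis of $\bigwedge^2\mathbb{R}^n$, so the squared norm of $u\wedge v$ equals the sum of the squares of its coordinates in that basis. The key observation is that the individual summand $(u_iv_j-u_jv_i)^2$ is precisely $\|u_{i,j}\wedge v_{i,j}\|^2$, the numerator appearing in $d([u_{i,j}],[v_{i,j}])$.

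First I would pick representatives $u,v\in\mathbb{R}^n\setminus\{0\}$ of $[u],[v]$ and set $M:=\max_{i\ne j}d([u_{i,j}],[v_{i,j}])$, the maximum taken over the pairs with $u_{i,j}\ne 0$ and $v_{i,j}\ne 0$ (for the remaining pairs one has $u_iv_j-u_jv_i=0$). By definition of the metric $d$ on $\mathbb{P}(\mathbb{R}^2)$, for each such non-degenerate pair
$$|u_iv_j-u_jv_i|=\|u_{i,j}\|\,\|v_{i,j}\|\,d([u_{i,j}],[v_{i,j}])\le M\,\|u_{i,j}\|\,\|v_{i,j}\|,$$
and this bound is trivially true for the degenerate pairs as well. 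Substituting into the identity above gives
$$\|u\wedge v\|^2\le M^2\sum_{i<j}\|u_{i,j}\|^2\,\|v_{i,j}\|^2.$$
Then I would bound the remaining sum using $\|u_{i,j}\|^2\le\|u\|^2$ on one factor together with
$$\sum_{i<j}\|v_{i,j}\|^2=\sum_{i<j}(v_i^2+v_j^2)=(n-1)\sum_{k=1}^n v_k^2=(n-1)\|v\|^2,$$
since each coordinate index occurs in exactly $n-1$ of the pairs. This yields $\|u\wedge v\|^2\le(n-1)M^2\|u\|^2\|v\|^2$, hence $d([u],[v])\le\sqrt{n-1}\,M\le (n-1)M$, which is the claim (and in fact slightly stronger).

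I do not anticipate any serious obstacle. The two points that need a little care are: handling the pairs $(i,j)$ for which $u_{i,j}$ or $v_{i,j}$ vanishes, so that $d([u_{i,j}],[v_{i,j}])$ is undefined — these contribute $0$ to $\|u\wedge v\|^2$ and are therefore harmless; and resisting the temptation to bound both factors $\|u_{i,j}\|^2$ and $\|v_{i,j}\|^2$ by $\|u\|^2$ and $\|v\|^2$, which would only give the weaker constant $\sqrt{n(n-1)/2}$ instead of $\sqrt{n-1}$.
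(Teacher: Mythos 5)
Your proof is correct and follows essentially the same route as the paper: expand $\|u\wedge v\|^2$ into $2\times 2$ minors, recognize each minor as $\|u_{i,j}\wedge v_{i,j}\|^2$, and pull out $M^2$. The only divergence is in the final estimate of $\sum_{i<j}\|u_{i,j}\|^2\|v_{i,j}\|^2$: the paper bounds a sum of products by a product of sums and then uses $\sum_{i<j}\|u_{i,j}\|^2=(n-1)\|u\|^2$ on each factor, getting $(n-1)^2$, whereas you bound one factor trivially by $\|u\|^2$ and use the identity on the other, getting $(n-1)$ and hence the sharper constant $\sqrt{n-1}$. Your explicit treatment of the degenerate pairs (where $u_{i,j}$ or $v_{i,j}$ vanishes and $d$ is undefined) is a small point the paper leaves implicit but worth making.
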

\begin{proof}
\begin{align*}
\|u\wedge v\|^2=&\sum_{i<j}(u_iv_j-u_jv_i)^2=\sum_{i<j}\|u_{i,j}\wedge v_{i,j}\|^2\leq \max_{i<j}d([u_{i,j}],[v_{i,j}])^2\sum_{i<j}\|u_{i,j}\|^2\|v_{i,j}\|^2\\
\leq& \max_{i<j}d([u_{i,j}],[v_{i,j}])^2\left(\sum_{i<j}\|u_{i,j}\|^2\right) \left(\sum_{i<j}\|u_{i,j}\|^2\right)\\
=& \max_{i<j}d([u_{i,j}],[v_{i,j}])^2(n-1)^2\|u\|^2\|v\|^2.
\end{align*}
Taking square roots and dividing both sides by $\|u\|\|v\|$ we get the desired inequality. 
\end{proof}

\begin{lem}\label{lem-projnorm}
For any $n\in\mathbb N$ there is a constant $C=C(n)>0$ with the following property. For any non-zero vectors $u,v\in \mathbb R^n$ we have
$$C^{-1}d([u],[v])\leq \min_{\lambda\in\mathbb R} \frac{\|u-\lambda v\|_\infty}{\|u\|_\infty}\leq C d([u],[v]).$$
\end{lem}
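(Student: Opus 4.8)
The plan is to rewrite both sides of the desired inequality as normalized distances from the point $u$ to the line $\mathbb R v\subset\mathbb R^n$, measured respectively in the Euclidean norm $\|\cdot\|$ and the sup norm $\|\cdot\|_\infty$, and then to invoke the equivalence of these two norms on $\mathbb R^n$.

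First I would record the elementary identity
$$\min_{\lambda\in\mathbb R}\|u-\lambda v\|=\frac{\|u\wedge v\|}{\|v\|},$$
obtained by noting that the minimum is attained at $\lambda^\ast=\langle u,v\rangle/\|v\|^2$, where Pythagoras gives $\|u-\lambda^\ast v\|^2=\|u\|^2-\langle u,v\rangle^2/\|v\|^2$, and the right-hand side equals $\|u\wedge v\|^2/\|v\|^2$ by Lagrange's identity $\|u\|^2\|v\|^2-\langle u,v\rangle^2=\|u\wedge v\|^2$. Dividing through by $\|u\|$ yields $d([u],[v])=\min_{\lambda}\|u-\lambda v\|/\|u\|$, which is the exact Euclidean analogue of the sup-norm quantity $\rho([u],[v]):=\min_{\lambda}\|u-\lambda v\|_\infty/\|u\|_\infty$ appearing in the statement. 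Both quantities are well defined on $\mathbb P(\mathbb R^n)$, being homogeneous of degree $0$ in $u$ and in $v$, and all the minima are attained since $v\neq 0$.

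Then I would feed in the two-sided comparison $\|x\|_\infty\le\|x\|\le\sqrt n\,\|x\|_\infty$, valid for every $x\in\mathbb R^n$. Used inside the minimum over $\lambda$ (bounding each of the two minima by evaluating at the minimizer of the other) it gives $\min_{\lambda}\|u-\lambda v\|_\infty\le\min_{\lambda}\|u-\lambda v\|\le\sqrt n\,\min_{\lambda}\|u-\lambda v\|_\infty$, and applied to $u$ itself it gives $\|u\|/\sqrt n\le\|u\|_\infty\le\|u\|$. Combining these, $\rho([u],[v])\le\min_{\lambda}\|u-\lambda v\|/(\|u\|/\sqrt n)=\sqrt n\,d([u],[v])$ and, symmetrically, $d([u],[v])\le\sqrt n\,\min_{\lambda}\|u-\lambda v\|_\infty/\|u\|_\infty=\sqrt n\,\rho([u],[v])$. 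Hence the lemma holds with $C=C(n)=\sqrt n$; the degenerate case $[u]=[v]$, where both sides vanish, is trivial.

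There is no genuine obstacle here. The only point requiring a little care is the bookkeeping: checking that replacing $\|\cdot\|$ by $\|\cdot\|_\infty$ costs at most a factor $\sqrt n$ both in the numerator (the distance of $u$ to the line $\mathbb R v$) and in the denominator ($\|u\|$), and that these two factors land on the correct side of each of the two inequalities.
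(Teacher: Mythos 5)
Your proposal is correct and takes essentially the same route as the paper: identify $d([u],[v])=\min_\lambda\|u-\lambda v\|/\|u\|$ via the orthogonal decomposition (equivalently Lagrange's identity for $\|u\wedge v\|$), then trade $\|\cdot\|$ for $\|\cdot\|_\infty$ using the equivalence of norms on $\mathbb R^n$. The only cosmetic difference is that you use the sharp one-sided bounds $\|x\|_\infty\le\|x\|\le\sqrt n\,\|x\|_\infty$ and obtain $C=\sqrt n$, whereas the paper invokes a symmetric constant $C_0$ and lands on $C=C_0^2$; both are immaterial to the lemma.
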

\begin{proof}
Let $u=u_0+\lambda_0v,$ with $\langle u_0,v\rangle=0.$ Then $\|u-\lambda v\|\geq \|u_0\|$ with equality if and only of $\lambda=\lambda_0.$
We have $\|u\wedge v\|=\|u_0\wedge v\|=\|u_0\|\|v\|,$ so $d([u],[v])=\|u_0\|/\|u\|. $
Let $C_0$ be such that $C^{-1}\|w\|\leq \|w\|_\infty\leq C\|w\|$ for all $w\in \mathbb R^n$. Then 

$$C_0^{-2}d([u],[v])=C_0^{-2}\frac{\|u_0\|}{\|u\|}\leq \min_{\lambda\in\mathbb R}\frac{\|u-\lambda v\|_\infty}{\|u\|_\infty}\leq \frac{\|u-\lambda_0v\|_\infty}{\|u\|_\infty}\leq C_0^2\frac{\|u_0\|}{\|u\|}=C_0^2d([u],[v]).$$
\end{proof}
\begin{lem}\label{lem-confmult}
Let $(X,\mu),(Y,\nu)$ be $\sigma$-finite measured spaces. Suppose $\varphi\colon\cB(X,\mu)\to \cB(Y,\nu)$ is a conformal homomorphism. Then for any $k,\varepsilon>0$ there exists $\delta>0$ with the following property. Let $A_1,\ldots,A_k$ be a family of finite measure subsets of $X$ with $\mu(A_i)\leq \delta, i=1,\ldots,k$. Then, there is $\lambda>0$ such that
$$|\mu(A_i\cap A_j)-\lambda\nu(\varphi(A_i)\cap \varphi(A_j))|\leq \varepsilon\max_{l=1,\ldots,k}\mu(A_l),$$ for all $i,j=1,\ldots,k.$  
\end{lem}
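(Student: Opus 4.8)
The plan is to exhibit a single scalar $\lambda$ that works simultaneously for all the pairs, by taking it to be the constant the conformal property attaches to the \emph{largest} of the sets and then controlling, pair by pair, how far the pair-specific constants drift from it. Write $M:=\max_l\mu(A_l)$; we may assume $M>0$ (otherwise every term below vanishes and any $\lambda>0$ works), and, after discarding the null sets among the $A_i$, that $\mu(A_i)>0$ for every $i$ — if $\mu(A_i)=0$ then the conformal property applied to the pair $(A_i,A_i)$ forces $\nu(\varphi(A_i))=0$, so every intersection involving $A_i$ contributes $0$ on both sides. Relabel so that $\mu(A_1)=M$ and put $B_i:=\varphi(A_i)$. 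Fix a small $\varepsilon_0>0$, to be chosen at the end as a function of $\varepsilon$ alone and in particular with $\varepsilon_0<\varepsilon/4$, and let $\delta>0$ be the threshold the conformal-homomorphism hypothesis provides for $\varepsilon_0$; we claim this $\delta$ works. Applying the conformal property to $(A_1,A_1)$ yields $\lambda_1>0$ with $|\mu(A_1)-\lambda_1\nu(B_1)|\le\varepsilon_0 M$, hence $\lambda_1\nu(B_1)\in[(1-\varepsilon_0)M,(1+\varepsilon_0)M]$ and $\nu(B_1)>0$. We will show $|\mu(A_i\cap A_j)-\lambda_1\nu(B_i\cap B_j)|\le\varepsilon M$ for all $i,j$, which is the assertion with $\lambda:=\lambda_1$.

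For each $i$, the conformal property applied to $(A_1,A_i)$ produces $\lambda_{1i}>0$; since $\mu(A_1)=M\ge\mu(A_i)$ the defining inequalities read $|\mu(A_1)-\lambda_{1i}\nu(B_1)|\le\varepsilon_0 M$ and $|\mu(A_i)-\lambda_{1i}\nu(B_i)|\le\varepsilon_0 M$, and comparing the first with the bound for $\lambda_1$ gives $\lambda_{1i}/\lambda_1\in[\rho^{-1},\rho]$ where $\rho:=\tfrac{1+\varepsilon_0}{1-\varepsilon_0}$. Now fix $i,j$ and set $\eta:=\varepsilon/4$. \textbf{If $\min\{\mu(A_i),\mu(A_j)\}\le\eta M$}, say $\mu(A_i)\le\eta M$, then $\mu(A_i\cap A_j)\le\eta M$, while $\lambda_{1i}\nu(B_i)\le\mu(A_i)+\varepsilon_0 M\le(\eta+\varepsilon_0)M$ gives $\lambda_1\nu(B_i\cap B_j)\le\lambda_1\nu(B_i)\le\rho\,\lambda_{1i}\nu(B_i)\le\rho(\eta+\varepsilon_0)M$, so $|\mu(A_i\cap A_j)-\lambda_1\nu(B_i\cap B_j)|\le\max\{\eta,\rho(\eta+\varepsilon_0)\}M$, which is $\le\varepsilon M$ as soon as $\rho\le 2$ and $\varepsilon_0\le\eta$. \textbf{If $\mu(A_i),\mu(A_j)>\eta M$}, apply the conformal property to $(A_i,A_j)$ to get $\lambda_{ij}>0$ with $|\mu(A_i\cap A_j)-\lambda_{ij}\nu(B_i\cap B_j)|\le\varepsilon_0 M$ and $|\mu(A_i)-\lambda_{ij}\nu(B_i)|\le\varepsilon_0 M$. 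Both $\lambda_{ij}\nu(B_i)$ and $\lambda_{1i}\nu(B_i)$ lie in $[\mu(A_i)-\varepsilon_0 M,\mu(A_i)+\varepsilon_0 M]\subseteq[(\eta-\varepsilon_0)M,(1+\varepsilon_0)M]$, so dividing gives $\lambda_{ij}/\lambda_{1i}\in[\tfrac{\eta-\varepsilon_0}{\eta+\varepsilon_0},\tfrac{\eta+\varepsilon_0}{\eta-\varepsilon_0}]$ and hence $\lambda_{ij}/\lambda_1\in[\rho_\ast^{-1},\rho_\ast]$ with $\rho_\ast:=\rho\cdot\tfrac{\eta+\varepsilon_0}{\eta-\varepsilon_0}$. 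Using $\nu(B_i\cap B_j)\le\nu(B_i)$ and $\lambda_{ij}\nu(B_i)\le(1+\varepsilon_0)M$, the triangle inequality then gives $|\mu(A_i\cap A_j)-\lambda_1\nu(B_i\cap B_j)|\le\varepsilon_0 M+|\lambda_{ij}-\lambda_1|\,\nu(B_i)\le\big(\varepsilon_0+(\rho_\ast-1)(1+\varepsilon_0)\big)M$. Since $\rho,\rho_\ast\to1$ as $\varepsilon_0\to0$ with $\eta=\varepsilon/4$ fixed, we may finally pick $\varepsilon_0=\varepsilon_0(\varepsilon)>0$ small enough that $\max\{\eta,\rho(\eta+\varepsilon_0)\}\le\varepsilon$ and $\varepsilon_0+(\rho_\ast-1)(1+\varepsilon_0)\le\varepsilon$ (and $\varepsilon_0<\eta$), and take the corresponding $\delta$.

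The one genuinely delicate point is the one just negotiated: for a pair of \emph{small} sets the scalar $\lambda_{ij}$ furnished by the conformal property is essentially unconstrained — all three of its defining inequalities have right-hand side $\varepsilon_0\max\{\mu(A_i),\mu(A_j)\}$, which may dwarf $\mu(A_i)$, $\mu(A_j)$ and $\mu(A_i\cap A_j)$ all at once — so there is no hope of comparing $\lambda_{ij}$ with $\lambda_1$ multiplicatively in general. The resolution is the threshold split at $\eta M$: above the threshold the chain of comparisons $\lambda_{ij}\leftrightarrow\lambda_{1i}\leftrightarrow\lambda_1$ through the common sets $B_i$ and $B_1$ is multiplicatively tight, while below it both sides of the target inequality are individually $O(\varepsilon)M$ and no comparison is needed. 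One could instead package the numbers $\mu(A_i\cap A_j)$ and $\nu(B_i\cap B_j)$ into vectors and invoke Lemmas~\ref{lem-projmetric} and~\ref{lem-projnorm} to reduce to two-coordinate projections, but the small-set projections meet the same obstruction, so the direct argument seems cleanest; note that the $\delta$ it produces depends only on $\varepsilon$, not on $k$.
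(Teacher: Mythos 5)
Your proof is correct, but it takes a genuinely different route from the paper. The paper packages $u=(\mu(A_i))_i$ and $v=(\nu(\varphi(A_i)))_i$ into vectors, converts the pairwise conformal bounds into bounds on the projective distance $d([u_{i,j}],[v_{i,j}])$ via Lemma~\ref{lem-projnorm}, lifts this to a bound on $d([u],[v])$ via Lemma~\ref{lem-projmetric}, and reads off a single $\lambda$; the intersection estimate then follows by a short triangle inequality through $\lambda_{i,j}$. You instead take $\lambda=\lambda_1$ outright (the constant attached to the largest set) and compare $\lambda_{ij}$ to $\lambda_1$ pairwise via the common reference $B_1,B_i$, with a threshold split at $\eta M$: above the threshold the ratios $\lambda_{ij}/\lambda_{1i}$ and $\lambda_{1i}/\lambda_1$ are pinned multiplicatively through the shared factors $\nu(B_i)$, $\nu(B_1)$; below it both sides of the target inequality are individually $O(\varepsilon)M$ and no comparison is needed. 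The paper's projective-metric argument is more geometric and modular — it reuses two clean linear-algebra lemmas — while your argument is more elementary. A genuine bonus of your route is that the resulting $\delta$ depends only on $\varepsilon$ and not on $k$, whereas the paper's $\eta=\varepsilon/(C^2(k-1)+2)$ carries the $(k-1)$ from Lemma~\ref{lem-projmetric}. Your identification of the delicate point (uncontrolled $\lambda_{ij}$ for pairs of small sets) is exactly what the threshold split is for and is handled correctly.
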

The proof should be a straightforward exercise in metrics on $\mathbb{P}(\mathbb{R}^N)$, but since it took us some effort, we include the details. 
\begin{proof}

To abbreviate notation we set $B_i:=\varphi(A_i).$
For any $\eta>0$ there exists $\delta>0$ such that any tuple $A_1,\ldots,A_k$ with $\mu(A_i)\leq \delta$ satisfies
$$|\mu(A_i)-\lambda_{i,j}\nu(B_i)|, |\mu(A_j)-\lambda_{i,j}\nu(B_j)|,|\mu(A_i\cap A_j)-\lambda_{i,j}\nu(B_i\cap B_j)|\leq \eta\max\{\mu(A_i),\mu(A_j)\},$$
for all $i,j=1,\ldots,k.$ Let $u=(\mu(A_i))_{i=1,\ldots,k}, v=(\nu(B_i))_{i=1,\ldots,k}$. Write $u_{i,j},v_{i,j}$ for the restrictions to $i$-th and $j$-th coordinates. 

By Lemma \ref{lem-projnorm}, $d([u_{i,j}],[v_{i,j}])\leq C\eta$ for all $i<j=1,\ldots,k$. By Lemma \ref{lem-projmetric}, $d([u],[v])\leq (k-1)C\eta.$ Using Lemma \ref{lem-projnorm} once again we get $\min_{\lambda}\|u-\lambda v\|_\infty \leq (k-1)C^2\eta \|u\|_\infty,$ which translates to 
$$|\mu(A_i)-\lambda\nu(B_i)|, |\mu(A_i)-\lambda\nu(B_i)|\leq C^2(k-1)\eta\max_{i=1,\ldots,k}\mu(A_i),$$ For some $\lambda>0.$
for the same $\lambda$ we have 
\begin{align*}|\mu(A_i\cap A_j)-\lambda\nu(B_i\cap B_j)|\leq & |\mu(A_i\cap A_j)-\lambda_{i,j}\nu(B_i\cap B_j)|+|(\lambda-\lambda_{i,j})\nu(B_i\cap B_j)|\\
\leq& |\mu(A_i\cap A_j)-\lambda_{i,j}\nu(B_i\cap B_j)|+|(\lambda-\lambda_{i,j})\nu(B_i)|\\
\leq& |\mu(A_i\cap A_j)-\lambda_{i,j}\nu(B_i\cap B_j)|+|\mu(A_i)-\lambda_{i,j}\nu(B_i)|+|\mu(A_i)-\lambda\nu(B_i)|\\
\leq& (2+C^2(k-1))\eta\max_{l=1,\ldots,k}\mu(A_l).
\end{align*}
To prove the Lemma, choose $\eta=\varepsilon/(C^2(k-1)+2).$

\end{proof}

\begin{lem}
Let $(X,\mu),(Y,\nu)$ be measure preserving action of a countable group $\Gamma$. Suppose $\varphi\colon\cB(X,\mu)\to \cB(Y,\nu)$ is a $\Gamma$-equivariant conformal homomorphism. Then $(X,\mu)\xhookrightarrow{inf}(Y,\nu)$
\end{lem}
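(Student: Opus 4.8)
The plan is to take $B_i:=\varphi(A_i)$ for each $i$ and to read off the required estimates from Lemma \ref{lem-confmult}, applied not to the $A_i$ themselves but to a slightly enlarged family of sets that also contains the relevant translates.

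First I would fix a finite set $F\subset\Gamma$, an integer $k$ and $\varepsilon>0$, and introduce the family $\cA:=\{A_1,\ldots,A_k\}\cup\{\gamma A_j:\gamma\in F,\ 1\le j\le k\}$, which has at most $N:=k(1+|F|)$ members. Because $\Gamma\curvearrowright(X,\mu)$ is measure preserving, every element of $\cA$ is a translate of one of the $A_l$, so $\max_{C\in\cA}\mu(C)=\max_{1\le l\le k}\mu(A_l)$, and in particular every member of $\cA$ has measure $\le\delta$ as soon as $\mu(A_i)\le\delta$ for all $i$. Applying Lemma \ref{lem-confmult} to the conformal homomorphism $\varphi$ with parameters $N$ and $\varepsilon$ yields $\delta>0$ with the property that whenever $\mu(A_i)\le\delta$ for all $i$ there is $\lambda>0$ with
$$|\mu(C\cap C')-\lambda\nu(\varphi(C)\cap\varphi(C'))|\le\varepsilon\max_{1\le l\le k}\mu(A_l)\qquad\text{for all }C,C'\in\cA.$$

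It then remains only to specialize and to use equivariance. Given $A_1,\ldots,A_k$ with $\mu(A_i)\le\delta$, pick $\lambda$ as above and set $B_i:=\varphi(A_i)$. For $\gamma\in F$ the $\Gamma$-equivariance of $\varphi$ gives $\varphi(\gamma A_j)=\gamma\varphi(A_j)=\gamma B_j$, so taking $C=A_i$ and $C'=\gamma A_j$ in the displayed inequality produces
$$|\mu(A_i\cap\gamma A_j)-\lambda\nu(B_i\cap\gamma B_j)|\le\varepsilon\max_{1\le l\le k}\mu(A_l),$$
which is exactly the estimate in Definition \ref{def-infcont}. Hence $(X,\mu)\xhookrightarrow{inf}(Y,\nu)$.

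There is no real obstacle here once Lemma \ref{lem-confmult} is available; the only point to keep track of is that the size $N$ of the enlarged family, and therefore the threshold $\delta$, depends on $|F|$, which is permitted by Definition \ref{def-infcont}. (As remarked after that definition, the explicit choice $B_i=\varphi(A_i)$ in fact gives more — an approximation whose error can be taken uniform in $|F|$ — but I would not pursue that refinement here.)
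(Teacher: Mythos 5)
Your proof is correct and follows essentially the same route as the paper: apply Lemma \ref{lem-confmult} to the enlarged family of translates $\{\gamma A_j : \gamma\in F\cup\{1\},\ j\le k\}$ (the paper phrases this as ``$k|F|$ tuples of sets'' with $1\in F$), then use $\Gamma$-equivariance to rewrite $\varphi(\gamma A_j)$ as $\gamma\varphi(A_j)$. The bookkeeping that the measure-preserving action keeps the maximum measure unchanged, and hence that $\delta$ works for the enlarged family, is the one small point to check, and you handle it correctly.
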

\begin{proof}
Let $k\in\mathbb N$ and let $F\subset\Gamma$ be a finite subset containing identity. Using Lemma \ref{lem-confmult} for $k|F|$ tuples of sets, we get $\delta>0$ with the following property. Let $A_1,\ldots,A_k\in \cB(X)$ and $F\subset\Gamma$ finite. Suppose $\mu(A_i)\leq \delta$ for all $i=1,\ldots,k$. Then, there is $\lambda>0$ such that 
$$|\mu(A_i\cap \gamma A_j)-\lambda\nu(\varphi(A_i)\cap \varphi(\gamma A_j))|=|\mu(A_i\cap \gamma A_j)-\lambda\nu(\varphi(A_i)\cap \gamma\varphi( A_j))|\leq \max_{l=1,\ldots,k}\mu(A_l).$$
This means that the tuple $\varphi(A_1),\ldots,\varphi(A_k)$ approximates the dynamics of $A_1,\ldots,A_k.$ Thus, $(X,\mu)\xhookrightarrow{inf}(Y,\nu).$
\end{proof}

\section{Entropy support maps}\label{sec-esmaps}
The entropy support map is a map between Borel subsets of $\{0,1\}^I$, where $I$ is a countable index set, to Borel subsets of $\mathfrak O\times \{0,1\}^I\times I \times [0,\log 2],$ where $\mathfrak O$ is the space of linear orders on $I$ 
$$\cE\colon \cB(\{0,1\}^I)\to \cB(\mathfrak O\times \{0,1\}^I\times I \times[0,\log 2]).$$
It will take us some time to state the definition of this map, so we start by listing its properties. 
For any fixed order $\prec \in \mathfrak O$ write $$\cE_\prec(U):=\{(w,i,t)\in \{0,1\}^I\times I\times [0,\log 2]\mid (\prec,w,i,t)\in \cE(U)\}.$$
We endow $\{0,1\}^I$ with the product measure $\mu=(\frac{1}{2}\delta_0+\frac{1}{2}\delta_1)^{\otimes I}.$ Let $m$ be the product of $\mu$, the counting measure on $I$ and the Lebesgue measure on $[0,\log 2].$
We will write $H(A)$ for the Shannon entropy of a random variable $A$ and $h(t):=-t\log t-(1-t)\log(1-t), t\in [0,1].$ If $U\subset \{0,1\}^I$, we put $H(U):=H(1_U)=h(\mu(U)).$
\begin{thm}\label{thm-ESM}
There exists a measurable map $\cE \colon \cB(\{0,1\}^I)\to \cB(\mathfrak O\times \{0,1\}^I\times I \times[0,\log 2])$ satisfying the following properties 
\begin{enumerate}
    \item For any measurable $U\subset \{0,1\}^I$ and any order $\prec \in \mathfrak O$ we have $m(\cE_\prec(U))=H(U).$
    \item If $U,V\subset \{0,1\}^I$ are measurable disjoint sets,  then $\cE_\prec(U),\cE_\prec(V)$ are disjoint for all $\prec\in\mathfrak O$ (which means that $\cE(U),\cE(V)$ are disjoint).
    \item 
    The map $\cE$ is $\Sym(I)$ equivariant. More explicitly, the following diagram commutes.
\[\begin{tikzcd}[ampersand replacement=\&]
	{\mathcal B(\{0,1\}^I)} \&\& {\mathcal B(\{0,1\}^I\times I\times [0,\log 2])} \\
	\\
	{\mathcal B(\{0,1\}^I)} \&\& {\mathcal B(\{0,1\}^I\times I\times [0,\log 2])}
	\arrow["{\mathcal E_{\prec}}", from=1-1, to=1-3]
	\arrow["\sigma"', from=1-1, to=3-1]
	\arrow["\sigma\times \sigma\times \id", from=1-3, to=3-3]
	\arrow["{\mathcal E_{\sigma \prec}}"', from=3-1, to=3-3]
\end{tikzcd}\]
where $\sigma \prec$ is the order $x (\sigma\prec)y$ iff $\sigma^{-1}x\prec \sigma^{-1}y.$
\item  
For any $\varepsilon>0$ there exists a $\delta>0$, dependent only on $\varepsilon$, with the following property. Let $U,V\subset \{0,1\}^I$ be measurable subsets with $\mu(U),\mu(V)\leq \delta$ and let $\prec \in \mathfrak O$. Then 
$$m((\cE_\prec(U)\cup \cE_\prec(V))\Delta \cE_\prec(U\cup V))\leq \varepsilon(m(\cE_\prec(U))+m(\cE_\prec(V)).$$
\end{enumerate}
\end{thm}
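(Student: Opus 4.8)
The plan is to define $\cE$ through a height function and then verify (1)--(4), the last being the only substantial point. Fix a linear order $\prec$ on $I$ and write $\mathcal F_{\prec i}=\sigma(Z_j:j\prec i)$, $\mathcal F_{\preceq i}=\sigma(Z_j:j\preceq i)$. For a measurable $U$ put $a^U_{\prec,i}:=\mathbb{E}[1_U\mid\mathcal F_{\prec i}]$ and let $\psi^U_{\prec,i}:=h(a^U_{\prec,i})-\mathbb{E}\big[h(\mathbb{E}[1_U\mid\mathcal F_{\preceq i}])\mid\mathcal F_{\prec i}\big]\ge 0$ be the localized entropy increment gained by revealing the $i$-th coordinate; the height function is $f^U_{\prec,i}:=1_U\cdot\psi^U_{\prec,i}/a^U_{\prec,i}$ (this is the introduction's explicit formula, read through the martingale of conditional expectations so that it makes sense for countable $I$), and $\cE_\prec(U):=\{(w,i,t):0\le t\le f^U_{\prec,i}(w)\}$, $\cE(U):=\{(\prec,w,i,t):(w,i,t)\in\cE_\prec(U)\}$. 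Measurability is routine. Property (2) is immediate: the factor $1_U$ forces $\cE_\prec(U)$ to lie over $U\times I\times[0,\log2]$, so disjoint sets have disjoint images. Property (3) holds because $1_U$, the conditional expectations along $\prec$, and $h$ all transform naturally under $\Sym(I)$. For property (1), since $\int 1_U\,g\,d\mu=\int a^U_{\prec,i}\,g\,d\mu$ for $\mathcal F_{\prec i}$-measurable $g$, one gets $\int f^U_{\prec,i}\,d\mu=H(1_U\mid\mathcal F_{\prec i})-H(1_U\mid\mathcal F_{\preceq i})$; summing over $i$ telescopes along the order to $H(1_U\mid\bigcap_i\mathcal F_{\prec i})-H(1_U\mid\bigvee_i\mathcal F_{\preceq i})$, and the subtrahend is $0$ while the minuend is $H(1_U)=h(\mu(U))$ because the remote $\sigma$-algebra $\bigcap_i\mathcal F_{\prec i}$ is trivial (any event in it is independent of every $Z_i$, hence of $\sigma(Z_i:i\in I)$, hence of itself). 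For infinite $I$ this limiting step is justified by exhausting $I$ by finite subsets and invoking martingale convergence.

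For property (4) I would first reduce to the disjoint case. Observe $\cE_\prec(A)\cup\cE_\prec(B)$ is exactly the region below $\max(f^A,f^B)$, which for disjoint $A,B$ equals $f^A+f^B$ since the two functions have disjoint supports in $w$; consequently $m\big((\cE_\prec(U)\cup\cE_\prec(V))\Delta\cE_\prec(U\cup V)\big)=\|\max(f^U,f^V)-f^{U\cup V}\|_{L^1(m')}$, where $m'=\mu\times(\text{counting measure on }I)$. Writing $U=(U\setminus V)\sqcup(U\cap V)$, $V=(V\setminus U)\sqcup(U\cap V)$, $U\cup V=(U\setminus V)\sqcup(V\setminus U)\sqcup(U\cap V)$ and using $|\max(a,b)-\max(c,d)|\le|a-c|+|b-d|$ together with the pointwise identity $\max(f^{U\setminus V}+f^{U\cap V},\,f^{V\setminus U}+f^{U\cap V})=f^{U\setminus V}+f^{V\setminus U}+f^{U\cap V}$ (disjoint supports), this norm is bounded by the three ``disjoint'' errors $\|f^U-f^{U\setminus V}-f^{U\cap V}\|_1+\|f^V-f^{V\setminus U}-f^{U\cap V}\|_1+\|f^{U\setminus V}+f^{V\setminus U}+f^{U\cap V}-f^{U\cup V}\|_1$. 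Since $\mu$ of each piece is $\le\delta$ and $H(A\setminus V)+H(A\cap V)\le H(A)+O(\mu(A)\log2)\le(1+o(1))H(A)$ as $\delta\to0$ (because $H(X)\ge\mu(X)\log(1/\delta)$), after rescaling $\varepsilon$ it suffices to prove: for disjoint $A,B$ with $\mu(A),\mu(B)\le\delta$,
$$\|f^{A\sqcup B}-f^A-f^B\|_{L^1(m')}\le\varepsilon\big(H(A)+H(B)\big).$$

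To attack this, set $W=A\sqcup B$, $\theta_{\prec,i}:=a^A_{\prec,i}/a^W_{\prec,i}\in[0,1]$, so $a^W=a^A+a^B$ and $1-\theta=a^B/a^W$. On $A$ we have $f^{A\sqcup B}-f^A=\psi^W/a^W-\psi^A/a^A$ and $f^B\equiv0$, symmetrically on $B$, and nothing elsewhere; integrating against $\mu$ and grouping by cylinders $C=C_{\prec,i}(w)$, with $\mu(A\cap C)=a^A_C\mu(C)$, collapses the left side to $\sum_i\sum_C\mu(C)\big(|\theta_C\psi^W_C-\psi^A_C|+|(1-\theta_C)\psi^W_C-\psi^B_C|\big)$. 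The guiding principle is that both $\psi^A_C+\psi^B_C-\psi^W_C$ and the mixing discrepancy $\theta_C\psi^B_C-(1-\theta_C)\psi^A_C$ are second-order quantities controlled by the concavity of $h$ (using $h(a^A_C)\ge\theta_C h(a^W_C)$ and $h(b^A_{C,c})\ge(b^A_{C,c}/b^W_{C,c})h(b^W_{C,c})$, each being the inequality $h(\lambda x)\ge\lambda h(x)$ from $h(0)=0$): the signed, weighted sum $\sum_i\sum_C\mu(C)(\psi^A_C+\psi^B_C-\psi^W_C)$ telescopes along $\prec$ to $h(\mu(A))+h(\mu(B))-h(\mu(A)+\mu(B))=O\big((\mu(A)+\mu(B))\log2\big)$, which is $o(1)\cdot(H(A)+H(B))$; one then shows the remaining error terms reassemble, after resumming the telescoping differences, into conditional mutual informations $I(1_A;1_B\mid\mathcal F_{\prec i})$ and conditional entropies $H(1_A\mid 1_W,\mathcal F_{\prec i})$ whose increments along $\prec$ again sum to a quantity of the same $O((\mu(A)+\mu(B))\log2)$ order.

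The hard part is precisely this last bookkeeping: one must bound the \emph{absolute values} $\sum_i\sum_C\mu(C)|\theta_C\psi^W_C-\psi^A_C|$ rather than signed sums, and a crude triangle inequality (replacing $\psi^A_C$ by $h(a^A_C)$, or splitting the two concavity gaps) destroys the telescoping and leaves a divergent series $\sum_i H(1_A\mid\mathcal F_{\prec i})$. The right move is to keep every error term displayed as a difference of conditional entropies along the order, so the chain rule applies globally; finding that grouping is the genuine obstacle (an explicit small example shows $\psi^W_C$ can exceed $\psi^A_C+\psi^B_C$ pointwise, so no naive pointwise inequality suffices). A secondary, purely technical nuisance is that $\prec$ is an arbitrary linear order on a countable set, not necessarily well-ordered, so every ``sum over $i$ along $\prec$'' must be justified by restricting to an exhausting sequence of finite subsets and passing to the limit, again using triviality of $\bigcap_i\mathcal F_{\prec i}$ to prevent entropy from leaking to the infinite past.
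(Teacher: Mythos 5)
Your construction of $\cE$ and your proofs of (1)--(3) agree with the paper's. The reduction of (4) to disjoint pieces via $U=(U\setminus V)\sqcup(U\cap V)$, $V=(V\setminus U)\sqcup(U\cap V)$, $U\cup V=(U\setminus V)\sqcup(V\setminus U)\sqcup(U\cap V)$ is also essentially what the paper does: its key statement, Theorem \ref{thm-almostcontained}, is that for $V\subset U$ small, $m(\cE_\prec(U)\cup\cE_\prec(V))-m(\cE_\prec(U))=o\bigl(m(\cE_\prec(U))\bigr)$, i.e.\ $\cE_\prec(V)$ is almost contained in $\cE_\prec(U)$, and (4) follows from this plus (1)--(2) in a few lines. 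Your proposed disjoint lemma $\|f^{A\sqcup B}-f^A-f^B\|_{L^1}\le\varepsilon(H(A)+H(B))$ is a two-sided strengthening of the same thing, and either version would close (4).

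But that key lemma is exactly where your proposal stops, and you say so honestly: the signed sums telescope, the absolute values do not, and ``finding that grouping is the genuine obstacle.'' This is a real gap, and it is where essentially all of the work in the paper lives. The paper resolves it with a specific idea absent from your sketch. Set $s_1:=\Delta_{U,V}-\Delta_U=H_{w,i}(V\mid U)-H_{w,i}(V\mid U,Z_i)\ge 0$. Its total mass $\sum_i\int s_1\,d\mu=H(V\mid U)\le\mu(U)$ telescopes to something \emph{negligible} relative to $H(U)\sim\mu(U)\log(1/\mu(U))$; moreover (Lemma \ref{lem-s1lowerbound}) one has a quadratic lower bound $s_1\gtrsim\kappa\,\varepsilon_U\bigl(\varepsilon^0_U/\varepsilon_U-\varepsilon^0_V/\varepsilon_V\bigr)^2$. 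The defect $r$ is decomposed (on the bulk region) as $s_2+s_3$; $s_3$ is small because $\varepsilon_U\le\kappa_0$, and the mean value theorem together with the quadratic lower bound gives $|s_2|\lesssim\sqrt{\Delta_U s_1}+\sqrt{\Delta_V s_1}$ up to a genuinely small remainder (Lemma \ref{lem-s2bound}). Cauchy--Schwarz then yields
$$
\int\sum_i\sqrt{\Delta_U s_1}\,d\mu\;\le\;\Bigl(\int\sum_i\Delta_U\,d\mu\Bigr)^{1/2}\Bigl(\int\sum_i s_1\,d\mu\Bigr)^{1/2}\;\le\;H(U)^{1/2}\mu(U)^{1/2}\;=\;o(H(U)),
$$
which is precisely the ``grouping'' you were missing: it converts the problematic $L^1$ estimate of $|s_2|$ into a geometric mean of two telescoping sums, one of which is tiny. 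Making this legitimate requires a five-region split of $\{0,1\}^I\times I$ according to whether $\varepsilon_U$ and $\varepsilon_V/\varepsilon_U$ are bounded away from $0$ and $1$, with stopping-time estimates for the outlying regions where the mean value theorem degenerates. So while your setup and reduction are sound, the heart of the argument --- the Cauchy--Schwarz trick via the quadratic lower bound on $s_1$, and the region bookkeeping around it --- is missing, and this is not a routine omission: it is the content of Lemmas \ref{lem-s1bound} through \ref{lem-s2bound} and the proof of Theorem \ref{thm-almostcontained}.
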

The last point is by far the most difficult one to establish. Before proving Theorem \ref{thm-ESM} we explain how it implies Theorem \ref{mthm-confhom}. We restate it below for convenience. 
\begin{reptheorem}{mthm-confhom}
Let $\tau$ be any $\Sym(I)$-invariant probability measure on $\mathfrak O$. The map $\cE\colon \cB(\{0,1\}^I,\mu)\to \cB(\mathfrak O\times \{0,1\}\times I\times [0,\infty),\tau\times m)$ is a $\Sym(I)$-equivariant conformal homomorphism. 
\end{reptheorem}
\begin{proof}
The equivariance is clear. Let $\varepsilon>0$ and let $\delta>0$ be such that Theorem \ref{thm-ESM} (4) holds. 
Let $U,V\subset \{0,1\}^I$ and assume $\mu(V)\leq \mu(U)\leq\delta$. Put $\lambda=-\log\mu(U)^{-1}$ Then 
\begin{align*}
|\mu(U)-\lambda(\tau\times m)(\cE(U))|=&|\mu(U)-\log\mu(U)^{-1}h(\mu(U))|\ll \frac{\mu(U)}{|\log\mu(U)|},\\
|\mu(V)-\lambda(\tau\times m)(\cE(V))|=&|\mu(V)-\log\mu(U)^{-1}h(\mu(V))|\ll \frac{\mu(U)\log|\log\mu(U)|}{|\log\mu(U)|},\\
|\mu(U\cap V)-\lambda(\tau\times m)(\cE(U)\cap\cE(V))|\leq & |\mu(U)-\lambda(\tau\times m)(\cE(U))|+|\mu(V)-\lambda(\tau\times m)(\cE(V))|\\
+&|\mu(U\cup V)-\lambda(\tau\times m)(\cE(U)\cup\cE(V))|\\
\ll& \frac{\mu(U)\log|\log\mu(U)|}{|\log\mu(U)|}+\varepsilon\mu(U).
\end{align*}
For any $\eta>0$, choosing $\varepsilon,\delta$ small enough makes all right-hand sides of above inequalities smaller than $\eta\mu(U)$. This proves that $\cE$ is indeed a conformal homomorphism.
\end{proof}

To define the map $\cE$ we need to introduce some notations. Let $Z_i\colon \{0,1\}^I\to \{0,1\}$ be the $i$-th coordinate function. To shorten notation, for any measurable subset $U\subset \{0,1\}^I$ we will write $U=1_U$ for the characteristic function of $U$, so that way $H(U)=h(\mu(U))$ and $H(U|E)$ is the entropy of $1_U$ conditioned on the event $E$, etc. We write $\{\prec i\},\{\preceq i\}$ for respectively the set of elements of $I$ less and less or equal to $i$, $w_{\prec i}$ is the projection of $w\in \{0,1\}^I$ to $\{0,1\}^{\prec i}$ and $Z_{\prec i}$ is the projection function $Z_{\prec i}(w)=w_{\prec i}.$ We make the same definitions for $\succ i, \succeq i$.

Let $i\in I$. Let $\mu_{\prec i}, \mu_{\succeq i}$ be the product measures on $\{0,1\}^{\prec i}, \{0,1\}^{\succeq i}$, so that $\mu=\mu_{\prec i}\times \mu_{\succeq i}.$ Let $U,V\subset \{0,1\}^I$ be Borel subsets. By Fubini's theorem, for $\mu_{\prec i}$-almost all $w_0\in \{0,1\}^{\prec i}$ the measure $\mu_{\succeq i}(\{w_1\in \{0,1\}^{\succeq i}\mid w_0\times w_1\in U\})$ is well defined. 
Put $$\varepsilon_U(w,i):= \mu_{\succeq i}(\{w'\in \{0,1\}^{\succeq i}\mid w_{\prec i}\times w'\in U\}),\quad w\in \{0,1\}^I, i\in I $$ We think of $\varepsilon_U(w,i)$ as the relative density of $U$ in the cylinder specified by the coordinates of $w$ less than $i.$ We have 
$$\int_{\{0,1\}^I} \varepsilon_U(w,i)d\mu(w)=\mu(U).$$
In addition we define 
\begin{align*}\varepsilon_U^0(w,i):=&\frac{1}{2}\mu_{\succ i}(\{w'\in \{0,1\}^{\succ i}\mid w_{\prec i}\times 0_i \times  w'\in U\}),\quad w\in \{0,1\}^I, i\in I ,\\
\varepsilon_U^1(w,i):=& \frac{1}{2}\mu_{\succ i}(\{w'\in \{0,1\}^{\succ i}\mid w_{\prec i}\times 1_i \times  w'\in U\}),\quad w\in \{0,1\}^I, i\in I.
\end{align*}
We have $\varepsilon_U(w,i)=\varepsilon_U^0(w,i)+\varepsilon_U^1(w,i).$ The functions $\varepsilon_U(w,i),\varepsilon_U^0(w,i),\varepsilon_U^1(w,i)$ can be also understood in terms of conditional expectations.
$$\varepsilon_U(w,i)=\mathbb E(1_U|Z_{\prec i})(w),\quad \varepsilon_U^0(w,i)=\mathbb E(1_U|Z_{\preceq i})(w_{\prec i}\times 0_i),\quad \varepsilon_U^1(w,i)=\mathbb E(1_U|Z_{\preceq i})(w_{\prec i}\times 1_i).$$ We adopt the convention that 
$$H_{w,i}(\diamondsuit|\heartsuit):=H(\diamondsuit|\heartsuit, Z_{\prec i}=w_{\prec i})$$
We are conditioning on measure zero events here, so formally we have to define these entropies using conditional expectations. For example
$$H_{w,i}(U):=h(\varepsilon_U(w,i)),\quad H_{w,i}(U|Z_i):=\frac{1}{2}h(2\varepsilon^0_U(w,i))+\frac{1}{2} h(2\varepsilon_U^1(w,i)).$$
By Fubini's theorem these conditional entropies are well defined for almost every $w\in \{0,1\}^I$ and coincide with the usual definition when $I$ is finite. 
Put \begin{align*}\Delta_U(w,i)=& H_{w,i}(U)-H_{w,i}(U|Z_i), \quad w\in \{0,1\}^I, i\in I,\\
\Delta_{U,V}(w,i)=& H_{w,i}(U,V)-H_{w,i}(U,V|Z_i), \quad w\in \{0,1\}^I, i\in I.
\end{align*} We interpret $\Delta_U(w,i)$ as the amount of information we gain on $U$ by revealing the $i$-th coordinate, conditional on the previous coordinates being $w_{\prec i}.$ The average of $\Delta_U(w,i)$ over all $w\in \{0,1\}^I$ already appeared in \cite{csoka2020entropy} where it was used to establish entropy inequalities for factors of iid on Cayley graphs. In our construction it will be quite important to work with the non-averaged version. We are now ready to define the entropy support map:

\begin{defn}
Let $\prec$ be a linear order on $I$ and let $U\in\cB(\{0,1\}^I)$. Define 
$$\cE_{\prec}(U):=\left\{(w,i,t)\in \{0,1\}^I\times I\times [0,\log 2]\mid 0\leq t< 1_U(w)\frac{\Delta_U(w,i)}{\varepsilon_U(w,i)}\right\}. 
$$  We then put $\cE(U):=\{(\prec,w,i,t)\mid (w,i,t)\in \cE_\prec(U)\}$.
\end{defn}
Note that \begin{align*}\Delta_U(w,i)=&H_{w,i}(U)-H_{w,i}(U|Z_i)=h(\varepsilon_U(w,i))-\frac{1}{2}h(2\varepsilon_U^0(w,i))-\frac{1}{2}h(2\varepsilon_U^1(w,i))\\
=& H_{w,i}(Z_i)-H_{w,i}(Z_i|U)\leq H_{w,i}(Z_i)=\log 2.\end{align*}
This explains why we put $[0,\log 2]$ as the last factor. 

The first few lemmas will be stated in the generality of countable index set $I$ with arbitrary linear order. After we establish certain continuity properties of $\cE$ we will be able to reduce the proof of Theorem \ref{thm-ESM} for the case of finite $I$. We can quickly verify some of the properties required by Theorem \ref{thm-ESM}.
\begin{lem}\label{lem-points2and3}
The map $\cE$ satisfies conditions (2),(3) of Theorem \ref{thm-ESM}
\end{lem}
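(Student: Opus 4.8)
The plan is to read both conditions off directly from the defining formula for $\cE_\prec$; neither requires the sharp entropy estimates needed for condition (4). For condition (2), note first that $\cE_\prec(U)$ is supported over $U$ in its first coordinate: if $(w,i,t)\in\cE_\prec(U)$ then $0\le t<1_U(w)\,\Delta_U(w,i)/\varepsilon_U(w,i)$, so the right-hand side is strictly positive; since $\Delta_U(w,i)\ge 0$ (conditioning does not increase entropy), $\varepsilon_U(w,i)\ge 0$, and $1_U(w)\in\{0,1\}$, this forces $1_U(w)=1$, i.e.\ $w\in U$. (On the null set where $\varepsilon_U(w,i)=0$ one also has $\Delta_U(w,i)=0$, and with the convention $0/0=0$ no such triple occurs.) Hence if $U\cap V=\emptyset$ the sets $\cE_\prec(U)$ and $\cE_\prec(V)$ lie over disjoint subsets of $\{0,1\}^I$, so they are disjoint, which is condition (2).

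For condition (3), I would check that every quantity entering the definition of $\cE_\prec$ transforms equivariantly. Fix $\sigma\in\Sym(I)$ and recall it acts on $\{0,1\}^I$ by $(\sigma w)_i=w_{\sigma^{-1}i}$; this is a measure-preserving bijection of $(\{0,1\}^I,\mu)$ with $1_{\sigma U}(\sigma w)=1_U(w)$, and it carries the $\prec$-initial segment $\{j:j\prec i\}$ bijectively onto the $(\sigma\prec)$-initial segment $\{j':j'\,(\sigma\prec)\,\sigma i\}$, compatibly with the product decompositions $\{0,1\}^I=\{0,1\}^{\prec i}\times\{0,1\}^{\succeq i}$. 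Writing $\varepsilon_U^{\prec},\Delta_U^{\prec}$ to make the order dependence explicit, the descriptions of $\varepsilon_U,\varepsilon_U^0,\varepsilon_U^1$ as the conditional expectations $\mathbb{E}(1_U\mid Z_{\prec i})$ and $\mathbb{E}(1_U\mid Z_{\preceq i})$ (relative to $\prec$) then yield, for almost every $w$,
\[
\varepsilon_{\sigma U}^{\sigma\prec}(\sigma w,\sigma i)=\varepsilon_U^{\prec}(w,i),\qquad \varepsilon_{\sigma U}^{j,\,\sigma\prec}(\sigma w,\sigma i)=\varepsilon_U^{j,\,\prec}(w,i)\quad(j=0,1),
\]
because $\sigma$ intertwines the relevant sub-$\sigma$-algebras. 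Consequently $H_{w,i}(U)$, $H_{w,i}(U\mid Z_i)$ and hence $\Delta_U(w,i)$ satisfy $\Delta_{\sigma U}^{\sigma\prec}(\sigma w,\sigma i)=\Delta_U^{\prec}(w,i)$. Substituting these identities into the definition gives the equivalence $(w,i,t)\in\cE_\prec(U)\iff(\sigma w,\sigma i,t)\in\cE_{\sigma\prec}(\sigma U)$, i.e.\ $(\sigma\times\sigma\times\id)(\cE_\prec(U))=\cE_{\sigma\prec}(\sigma U)$, which is exactly the commutativity of the diagram in condition (3).

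The only real obstacle here is bookkeeping: one must keep the order dependence of $\varepsilon_U$ and $\Delta_U$ explicit so that the simultaneous substitution $\prec\mapsto\sigma\prec$, $w\mapsto\sigma w$, $i\mapsto\sigma i$ is applied consistently through every conditional expectation, and one must dispose of the measure-zero locus on which the ratio $\Delta_U/\varepsilon_U$ is a priori indeterminate. No part of this argument uses the delicate near-additivity estimate reserved for condition (4).
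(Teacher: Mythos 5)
Your proof is correct and takes essentially the same approach as the paper's: condition (2) is read off from the observation that $\cE_\prec(U)$ is supported over $U$, and condition (3) is read off from the $\Sym(I)$-equivariance of the function $(U,\prec,w,i)\mapsto 1_U(w)\Delta_U(w,i)/\varepsilon_U(w,i)$. The paper states both observations in one sentence each; you have simply fleshed out the bookkeeping.
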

\begin{proof}
For (2) note that $\cE(U)\subset \mathfrak O\times U \times I \times [0,\log 2]$ so $\cE(U)\cap \cE(V)\subset \mathfrak O\times (U\cap V) \times I \times [0,\log 2].$ For $(3)$ it is enough to observe that the map $(U,\prec, w, i)\mapsto 1_U(w)\frac{\Delta_U(w,i)}{\varepsilon_U(w,i)}$ is constant under $\Sym(I)$ action. 
\end{proof}
The verification of points (1) and (4) is done pointwise for each order $\prec$. 
\begin{lem}\label{lem-totalmass}
$m(\cE_{\prec}(U))=H(U).$
\end{lem}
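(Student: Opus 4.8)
The plan is to compute $m(\cE_\prec(U))$ directly from the definition by Fubini's theorem, peeling off the three factors $\{0,1\}^I$, $I$, $[0,\log 2]$ one at a time, and then to recognise the answer as a telescoping sum of conditional entropy increments. For fixed $(w,i)$ the fibre of $\cE_\prec(U)$ over $(w,i)$ is the interval $[0,1_U(w)\Delta_U(w,i)/\varepsilon_U(w,i))$ (for this computation one reads the last coordinate as ranging over $[0,\infty)$, as in the statement of Theorem~\ref{mthm-confhom}), so integrating in $t$ gives $m(\cE_\prec(U))=\sum_{i\in I}\int_{\{0,1\}^I}1_U(w)\,\Delta_U(w,i)/\varepsilon_U(w,i)\,d\mu(w)$. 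Next I would integrate out the coordinates in $\{\succeq i\}$: since $\Delta_U(\cdot,i)$ and $\varepsilon_U(\cdot,i)$ depend only on $w_{\prec i}$, while $\int_{\{0,1\}^{\succeq i}}1_U(w_{\prec i}\times w')\,d\mu_{\succeq i}(w')=\varepsilon_U(w_{\prec i},i)$ by the very definition of $\varepsilon_U$ (and the integrand vanishes identically on the null set $\{\varepsilon_U(w_{\prec i},i)=0\}$, where the quotient is set to $0$), the denominator cancels and the $i$-th summand equals $\int\Delta_U(w,i)\,d\mu(w)$. By the definitions of $H_{w,i}$ and conditional entropy this is $\int\big(H_{w,i}(U)-H_{w,i}(U\mid Z_i)\big)\,d\mu(w)=H(1_U\mid Z_{\prec i})-H(1_U\mid Z_{\preceq i})=:a_i\ge 0$, the inequality because conditioning does not increase entropy; in particular Tonelli was legitimate above.

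It then remains to show $\sum_{i\in I}a_i=H(U)$. For finite $I$ this is a genuine telescope: ordering $I=\{i_1\prec\cdots\prec i_n\}$ one has $\{\prec i_{k+1}\}=\{\preceq i_k\}$, hence $\sum_k a_{i_k}=H(1_U\mid Z_\emptyset)-H(1_U\mid Z_I)=h(\mu(U))-0=H(U)$, using that $1_U$ is $Z_I$-measurable. For countably infinite $I$ I would apply the same telescope to finite restrictions: for any finite $F\subset I$ (ordered by $\prec$), $\sum_{i\in F}\big(H(1_U\mid Z_{F\cap\{\prec i\}})-H(1_U\mid Z_{(F\cap\{\prec i\})\cup\{i\}})\big)=H(1_U)-H(1_U\mid Z_F)$. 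Taking $F=F_n\uparrow I$, the right-hand side tends to $H(U)$ by the martingale convergence theorem (since $1_U$ is $\{0,1\}$-valued, $H(1_U\mid Z_{F_n})\to H(1_U\mid Z_I)=0$). On the left, the $i$-th term, extended by $0$ when $i\notin F_n$, is non-decreasing in $n$: indeed $H(1_U\mid Z_{F_n\cap\{\prec i\}})-H(1_U\mid Z_{(F_n\cap\{\prec i\})\cup\{i\}})=\log 2-H(Z_i\mid 1_U,Z_{F_n\cap\{\prec i\}})$ because $Z_i$ is independent of $Z_{F_n\cap\{\prec i\}}$, and the subtracted conditional entropy decreases as $F_n$ grows; moreover each such term converges to $a_i$. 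Monotone convergence for series then yields $\sum_{i\in F_n}(\cdots)\to\sum_{i\in I}a_i$, so $\sum_i a_i=H(U)$, as desired.

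The Fubini bookkeeping and the entropy identities are routine. The point I expect to be the main obstacle is the telescoping for an \emph{arbitrary} countable linear order $\prec$, whose order type need not be $\omega$, so that the conditioning sets $\{\prec i\}$ are honest $\prec$-down-sets rather than finite initial segments. A naive termwise passage to the limit (Fatou) only gives $\sum_i a_i\le H(U)$; to get equality one really needs the monotonicity in $n$ of $H(1_U\mid Z_{F_n\cap\{\prec i\}})-H(1_U\mid Z_{(F_n\cap\{\prec i\})\cup\{i\}})$, which itself rests on the fact that the freshly revealed coordinate $Z_i$ is independent of everything it is being conditioned against. (One should also keep in mind the discrepancy between the $[0,\log 2]$ and $[0,\infty)$ normalisations of the last factor: it is the latter that makes the total mass equal to $H(U)$.)
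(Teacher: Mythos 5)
Your proof is correct, and it follows the same overall strategy as the paper's: the opening Fubini reduction (integrate out $t$, cancel $\varepsilon_U$ against the $1_U$-fiber, arrive at $\sum_i a_i$ with $a_i = H(1_U\mid Z_{\prec i})-H(1_U\mid Z_{\preceq i})$) is identical, and both arguments pivot on the same fact about independence of the fresh coordinate. The paper isolates that fact as Lemma~\ref{lem-indepcond} ($H(U\mid\cZ)-H(U\mid\cX,\cZ)\le H(U\mid\cY,\cZ)-H(U\mid\cX,\cY,\cZ)$ for independent $\cX,\cY,\cZ$); you derive it inline by rewriting the increment as $\log 2 - H(Z_i\mid 1_U, Z_{F_n\cap\{\prec i\}})$ and using that conditioning shrinks entropy. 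Where you diverge is in how the limit is organized: the paper proves the two inequalities $\ge$ and $\le$ separately (the $\le$ side via a ``skip telescoping'' with $i^{--}$, the $\ge$ side via Lemma~\ref{lem-indepcond} and an exhaustion), whereas you observe that the truncated increments $b_i^{(n)}$ are termwise monotone in $n$ and converge to $a_i$, so a single application of monotone convergence finishes both directions at once. This is a tighter packaging of the same idea rather than a genuinely different argument, but it is arguably cleaner, and you correctly flagged the one subtlety (that the telescope only literally closes for $\omega$-type orders, which is exactly what Lemma~\ref{lem-indepcond}/your monotonicity repairs). You also correctly note the $[0,\log 2]$ vs.\ $[0,\infty)$ inconsistency in the paper: $1_U(w)\Delta_U(w,i)/\varepsilon_U(w,i)$ can exceed $\log 2$ (e.g.\ for $I$ a singleton and $\mu(U)=1/2$ it equals $2\log 2$), so the last factor should indeed be $[0,\infty)$ for the identity $m(\cE_\prec(U))=H(U)$ to hold verbatim.
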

\begin{proof}
\begin{align*}
    m(\cE_{\prec}(U))=& \int_{\{0,1\}^I}\sum_{i\in I}1_U(w)\frac{\Delta_U(w,i)}{\varepsilon_U(w,i)} d\mu(w)\\
    =&\sum_{i\in I}\int_{\{0,1\}^I}\Delta_U(w,i)d\mu(w)=\sum_{i\in I} (H_{w,i}(U)-H_{w,i}(U|Z_i)). 
\end{align*}
If $I$ were finite, the last expression would be a telescoping sum, summing up to $H(U).$ In the general case, the argument is slightly more involved. Let $\delta>0$. We choose a finite subset $F\subset I$, such that $H(U|Z_i,i\in F)\leq \delta$. Then, by Lemma \ref{lem-indepcond} 
\begin{align*}
H(U)-H(U|Z_i,i\in F)&=\sum_{i\in F} (H(U|Z_j, j\in F,j\prec i)-H(U|Z_j, j\in F,j\preceq i))\\
&\leq\sum_{i\in F} (H(U|Z_j,j\prec i)-H(U|Z_j,j\preceq i))\\
&\leq\sum_{i\in I} (H(U|Z_j,j\prec i)-H(U|Z_j,j\preceq i))
\end{align*}
Taking $\delta$ to $0$ we get $H(U)\leq \sum_{i\in I} (H(U|Z_j,j\prec i)-H(U|Z_j,j\preceq i))=m(\cE_{\prec}(U)).$
On the other hand 
\begin{align*}
m(\cE_{\prec}(U))=&\sup_{F\subset I, |F|<\infty} \sum_{i\in F}(H(U|Z_j,j\prec i)-H(U|Z_j,j\preceq i))
\end{align*}
For a finite $F\subset I$ and $i\in F$ let $i^{--}$ be the largest element of $F$ less than $i$ or $-\infty$ if $i$ is the smallest element of $F$. We have 
\begin{align*} \sum_{i\in F}(H(U|Z_j,j\prec i)-H(U|Z_j,j\preceq i)) \leq& \sum_{i\in F}(H(U|Z_j, j\preceq i^{--})-H(U|Z_j,j\preceq i))\\
=&H(U)-H(U|Z_{\preceq\sup F})\leq H(U).
\end{align*} Therefore $m(\cE_{\prec}(U))\leq H(U).$
\end{proof}
\begin{lem}\label{lem-indepcond}
Let $(\Omega,\mu)$ be a standard probability space. Let $\cX,\cY,\cZ$ be independent sub-$\sigma$-algebras of the Borel $\sigma$-algebra on $\Omega$. Then, for any random variable $U$, we have 
$$H(U|\cZ)-H(U|\cX,\cZ)\leq H(U|\cY,\cZ)-H(U|\cX,\cY,\cZ).$$
\end{lem}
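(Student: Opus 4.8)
The plan is to recognize both sides of the inequality as conditional mutual informations and then exploit the independence of $\cX,\cY,\cZ$ through the chain rule. Writing $I(U;\cA\mid\cB):=H(U\mid\cB)-H(U\mid\cA\vee\cB)\in[0,\infty]$ for conditional mutual information (this abstract lemma has no index set in scope, so there is no clash with the $I$ of this section), the left-hand side of the claimed inequality is exactly $I(U;\cX\mid\cZ)$ and the right-hand side is exactly $I(U;\cX\mid\cY\vee\cZ)$. Hence what must be shown is
$$I(U;\cX\mid\cZ)\ \le\ I(U;\cX\mid\cY\vee\cZ).$$
All of these quantities are well defined in $[0,\infty]$, e.g. as suprema over finite measurable partitions, so no finiteness hypothesis on $U$ is needed; in the application $U=1_U$ is an indicator and every quantity is bounded by $\log 2$.

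The argument itself has three short steps. First, the independence hypothesis is used only through the observation that $\cX$ and $\cY$ are conditionally independent given $\cZ$: mutual independence of $\cX,\cY,\cZ$ gives, for $A\in\cX$ and $B\in\cY$, that $\mathbb E[1_A 1_B\mid\cZ]=\mu(A)\mu(B)=\mathbb E[1_A\mid\cZ]\,\mathbb E[1_B\mid\cZ]$ a.s., hence $I(\cX;\cY\mid\cZ)=0$. Second, apply the chain rule for conditional mutual information to $I\big(\sigma(U)\vee\cY;\cX\mid\cZ\big)$, expanding the first argument as ``$\cY$ first, then $U$'':
$$I\big(\sigma(U)\vee\cY;\,\cX\mid\cZ\big)=I(\cY;\cX\mid\cZ)+I\big(U;\cX\mid\cY\vee\cZ\big)=I\big(U;\cX\mid\cY\vee\cZ\big),$$
using the first step. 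Third, monotonicity of conditional mutual information under enlarging one of the arguments gives $I\big(\sigma(U)\vee\cY;\cX\mid\cZ\big)\ge I\big(U;\cX\mid\cZ\big)$; indeed the difference equals $I\big(\cY;\cX\mid\sigma(U)\vee\cZ\big)\ge 0$. Combining the last two displays yields $I(U;\cX\mid\cY\vee\cZ)\ge I(U;\cX\mid\cZ)$, which is the assertion.

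There is essentially no real obstacle here; the only content is spotting the correct reading of the two sides as $I(U;\cX\mid\cZ)$ and $I(U;\cX\mid\cY\vee\cZ)$, after which it is a two-line manipulation. The one point to handle with a little care is that $\cX$ and $\cY$ may carry infinite entropy (for instance if some coordinate is continuous), so one should use the partition/Radon–Nikodym definition of mutual information between $\sigma$-algebras, for which the chain rule and monotonicity remain valid in $[0,\infty]$; alternatively, one runs the argument with arbitrary \emph{finite} sub-$\sigma$-algebras $\cX_0\subseteq\cX$, $\cY_0\subseteq\cY$, $\cZ_0\subseteq\cZ$ in place of $\cX,\cY,\cZ$ and passes to the supremum over such refinements, using that $H(U\mid\cdot)$ is continuous along increasing sequences of $\sigma$-algebras by the martingale convergence theorem.
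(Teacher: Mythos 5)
Your proof is correct and is essentially the same argument as the paper's, merely phrased in the language of conditional mutual information and the chain rule rather than as a direct entropy expansion. The paper first disintegrates away $\cZ$ and reduces to finite $\cX,\cY$ by martingale convergence, then computes $H(U)-H(U\mid\cX)-H(U\mid\cY)+H(U\mid\cX,\cY)=H(\cX\mid U,\cY)-H(\cX\mid U)\le 0$ using $H(\cX,\cY)=H(\cX)+H(\cY)$; this is precisely your chain-rule identity $I(U;\cX\mid\cY\vee\cZ)-I(U;\cX\mid\cZ)=I(\cX;\cY\mid\sigma(U)\vee\cZ)\ge 0$ (after using $I(\cX;\cY\mid\cZ)=0$), and your fallback reduction to finite sub-$\sigma$-algebras via martingale convergence matches the paper's treatment of the general case.
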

\begin{proof}
First, by disintegrating over $\cZ$ we can assume it is the trivial sub-algebra.
By standard martingale convergence theorems (e.g. \cite{klenke2020martingale}), we can approximate $\cX_n, \cY_n$ by finite subalgebras in such a way that 
$$\lim_{n\to\infty} H(U|\cX_n)=H(U|\cX), \lim_{n\to\infty} H(U|\cY_n)=H(U|\cY) \text{ and } \lim_{n\to\infty} H(U|\cX_n,\cY_n)=H(U|\cX,\cY).$$ Therefore, the proof is reduced to the case when $\cX,\cY$ are finite. In that case, we have 
\begin{align*}H(U)&-H(U|\cX)-H(U|\cY)+H(U|\cX,\cY)\\&=H(U)-H(U,\cX)+H(\cX)-H(U,\cY)+H(\cY)+H(U,\cX,\cY)-H(\cX,\cY)\\
&=H(U,\cX,\cY)-H(U,\cY)-H(U,\cX)+H(U)\\
&=H(\cX|U,\cY)-H(\cX|U)\leq 0.\end{align*}
\end{proof}
\begin{lem}\label{lem-continuity}
The map $\cE_{\prec}$ is continuous with respect to total variation distance on both sides. That is, if $U_n$ is a sequence of subsets of $\{0,1\}^I$ such that $\mu(U\Delta U_n)\to 0$ then $m(\cE_{\prec}(U)\Delta \cE_{\prec}(U_n))\to 0.$ 
\end{lem}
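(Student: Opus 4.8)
The plan is to identify $m(\cE_{\prec}(U)\Delta\cE_{\prec}(U_n))$ with an $L^1$-distance between explicit fiber-length functions and then finish by a Scheffé-type argument together with a subsequence trick. Write $g_U(w,i):=1_U(w)\frac{\Delta_U(w,i)}{\varepsilon_U(w,i)}$ for the ($\mu$-a.e.\ defined, nonnegative) length of the fiber $\{t\mid(w,i,t)\in\cE_{\prec}(U)\}=[0,g_U(w,i))$; if one keeps track of the intersection with $[0,\log 2]$, replace $g_U$ by $\min(\log 2,g_U)$ throughout, which changes nothing below since $s\mapsto\min(\log 2,s)$ is $1$-Lipschitz and nonnegative. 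The fiber of $\cE_{\prec}(U)\Delta\cE_{\prec}(U_n)$ over $(w,i)$ is $[0,g_U(w,i))\Delta[0,g_{U_n}(w,i))$, of Lebesgue length $|g_U(w,i)-g_{U_n}(w,i)|$, so by Fubini
$$m\bigl(\cE_{\prec}(U)\Delta\cE_{\prec}(U_n)\bigr)=\int_{\{0,1\}^I}\sum_{i\in I}\bigl|g_U(w,i)-g_{U_n}(w,i)\bigr|\,d\mu(w).$$
Moreover, by Lemma \ref{lem-totalmass}, $\int_{\{0,1\}^I}\sum_{i}g_{U_n}\,d\mu=m(\cE_{\prec}(U_n))=H(U_n)=h(\mu(U_n))$, and since $|\mu(U_n)-\mu(U)|\le\mu(U\Delta U_n)\to 0$ and $h$ is continuous, the total masses converge to $h(\mu(U))=\int_{\{0,1\}^I}\sum_i g_U\,d\mu<\infty$.

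It remains to produce pointwise a.e.\ convergence of the functions $(w,i)\mapsto g_{U_n}(w,i)$ to $g_U$; combined with the convergence of the integrals and the nonnegativity $g_U\ge 0$ (valid because conditioning cannot increase entropy, so $\Delta_U(w,i)=H_{w,i}(U)-H_{w,i}(U\mid Z_i)\ge 0$), the Scheffé-type fact that nonnegative $f_n\to f$ a.e.\ with $\int f_n\to\int f<\infty$ implies $\int|f_n-f|\to 0$ then yields $\int_{\{0,1\}^I}\sum_i|g_{U_n}-g_U|\,d\mu\to 0$. To get the a.e.\ convergence I would argue along subsequences, using that conditional expectation is an $L^1$-contraction: for each $i\in I$, $\|\varepsilon_{U_n}(\cdot,i)-\varepsilon_U(\cdot,i)\|_{L^1(\mu)}=\|\mathbb E(1_{U_n}-1_U\mid Z_{\prec i})\|_{L^1(\mu)}\le\mu(U\Delta U_n)\to 0$, and likewise $\varepsilon_{U_n}^0(\cdot,i)\to\varepsilon_U^0(\cdot,i)$, $\varepsilon_{U_n}^1(\cdot,i)\to\varepsilon_U^1(\cdot,i)$, and $1_{U_n}\to 1_U$, all in $L^1(\mu)$. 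Since $I$ is countable, a diagonal extraction shows that every subsequence of $(U_n)$ has a further subsequence along which $1_{U_n}\to 1_U$ and all of $\varepsilon_{U_n}(\cdot,i),\varepsilon_{U_n}^0(\cdot,i),\varepsilon_{U_n}^1(\cdot,i)$ converge $\mu$-a.e., for every $i\in I$ simultaneously. Along such a subsequence, for $\mu$-a.e.\ $w$: if $1_U(w)=0$ then $1_{U_n}(w)=0$ for large $n$, so $g_{U_n}(w,i)=0=g_U(w,i)$; if $1_U(w)=1$ then $\varepsilon_U(w,i)>0$ for every $i$ — because $\{\varepsilon_U(\cdot,i)=0\}$ is $Z_{\prec i}$-measurable with $\mu(U\cap\{\varepsilon_U(\cdot,i)=0\})=\int_{\{\varepsilon_U(\cdot,i)=0\}}\varepsilon_U(\cdot,i)\,d\mu=0$, and the union over $i$ is still null — so by continuity of $h$ on $[0,1]$ one gets $\Delta_{U_n}(w,i)\to\Delta_U(w,i)$ and hence $g_{U_n}(w,i)=\Delta_{U_n}(w,i)/\varepsilon_{U_n}(w,i)\to\Delta_U(w,i)/\varepsilon_U(w,i)=g_U(w,i)$ for every $i$.

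Combining these: from any subsequence of $(U_n)$ one extracts a further subsequence along which $g_{U_n}\to g_U$ a.e.\ and $\int_{\{0,1\}^I}\sum_i g_{U_n}\,d\mu\to\int_{\{0,1\}^I}\sum_i g_U\,d\mu<\infty$, hence $m(\cE_{\prec}(U)\Delta\cE_{\prec}(U_n))\to 0$ along that subsequence by the Scheffé-type fact; since every subsequence has such a further subsequence, the full sequence $m(\cE_{\prec}(U)\Delta\cE_{\prec}(U_n))$ tends to $0$, which is the claim. The main obstacle is the interplay of the denominator $\varepsilon_U(w,i)$ with the genuinely discontinuous factor $1_U(w)$: a direct $L^1$-estimate is awkward since $g_U$ is not a continuous function of $1_U$, which is why the argument is routed through subsequential a.e.\ convergence and Scheffé rather than a bare inequality; and since $I$ is infinite, so the conditional quantities $\varepsilon_U(\cdot,i),\varepsilon_U^0(\cdot,i),\varepsilon_U^1(\cdot,i)$ and the entropies $\Delta_U(\cdot,i)$ are only defined through conditional expectations and Fubini, one must take care — via the diagonal extraction — that the a.e.\ statements hold for all $i$ at once.
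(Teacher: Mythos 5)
Your proof is correct, but it takes a different route from the paper's. The paper chooses a finite $F\subset I$ capturing all but $\delta/4$ of the mass $H(U)=m(\cE_\prec(U))$, bounds $m(\cE_\prec(U)\setminus\cE_\prec(V))$ by $\int\sum_{i\in F}|J_U(\cdot,i)-J_V(\cdot,i)|\,d\mu$ plus a small tail (using $\sum_i(J_U-J_V)^+\le J_U$), asserts — by ``simple computation'' — that for each fixed $i$ the map $V\mapsto J_V(\cdot,i)\in L^1$ is continuous in total variation, and finally controls $m(\cE_\prec(V)\setminus\cE_\prec(U))$ by continuity of $V\mapsto m(\cE_\prec(V))=H(V)$. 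You instead observe that $m(\cE_\prec(U)\Delta\cE_\prec(U_n))$ is exactly $\int\sum_i|g_U-g_{U_n}|\,d\mu$ (modulo the $[0,\log 2]$ clip, which the $1$-Lipschitz remark handles), and then run a subsequence-of-subsequences argument: $L^1$-contractivity of conditional expectation gives $L^1$-convergence of $1_{U_n},\varepsilon_{U_n}(\cdot,i),\varepsilon^0_{U_n}(\cdot,i),\varepsilon^1_{U_n}(\cdot,i)$, a diagonal extraction gives a.e.\ convergence simultaneously for all $i$, and the key pointwise claim --- that $\varepsilon_U(\cdot,i)>0$ a.e.\ on $U$ for every $i$, so the denominators are a.e.\ bounded away from zero where needed --- lets you pass to $g_{U_n}\to g_U$ a.e.; Scheffé's lemma with the mass convergence $H(U_n)\to H(U)$ from Lemma~\ref{lem-totalmass} closes the argument. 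The two approaches are genuinely different technically: the paper's is shorter but leans on an unproved $L^1$-continuity claim, while your route (a.e.\ convergence along subsequences plus Scheffé) supplies precisely the detail needed to justify that claim — in particular how to handle the division by $\varepsilon_U$ and the discontinuous $1_U$ factor — and as a bonus handles the whole sum $\sum_{i\in I}$ at once, avoiding the finite-truncation step. Both are valid; your version is more self-contained.
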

\begin{proof}
For any $i\in I$ consider the function 
$J_V(w,i):= 1_V(w)\frac{\Delta_V(w,i)}{\varepsilon_V(w,i)}$. Simple computation shows that $\cB(\{0,1\}^I)\ni V\mapsto J_V(\cdot,i)\in L^1(\{0,1\}^I)$ is continuous. Let $\delta>0$ and let $U,V\subset \{0,1\}^I$. Let $F\subset I$ be a finite set such that 
$$\sum_{i\in F}H(U|Z_{\prec i})-H(U|Z_{\preceq i})\geq m(\cE_{\prec}(U))-\frac{\delta}{4}$$.
Then 
\begin{align*}
m(\cE_{\prec}(U)\setminus\cE_{\prec}(V))\leq &\int_{\{0,1\}^I}\sum_{i\in F}\left|J_U(w,i)-J_V(w,i)\right|d\mu(w) +\frac{2\delta}{3}.
\end{align*} As the map $V\mapsto J_V(\cdot,i),$ is continuous, we deduce that for $V$ close enough to $U$ we will have 
$$\int_{\{0,1\}^I}\sum_{i\in F}\left|J_U(w,i)-J_V(w,i)\right|d\mu(w)\leq \delta/4,$$ so $m(\cE_{\prec}(U)\setminus  \cE_{\prec}(V))\leq \frac{\delta}{2}.$ The map $V\mapsto m(\cE_{\prec}(V))=H(V)$ is continuous, so for $V$ close enough to $U$ we will also have $|m(\cE_{\prec}(U))-m(\cE_{\prec}(V))|\leq \frac{\delta}{4}$ and $m(\cE_{\prec}(U)\Delta \cE_{\prec}(V))\leq \delta.$
\end{proof}

\begin{lem}\label{lem-DeltaU}
$$\Delta_U(w,i)=\varepsilon_U(w,i)\left(h\left(\frac{1}{2}\right)-h\left(\frac{\varepsilon_U^0(w,i)}{\varepsilon_U(w,i)}\right)\right)+(1-\varepsilon_U(w,i))\left(h\left(\frac{1}{2}\right)-h\left(\frac{\varepsilon_U^1(w,i)-\varepsilon_U^0(w,i)}{2-2\varepsilon_U(w,i)}\right)\right)$$
\end{lem}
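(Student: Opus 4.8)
The identity is a purely elementary rewriting of $\Delta_U(w,i)=H_{w,i}(U)-H_{w,i}(U\mid Z_i)$, and the plan is to derive it from the symmetry of mutual information. All entropies below are understood inside the cylinder $\{Z_{\prec i}=w_{\prec i}\}$, i.e.\ they are the conditional entropies $H_{w,i}$ fixed before the definition of $\cE$; by Fubini they are well defined for a.e.\ $w$, and for the computation one may assume $I$ finite. First I would write $H_{w,i}(U\mid Z_i)=H_{w,i}(U,Z_i)-H_{w,i}(Z_i)$, so that $\Delta_U(w,i)=H_{w,i}(U)+H_{w,i}(Z_i)-H_{w,i}(U,Z_i)$ is manifestly symmetric in $1_U$ and $Z_i$; re-expanding the other way gives $\Delta_U(w,i)=H_{w,i}(Z_i)-H_{w,i}(Z_i\mid 1_U)$. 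Since $Z_i$ is a fair coin in the cylinder, $H_{w,i}(Z_i)=\log 2=h(\tfrac12)$.

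Next I would compute $H_{w,i}(Z_i\mid 1_U)$ by conditioning on the two values of $1_U$:
$$H_{w,i}(Z_i\mid 1_U)=\varepsilon_U(w,i)\,H_{w,i}\bigl(Z_i\mid\{1_U=1\}\bigr)+\bigl(1-\varepsilon_U(w,i)\bigr)\,H_{w,i}\bigl(Z_i\mid\{1_U=0\}\bigr).$$
Inside the cylinder the mass of $U\cap\{Z_i=0\}$ is $\varepsilon_U^0(w,i)$ and the mass of $\{Z_i=0\}\setminus U$ is $\tfrac12-\varepsilon_U^0(w,i)$ (with the $Z_i=1$ analogues, whose $U$-part has mass $\varepsilon_U^1(w,i)$), so
$$H_{w,i}\bigl(Z_i\mid\{1_U=1\}\bigr)=h\!\left(\frac{\varepsilon_U^0(w,i)}{\varepsilon_U(w,i)}\right),\qquad H_{w,i}\bigl(Z_i\mid\{1_U=0\}\bigr)=h\!\left(\frac{\tfrac12-\varepsilon_U^0(w,i)}{1-\varepsilon_U(w,i)}\right).$$

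Finally I would substitute back, write $h(\tfrac12)=\varepsilon_U(w,i)h(\tfrac12)+(1-\varepsilon_U(w,i))h(\tfrac12)$, and regroup into the two advertised summands; the argument of the second entropy is rewritten using $h(t)=h(1-t)$ together with $\varepsilon_U^1(w,i)=\varepsilon_U(w,i)-\varepsilon_U^0(w,i)$ (note $\tfrac{\frac12-\varepsilon_U^0}{1-\varepsilon_U}=\tfrac12+\tfrac{\varepsilon_U^1-\varepsilon_U^0}{2-2\varepsilon_U}$), which brings it to the form in the statement. I do not anticipate any genuine obstacle: after the mutual-information symmetry everything is bookkeeping, and the only mild subtlety — conditioning on the null event $\{Z_{\prec i}=w_{\prec i}\}$ — is already addressed in the paragraph preceding the definition of $\cE$. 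One can also avoid mutual information and check the identity by expanding $h(\varepsilon_U)-\tfrac12 h(2\varepsilon_U^0)-\tfrac12 h(2\varepsilon_U^1)$ directly from the definition of $h$, but this route is messier.
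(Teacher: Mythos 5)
Your proof is correct and essentially identical to the paper's: both invoke the mutual-information symmetry $\Delta_U(w,i)=H_{w,i}(Z_i)-H_{w,i}(Z_i\mid U)$ with $H_{w,i}(Z_i)=h(\frac{1}{2})$, then expand $H_{w,i}(Z_i\mid U)$ by conditioning on the two values of $1_U$. One small remark: your computation correctly produces $\frac{1}{2}+\frac{\varepsilon_U^1(w,i)-\varepsilon_U^0(w,i)}{2-2\varepsilon_U(w,i)}$ as the argument of the second entropy (the lemma's statement appears to have dropped the $\frac{1}{2}$ — compare the form actually used in Lemma \ref{lem-DeltaUapprox}), and since $\frac{\frac{1}{2}-\varepsilon_U^0}{1-\varepsilon_U}=\frac{1}{2}+\frac{\varepsilon_U^1-\varepsilon_U^0}{2-2\varepsilon_U}$ is an exact algebraic identity, the appeal to $h(t)=h(1-t)$ in your final step is not actually needed.
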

\begin{proof}
This is proved by a straightforward but slightly unmotivated algebraic manipulation of the defining formula $\Delta_U(w,i)=h(\varepsilon_U(w,i))-\frac{1}{2}h(2\varepsilon_U^0(w,i))-\frac{1}{2}h(2\varepsilon_U^1(w,u))$. When $I$ is finite, there is a more conceptual argument.
$$\Delta_U(w,i)=H_{w,i}(U)-H_{w,i}(U|Z_i)=H_{w,i}(Z_i)-H_{w,i}(Z_i|U).$$
Now, $H_{w,i}(Z)=h(\frac{1}{2}), H_{w,i}(Z|U)=\varepsilon_U(w,i)h(\frac{\varepsilon_U^0(w,i)}{\varepsilon_U(w,i)})+(1-\varepsilon_U(w,i))h(\frac{\frac{1}{2}-\varepsilon_U^0(w,i)}{1-\varepsilon_U(w,i)})$, so we get the expression from the Lemma. 
\end{proof}
\begin{thm}\label{thm-almostcontained}
Let $\delta>0$. There exists an $\eta>0$, dependent only on $\delta$, such that for every $U\subset \{0,1\}^I$ with $\mu(U)\leq \eta$ and every $V\subset U$ we have 
$$m(\cE_{\prec}(U)\cup \cE_{\prec}(V))-m(\cE_{\prec}(U))\leq \delta m(\cE_{\prec}(U))$$.
\end{thm}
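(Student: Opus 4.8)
I want to compare, for $V\subseteq U$ with $\mu(U)$ small, the total masses $m(\cE_\prec(U)\cup\cE_\prec(V))$ and $m(\cE_\prec(U))$. Since $\cE_\prec(U),\cE_\prec(V)\subseteq \{0,1\}^I\times I\times[0,\log 2]$ and $\cE_\prec(V)\subseteq \mathfrak O\times V\times I\times[0,\log 2]\subseteq \mathfrak O\times U\times I\times[0,\log 2]$ (using $V\subseteq U$), the union is supported over $U$ in the $w$-variable. Fixing $w\in U$ and $i\in I$, the two "fibers" are intervals $[0,J_U(w,i))$ and $[0,J_V(w,i))$ with $J_W(w,i)=\Delta_W(w,i)/\varepsilon_W(w,i)$, so the extra mass contributed at $(w,i)$ by taking the union is $\big(J_V(w,i)-J_U(w,i)\big)_+$. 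Integrating, the plan is to bound
\[
m(\cE_\prec(U)\cup\cE_\prec(V))-m(\cE_\prec(U))=\int_U\sum_{i\in I}\big(J_V(w,i)-J_U(w,i)\big)_+\,d\mu(w)\le \int_U\sum_{i\in I}\big(\Delta_V(w,i)-\tfrac{\varepsilon_V(w,i)}{\varepsilon_U(w,i)}\Delta_U(w,i)\big)_+\,d\mu(w),
\]
and crucially to show this is at most $\delta\, m(\cE_\prec(U))=\delta\, H(U)=\delta\, h(\mu(U))$ once $\mu(U)\le\eta(\delta)$.

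**Key steps.** First I would reduce to finite $I$: by Lemma \ref{lem-continuity} both $\cE_\prec(U)$ and $\cE_\prec(V)$ are continuous in total variation, and one checks $\cE_\prec(U)\cup\cE_\prec(V)$ is as well (the fiberwise $\max$ of the two interval-lengths is continuous in the $L^1$-data $J_U(\cdot,i),J_V(\cdot,i)$ by the argument of Lemma \ref{lem-continuity}, with the same finite-$F$ truncation). Approximating $U,V$ by sets depending on finitely many coordinates then lets us assume $I$ is finite. Second, the main estimate: using the formula of Lemma \ref{lem-DeltaU} for $\Delta_U$ and $\Delta_V$, together with the fact that for a small set $U$ and a subset $V\subseteq U$ the relative densities $\varepsilon_U(w,i),\varepsilon_V(w,i)$ are small for most $(w,i)$, I would argue that $\Delta_V(w,i)$ cannot much exceed $\tfrac{\varepsilon_V(w,i)}{\varepsilon_U(w,i)}\Delta_U(w,i)$ in aggregate. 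The heuristic is that $\Delta_W(w,i)\approx\varepsilon_W(w,i)\log\tfrac{1}{\varepsilon_W(w,i)}\cdot(\text{small})$ in the regime where the densities are tiny, so when $\varepsilon_V\ll\varepsilon_U$ the factor $\varepsilon_V/\varepsilon_U$ already absorbs the difference; the places where $\varepsilon_V$ is comparable to $\varepsilon_U$ contribute to $m(\cE_\prec(U))$ proportionally. Concretely I would split the sum over $(w,i)$ according to whether $\varepsilon_U(w,i)$ exceeds a threshold $\rho=\rho(\delta)$: on $\{\varepsilon_U\ge\rho\}$ use that $h(\varepsilon_U(w,i))$ is bounded below, so the contribution to $m(\cE_\prec(U))$ there is a definite fraction, and control the $V$-excess by $\Delta_V\le\Delta_U$ pointwise plus Lemma \ref{lem-indepcond}-type convexity; on $\{\varepsilon_U<\rho\}$ use the explicit bound $(J_V-J_U)_+\le h(1/2)\cdot$ (something small in $\rho$) coming from Lemma \ref{lem-DeltaU}, times $\varepsilon_U$, and integrate.

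**Main obstacle.** The hard part is the second step — getting the \emph{uniform} bound $m(\cE_\prec(U)\cup\cE_\prec(V))-m(\cE_\prec(U))\le\delta\,h(\mu(U))$ with $\eta$ depending on $\delta$ only and not on $I$, $\prec$, $U$ or $V$. The subtlety is that $h(\mu(U))$ can be as small as $\approx\mu(U)\log\tfrac1{\mu(U)}$, so we need the excess mass to be genuinely of smaller order; the telescoping identity from the proof of Lemma \ref{lem-totalmass} gives $m(\cE_\prec(U))=\sum_i\Delta_U(\cdot,i)$ exactly, but for the union we only get an inequality, and controlling the defect requires the precise shape of $\Delta$ from Lemma \ref{lem-DeltaU} rather than crude entropy bounds. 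I expect the argument to hinge on showing that the function $t\mapsto \Delta$ behaves like $t\,(\text{a bounded increasing function of the "split ratios"})$ near $t=0$, so that passing from $U$ to $V\subseteq U$ at fixed coordinate $i$ can only decrease the "density-normalized" increment $J$ outside a set of $(w,i)$ whose $\varepsilon_U$-mass is controlled; this is exactly the kind of local analysis the next lemmas (Theorem \ref{thm-almostcontained} is stated, its proof presumably does this) are set up to carry out.
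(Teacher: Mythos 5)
Your starting point is correct: you reduce to finite $I$ via Lemma \ref{lem-continuity} and write the excess mass as $\int\sum_i \max\{0,\Delta_V(w,i)-\tfrac{\varepsilon_V(w,i)}{\varepsilon_U(w,i)}\Delta_U(w,i)\}\,d\mu(w)$, which is precisely formula (\ref{eq-differnceintegral}) in the paper. You also correctly anticipate that a region decomposition based on the size of $\varepsilon_U(w,i)$ is needed (this is the paper's $\Omega_2$). However, there are two genuine gaps in how you propose to carry out the main estimate.

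First, you invoke ``$\Delta_V\le\Delta_U$ pointwise.'' This is false. $\Delta_W(w,i)$ is the conditional mutual information between $1_W$ and $Z_i$ given $Z_{\prec i}=w_{\prec i}$, and there is no monotonicity in $W$: for example with $I=\{1,2\}$, $U=\{00,01\}$, $V=\{00\}$, $w=00$, $i=2$ one gets $\Delta_U(w,2)=0$ but $\Delta_V(w,2)=\log 2$. The entire difficulty of the theorem is that $\Delta_V$ can locally dwarf $\tfrac{\varepsilon_V}{\varepsilon_U}\Delta_U$, and one must show this only happens on a set whose contribution is $o(H(U))$.

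Second, and more structurally, your two-region split $\{\varepsilon_U\ge\rho\}\cup\{\varepsilon_U<\rho\}$ with fixed $\rho=\rho(\delta)$ is too coarse to capture the $\{\varepsilon_U<\rho\}$ contribution, which is where the bulk of the work lives. The paper further splits that region according to the ratio $\varepsilon_V/\varepsilon_U$ (giving $\Omega_3$: $\varepsilon_V/\varepsilon_U<\kappa_1$, $\Omega_4$: $\varepsilon_V/\varepsilon_U>1-\kappa_1$, and the intermediate $\Omega_5$), and the crucial missing ingredient in your proposal is the quantity $s_1(w,i)=\Delta_{U,V}(w,i)-\Delta_U(w,i)=H_{w,i}(V|U)-H_{w,i}(V|U,Z_i)\ge 0$, which is the genuinely small control: it telescopes to $H(V|U)\le\mu(U)$ (Lemma \ref{lem-s1bound}), not just to $H(V)$. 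This $s_1$ both upper-bounds the excess $r(w,i)$ on $\Omega_4$ (Lemma \ref{lem-rsmallbigV}) and, via a lower bound $s_1\gtrsim\kappa_1\varepsilon_U(\varepsilon_U^0/\varepsilon_U-\varepsilon_V^0/\varepsilon_V)^2$ (Lemma \ref{lem-s1lowerbound}), controls the size of the deviation $|\varepsilon_U^0/\varepsilon_U-\varepsilon_V^0/\varepsilon_V|$ on $\Omega_5$; a mean value theorem argument plus Cauchy--Schwarz then closes the estimate there (Lemma \ref{lem-s2bound}). Without introducing $s_1$ and letting the thresholds $\kappa_0,\kappa_1,\kappa_2$ depend on $\mu(U)$ (the paper takes $\kappa_0=\kappa_1=(-\log\mu(U))^{-1/3}$, $\kappa_2=(-\log\mu(U))^{-1}$), the excess integral on the small-$\varepsilon_U$ region cannot be forced below $\delta H(U)\approx\delta\mu(U)|\log\mu(U)|$.
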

Before moving to the proof of Theorem \ref{thm-almostcontained}, we explain how it implies Theorem \ref{thm-ESM}. 

\begin{proof}[Proof of Theorem \ref{thm-ESM}]
Point (1) follows from Lemma \ref{lem-totalmass} and points (2),(3) were shown in Lemma \ref{lem-points2and3}. It remains to prove that Theorem \ref{thm-almostcontained} implies (4).

Let $U,V\subset\{0,1\}^I$ be measurable subsets satisfying $\mu(U),\mu(V)\leq \varepsilon.$  Below, the notation $f=o(g)$ will mean that $\lim_{\varepsilon\to 0}\frac{f}{g}=0.$
By (2), the sets $\cE_\prec(U\setminus V),\cE_\prec(V\setminus U),\cE_\prec(U\cap V)$ are disjoint.  
By Theorem \ref{thm-almostcontained} and Lemma \ref{lem-totalmass}, we have 
\begin{align*}
    m(\cE_\prec(U)\Delta (\cE_\prec(U\setminus V)\cup \cE_\prec(U\cap V)))=o&(m(\cE_\prec(U))) ,\\
    m(\cE_\prec(V)\Delta (\cE_\prec(V\setminus U)\cup \cE_\prec(U\cap V)))=o&(m(\cE_\prec(V))),\\
    m(\cE_\prec(U\cup V)\Delta (\cE_\prec(U\setminus V)\cup \cE(V\setminus U)\cup \cE_\prec(U\cap V)))=o&(m(\cE_\prec(U\cup V)).
\end{align*}
Therefore, by the triangle inequality, \begin{align*}m((\cE_\prec(U)\cup \cE_\prec(V))\Delta \cE_\prec(U\cup V))=&o(m(\cE_\prec(U))+m(\cE_\prec(V))+m(\cE_\prec(U\cup V))\\=&o(m(\cE_\prec(U))+m(\cE_\prec(V))).\end{align*}
\end{proof}

It remains to prove Theorem \ref{thm-almostcontained}. We need to collect several preliminary lemmas. From now on we assume that $V\subset U\subset \{0,1\}^I$ and that $I$ is finite. 

We have
\begin{align*}
m(\cE_{\prec}(U)\cup\cE_{\prec}(V))=& \int_{\{0,1\}^I}\sum_{i\in I}\max\left\{ \frac{1_U(w)\Delta_U(w,i)}{\varepsilon_U(w,i)},\frac{1_V(w)\Delta_V(w,i)}{\varepsilon_V(w,i)}
\right\}d\mu(w)\\
=& \int_{\{0,1\}^I}\sum_{i\in I}\max\left\{\Delta_U(w,i),\Delta_V(w,i)+\frac{\varepsilon_U(w,i)-\varepsilon_V(w,i)}{\varepsilon_U(w,i)}\Delta_U(w,i)\right\}d\mu(w)\end{align*}
Similarly 
$$m(\cE_{\prec}(U))=\int_{\{0,1\}^I}\sum_{i\in I}\Delta_U(w,i)d\mu(w,i).$$
By combining these identities, we can write 
\begin{equation}\label{eq-differnceintegral}
m(\cE_{\prec}(U)\cup\cE_{\prec}(V))-m(\cE_{\prec}(U))=\int_{\{0,1\}^I}\sum_{i\in I}\max\left\{0,\Delta_V(w,i)-\frac{\varepsilon_V(w,i)}{\varepsilon_U(w,i)}\Delta_U(w,i)\right\}d\mu(w).
\end{equation}
Set 
\begin{align*}r(w,i):=&\max\left\{0,\Delta_V(w,i)-\frac{\varepsilon_V(w,i)}{\varepsilon_U(w,i)}\Delta_U(w,i)\right\}\\ s_1(w,i):=&\Delta_{U,V}(w,i)-\Delta_U(w,i)\end{align*}

\begin{lem}\label{lem-rsmallbigV}
\begin{enumerate}
    \item $$r(w,i)\leq \min\left\{\Delta_V(w,i), \frac{\varepsilon_U(w,i)-\varepsilon_V(w,i)}{\varepsilon_U(w,i)}\Delta_U(w,i)+s_1(w,i)\right\}$$
\end{enumerate}
\end{lem}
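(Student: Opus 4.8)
The idea is to reduce both halves of the $\min$ to three completely elementary facts about the conditional mutual informations $\Delta_U(w,i),\Delta_V(w,i),\Delta_{U,V}(w,i)$, each valid for $\mu$-almost every $w$ because $I$ is finite:
\begin{enumerate}
\item[(i)] all three are nonnegative, being the conditional mutual informations $I_{w,i}(1_U:Z_i)$, $I_{w,i}(1_V:Z_i)$, $I_{w,i}((1_U,1_V):Z_i)$, where the conditioning is on the event $\{Z_{\prec i}=w_{\prec i}\}$;
\item[(ii)] $\Delta_{U,V}(w,i)\geq\max\{\Delta_U(w,i),\Delta_V(w,i)\}$, since $1_U$ and $1_V$ are each functions of the pair $(1_U,1_V)$: by the chain rule $\Delta_{U,V}(w,i)=\Delta_U(w,i)+I_{w,i}(1_V:Z_i\mid 1_U)\geq\Delta_U(w,i)$, and symmetrically for $\Delta_V$, using nonnegativity of conditional mutual information;
\item[(iii)] $\varepsilon_V(w,i)\leq\varepsilon_U(w,i)$, because $V\subset U$ gives $1_V\leq 1_U$ pointwise, hence $\mathbb E(1_V\mid Z_{\prec i})\leq\mathbb E(1_U\mid Z_{\prec i})$.
\end{enumerate}

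First I would dispatch the bound $r(w,i)\leq\Delta_V(w,i)$. By (i) and (iii) the term $\tfrac{\varepsilon_V(w,i)}{\varepsilon_U(w,i)}\Delta_U(w,i)$ is nonnegative, so the quantity inside the max defining $r(w,i)$ is at most $\Delta_V(w,i)$; since $\Delta_V(w,i)\geq 0$ by (i), this yields $r(w,i)=\max\bigl\{0,\Delta_V(w,i)-\tfrac{\varepsilon_V(w,i)}{\varepsilon_U(w,i)}\Delta_U(w,i)\bigr\}\leq\max\{0,\Delta_V(w,i)\}=\Delta_V(w,i)$.

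For the second bound I would first unfold the right-hand side using the definition $s_1(w,i)=\Delta_{U,V}(w,i)-\Delta_U(w,i)$:
$$\frac{\varepsilon_U(w,i)-\varepsilon_V(w,i)}{\varepsilon_U(w,i)}\,\Delta_U(w,i)+s_1(w,i)=\Delta_{U,V}(w,i)-\frac{\varepsilon_V(w,i)}{\varepsilon_U(w,i)}\,\Delta_U(w,i).$$
Then split into the two cases of $r(w,i)=\max\bigl\{0,\Delta_V(w,i)-\tfrac{\varepsilon_V(w,i)}{\varepsilon_U(w,i)}\Delta_U(w,i)\bigr\}$. If the max is $0$, it suffices to know $\Delta_{U,V}(w,i)\geq\tfrac{\varepsilon_V(w,i)}{\varepsilon_U(w,i)}\Delta_U(w,i)$, which follows from $\Delta_{U,V}(w,i)\geq\Delta_U(w,i)\geq\tfrac{\varepsilon_V(w,i)}{\varepsilon_U(w,i)}\Delta_U(w,i)$ by (ii), (iii), and $\Delta_U(w,i)\geq 0$. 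If the max equals $\Delta_V(w,i)-\tfrac{\varepsilon_V(w,i)}{\varepsilon_U(w,i)}\Delta_U(w,i)$, then cancelling the common term $-\tfrac{\varepsilon_V(w,i)}{\varepsilon_U(w,i)}\Delta_U(w,i)$ reduces the desired inequality to $\Delta_V(w,i)\leq\Delta_{U,V}(w,i)$, which is again (ii). Combining the two cases with the first bound gives the claimed $\min$.

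I do not expect a real obstacle here; the only point needing a little care is that the conditional entropies $H_{w,i}(\cdot)$ are defined via conditional expectations on the $\mu$-null event $\{Z_{\prec i}=w_{\prec i}\}$, so the chain-rule identity and monotonicity in (ii) should be phrased as ``$\mu$-almost everywhere'' statements, exactly in the spirit of Lemma \ref{lem-indepcond}. Once $I$ is finite this is simply the finite-alphabet chain rule applied fibrewise over $\{0,1\}^{\prec i}$, and no new input is required.
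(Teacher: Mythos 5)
Your proof is correct and takes essentially the same route as the paper: both use $\Delta_V\leq\Delta_{U,V}$ together with the identity $s_1=\Delta_{U,V}-\Delta_U$ to rewrite the second bound, and both dispatch $r\leq\Delta_V$ from nonnegativity. Your write-up is merely more explicit about the two-case split on the $\max$ (the paper leaves the $r=0$ case implicit, relying on $s_1\geq 0$ and $\varepsilon_V\leq\varepsilon_U$) and about the chain-rule/monotonicity facts, but there is no genuine difference in method.
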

\begin{proof}
The inequality $r(w,i)\leq \Delta_V(w,i)$ is clear. To prove the remaining inequality we observe that 
\begin{align*}\Delta_V(w,i)-\frac{\varepsilon_V(w,i)}{\varepsilon_U(w,i)}\Delta_U(w,i)\leq& \Delta_{U,V}(w,i)-\frac{\varepsilon_V(w,i)}{\varepsilon_U(w,i)}\Delta_U(w,i)\\
=& s_1(w,i)+\frac{\varepsilon_U(w,i)-\varepsilon_V(w,i)}{\varepsilon_U(w,i)}\Delta_U(w,i).\end{align*}

\end{proof}
\begin{lem}\label{lem-s1bound}
\begin{enumerate}
\item 
$$\int_{\{0,1\}^I}\sum_{i\in I}|s_1(w,i)|d\mu(w)=H(V|U)\leq \mu(U).$$
\item \begin{align*}s_1(w,i)=&\varepsilon_U(w,i)h\left(\frac{\varepsilon_U^0(w,i)}{\varepsilon_U(w,i)}\right)-\varepsilon_V(w,i)h\left(\frac{\varepsilon_V^0(w,i)}{\varepsilon_V(w,i)}{}\right)\\ &-(\varepsilon_U(w,i)-\varepsilon_V(w,i))h\left(\frac{\varepsilon_U^0(w,i)-\varepsilon_V^0(w,i)}{\varepsilon_U(w,i)-\varepsilon_V(w,i)}\right).\end{align*}
\end{enumerate}
\end{lem}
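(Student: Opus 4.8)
The plan is to recognize $s_1(w,i)$ as a conditional mutual information and then use a telescoping argument. By the chain rule for entropy, $H_{w,i}(U,V)=H_{w,i}(1_U,1_V)=H_{w,i}(U)+H_{w,i}(V\mid U)$ and likewise $H_{w,i}(U,V\mid Z_i)=H_{w,i}(U\mid Z_i)+H_{w,i}(V\mid U,Z_i)$, so
\[
s_1(w,i)=\Delta_{U,V}(w,i)-\Delta_U(w,i)=H_{w,i}(V\mid U)-H_{w,i}(V\mid U,Z_i).
\]
Equivalently, applying the chain rule to $Z_i$ instead of to $1_V$ (as in the proof of Lemma~\ref{lem-DeltaU}, using $\Delta_U(w,i)=H_{w,i}(Z_i)-H_{w,i}(Z_i\mid U)$ and $\Delta_{U,V}(w,i)=H_{w,i}(Z_i)-H_{w,i}(Z_i\mid U,V)$), one gets the second useful form $s_1(w,i)=H_{w,i}(Z_i\mid U)-H_{w,i}(Z_i\mid U,V)$; this is the one I would use for part (2).

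For part (2) I would expand both conditional entropies by hand. Conditioning on $Z_{\prec i}=w_{\prec i}$, the pair $(1_U,1_V)$ takes only the three values $(0,0),(1,0),(1,1)$ with probabilities $1-\varepsilon_U,\ \varepsilon_U-\varepsilon_V,\ \varepsilon_V$ (all $\varepsilon$'s evaluated at $(w,i)$), and on each atom $Z_i$ is Bernoulli with the conditional parameter read off from $\varepsilon_U^0,\varepsilon_V^0$: on $\{1_V=1\}$ it is $\varepsilon_V^0/\varepsilon_V$, on $\{1_U=1,1_V=0\}$ it is $(\varepsilon_U^0-\varepsilon_V^0)/(\varepsilon_U-\varepsilon_V)$, and on $\{1_U=0\}$ it is $(\tfrac12-\varepsilon_U^0)/(1-\varepsilon_U)$. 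Hence, suppressing the argument $(w,i)$,
\begin{align*}
H_{w,i}(Z_i\mid U)&=\varepsilon_U\, h\!\left(\tfrac{\varepsilon_U^0}{\varepsilon_U}\right)+(1-\varepsilon_U)\, h\!\left(\tfrac{\frac12-\varepsilon_U^0}{1-\varepsilon_U}\right),\\
H_{w,i}(Z_i\mid U,V)&=\varepsilon_V\, h\!\left(\tfrac{\varepsilon_V^0}{\varepsilon_V}\right)+(\varepsilon_U-\varepsilon_V)\, h\!\left(\tfrac{\varepsilon_U^0-\varepsilon_V^0}{\varepsilon_U-\varepsilon_V}\right)+(1-\varepsilon_U)\, h\!\left(\tfrac{\frac12-\varepsilon_U^0}{1-\varepsilon_U}\right);
\end{align*}
subtracting, the $\{1_U=0\}$ contributions cancel and what remains is precisely the claimed identity. (Alternatively this follows from a direct algebraic manipulation of $\Delta_U(w,i)=h(\varepsilon_U)-\tfrac12h(2\varepsilon_U^0)-\tfrac12h(2\varepsilon_U^1)$ and the analogous expression for $\Delta_{U,V}$, in the spirit of Lemma~\ref{lem-DeltaU}, but the probabilistic bookkeeping above is cleaner.)

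For part (1), I would first note $s_1(w,i)\ge 0$ for a.e.\ $w$: in the form $s_1(w,i)=H_{w,i}(V\mid U)-H_{w,i}(V\mid U,Z_i)$ it is a difference of conditional entropies where the subtrahend conditions on strictly more, and conditioning cannot increase entropy. Thus $|s_1|=s_1$, and since $I$ is finite I may interchange sum and integral. Integrating the first form over $w$ and using that $H_{w,i}(V\mid U)$ depends only on $w_{\prec i}$ (it is $H(1_V\mid 1_U, Z_{\prec i}=w_{\prec i})$) yields $\int s_1(w,i)\,d\mu(w)=H(1_V\mid 1_U,Z_{\prec i})-H(1_V\mid 1_U,Z_{\preceq i})$, ordinary conditional entropies against the sub-$\sigma$-algebras generated by the coordinates $\prec i$ (resp.\ $\preceq i$) together with $1_U$. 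Enumerating $I$ in increasing order, the sum over $i\in I$ telescopes to $H(1_V\mid 1_U)-H\big(1_V\mid 1_U,(Z_j)_{j\in I}\big)$, and the second term vanishes because $1_V$ is a function of $w$; hence $\int\sum_{i}|s_1(w,i)|\,d\mu(w)=H(1_V\mid 1_U)=H(V\mid U)$. Finally, since $V\subset U$ we have $1_V=0$ on $\{1_U=0\}$, so $H(V\mid U)=\mu(U)\,h\!\left(\mu(V)/\mu(U)\right)\le \mu(U)\log 2\le \mu(U)$.

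I do not expect a genuine obstacle here: once the setup of Section~\ref{sec-esmaps} is in place, the only thing needing care is the formalism of conditioning on the null events $\{Z_{\prec i}=w_{\prec i}\}$ when passing between the pointwise identity and its integrated version, and with $I$ finite this reduces to elementary finite conditional-entropy calculus. The actual content is just the three-atom decomposition used in part (2) and the telescoping-plus-measurability argument in part (1).
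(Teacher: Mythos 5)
Your proposal is correct and follows essentially the same route as the paper: for part (2) the paper also rewrites $s_1(w,i)=H_{w,i}(Z_i\mid U)-H_{w,i}(Z_i\mid U,V)$ (via an algebraic manipulation of joint entropies rather than the chain rule, but that is cosmetic), and then states that "the last expression evaluates to the desired formula"; your three-atom decomposition over $(1_U,1_V)\in\{(0,0),(1,0),(1,1)\}$ is exactly the computation the paper leaves to the reader, and your conditional parameters are correct. For part (1), the paper uses the same nonnegativity observation $s_1=H_{w,i}(V\mid U)-H_{w,i}(V\mid U,Z_i)\ge 0$ and the same telescoping sum $\sum_i\bigl(H(V\mid U,Z_{\prec i})-H(V\mid U,Z_{\preceq i})\bigr)=H(V\mid U)=\mu(U)h(\mu(V)/\mu(U))\le\mu(U)$, so there is no substantive difference.
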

\begin{proof}
\begin{enumerate}
    \item 
$s_1(w,i)=\Delta_{U,V}(w,i)-\Delta_U(w,i)=H_{w,i}(V|U)-H_{w,i}(V|U,Z_i)\geq 0.$ Hence 
\begin{align*}
\int_{\{0,1\}^I}\sum_{i\in I}|s_1(w,i)|d\mu(w)=&\int_{\{0,1\}^I}\sum_{i\in I}\left(H_{w,i}(V|U)-H_{w,i}(V|U,Z_i)\right)d\mu(w)\\
=& \sum_{i\in I}(H(V|U, Z_{\prec i})-H(V|U,Z_{\preceq i}))\\
=& H(V|U)=\mu(U) h\left(\frac{\mu(V)}{\mu(U)}\right)\leq \mu(U)\log 2\leq \mu(U).
\end{align*}
\item 
\begin{align*}
s_1(w,i)=& H_{w,i}(U,V)-H_{w,i}(U,V|Z_i)-H_{w,i}(U)+H_{w,i}(U|Z_i)\\ =&H_{w,i}(U,V)-H_{w,i}(U,V,Z_i)+H_{w,i}(Z_i)-H_{i,w}(U)+H_{w,i}(U,Z_i)-H_{w,i}(Z_i)\\
=&H_{w,i}(Z_i|U)-H_{w,i}(Z_i|U,V).
\end{align*}
The last expression evaluates to the desired formula.
\end{enumerate}
\end{proof}

\begin{lem}\label{lem-hquadbounds}
Let $\kappa\leq x,y\leq 1-\kappa.$ Then 
$$2\left(x-\frac{1}{2}\right)^2\leq \left|h(x)-h\left(\frac{1}{2}\right)\right|\leq 8\left(x-\frac{1}{2}\right)^2.$$
\end{lem}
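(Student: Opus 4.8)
The plan is to reduce everything to elementary estimates on the second derivative of $h$. Set $\phi(x) := h(\tfrac12) - h(x) = \log 2 - h(x)$. Since $h$ is concave on $[0,1]$ with maximum value $\log 2$ attained at $\tfrac12$, we have $\phi \ge 0$, so $|h(x)-h(\tfrac12)| = \phi(x)$ (and the variable $y$ in the statement plays no role). On $(0,1)$ the function $\phi$ is smooth, $\phi(\tfrac12)=0$, $\phi'(x)=\log x - \log(1-x)$ so $\phi'(\tfrac12)=0$, and $\phi''(x)=\tfrac1{x(1-x)}$. By Taylor's formula with integral remainder at $\tfrac12$, and using $\phi(x)=\phi(1-x)$ to reduce to $x\ge\tfrac12$,
$$\phi(x)=\int_{1/2}^{x}(x-s)\,\phi''(s)\,ds ,\qquad \int_{1/2}^{x}(x-s)\,ds=\tfrac12\big(x-\tfrac12\big)^2 ,$$
so a two-sided bound $m\le\phi''(s)\le M$ on $[\tfrac12,x]$ yields $\tfrac m2(x-\tfrac12)^2\le\phi(x)\le\tfrac M2(x-\tfrac12)^2$.

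For the lower bound there is nothing to split: $s(1-s)\le\tfrac14$ on $[0,1]$ forces $\phi''(s)\ge4$ everywhere, hence $\phi(x)\ge2(x-\tfrac12)^2$.

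The one real obstacle is the upper bound, because $\phi''$ is \emph{unbounded} on $(0,1)$, blowing up as $s\to0,1$, so a single constant bound on $\phi''$ cannot work. The remedy is a case split on the size of $|x-\tfrac12|$. If $8(x-\tfrac12)^2\ge\log 2$, the estimate is trivial since $\phi(x)\le\log 2$ (because $h(x)\ge0$). Otherwise $|x-\tfrac12|<\sqrt{(\log 2)/8}\le\sqrt3/4$ — the last inequality amounting to $\log 2\le\tfrac32$ — and then for every $s$ between $\tfrac12$ and $x$ we have $s(1-s)=\tfrac14-(s-\tfrac12)^2>\tfrac14-\tfrac3{16}=\tfrac1{16}$, so $\phi''(s)<16$; the Taylor estimate above then gives $\phi(x)\le8(x-\tfrac12)^2$. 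This proves both inequalities for all $x\in(0,1)$, in particular on $[\kappa,1-\kappa]$, and all the remaining steps are routine one-variable calculus.
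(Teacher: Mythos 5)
Your proof is correct and follows essentially the same route as the paper: Taylor expansion of $h$ about $1/2$ (you use the integral remainder, the paper uses the Lagrange form), the lower bound from $h''(t)=-\tfrac{1}{t(1-t)}\le -4$, and a case split on the size of $|x-\tfrac12|$ with the crude bound $|h(x)-h(\tfrac12)|\le\log 2$ in the outer regime. The only cosmetic differences are the threshold for the split ($\sqrt{(\log 2)/8}$ vs.\ the paper's $\sqrt{2}/4$) and your explicit note — correct, and worth making — that the variable $y$ in the statement is vestigial.
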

\begin{proof}
The first order Taylor expansion yields $h(x)=h(\frac{1}{2})+h'(\frac{1}{2})(x-\frac{1}{2})+\frac{1}{2}h''(t)(x-\frac{1}{2})^2,$ for some $t\in (\frac{1}{2},x)$ so $h(x)-h(\frac{1}{2})=\frac{1}{2}h''(t)(x-\frac{1}{2})^2.$
We have $h''(t)=-\frac{1}{t(1-t)}\leq -4.$ This explains the lower bound. For the upper bound the same formula works as long as $\frac{1}{t(1-t)}\leq 8$ which corresponds to $|t-\frac{1}{2}|\leq \frac{\sqrt{2}}{4}$, which is the case for $|x-\frac{1}{2}|\leq \frac{\sqrt{2}}{4}$. For $|x-\frac{1}{2}|\geq \frac{\sqrt{2}}{4}$ we can use the crude estimate $|h(x)-h(\frac{1}{2})|\leq \log 2<1= 8(\frac{\sqrt 2}{4})^2\leq 4(x-\frac{1}{2})^2.$
\end{proof}

\begin{lem}\label{lem-s1lowerbound}
Suppose $\kappa\leq \frac{\varepsilon_V(w,i)}{\varepsilon_U(w,i)}\leq 1-\kappa, \kappa>0.$ Then,
$$|s_1(w,i)|\geq 2\kappa\varepsilon_U(w,i)\left(\frac{\varepsilon_U^0(w,i)}{\varepsilon_U(w,i)}-\frac{\varepsilon_V^0(w,i)}{\varepsilon_V(w,i)}\right)^2.$$
\end{lem}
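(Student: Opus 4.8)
The plan is to reduce the whole statement to a pointwise inequality, fixing $w\in\{0,1\}^I$ and $i\in I$ and abbreviating $p:=\varepsilon_U(w,i)$, $q:=\varepsilon_V(w,i)$, $a:=\varepsilon_U^0(w,i)/\varepsilon_U(w,i)$, $b:=\varepsilon_V^0(w,i)/\varepsilon_V(w,i)$. The hypothesis $\kappa\le q/p\le 1-\kappa$ forces $p>q>0$, so $a,b$ are well defined and $t:=q/p$ lies in $[\kappa,1-\kappa]$; setting also $c:=(\varepsilon_U^0(w,i)-\varepsilon_V^0(w,i))/(\varepsilon_U(w,i)-\varepsilon_V(w,i))$, the relations $\varepsilon_U^0=ap$ and $\varepsilon_V^0=bq=bpt$ give $c=(a-tb)/(1-t)$, equivalently $a=tb+(1-t)c$ (and $c\in[0,1]$ since $V\subset U$ and $I$ is finite). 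Dividing the explicit formula for $s_1(w,i)$ from Lemma \ref{lem-s1bound}(2) by $p$ then yields
\[ s_1(w,i)=p\bigl(h(a)-t\,h(b)-(1-t)\,h(c)\bigr), \]
so $s_1(w,i)$ is $p$ times the Jensen gap of the concave function $h$ for the two-point distribution putting mass $t$ at $b$ and mass $1-t$ at $c$, whose mean is $a$. In particular this reproves $s_1(w,i)\ge0$, so $|s_1(w,i)|=s_1(w,i)$.

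The core step is a quantitative strong-concavity estimate for $h$, in the spirit of Lemma \ref{lem-hquadbounds}: since $g:=-h$ satisfies $g''(x)=\tfrac{1}{x(1-x)}\ge4$ on $(0,1)$, the map $x\mapsto g(x)-g(m)-g'(m)(x-m)-2(x-m)^2$ is convex on $(0,1)$ with a critical point at any interior $m$, hence nonnegative on $[0,1]$ by continuity; thus $g(x)\ge g(m)+g'(m)(x-m)+2(x-m)^2$ for all $x\in[0,1]$ and all $m\in(0,1)$. We may assume $b\ne c$ (otherwise $a=b=c$ and both sides of the claimed inequality vanish), so that $a$ lies strictly between $b$ and $c$, hence $a\in(0,1)$. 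Applying the estimate with $m=a$ and $x\in\{b,c\}$, multiplying by $t$ and $1-t$ respectively and adding, the linear terms cancel because $t(b-a)+(1-t)(c-a)=0$, and we obtain
\[ h(a)-t\,h(b)-(1-t)\,h(c)\ \ge\ 2\bigl(t(b-a)^2+(1-t)(c-a)^2\bigr)\ =\ 2\,t(1-t)(b-c)^2, \]
the last equality being the variance of a two-point distribution with mean $a$.

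Finally I would convert this back to the stated form. From $a-b=(1-t)(c-b)$ we get $(b-c)^2=(a-b)^2/(1-t)^2$, so the bound becomes $h(a)-t\,h(b)-(1-t)\,h(c)\ge \tfrac{2t}{1-t}(a-b)^2$; and since $t\mapsto t/(1-t)$ is increasing on $[0,1)$, the inequality $t\ge\kappa$ gives $\tfrac{t}{1-t}\ge\tfrac{\kappa}{1-\kappa}\ge\kappa$. Multiplying through by $p$ yields $s_1(w,i)\ge 2\kappa\,p\,(a-b)^2$, which after unwinding the abbreviations is exactly
\[ |s_1(w,i)|\ \ge\ 2\kappa\,\varepsilon_U(w,i)\left(\frac{\varepsilon_U^0(w,i)}{\varepsilon_U(w,i)}-\frac{\varepsilon_V^0(w,i)}{\varepsilon_V(w,i)}\right)^2. \]
I do not expect a genuine obstacle; the one mildly delicate point is that $h''$ blows up at the endpoints $0,1$, which is precisely why I phrase the strong-concavity bound through convexity of the Taylor remainder — valid on all of $[0,1]$ by continuity — rather than a naive second-order Taylor expansion at an interior point.
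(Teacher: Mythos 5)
Your proof is correct and follows essentially the same strategy as the paper: divide out $\varepsilon_U(w,i)$ to rewrite $s_1$ as a Jensen gap of $h$ for the two-point distribution with mass $t=\varepsilon_V/\varepsilon_U$ at $b$ and $1-t$ at $c$, whose mean is $a$, then apply a quadratic strong-concavity bound for $h$ and simplify the variance $t(1-t)(b-c)^2$ to $\frac{t}{1-t}(a-b)^2\ge\kappa(a-b)^2$; the paper does exactly this with the labels $\alpha,p,p_0,p_1$ in place of your $t,a,b,c$. The one place you genuinely diverge is in how the quadratic bound is established: the paper invokes a second-order Taylor expansion with Lagrange remainder about the mean $p$, producing intermediate points $t_0\in(p_0,p)$, $t_1\in(p_1,p)$ and then using $h''\le-4$, whereas you prove $g(x)\ge g(m)+g'(m)(x-m)+2(x-m)^2$ on all of $[0,1]$ by observing that the remainder is convex with a critical point at $m$. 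Your route is a bit cleaner at the endpoints: if $p_0$ or $p_1$ equals $0$ or $1$ (which can happen, e.g.\ when $\varepsilon_V^0(w,i)=0$), the Lagrange form of Taylor's theorem for $h$ on the closed interval requires a limiting argument that the paper leaves implicit, while your convexity-of-remainder formulation handles $x\in\{0,1\}$ directly by continuity, and your reduction to the case $b\neq c$ ensures $m=a$ is interior. Both routes yield the identical constant, so the buy-in is modest, but your version is the more self-contained.
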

\begin{proof}
By Lemma \ref{lem-s1bound},
$$s_1(w,i)=\varepsilon_U(w,i)h\left(\frac{\varepsilon_U^0(w,i)}{\varepsilon_U(w,i)}\right)-\varepsilon_V(w,i)h\left(\frac{\varepsilon_V^0(w,i)}{\varepsilon_V(w,i)}{}\right) -(\varepsilon_U(w,i)-\varepsilon_V(w,i))h\left(\frac{\varepsilon_U^0(w,i)-\varepsilon_V^0(w,i)}{\varepsilon_U(w,i)-\varepsilon_V(w,i)}\right).$$
Put $p=\frac{\varepsilon_U^0(w,i)}{\varepsilon_U(w,i)}, p_0=\frac{\varepsilon_V^0(w,i)}{\varepsilon_V(w,i)}, p_1=\frac{\varepsilon_U^0(w,i)-\varepsilon_V^0(w,i)}{\varepsilon_U(w,i)-\varepsilon_V(w,i)}$ and $\alpha=\frac{\varepsilon_V(w,i)}{\varepsilon_U(w,i)}, 1-\alpha=\frac{\varepsilon_U(w,i)-\varepsilon_V(w,i)}{\varepsilon_U(w,i)}$. Then $p=\alpha p_0+(1-\alpha)p_1$ and the above formula can be rewritten as 
$$s_1(w,i)=\varepsilon_U(w,i)(h(p)-\alpha h(p_0)-(1-\alpha)h(p_1)).$$
Note the identities $\alpha(p_0-p)+(1-\alpha)(p_1-p)=0, \alpha(p_0-p_1)=p-p_1, (1-\alpha)(p_1-p_0)=p-p_1.$ By the first order Taylor expansion, there exist $t_0\in (p_0,p),t_1\in (p_1,p)$ such that 
\begin{align*}
s_1(w,i)=&\varepsilon_U(w,i)\left(h(p)-\alpha(h(p)+h(p)'(p_0-p)+\frac{h''(t_0)}{2}(p_0-p)^2)\right.\\ &\left.-(1-\alpha)(h(p)+h'(p)(p_1-p)+\frac{h''(t_1)}{2}(p_1-p)^2)\right)\\
=&\varepsilon_U(w,i)\frac{\alpha(1-\alpha)}{2}\left(-(1-\alpha)h''(t_0)-\alpha h''(t_1)\right)(p_1-p_0)^2\\
\geq & 2\varepsilon_U(w,i)\alpha(1-\alpha)(p_1-p_0)^2.
\end{align*}
For the last inequality we used $h''(t)=-\frac{1}{t(1-t)}\leq -4$ for all $t\in (0,1).$ We can now use the assumption $\kappa\leq \frac{\varepsilon_V(w,i)}{\varepsilon_U(w,i)}\leq 1-\kappa$ to get 
\begin{align*}
s_1(w,i)\geq& 2\varepsilon_U(w,i)\frac{\varepsilon_V(w,i)(\varepsilon_U(w,i)-\varepsilon_V(w,i))}{\varepsilon_U(w,i)^2}\left(\frac{\varepsilon_V^0(w,i)}{\varepsilon_V(w,i)}-\frac{\varepsilon_U^0(w,i)-\varepsilon_V^0(w,i)}{\varepsilon_U(w,i)-\varepsilon_V(w,i)}\right)^2\\
=&2 \varepsilon_U(w,i)\frac{\varepsilon_V(w,i)}{\varepsilon_U(w,i)-\varepsilon_V(w,i)}\left(\frac{\varepsilon_V^0(w,i)}{\varepsilon_V(w,i)}-\frac{\varepsilon_U^0(w,i)}{\varepsilon_U(w,i)}\right)^2\\
\geq& 2\kappa\varepsilon_U(w,i)\left(\frac{\varepsilon_V^0(w,i)}{\varepsilon_V(w,i)}-\frac{\varepsilon_U^0(w,i)}{\varepsilon_U(w,i)}\right)^2.
\end{align*}
\end{proof}
\begin{lem}\label{lem-DeltaUapprox}
Suppose $\varepsilon_U(w,i)\leq \frac{1}{4}.$ Then 
$$0\leq \Delta_U(w,i)-\varepsilon_U(w,i)\left(h\left(\frac{1}{2}\right)-h\left(\frac{\varepsilon_U^0(w,i)}{\varepsilon_U(w,i)}\right)\right)\leq 16\varepsilon_U(w,i)\Delta
_U(w,i).$$
\end{lem}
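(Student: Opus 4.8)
The plan is to use the explicit formula from Lemma \ref{lem-DeltaU}, which decomposes $\Delta_U(w,i)$ into two terms, one attached to the mass $\varepsilon_U(w,i)$ and one to the complementary mass $1-\varepsilon_U(w,i)$. The first term is exactly $\varepsilon_U(w,i)\bigl(h(\tfrac12)-h(\tfrac{\varepsilon_U^0(w,i)}{\varepsilon_U(w,i)})\bigr)$, so the quantity to be estimated is precisely the \emph{second} term,
$$R:=(1-\varepsilon_U(w,i))\left(h\left(\tfrac12\right)-h\left(\frac{\varepsilon_U^1(w,i)-\varepsilon_U^0(w,i)}{2-2\varepsilon_U(w,i)}\right)\right).$$
So the lemma reduces to showing $0\le R\le 16\,\varepsilon_U(w,i)\,\Delta_U(w,i)$. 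To lighten notation write $a=\varepsilon_U(w,i)$, $a_0=\varepsilon_U^0(w,i)$, $a_1=\varepsilon_U^1(w,i)$, so $a=a_0+a_1$, $0\le a_0,a_1\le\tfrac12$, and $a\le\tfrac14$.

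First I would establish nonnegativity of both pieces. Since $h$ is maximized at $\tfrac12$, each bracket $h(\tfrac12)-h(\cdot)$ is $\ge 0$, hence $R\ge0$ and also the first term is $\ge0$; adding them gives $\Delta_U(w,i)-\varepsilon_U(w,i)(h(\tfrac12)-h(\tfrac{a_0}{a}))=R\ge0$, which is the left inequality. For the right inequality, the key point is that the argument of $h$ inside $R$, namely $\frac{a_1-a_0}{2-2a}$, lies in the narrow window $\bigl[-\tfrac{a/2}{1-a},\tfrac{a/2}{1-a}\bigr]$ around $0$; with $a\le\tfrac14$ this is contained in, say, $[-\tfrac13,\tfrac13]$, and in particular the argument stays bounded away from both $0$ and $1$ only in the sense that it is \emph{close to $0$}. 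I would use the bound $0\le h(\tfrac12)-h(s)\le (\log 2)\,\bigl(1-2\min\{s,1-s\}\bigr)$ valid for $s$ near $0$ — more concretely, for $s=\frac{a_1-a_0}{2-2a}\in[-\tfrac13,\tfrac13]$ one has $h(\tfrac12)-h(|s|+\tfrac12)\le C|s|$ is not quite right since we need the argument near $0$, so instead: note $h(\tfrac12)-h(s)\le \log 2$ always, and more usefully $h(\tfrac12)-h(s)= h(\tfrac12)-h(1-s)$, and for $s$ small, $h(s)\le s\log(e/s)$; thus $R\le (1-a)\log 2\cdot(1-2\cdot\tfrac{a/2}{1-a})$ is only a crude bound.

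The cleaner route, which I expect to be the real content, is to compare $R$ directly with $\Delta_U(w,i)$. Observe that $\Delta_U(w,i)=H_{w,i}(U)-H_{w,i}(U|Z_i)$ is itself bounded below: since revealing one bit can change the entropy of $1_U$, and $H_{w,i}(U)=h(a)$ with $a$ small, we have $\Delta_U(w,i)\le h(a)$, but more importantly $\Delta_U(w,i)\ge \tfrac12|h(2a_0)-h(2a_1)|$-type lower bounds are available from Lemma \ref{lem-DeltaU} — actually from that lemma $\Delta_U(w,i)\ge a\bigl(h(\tfrac12)-h(\tfrac{a_0}{a})\bigr)$ since the other term is nonnegative. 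So it suffices to show $R\le 16\,a\cdot\Delta_U(w,i)$, for which I would bound $R$ from above and $\Delta_U(w,i)$ from below separately in terms of $a$ and the deviation $|a_1-a_0|$. Using $h''\le -4$ (Lemma \ref{lem-hquadbounds}) on the first term gives $a(h(\tfrac12)-h(\tfrac{a_0}{a}))\ge 2a(\tfrac{a_0}{a}-\tfrac12)^2=2\frac{(a_0-a_1)^2}{4a}=\frac{(a_0-a_1)^2}{2a}$, hence $\Delta_U(w,i)\ge\frac{(a_0-a_1)^2}{2a}$; and using $|h''|\le 8$ near $\tfrac12$ (again Lemma \ref{lem-hquadbounds}, legitimate since $\frac{a_1-a_0}{2-2a}+\tfrac12\in[\tfrac16,\tfrac56]$) gives $R\le (1-a)\cdot 4\bigl(\frac{a_1-a_0}{2-2a}\bigr)^2=\frac{(a_1-a_0)^2}{1-a}\le \tfrac{4}{3}(a_1-a_0)^2$. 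Therefore $R\le \tfrac{4}{3}(a_1-a_0)^2=\tfrac{4}{3}\cdot 2a\cdot\frac{(a_1-a_0)^2}{2a}\le \tfrac{8}{3}a\,\Delta_U(w,i)\le 16\,a\,\Delta_U(w,i)$, which is the desired bound with room to spare. The main obstacle is the bookkeeping around applying Lemma \ref{lem-hquadbounds}: one must check that all the arguments of $h$ being fed into the quadratic bounds genuinely lie in a region $[\kappa,1-\kappa]$ with a fixed $\kappa$, which is exactly where the hypothesis $\varepsilon_U(w,i)\le\tfrac14$ is used, and to handle cleanly the degenerate case $a_0=a_1$ (where $R=0$ trivially) and the case $a$ very small.
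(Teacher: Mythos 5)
Your final paragraph is correct and follows essentially the same route as the paper: isolate the second summand $R$ from Lemma \ref{lem-DeltaU}, bound it above quadratically via Lemma \ref{lem-hquadbounds}, bound the first summand (hence $\Delta_U$) below by $\frac{(a_0-a_1)^2}{2a}$ via the other half of the same lemma, and compare. Two small repairs: the argument of $h$ in the second term of Lemma \ref{lem-DeltaU} should read $\tfrac12+\frac{\varepsilon_U^1-\varepsilon_U^0}{2-2\varepsilon_U}$ (a typo in that lemma's statement, which both your abandoned middle paragraph and the ``$+\tfrac12$'' in your final step implicitly correct — note the displayed argument can be negative, so $h$ of it as literally written is not even defined), and the upper constant in Lemma \ref{lem-hquadbounds} is $8$, not $4$; using $8$ gives $R\le\tfrac83(a_1-a_0)^2$ and hence $R\le\tfrac{16}{3}\,\varepsilon_U(w,i)\,\Delta_U(w,i)$, still comfortably within $16\,\varepsilon_U(w,i)\,\Delta_U(w,i)$.
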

\begin{proof}
By Lemma \ref{lem-DeltaU} and Lemma \ref{lem-hquadbounds} 
\begin{align*}
\Delta_U(w,i)-&\varepsilon_U(w,i)\left(h\left(\frac{1}{2}\right)-h\left(\frac{\varepsilon_U^0(w,i)}{\varepsilon_U(w,i)}\right)\right)\\=&(1-\varepsilon_U(w,i))\left(h\left(\frac{1}{2}\right)-h\left(\frac{1}{2}+\frac{\varepsilon_U^1(w,i)-\varepsilon_U^0(w,i)}{2-2\varepsilon_U(w,i)}\right)\right)\\
\leq& 8\left(\frac{\varepsilon_U^1(w,i)-\varepsilon_U^0(w,i)}{2-2\varepsilon_U(w,i)}\right)^2\leq 8\varepsilon_U(w,i)^2\left(\frac{\varepsilon_U(w,i)-2\varepsilon_U^0(w,i)}{\varepsilon_U(w,i)}\right)^2\\
= & 32\varepsilon_U(w,i)^2\left(\frac{1}{2}-\frac{\varepsilon_U^0(w,i)}{\varepsilon_U(w,i)}\right)^2
\leq 16 \varepsilon_U(w,i)^2\left( h\left(\frac{1}{2}\right)-h\left(\frac{\varepsilon_U^0(w,i)}{\varepsilon_U(w,i)}\right)\right)\\
\leq& 16 \varepsilon_U(w,i)\Delta_U(w,i).
\end{align*}
\end{proof}

Write 
\begin{align*}
s_2(w,i):=&\varepsilon_V(w,i)\left(h\left(\frac{\varepsilon_U^0(w,i)}{\varepsilon_U(w,i)}\right)-h\left(\frac{\varepsilon_V^0(w,i)}{\varepsilon_V(w,i)}\right)\right)\\
s_3(w,i):=& \Delta_V(w,i)-\frac{\varepsilon_V(w,i)}{\varepsilon_U(w,i)}\Delta_U(w,i)-s_2(w,i).
\end{align*}
\begin{lem}\label{lem-s3bound}
$$|s_3(w,i)|\leq 16 (\varepsilon_U(w,i)\Delta_U(w,i)+\varepsilon_V(w,i)\Delta_V(w,i)).$$
\end{lem}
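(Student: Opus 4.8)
The plan is to rewrite $s_3(w,i)$ using the approximation for $\Delta_U$ and $\Delta_V$ supplied by Lemma \ref{lem-DeltaUapprox}. Recall that by definition
\[
s_3(w,i) = \Delta_V(w,i) - \frac{\varepsilon_V(w,i)}{\varepsilon_U(w,i)}\Delta_U(w,i) - \varepsilon_V(w,i)\left(h\!\left(\tfrac12\right) - h\!\left(\tfrac{\varepsilon_U^0(w,i)}{\varepsilon_U(w,i)}\right)\right) + \varepsilon_V(w,i)\left(h\!\left(\tfrac12\right) - h\!\left(\tfrac{\varepsilon_V^0(w,i)}{\varepsilon_V(w,i)}\right)\right) - \varepsilon_V(w,i)\left(h\!\left(\tfrac12\right)-h\!\left(\tfrac12\right)\right),
\]
so in fact, after expanding $s_2$, the terms $\varepsilon_V(w,i)(h(1/2) - h(\varepsilon_U^0/\varepsilon_U))$ telescope against one of the two pieces I isolate below. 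Concretely I would set
\[
D_U := \varepsilon_U(w,i)\left(h\!\left(\tfrac12\right) - h\!\left(\tfrac{\varepsilon_U^0(w,i)}{\varepsilon_U(w,i)}\right)\right), \qquad D_V := \varepsilon_V(w,i)\left(h\!\left(\tfrac12\right) - h\!\left(\tfrac{\varepsilon_V^0(w,i)}{\varepsilon_V(w,i)}\right)\right),
\]
and observe that $s_2(w,i) = \frac{\varepsilon_V(w,i)}{\varepsilon_U(w,i)} D_U - D_V$. Therefore
\[
s_3(w,i) = \left(\Delta_V(w,i) - D_V\right) - \frac{\varepsilon_V(w,i)}{\varepsilon_U(w,i)}\left(\Delta_U(w,i) - D_U\right).
\]

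This is the key identity: $s_3$ is a difference of the two error terms controlled by Lemma \ref{lem-DeltaUapprox}. First I would dispose of the trivial regime: if $\varepsilon_U(w,i) > \tfrac14$, then since $\varepsilon_U(w,i)\Delta_U(w,i) > \tfrac14 \Delta_U(w,i)$ and $|\Delta_V|, |\Delta_U| \le \log 2$, $\varepsilon_V/\varepsilon_U \le 1$, and $|h(1/2)-h(\cdot)| \le \log 2$, one checks $|s_3(w,i)| \le 4\log 2 \le 16\,\varepsilon_U(w,i)\Delta_U(w,i)$ directly (the constant $16$ is not tight here). In the main regime $\varepsilon_U(w,i) \le \tfrac14$ we also have $\varepsilon_V(w,i) \le \varepsilon_U(w,i) \le \tfrac14$ since $V \subset U$, so Lemma \ref{lem-DeltaUapprox} applies to both $U$ and $V$, giving
\[
0 \le \Delta_U(w,i) - D_U \le 16\,\varepsilon_U(w,i)\Delta_U(w,i), \qquad 0 \le \Delta_V(w,i) - D_V \le 16\,\varepsilon_V(w,i)\Delta_V(w,i).
\]
Applying the triangle inequality to the displayed identity for $s_3$, together with $\varepsilon_V(w,i)/\varepsilon_U(w,i) \le 1$,
\[
|s_3(w,i)| \le \left|\Delta_V(w,i) - D_V\right| + \left|\Delta_U(w,i) - D_U\right| \le 16\,\varepsilon_V(w,i)\Delta_V(w,i) + 16\,\varepsilon_U(w,i)\Delta_U(w,i),
\]
which is exactly the claimed bound.

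The only genuine subtlety — the part I would be most careful about — is the algebraic verification that $s_2(w,i) = \frac{\varepsilon_V}{\varepsilon_U}D_U - D_V$ so that the $h(1/2)$ contributions cancel and $s_3$ collapses to the clean difference-of-errors form; this is a routine expansion of the definition of $s_2$ but it is where a sign slip would be costly. Everything after that is just the triangle inequality plus the already-proved Lemma \ref{lem-DeltaUapprox}, with the large-$\varepsilon_U$ case handled crudely. I expect no real obstacle beyond this bookkeeping, since the statement is visibly engineered so that $s_3$ is second order in $\varepsilon_U$ while $\Delta_V - \frac{\varepsilon_V}{\varepsilon_U}\Delta_U$ is only controlled to first order — the $h(\varepsilon_U^0/\varepsilon_U)$ versus $h(\varepsilon_V^0/\varepsilon_V)$ discrepancy absorbed into $s_2$ is precisely what pushes the remainder down to the quadratic scale captured by $\varepsilon_U\Delta_U + \varepsilon_V\Delta_V$.
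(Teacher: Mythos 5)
Your approach is the same as the paper's: rewrite $s_3$ as a linear combination of the two error terms $\Delta_U-D_U$ and $\Delta_V-D_V$ controlled by Lemma~\ref{lem-DeltaUapprox}, then apply the triangle inequality. Two points deserve comment.

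First, a sign slip. You assert $s_2=\frac{\varepsilon_V}{\varepsilon_U}D_U-D_V$, but a direct expansion gives
\[
\frac{\varepsilon_V}{\varepsilon_U}D_U-D_V
=\varepsilon_V\Bigl(h\bigl(\tfrac{\varepsilon_V^0}{\varepsilon_V}\bigr)-h\bigl(\tfrac{\varepsilon_U^0}{\varepsilon_U}\bigr)\Bigr)=-s_2(w,i),
\]
so the correct identity is $s_2=D_V-\frac{\varepsilon_V}{\varepsilon_U}D_U$. Your displayed formula $s_3=(\Delta_V-D_V)-\frac{\varepsilon_V}{\varepsilon_U}(\Delta_U-D_U)$ is nevertheless correct (it is exactly what the corrected $s_2$-identity yields), so the error is self-cancelling and harmless to the rest of the argument. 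Incidentally, the paper's own intermediate displayed formula for $s_3$ has a $+$ in front of the second bracket where there should be a $-$; this is also harmless since absolute values are taken immediately afterwards.

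Second, the $\varepsilon_U>\tfrac14$ ``trivial regime.'' You correctly noticed that the lemma is stated without hypothesis while Lemma~\ref{lem-DeltaUapprox} carries the restriction $\varepsilon_U\le\tfrac14$ --- the paper silently ignores this. However, your proposed crude bound does not close the gap: from $\varepsilon_U>\tfrac14$ alone you cannot conclude $4\log 2\le 16\,\varepsilon_U\Delta_U$, because $\Delta_U$ can be arbitrarily small (indeed zero) regardless of $\varepsilon_U$. So the inequality $4\log2\le 16\varepsilon_U\Delta_U$ simply fails, for instance when $U$ is independent of the $i$-th coordinate. In the paper, Lemma~\ref{lem-s3bound} is invoked only on the region $\Omega_5$, where $\varepsilon_U\le\kappa_0<\tfrac14$ by construction, so the application is sound; strictly speaking the lemma should carry the hypothesis $\varepsilon_U(w,i)\le\tfrac14$. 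Your instinct to flag the missing hypothesis was right, but the honest fix is to add it to the statement rather than to attempt an unconditional bound.
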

\begin{proof}
\begin{align*}
s_3(w,i)=& \Delta_V(w,i)-\varepsilon_V(w,i)\left(h\left(\frac{1}{2}\right)-h\left(\frac{\varepsilon_V^0(w,i)}{\varepsilon_V(w,i)}\right)\right)\\ +&\frac{\varepsilon_V(w,i)}{\varepsilon_U(w,i)}\left(\Delta_U(w,i)-\varepsilon_U(w,i)\left(h\left(\frac{1}{2}\right)-h\left(\frac{\varepsilon_U^0(w,i)}{\varepsilon_U(w,i)}\right)\right)\right)
\end{align*}
By Lemma \ref{lem-DeltaUapprox},
\begin{align*}|s_3(w,i)|\leq& 16(\varepsilon_V(w,i)\Delta_V(w,i)+\frac{\varepsilon_V(w,i)}{\varepsilon_U(w,i)}\varepsilon_U(w,i)\Delta_U(w,i))\\ \leq& 16(\varepsilon_V(w,i)\Delta_V(w,i)+\varepsilon_U(w,i)\Delta_U(w,i))\end{align*}
\end{proof}
\begin{lem}\label{lem-s2bound}
Suppose $\kappa_1\leq \frac{\varepsilon_V(w,i)}{\varepsilon_U(w,i)}\leq 1-\kappa_1, \kappa_1<\frac{1}{4}$ and let $\kappa_2\leq \frac{\kappa_1}{8}.$ Then,
$$|s_2(w,i)|\leq -8\kappa_1^{-1/2}\log \kappa_2 \left(\sqrt{\Delta_U(w,i)s_1(w,i)}+\sqrt{\Delta_V(w,i)s_1(w,i)}\right)+4h(2\kappa_1^{-1}\kappa_2)\Delta_U(w,i).$$
\end{lem}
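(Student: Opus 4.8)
The plan is to reduce the inequality to a one-variable estimate for the binary entropy $h$ and then treat a ``main'' and a ``boundary'' regime separately. First I would fix $(w,i)$ and abbreviate $a=\varepsilon_U(w,i)$, $b=\varepsilon_V(w,i)$, $p=\varepsilon_U^0(w,i)/\varepsilon_U(w,i)$, $q=\varepsilon_V^0(w,i)/\varepsilon_V(w,i)$, $\alpha=b/a$; by hypothesis $\alpha\in[\kappa_1,1-\kappa_1]$, and by definition $s_2=b\bigl(h(p)-h(q)\bigr)$. I will use: (i) Lemma~\ref{lem-s1lowerbound} with $\kappa=\kappa_1$, i.e.\ $(p-q)^2\le s_1/(2\kappa_1 a)$; (ii) Lemma~\ref{lem-DeltaU}, each of whose two summands is nonnegative because $h\le h(\tfrac12)$, so $\Delta_U\ge a\bigl(h(\tfrac12)-h(p)\bigr)$ and $\Delta_V\ge b\bigl(h(\tfrac12)-h(q)\bigr)$; (iii) $h(\tfrac12)-h(t)\ge2(t-\tfrac12)^2$ for all $t\in[0,1]$ (Taylor expansion with $h''\le-4$, as in Lemma~\ref{lem-hquadbounds}), hence $|t-\tfrac12|\le\sqrt{(h(\tfrac12)-h(t))/2}$; and (iv) the involution $t\mapsto1-t$, which exchanges the superscripts $0$ and $1$ everywhere and fixes $h$, hence fixes each of $s_1,s_2,\Delta_U,\Delta_V$. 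Set $\theta:=2\kappa_1^{-1}\kappa_2$; then $2\kappa_2\le\theta\le\tfrac14$.

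\emph{Main regime: $q\in[\theta,1-\theta]$.} Writing $r:=(\varepsilon_U^0-\varepsilon_V^0)/(\varepsilon_U-\varepsilon_V)\in[0,1]$ we have $ap=bq+(a-b)r$, so $p$ is a convex combination of $q$ and $r$ with weight $\alpha\ge\kappa_1$ on $q$; hence $p\ge\kappa_1\theta=2\kappa_2$ and, symmetrically, $1-p\ge2\kappa_2$. Thus the closed interval with endpoints $p$ and $q$ lies inside $[2\kappa_2,1-2\kappa_2]$, and on that interval an elementary estimate gives $|h'(t)|=\bigl|\log\tfrac{1-t}{t}\bigr|\le3(-\log\kappa_2)\,|t-\tfrac12|$ (the ratio $|h'(t)|/|t-\tfrac12|$ increases towards the endpoints of $[2\kappa_2,1-2\kappa_2]$, where it is at most $(-\log2\kappa_2)/(\tfrac12-2\kappa_2)\le3(-\log\kappa_2)$ since $2\kappa_2<\tfrac1{16}$). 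By the mean value theorem and (i)--(iii),
\begin{align*}
|s_2|=b\,|h(p)-h(q)|
&\le 3b(-\log\kappa_2)\,|p-q|\,\max\bigl(|p-\tfrac12|,|q-\tfrac12|\bigr)\\
&\le 3b(-\log\kappa_2)\sqrt{\tfrac{s_1}{2\kappa_1 a}}\;\Bigl(\sqrt{\tfrac{\Delta_U}{2a}}+\sqrt{\tfrac{\Delta_V}{2b}}\Bigr)\\
&=\tfrac32\kappa_1^{-1/2}(-\log\kappa_2)\Bigl(\tfrac{b}{a}\sqrt{\Delta_U s_1}+\sqrt{\tfrac{b}{a}}\,\sqrt{\Delta_V s_1}\Bigr)\\
&\le\tfrac32\kappa_1^{-1/2}(-\log\kappa_2)\bigl(\sqrt{\Delta_U s_1}+\sqrt{\Delta_V s_1}\bigr),
\end{align*}
the last step using $b/a=\alpha\le1$. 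Since $\tfrac32<8$ and $4h(2\kappa_1^{-1}\kappa_2)\Delta_U\ge0$, this is the claimed bound in this regime, with room to spare.

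\emph{Boundary regime: $q\notin[\theta,1-\theta]$.} By (iv) I may assume $q<\theta$, so $h(q)<h(\theta)$; and if $s_1=0$ then (i) forces $p=q$ and hence $s_2=0$, so assume $s_1>0$. I would split on $p$. If $p\notin(\theta,1-\theta)$ then also $h(p)\le h(\theta)$, so $|s_2|\le b\max(h(p),h(q))<b\,h(\theta)$, while (ii) gives $b\le a\le\Delta_U/\bigl(h(\tfrac12)-h(\theta)\bigr)$; since $h(\tfrac12)-h(\theta)\to\log2$ as $\kappa_2\to0$, this bounds $|s_2|$ by an absolute constant times $h(\theta)\Delta_U$, and the constant $4$ follows from a careful choice together with the slack in the first two terms (whose coefficient $-8\kappa_1^{-1/2}\log\kappa_2$ exceeds $16\log32$). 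If instead $p\in(\theta,1-\theta)$ — $p$ moderate, $q$ extreme — then $h(p)>h(\theta)>h(q)$, and I would control $b\,h(p)$ by the cross terms: starting from Lemma~\ref{lem-s1bound}(2) and rewriting the three-$h$ expression as $s_1=b\,D(q\,\|\,p)+(a-b)D(r\,\|\,p)\ge b\,D(q\,\|\,p)$, where $D(x\,\|\,y):=x\log\tfrac xy+(1-x)\log\tfrac{1-x}{1-y}$, the constraints $q<\theta<p<1-\theta$ force $D(q\,\|\,p)$ to be bounded below by an absolute multiple of $h(\tfrac12)-h(q)$ when $p$ is near $\tfrac12$ and by a fixed positive constant when $p$ is away from $\tfrac12$; combining this with (ii) — $\Delta_V\ge b\bigl(h(\tfrac12)-h(q)\bigr)$ in the first case, $\Delta_U\ge a\bigl(h(\tfrac12)-h(p)\bigr)$ in the second — and exploiting the large coefficient $-8\kappa_1^{-1/2}\log\kappa_2$ closes this sub-case. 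I expect this last sub-case, namely arranging the three terms on the right with their stated constants, to be the main obstacle; everything else is routine manipulation of the convexity estimates for $h$ collected in Lemmas~\ref{lem-hquadbounds}, \ref{lem-DeltaU} and \ref{lem-DeltaUapprox}.
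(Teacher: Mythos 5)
Your main regime ($q\in[\theta,1-\theta]$ with $\theta:=2\kappa_1^{-1}\kappa_2$) is correct and closely parallels the paper's Case~1. The derivation $p\in[2\kappa_2,1-2\kappa_2]$ from the convex-combination identity, the endpoint-monotonicity bound $|h'(t)|\le 3(-\log\kappa_2)|t-\tfrac12|$ on that interval, the use of Lemma~\ref{lem-s1lowerbound} for $|p-q|$, and the lower bounds $\Delta_U\ge a\,(h(\tfrac12)-h(p))$, $\Delta_V\ge b\,(h(\tfrac12)-h(q))$ from Lemma~\ref{lem-DeltaU} are all sound, and the final coefficient $\tfrac32\kappa_1^{-1/2}(-\log\kappa_2)$ is comfortably below $8\kappa_1^{-1/2}(-\log\kappa_2)$, so this piece closes.

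The boundary regime, however, contains two genuine gaps, and your choice of threshold makes them harder to close than they need to be. In sub-case~(a) you let $p$ range up to $\theta$, and $\theta=2\kappa_1^{-1}\kappa_2$ can be as large as $\tfrac14$. Then $h(\tfrac12)-h(\theta)$ can be as small as $h(\tfrac12)-h(\tfrac14)\approx 0.13$, so $b\le a\le\Delta_U/(h(\tfrac12)-h(\theta))$ only gives $|s_2|\le a\,h(\theta)\lesssim 7.7\,h(\theta)\,\Delta_U$, not $4\,h(\theta)\,\Delta_U$. You acknowledge this and propose borrowing from the $\sqrt{\Delta\, s_1}$ terms, but the borrowing is not carried out; it requires lower-bounding $s_1$ via $(p-q)^2$ and checking that $|h(p)-h(q)|$ degenerates at the same rate when $p,q$ approach each other near $\theta$, uniformly over $\alpha\in[\kappa_1,1-\kappa_1]$. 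The paper avoids this entirely by using the much smaller threshold $2\kappa_2\le\kappa_1/4<\tfrac18$ for ``$p$ extreme'' (its Case~2): then $\Delta_U\ge a\,(h(\tfrac12)-h(2\kappa_2))\ge a\,(h(\tfrac12)-h(\tfrac18))>\tfrac14 a$ directly gives the constant~$4$, and $p\le 2\kappa_2$ automatically forces $q\le p/\kappa_1\le\theta$, so both $h(p)$ and $h(q)$ are bounded by $h(\theta)$ without any further case distinction.

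Sub-case~(b) (``$q$ extreme, $p$ moderate'') is entirely unresolved, as you yourself flag, and the relative-entropy decomposition $s_1=b\,D(q\,\|\,p)+(a-b)D(r\,\|\,p)$ is an unnecessary detour. The paper's Case~3 handles this configuration directly: once $q\le\kappa_2$ and $p\ge2\kappa_2$, concavity of $h$ bounds the secant slope $|h(p)-h(q)|/|p-q|$ by $h(2\kappa_2)/(2\kappa_2)\le h(2\kappa_2)/\kappa_2$; combining with $|p-q|\le\sqrt{s_1/(2\kappa_1 a)}$ from Lemma~\ref{lem-s1lowerbound}, $\varepsilon_V\le\varepsilon_U$, and $\Delta_V\ge\tfrac14\varepsilon_V$ (which uses $h(q)\le h(\kappa_2)$, hence the choice of threshold $\kappa_2$ for $q$ rather than $\theta$) yields the $\sqrt{\Delta_V s_1}$ term with the stated coefficient. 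So the toolkit you assembled is the right one and the main regime is fine, but as written the boundary regime is incomplete, and the natural thresholds that make the constants close out to exactly $4$ and $-8\kappa_1^{-1/2}\log\kappa_2$ are the paper's $\kappa_2$ and $2\kappa_2$ rather than $\theta$.
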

\begin{proof}
We split the proof into three cases.

\textbf{Case } $\kappa_2\leq \frac{\varepsilon_V^0(w,i)}{\varepsilon_V(w,i)},\frac{\varepsilon_U^0(w,i)}{\varepsilon_U(w,i)}\leq 1-\kappa_2.$

By the mean value theorem 
\begin{equation}\label{eq-mvt}h\left(\frac{\varepsilon_U^0(w,i)}{\varepsilon_U(w,i)}\right)-h\left(\frac{\varepsilon_V^0(w,i)}{\varepsilon_V(w,i)}\right)=\left(\frac{\varepsilon_U^0(w,i)}{\varepsilon_U(w,i)}-\frac{\varepsilon_V^0(w,i)}{\varepsilon_V(w,i)}\right)h'(t),\end{equation} for some $t\in \left(\frac{\varepsilon_U^0(w,i)}{\varepsilon_U(w,i)},\frac{\varepsilon_V^0(w,i)}{\varepsilon_V(w,i)}\right).$ The function $h'(t)(t-\frac{1}{2})^{-1}$ is negative on $(0,1)$, increasing on $(0,\frac{1}{2})$ and decreasing on $(\frac{1}{2},1).$ Therefore, by Lemma \ref{lem-hquadbounds} 
$$|h'(t)|\leq \left|t-\frac{1}{2}\right|\frac{|h'(\kappa_2)|}{|\kappa_2-\frac{1}{2}|}\leq -2\log \kappa_2\left|t-\frac{1}{2}\right|\leq -2\log \kappa_2 \sqrt{h\left(\frac{1}{2}\right)-h(t)}.$$
Using the inequality $\varepsilon_V(w,i)\leq \varepsilon_U(w,i)$ and the first inequality of Lemma \ref{lem-DeltaUapprox}, we can write 
\begin{align*}\label{eq-s2case1}
\varepsilon_V(w,i)^{1/2}|h'(t)|\leq& -2\log\kappa_2\max\left\{\sqrt{\varepsilon_V(w,i)\left(h\left(\frac{1}{2}\right)-h\left(\frac{\varepsilon_V^0(w,i)}{\varepsilon_V(w,i)}\right)\right)}\right.\\ &,\left.\sqrt{\varepsilon_U(w,i)\left(h\left(\frac{1}{2}\right)-h\left(\frac{\varepsilon_U^0(w,i)}{\varepsilon_U(w,i)}\right)\right)}\right\}\\
\leq& -2\log \kappa_2 \max\{\Delta_V(w,i)^{1/2},\Delta_U(w,i)^{1/2}\}.
\end{align*}
By Lemma \ref{lem-s1lowerbound}, 
$$\varepsilon_V(w,i)^{1/2}\left| \frac{\varepsilon_V^0(w,i)}{\varepsilon_V(w,i)}-\frac{\varepsilon_U^0(w,i)}{\varepsilon_U(w,i)}\right|\leq 2\kappa_1^{-1/2}|s_1(w,i)|^{1/2}.$$
Multiplying the last two inequalities and using (\ref{eq-mvt}), we obtain 
\begin{align*}|s_2(w,i)|=&\left|\varepsilon_V(w,i)\left(h\left(\frac{\varepsilon_U^0(w,i)}{\varepsilon_U(w,i)}\right)-h\left(\frac{\varepsilon_V^0(w,i)}{\varepsilon_V(w,i)}\right)\right)\right|\\ \leq& -4\kappa_1^{-1/2}\log\kappa_2 \left(\sqrt{\Delta_U(w,i)s_1(w,i)}+\sqrt{\Delta_V(w,i)s_1(w,i)}\right)\end{align*}

\textbf{Case} $\frac{\varepsilon_U^0(w,i)}{\varepsilon_U(w,i)}\leq 2\kappa_2$ or $\frac{\varepsilon_U^0(w,i)}{\varepsilon_U(w,i)}\geq 1-2\kappa_2$.

We have 
$$\frac{\varepsilon^0_V(w,i)}{\varepsilon_V(w,i)}\leq \frac{\varepsilon^0_U(w,i)}{\varepsilon_V(w,i)}\leq \kappa_1^{-1}\frac{\varepsilon_U^0(w,i)}{\varepsilon_U(w,i)}\leq 2\kappa_1^{-1}\kappa_2\leq \frac{1}{2}.$$
$$\frac{\varepsilon^1_V(w,i)}{\varepsilon_V(w,i)}\leq \frac{\varepsilon^1_U(w,i)}{\varepsilon_V(w,i)}\leq \kappa_1^{-1}\frac{\varepsilon_U^1(w,i)}{\varepsilon_U(w,i)}\leq 2\kappa_1^{-1}\kappa_2\leq \frac{1}{2},\text{ so } \frac{\varepsilon^0_V(w,i)}{\varepsilon_V(w,i)}\geq 1-2\kappa_1^{-1}\kappa_2\geq \frac{1}{2}.$$
Therefore we have a trivial estimate 
$$\left|h\left(\frac{\varepsilon_U^0(w,i)}{\varepsilon_U(w,i)}\right)-h\left(\frac{\varepsilon_V^0(w,i)}{\varepsilon_V(w,i)}\right)\right|\leq h(2\kappa_1^{-1}\kappa_2).$$
At the same time $$\Delta_U(w,i)\geq \varepsilon_U(w,i)(h(\frac{1}{2})-h(2\kappa_2))\geq \varepsilon_U(w,i)(h(1/2)-h(1/8))\geq \frac{1}{4}\varepsilon_U(w,i)\geq \frac{1}{4}\varepsilon_V(w,i).$$ Putting the inequalities together, we get 
$$|s_2(w,i)|=\varepsilon_V(w,i)\left|h\left(\frac{\varepsilon_U^0(w,i)}{\varepsilon_U(w,i)}\right)-h\left(\frac{\varepsilon_V^0(w,i)}{\varepsilon_V(w,i)}\right)\right|\leq 4 h(2\kappa_1^{-1}\kappa_2)\Delta_U(w,i)$$

\textbf{Case} $\frac{\varepsilon_V^0(w,i)}{\varepsilon_V(w,i)}\leq \kappa_2$ or $\frac{\varepsilon_V^0(w,i)}{\varepsilon_V(w,i)}\geq 1-\kappa_2$ and $2\kappa_2\leq \frac{\varepsilon_U^0(w,i)}{\varepsilon_U(w,i)}\leq 1-2\kappa_2.$

In this case 
$$\Delta_V(w,i)\geq \varepsilon_V(w,i)(h(\frac{1}{2})-h(\kappa_2))\geq \frac{1}{4}\varepsilon_V(w,i)$$ and 
$$\left|h\left(\frac{\varepsilon_U^0(w,i)}{\varepsilon_U(w,i)}\right)-h\left(\frac{\varepsilon_V^0(w,i)}{\varepsilon_V(w,i)}\right)\right|\leq h(2\kappa_2)\leq \frac{h(2\kappa_2)}{\kappa_2}\left|\frac{\varepsilon_U^0(w,i)}{\varepsilon_U(w,i)}-\frac{\varepsilon_V^0(w,i)}{\varepsilon_V(w,i)}\right|.$$
By Lemma \ref{lem-s1lowerbound}
$$\varepsilon_V(w,i)^{1/2}\left|h\left(\frac{\varepsilon_U^0(w,i)}{\varepsilon_U(w,i)}\right)-h\left(\frac{\varepsilon_V^0(w,i)}{\varepsilon_V(w,i)}\right)\right|\leq -4\kappa_1^{-1/2}\log\kappa_2\sqrt{s_1(w,i)}.$$ Combined with the lower bound on $\Delta_V(w,i)$ we get 
$$|s_2(w,i)|=\varepsilon_V(w,i)\left|h\left(\frac{\varepsilon_U^0(w,i)}{\varepsilon_U(w,i)}\right)-h\left(\frac{\varepsilon_V^0(w,i)}{\varepsilon_V(w,i)}\right)\right|\leq -8\kappa_1^{-1/2}\log\kappa_2\sqrt{s_1(w,i)\Delta_V(w,i)}.$$

The upper bound in the lemma is at least as large as the upper bound in all three cases. 
\end{proof}

We are now ready to prove Theorem \ref{thm-almostcontained}.
\begin{proof}[Proof of Theorem \ref{thm-almostcontained}]
First, using Lemma \ref{lem-continuity}, we reduce the problem to a finite index set $I$. We can therefore use all the Lemmas proved above which used that assumption. We can assume that $I=\{1,\ldots, N\}$ for some $N$ with the natural order. Then, $i+1, i-1$ are the successor and predecessor of $i$ respectively. We have a convenient formula  $\Delta_U(w,i)=H_{w,i}(U)-H_{w,i}(U|Z_i)=H_{w,i}(U)-H_{w,i+1}(U),$ similarly for $\Delta_V(w,i), \Delta_{U,V}(w,i).$

Recall that 
$$m(\cE_{\prec}(U)\cup\cE_{\prec}(V))-m(\cE_{\prec}(U))=\int_{\{0,1\}^I}\sum_{i\in I} r(w,i)d\mu(w),$$ where $r(w,i)=\max\{0,\Delta_V(w,i)-\frac{\varepsilon_V(w,i)}{\varepsilon_U(w,i)}\Delta_U(w,i)\}.$

Let $\kappa_0,\kappa_1,\kappa_2>0$ be auxiliary constants satisfying $\kappa_0<1/4, \kappa_1\leq 1/8,\kappa_2\leq \kappa_1/8.$
We will divide $\{0,1\}^I\times I$ into disjoint regions $\Omega_1,\ldots, \Omega_5$:
\begin{align*}
\Omega_1:=&\{(w,i)| \Delta_V(w,i)-\frac{\varepsilon_V(w,i)}{\varepsilon_U(w,i)}\Delta_U(w,i)<0\}\\
\Omega_2:=&\{(w,i)| \varepsilon_U(w,i)>\kappa_0\}\setminus \Omega_1\\
\Omega_3:=&\{(w,i)|\frac{\varepsilon_V(w,i)}{\varepsilon_U(w,i)}<\kappa_1\}\setminus (\Omega_1\cup\Omega_2)\\
\Omega_4:=&\{(w,i)|\frac{\varepsilon_V(w,i)}{\varepsilon_U(w,i)}>1-\kappa_1\}\setminus (\Omega_1\cup\Omega_2\cup\Omega_3)\\
\Omega_5:=&\{0,1\}^I\times I\setminus(\Omega_1\cup\Omega_2\cup\Omega_3\cup\Omega_4).
\end{align*}
Put $I_d:=\int_{\{0,1\}^I}\sum_{(w,i)\in \Omega_d}r(w,i)d\mu(w).$ Then $m(\cE_{\prec}(U)\cup\cE_{\prec}(V))-m(\cE_{\prec}(U))=I_1+\ldots +I_5.$ We shall estimate $I_d$ one by one. 
\begin{enumerate}
\item  $I_1=0$. \begin{proof} This is clear from the definition of $r(w,i)$.\end{proof}
\item $I_2 \leq -4\mu(V)\log\kappa_0.$
\begin{proof}
For any  $w\in\{0,1\}^I$ let $\tau_2(w)=\inf\{i\mid \varepsilon_U(w,i)> \kappa_0\}$, with the convention that $\tau_2(w)=+\infty$ if $\varepsilon_V(w,i)<\kappa_0$ for all $i\in I.$ 
We note that $\varepsilon_U(w,\tau_2(w))\leq 2 \varepsilon_U(w,\tau_2(w)-1)\leq 2\kappa_0\leq 1/2$. In particular $H_{w,\tau_2(w)}(V)=h(\varepsilon_V(w,\tau_2(w)))\leq h(\varepsilon_U(w,\tau_2(w)))\leq h(2\kappa_0).$
Any $(w,i)\in\Omega_2$ satisfies $i\geq \tau(w),$ so 
$$I_2\leq \int_{\{0,1\}^I}\sum_{i\geq \tau_2(w)}r(w,i)d\mu(w).$$
By Lemma \ref{lem-rsmallbigV}, $r(w,i)\leq \Delta_V(w,i)=H_{w,i}(V)-H_{w,i+1}(V).$ Hence
\begin{align*}I_2\leq& \int_{\{0,1\}^I}\sum_{i\geq \tau_2(w)}(H_{w,i}(V)-H_{w,i+1}(V))d\mu(w)=\int_{\tau_2(w)<+\infty}H_{w,\tau_2(w)}(V)d\mu(w)\\ \leq& h(2\kappa_0)\mu(\{\tau_2(w)<+\infty\}).\end{align*}
For every $w\in U$ we eventually have $\varepsilon_U(w,i)=1$ so $U\subset \{\tau_2(w)<+\infty\}$ and
$$\mu(U)=\int_{\tau_2(w)<+\infty} \varepsilon_U(w,\tau_2(w))d\mu(w)\geq \kappa_0\mu(\{\tau_2(w)<+\infty\}),$$ so $\mu(\{\tau_2(w)<+\infty\})\leq \kappa_0^{-1}\mu(U).$
Combining the inequalities we get
$$I_2\leq \kappa_0^{-1}h(2\kappa_0)\mu(U)\leq -4\mu(U)\log \kappa_0.$$
\end{proof}

\item $I_3\leq\frac{k(\kappa_0\kappa_1)}{h(\kappa_0)}H(U).$
\begin{proof}
Let $\tau_3(w):=\inf\{i\in I| \frac{\varepsilon_V(w,i)}{\varepsilon_U(w,i)}<\kappa_1 \text{ and } \varepsilon_U(w,i)\leq \kappa_0\}$. Then $\tau_3(w)\geq i$ for all $(w,i)\in \Omega_3.$ By Lemma \ref{lem-rsmallbigV},
\begin{align*}
I_3\leq& \int_{\{0,1\}^I}\sum_{i\geq \tau_3(w)}r(w,i)d\mu(w)\leq \int_{\{0,1\}^I}\sum_{i\geq \tau_3(w)}\Delta_V(w,i)d\mu(w)\\
=& \int_{\tau_3(w)<+\infty} H(V| Z_{\prec \tau_3(w)}=w_{\prec\tau_3(w)})d\mu(w)\\
=&  \int_{\tau_3(w)<+\infty} h(\varepsilon_V(w,\tau_3(w)))d\mu(w).\\
\end{align*}
But, $\varepsilon_V(w,i)\leq \kappa_1 \varepsilon_U(w,\tau_3(w))$ and $\varepsilon_U(w,\tau_3(w))<\kappa_0$ for all $w$ with $\tau_3(w)<+\infty.$ In particular 
\begin{align*}h(\varepsilon_V(w,\tau_3(w)))\leq & h(\kappa_1 \varepsilon_U(w,\tau_3(w)))\leq \max_{t\leq \kappa_0}\frac{h(\kappa_1 t)}{h(t)}h(\varepsilon_U(w,\tau_3(w)))\\=& \frac{h(\kappa_0\kappa_1)}{h(\kappa_0)} h(\varepsilon_U(w,\tau_3(w))).\end{align*}
Plugging this into our estimate on $I_3$, we obtain 
\begin{align*}I_3\leq& \frac{h(\kappa_0\kappa_1)}{h(\kappa_0)}\int_{\tau_3(w)<+\infty}h(\varepsilon_U(w,\tau_3(w)))d\mu(w)\\ =& \frac{h(\kappa_0\kappa_1)}{h(\kappa_0)}\int_{\tau_3(w)<+\infty}H_{w,\tau_3(w)}(U)d\mu(w) \leq \frac{h(\kappa_0\kappa_1)}{h(\kappa_0)} H(U).\end{align*}
\end{proof}
\item $I_4\leq \kappa_1 H(U)+\mu(U).$
\begin{proof}
By Lemmas \ref{lem-rsmallbigV} and \ref{lem-s1bound}
\begin{align*}
I_4\leq & \int_{\{0,1\}^I} \sum_{(w,i)\in \Omega_4} r(w,i)d\mu(w)\\
\leq& \int_{\{0,1\}^I} \sum_{(w,i)\in \Omega_4} \left(\left(1-\frac{\varepsilon_V(w,i)}{\varepsilon_U(w,i)}\right)\Delta_U(w,i)+s_1(w,i)\right)d\mu(w)\\
\leq& \kappa_1\int_{\{0,1\}^I}\int_{i\in I}\Delta_U(w,i)d\mu(w)+\int_{\{0,1\}^I}\sum_{i\in I}s_1(w,i)d\mu(w)\\
\leq& \kappa_1 H(U) +H(V|U)\leq \kappa_1 H(U)+\mu(U).
\end{align*}
\end{proof}
\item $I_5\leq (32\kappa_0+4h(2\kappa_1^{-1}\kappa_2))H(U)-16\kappa_1^{-1/2}\log\kappa_2 H(U)^{1/2}\mu(U)^{1/2}.$
\begin{proof}
This is the most challenging case. Let $(w,i)\in \Omega_5$. Then $\varepsilon_U(w,i)\leq \kappa_0$ and $\kappa_1\leq \frac{\varepsilon_V(w,i)}{\varepsilon_U(w,i)}\leq 1-\kappa_1.$ We have $r(w,i)=s_2(w,i)+s_3(w,i),$ so 
$$I_5\leq \int_{\{0,1\}^I}\sum_{(w,i)\in \Omega_5}|s_2(w,i)|d\mu(w)+ \int_{\{0,1\}^I}\sum_{(w,i)\in \Omega_5}|s_3(w,i)|d\mu(w).$$
By Lemma \ref{lem-s3bound} and the fact that $\varepsilon_V(w,i)\leq \varepsilon_U(w,i)\leq \kappa_0$, we have
$$|s_3(w,i)|\leq 16(\varepsilon_U(w,i)\Delta_U(w,i)+\varepsilon_V(w,i)\Delta_V(w,i))\leq 16\kappa_0(\Delta_U(w,i)+\Delta_V(w,i))$$
Therefore 
\begin{align*}\int_{\{0,1\}^I}\sum_{(w,i)\in \Omega_5}|s_3(w,i)|d\mu(w)\leq& 16\kappa_0\int_{\{0,1\}^I}\sum_{i\in I}(\Delta_V(w,i)+\Delta_U(w,i))d\mu(w)\\
=&16\kappa_0(H(U)+H(V))\leq 32\kappa_0 H(U).\end{align*}
By Lemma \ref{lem-s2bound}
$$|s_2(w,i)|\leq -8\kappa_1^{-1/2}\log\kappa_2 \left(\sqrt{\Delta_U(w,i)s_1(w,i)}+\sqrt{\Delta_V(w,i)s_1(w,i)}\right)+4h(2\kappa_1^{-1}\kappa_2)\Delta_U(w,i).$$
By the Cauchy-Schwarz inequality and Lemma \ref{lem-s1bound}
\begin{align*}
\int_{\{0,1\}^I}&\sum_{(w,i)\in \Omega_5}|s_2(w,i)|d\mu(w)\\\leq & -8\kappa_1^{-1/2}\log\kappa_2\left[\left(\int_{\{0,1\}^I}\sum_{i\in I}\Delta_U(w,i)d\mu(w)\right)^{1/2}\left(\int_{\{0,1\}^I}\sum_{i\in I}s_1(w,i)d\mu(w)\right)^{1/2}\right.\\ +&\left. \left(\int_{\{0,1\}^I}\sum_{i\in I}\Delta_V(w,i)d\mu(w)\right)^{1/2}\left(\int_{\{0,1\}^I}\sum_{i\in I}s_1(w,i)d\mu(w)\right)^{1/2}\right]\\
&+ 4h(2\kappa_1^{-1}\kappa_2)\int_{\{0,1\}^I}\sum_{i\in I}\Delta_U(w,i)d\mu(w)\\
\leq& -8\kappa_1^{-1/2}\log\kappa_2 \left[H(U)^{1/2}H(V|U)^{1/2}+H(V)^{1/2}H(V|U)^{1/2}\right]\\ +&4h(2\kappa_1^{-1}\kappa_2) H(U)\\
\leq& -16\kappa_1^{-1/2}\log\kappa_2 H(U)^{1/2}\mu(U)^{1/2}+4h(2\kappa_1^{-1}\kappa_2) H(U).
\end{align*}
We put together the bounds on the integrals of $|s_2(w,i)|, |s_3(w,i)|$ to obtain 
$$I_5\leq (32\kappa_0 + 4h(2\kappa_1^{-1}\kappa_2))H(U)-16\kappa_1^{-1/2}\log\kappa_2 H(U)^{1/2}\mu(U)^{1/2}.$$
\end{proof}

\end{enumerate}

In total 
\begin{align*}
m(\cE_{\prec}(U)\cup\cE_{\prec}(V))-m(\cE_{\prec}(U))=&I_1+I_2+I_3+I_4+I_5\\
\leq& -4\mu(U)\log\kappa_0+\frac{h(\kappa_0\kappa_1)}{h(\kappa_0)}H(U)+\kappa_1 H(U) +\mu(U)\\ &+(32\kappa_0 + 4h(2\kappa_1^{-1}\kappa_2))H(U)-16\kappa_1^{-1/2}\log\kappa_2 H(U)^{1/2}\mu(U)^{1/2}\\
\leq& (-4\log \kappa_0+1)\mu(U)+(32\kappa_0+4 h(2\kappa_1^{-1}\kappa_2)\\ &+\frac{h(\kappa_0\kappa_1)}{h(\kappa_0)})H(U) -16\kappa_1^{-1/2}\log\kappa_2 H(U)^{1/2}\mu(U)^{1/2}.
\end{align*}
Set $\kappa_0=(-\log \mu(U))^{-1/3}, \kappa_1=(-\log\mu(U))^{-1/3}, \kappa_2=(-\log\mu(U))^{-1}.$ Our initial conditions on $\kappa_0,\kappa_1,\kappa_2$ will be satisfied for $\mu(U)$ small enough. The last inequality evaluates to 
$$m(\cE_{\prec}(U)\cup\cE_{\prec}(V))-m(\cE_{\prec}(U))\leq C\frac{\log|\log(\mu(U)|}{|\log(\mu(U)|^{1/3}}m(\cE_{\prec}(U))=o(m(\cE_{\prec}(U)),$$ where $C$ is an absolute positive constant.
\end{proof}



\section{Proofs of Theorems \ref{mthm-infcontBernouilli}, \ref{mthm-fiidfinite} and \ref{mthm-approxhyp}}\label{sec-infcontproof}

\begin{proof}[Proof of Theorem \ref{mthm-infcontBernouilli}]
Let $\Gamma$ be a countable group. We construct a random linear order on $\Gamma$ by ordering its elements according to iid labels in $[0,1]$. Let $\tau$ be the corresponding probability distribution on $\mathfrak O$. Let $\mu=(\frac{1}{2}\delta_{0}+\frac{1}{2}\delta_{1})^\Gamma$ and let $m$ be the product of $\mu$, counting measure on $\Gamma$ and the Lebesgue measure on $[0,\log 2].$ 
Let $\cE\colon \cB(\{0,1\}^\Gamma,\mu)\to \cB(\mathfrak O\times \{0,1\}^\Gamma\times \Gamma\times [0,\log 2], \tau\times m)$ be the entropy support map. By Theorem \ref{mthm-confhom}, it is a $\Gamma$-equivariant conformal homomorphism, hence it induces an infinitesimal containment $(\{0,1\}^\Gamma,\mu)\xhookrightarrow{inf}\Gamma \times (\mathfrak O\times \{0,1\}^\Gamma\times [0,\log 2], \tau\times \mu\times {\rm Leb})$. By Lemma \ref{lem-gammaprod}, $\Gamma \times (\mathfrak O\times \{0,1\}^\Gamma\times [0,\log 2], \tau\times \mu\times {\rm Leb})\xhookrightarrow{proj}\Gamma$ so by Lemma \ref{lem-transitivity} $(\{0,1\}^\Gamma,\mu)\xhookrightarrow{inf}\Gamma.$ Finally, by Abert-Weiss theorem \cite{AbertWeiss2013BernoulliWeaklyContained}, 
$([0,1]^\Gamma, {\rm Leb}^\Gamma)\xhookrightarrow{w}(\{0,1\}^\Gamma, \mu)$ so using Lemma \ref{lem-transitivity}
once again we get $([0,1]^\Gamma, {\rm Leb}^\Gamma)\xhookrightarrow{inf}\Gamma.$\end{proof}

\begin{proof}[Proof of Theorem \ref{mthm-fiidfinite}]
By Theorem \ref{mthm-infcontBernouilli} 
$([0,1]^\Gamma, {\rm Leb}^\Gamma)\xhookrightarrow{inf}\Gamma.$ 
Let $\kappa$ be any factor of iid thinning with model $(Y,\nu), U$ so that  $\kappa=\cF(Y,U)$. 
By Lemma \ref{lem-ursweakcont} $(Y,\nu)\xhookrightarrow{proj} ([0,1]^\Gamma, {\rm Leb}^\Gamma)$. By Lemma \ref{lem-transitivity} $(Y,\nu)\xhookrightarrow{proj}\Gamma$, so $\kappa$ is weakly contained in $\Gamma$. By Lemma \ref{lem-urscontingamma}, $\kappa$ is a weak-* limit of finite unimodular random subsets.
\end{proof}

\begin{proof}[Proof of Theorem \ref{mthm-approxhyp}]
Follows from Theorem \ref{mthm-infcontBernouilli} and Proposition \ref{prop-approxhypinf}.
\end{proof}
Theorem \ref{mthm-infcontBernouilli} extends to the coset actions, with a slightly weaker conclusion.

\begin{thm}\label{thm-infcontcosetB}
Let $H\subset \Gamma$ be a subgroup. Then $([0,1]^{\Gamma/H},{\rm Leb}^{\Gamma/H})\xhookrightarrow{inf}\Gamma/H\times (W,\kappa)$ where $(W,\kappa)$ is an auxiliary probability measure preserving action of $\Gamma.$
\end{thm}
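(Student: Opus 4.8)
The plan is to imitate the proof of Theorem~\ref{mthm-infcontBernouilli} verbatim, but with an enlarged index set, chosen so as to avoid the two ingredients that are special to $\Gamma$: the essential freeness of $(\{0,1\}^\Gamma,\mu)$ (used there through Abert--Weiss) and the right translation action on $\Gamma$ (used there through Lemma~\ref{lem-gammaprod}). Concretely, I would take $I:=\Gamma/H\times\mathbb N$ with $\Gamma$ acting through the first coordinate, so that $\Gamma\to\Sym(I)$, and, fixing a measure isomorphism $(\{0,1\}^{\mathbb N},\mathrm{unif})\cong([0,1],\mathrm{Leb})$ applied coordinate-block-wise, one gets an isomorphism of $\Gamma$-spaces $(\{0,1\}^I,\mu)\cong\bigl(\{0,1\}^{\mathbb N}\bigr)^{\Gamma/H}\cong([0,1]^{\Gamma/H},\mathrm{Leb}^{\Gamma/H})$. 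Note $I$ is countably infinite regardless of whether $[\Gamma:H]<\infty$.

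Next I would apply Theorem~\ref{mthm-confhom} to this $I$, with $\tau$ the $\Sym(I)$-invariant law of the linear order on $I$ produced by iid $[0,1]$-labels, to obtain a $\Sym(I)$-equivariant conformal homomorphism $\cE\colon\cB(\{0,1\}^I,\mu)\to\cB(\mathfrak O\times\{0,1\}^I\times I\times[0,\log 2],\tau\times m)$. Restricting the $\Sym(I)$-action to $\Gamma$ via $\Gamma\to\Sym(I)$ makes $\cE$ a $\Gamma$-equivariant conformal homomorphism, where $\Gamma$ acts on the target by the shift on $\{0,1\}^I$, the induced action on $\mathfrak O$, the (counting-measure preserving) translation action on $I=\Gamma/H\times\mathbb N$, and trivially on $[0,\log 2]$. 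Then, exactly as in the proof of Theorem~\ref{mthm-infcontBernouilli} (via the last lemma of Section~\ref{sec-conf}), and after rescaling the Lebesgue factor to a probability measure — harmless since $\Gamma$ acts trivially on it and the constant is absorbed into the scaling parameter of Definition~\ref{def-infcont} — one gets
\[([0,1]^{\Gamma/H},\mathrm{Leb}^{\Gamma/H})\xhookrightarrow{inf}(\Gamma/H\times\mathbb N)\times(W_0,\kappa_0),\]
where $(W_0,\kappa_0):=\bigl(\mathfrak O\times\{0,1\}^I\times[0,\log 2],\,\tau\times\mu\times(\log 2)^{-1}\mathrm{Leb}\bigr)$ is a p.m.p.\ action of $\Gamma$.

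It remains to fold the redundant $\mathbb N$-factor into an auxiliary probability space. Here I would observe that $\Gamma/H\times\mathbb N$, with counting measure and $\Gamma$ acting through $\Gamma/H$, is projectively contained in $\Gamma/H\times([0,1],\mathrm{Leb})$ with $\Gamma$ trivial on $[0,1]$: given finite subsets $A_1,\dots,A_k\subset\Gamma/H\times\{1,\dots,N\}$, write $A_i=\bigsqcup_{n\le N}A_i^n\times\{n\}$, pick pairwise disjoint intervals $V_1,\dots,V_N\subset[0,1]$ of common length $\ell$, and put $B_i:=\bigsqcup_{n\le N}A_i^n\times V_n$; since $\Gamma$ fixes the second coordinate and the $V_n$ are disjoint, $(\lvert\cdot\rvert\times\mathrm{Leb})(B_i\cap\gamma B_j)=\ell\,\lvert A_i\cap\gamma A_j\rvert$ for every $\gamma\in\Gamma$, an exact projective model with scaling $\ell^{-1}$. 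This uses nothing but disjointness of the $V_n$ and no right action on $\Gamma/H$, which is exactly why one cannot instead absorb the auxiliary factor into $\Gamma/H$ itself — and hence why the conclusion is weaker than in Theorem~\ref{mthm-infcontBernouilli}. Taking the product with $(W_0,\kappa_0)$ (Lemma~\ref{lem-transitivity}(5)) gives $(\Gamma/H\times\mathbb N)\times(W_0,\kappa_0)\xhookrightarrow{proj}\Gamma/H\times(W,\kappa)$ with $(W,\kappa):=([0,1]\times W_0,\mathrm{Leb}\times\kappa_0)$, and chaining with the infinitesimal containment above through Lemma~\ref{lem-transitivity}(2) yields $([0,1]^{\Gamma/H},\mathrm{Leb}^{\Gamma/H})\xhookrightarrow{inf}\Gamma/H\times(W,\kappa)$. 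The only genuine obstacle is the one just flagged: lacking right translations on $\Gamma/H$, the auxiliary factor $(W,\kappa)$ cannot be eliminated, so it must stay in the statement; every other step is a faithful transcription of the proof of Theorem~\ref{mthm-infcontBernouilli}, with the enlargement of the index set by the countable factor $\mathbb N$ making Abert--Weiss unnecessary.
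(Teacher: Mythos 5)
Your proof follows the same core approach as the paper's: take $I=\Gamma/H\times\mathbb N$, identify $\{0,1\}^I\cong[0,1]^{\Gamma/H}$ block-wise, restrict the $\Sym(I)$-equivariant entropy support map of Theorem~\ref{mthm-confhom} to $\Gamma$, and read off the infinitesimal containment from the conformal homomorphism. However, you add one step that the paper's own proof actually omits, and which is needed for the statement as written. The paper sets
$(W,\kappa)=(\mathfrak O\times[0,1]^{\Gamma/H}\times\mathbb N\times[0,\log 2],\;\tau\times\mathrm{Leb}^{\Gamma/H}\times|\cdot|\times\mathrm{Leb})$,
which is an \emph{infinite} measure because of the counting measure on $\mathbb N$ (and the unnormalized Lebesgue measure on $[0,\log 2]$), so this $\kappa$ is not a probability measure as the theorem statement demands. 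You correctly notice this and fold the extra $\mathbb N$-factor into $([0,1],\mathrm{Leb})$ via an explicit projective containment $\Gamma/H\times\mathbb N\xhookrightarrow{proj}\Gamma/H\times[0,1]$ (using disjoint intervals and the triviality of the $\Gamma$-action on the second coordinate), then chain with Lemma~\ref{lem-transitivity}(5) and (2). This makes the target genuinely of the form $\Gamma/H\times(W,\kappa)$ with $(W,\kappa)$ a p.m.p.\ action. Your observation that one cannot instead absorb this factor into $\Gamma/H$ itself (no right translation action), and that this is exactly why the statement is weaker than Theorem~\ref{mthm-infcontBernouilli}, is also correct. In short: same approach, but your write-up closes a small normalization gap in the paper's own proof.
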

\begin{proof}
Let $I:=\mathbb N\times \Gamma/H.$ Let $$\cE\colon \cB(\{0,1\}^{\mathbb N\times \Gamma/H})\to \cB(\mathfrak O\times \{0,1\}^{\mathbb N\times \Gamma/H}\times (\mathbb N\times \Gamma/H)\times [0,\log 2])$$ be the entropy support map. We identify $(\{0,1\}^\mathbb N, (\frac{1}{2}\delta_0+\frac{1}{2}\delta_1)^\mathbb N)\simeq ([0,1],{\rm Leb})$ and restrict the action of $\Sym(\mathbb N\times \Gamma/H)$ to $\Gamma$ acting on the left on the second coordinate. Let $(W,\kappa):=(\mathfrak O\times [0,1]^{\Gamma/H}\times \mathbb N\times [0,\log 2], \tau \times {\rm Leb}^{\Gamma/H} \times |\cdot|\times {\rm Leb}).$ Then 
$$\cE\colon \cB([0,1]^{\Gamma/H},{\rm Leb}^{\Gamma/H})\to \cB(\Gamma/H\times W, |\cdot|\times \kappa)$$ is a $\Gamma$-equivariant conformal $\sigma$-algebra homomorphism. It follows that $([0,1]^{\Gamma/H},{\rm Leb}^{\Gamma/H})\xhookrightarrow{inf}(\Gamma/H\times W,|\cdot|\times \kappa).$ 
\end{proof}

\section{Proof of Chifan-Ioana's theorem for $(X,\mu)\xhookrightarrow{inf}\Gamma$.}\label{sec-QCI}
\begin{thm}\label{thm-QCIinf}
Let $\Gamma$ be an exact group and let $(X,\mu)$ be an ergodic p.m.p. action of $\Gamma$ with $(X,\mu)\xhookrightarrow{inf}\Gamma$. Let $\cR$ be the orbit equivalence relation and let $\cS$ be an equivalence subrelation of $\cR$. Then $X=\sqcup_{i=0} X_i$, where $\cS|_{X_0}$ is hyperfinite, and for each $i\geq 1$ either $X_i$ is empty or $\mu(X_i)\geq 0$ and the restriction $\cS|_{X_i}$ is strongly ergodic.
\end{thm}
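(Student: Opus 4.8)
The plan is to run the componentwise Chifan--Ioana dichotomy: decompose $\cS$ along its ergodic components, show that the components failing to be strongly ergodic glue to a single hyperfinite piece $X_0$, and that only finitely many (hence countably many, after padding with empty sets) components remain, each strongly ergodic of positive measure. Infinitesimal containment will enter twice --- once to force the ``non-rigid'' part to be genuinely hyperfinite, and once to bound the $\mu$-measure of the rigid pieces from below. Here $\cR$ is the orbit relation of $\Gamma\curvearrowright(X,\mu)$ and $\cS\subseteq\cR$ is the given subrelation.

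First I would set up the bookkeeping. Since $\cS$ is a countable p.m.p.\ equivalence relation it has an ergodic decomposition $\mu=\int_T\mu_t\,d\eta(t)$, with $\cS_t:=\cS|_{\supp\mu_t}$ ergodic and the assignment $t\mapsto\cS_t$ measurable; strong ergodicity of an ergodic component is a measurable condition (absence of almost-invariant vectors for the Koopman representation of a fixed countable graphing of $\cS$, tested on a countable dense family of vectors), so $T_{\mathrm{se}}=\{t:\cS_t\text{ strongly ergodic}\}$ is Borel. Set $X_{\mathrm{nse}}:=\bigcup_{t\notin T_{\mathrm{se}}}\supp\mu_t$, which is $\cS$-invariant.

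The crux is to prove that $\cS|_{X_{\mathrm{nse}}}$ --- and more generally $\cS$ restricted to any $\cS$-invariant set $A$ all of whose ergodic components fail to be strongly ergodic --- is hyperfinite. I would argue in two moves. The dynamical move uses that $\cR$ is the orbit relation of an action infinitesimally contained in $\Gamma$: failure of strong ergodicity produces, on each component, nontrivial asymptotically $\cS_t$-invariant sequences; assembling these over $T$ by measurable selection and tracking them through the approximations supplied by $(X,\mu)\xhookrightarrow{inf}\Gamma$ --- using the transitivity of projective/infinitesimal containment (Lemma~\ref{lem-transitivity}) and, since $\Gamma$ is exact, the boundary characterization of amenability (Lemma~\ref{lem-amenabilitychar}) together with Theorem~\ref{thm-exactprojcont} and its corollary --- one upgrades this to the statement that the infinite-measure ``relation model'' of $\cS|_A$ is projectively contained in $(X,\mu)$, hence amenable; equivalently, that $\cS|_A$ admits an invariant probability measure on $A\times B$ over $\mu$ for a fixed topologically amenable compact $\Gamma\curvearrowright B$. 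The soft move is then: an amenable equivalence relation is hyperfinite by Connes--Feldman--Weiss~\cite{connes1981amenable}. Finally a maximality/exhaustion argument --- the family of $\cS$-invariant sets with hyperfinite restriction is stable under countable increasing unions, as a countable increasing union of hyperfinite relations is hyperfinite --- produces a largest such set $X_0\supseteq X_{\mathrm{nse}}$. \textbf{This is the main obstacle:} converting ``asymptotically invariant on each component'' into the genuine relative-amenability (projective-containment) statement is exactly where the full force of Theorem~\ref{mthm-infcontBernouilli}/Proposition~\ref{prop-approxhypinf} must be spent, and the argument has to use that it is the ambient $\cR$, not the bare components, that makes this go through (componentwise non-strong-ergodicity alone does not imply amenability).

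For the complement $Y:=X\setminus X_0$: by maximality $\cS|_B$ is non-hyperfinite for every non-null $\cS$-invariant $B\subseteq Y$, and by Step~1 every ergodic component of $\cS|_Y$ is strongly ergodic. It remains to see the $\cS$-ergodic decomposition of $Y$ is purely atomic, i.e.\ finitely many positive-measure $\cS$-invariant pieces $X_1,\dots,X_m$ with $\cS|_{X_i}$ strongly ergodic. A strongly ergodic p.m.p.\ relation has a spectral gap, so there is $\varepsilon_0>0$ and a finite $S\subset\Gamma$ for which no restriction of it to a non-null invariant subset is $(S,k,\varepsilon_0)$-hyperfinite for any $k$. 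On the other hand Proposition~\ref{prop-approxhypinf}(2) supplies a uniform $\delta>0$ such that the unimodular random subsets attached to $\cS$-ergodic components of $\mu$-measure $<\delta$ are $(S,k,\varepsilon)$-hyperfinite for \emph{every} $\varepsilon>0$ --- and a (routine, but needing care) passage from $\cR$-returns to the subrelation $\cS$ preserves approximate hyperfiniteness. Hence every strongly ergodic component of $\cS|_Y$ has $\mu$-measure $\ge\delta$, so there are at most $\delta^{-1}$ of them; they exhaust $Y$ because any leftover non-null invariant set would again decompose into strongly ergodic components, each of measure $\ge\delta$. Relabelling and discarding empty pieces gives $X=\bigsqcup_{i\ge0}X_i$ with $\cS|_{X_0}$ hyperfinite and $\cS|_{X_i}$ strongly ergodic of positive measure for $i\ge1$, as required.
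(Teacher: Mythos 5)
Your proposal reproduces the overall shape of the theorem but leaves a genuine gap in the step you yourself flag as the main obstacle, and the paper closes that gap by a two-stage reduction you do not have.

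The difficulty you identify is real: infinitesimal containment bites only for small-measure sets, so it says nothing directly about a single $\cS$-ergodic component of fixed positive measure, and ``componentwise asymptotic invariance'' on its own does not give amenability/hyperfiniteness of $\cS$ restricted to such a component. The paper's argument circumvents this in two moves, neither of which is the ``upgrade to projective containment'' you gesture at.

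First, the paper proves hyperfiniteness of $\cS$ restricted to the \emph{smooth} part $X_0'$ of the $\cS$-ergodic decomposition (where the decomposition is atomless), not of the ``non-strongly-ergodic'' part. The smoothness is exactly what lets one partition $X_0'$ into $\cS$-invariant pieces each of $\mu$-measure $\le\delta$, after which Theorem~\ref{mthm-approxhyp} gives $(F,k,\varepsilon)$-hyperfiniteness of the associated unimodular random subsets and Theorem~\ref{thm-pairmodelfree} converts this into a deterministic small cut set $V^i$ inside each piece. Removing the union of the $V^i$ leaves the $F$-generated subrelation $\cS_F$ with only finite classes; letting $\varepsilon\to 0$ and then exhausting $F$ yields hyperfiniteness of $\cS|_{X_0'}$. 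This quantitative route is not in your outline.

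Second, for a positive-measure ergodic component $X_i$ that fails to be strongly ergodic, the paper does not try to prove amenability of $\cS|_{X_i}$ directly. Instead it produces a class-preserving map $\rho\colon (X_i,\cS)\to (Z,\cQ)$ onto a hyperfinite p.m.p.\ relation (the Jones--Schmidt-type consequence of non-strong-ergodicity, via \cite{connes1981amenable}), writes $\cQ=\bigcup_n\cQ_n$ with $\cQ_n$ finite, and sets $\cS_n:=\{(x,x')\in\cS\cap (X_i\times X_i)\mid(\rho(x),\rho(x'))\in\cQ_n\}$. Because $\cQ_n$ is finite, each $\cS_n$ has a \emph{smooth} ergodic decomposition, so the first part of the proof applies and each $\cS_n$ is hyperfinite; then $\cS|_{X_i}=\bigcup_n\cS_n$ is hyperfinite as an increasing union, contradicting the assumption that $X_i$ is a non-hyperfinite component. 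This quotient trick is precisely what reduces a positive-measure non-strongly-ergodic component down to the small-measure regime where the infinitesimal hypothesis has force; it is the idea missing from your proposal.

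Two further remarks. Your attempted finiteness of the strongly ergodic components is not needed for the statement (countably many positive-measure pieces is automatic), and the argument you offer for it is not quite sound: the threshold $\delta$ from Proposition~\ref{prop-approxhypinf} depends on $\varepsilon$, so a fixed small component is \emph{not} $(S,k,\varepsilon)$-hyperfinite for \emph{every} $\varepsilon>0$ with a single $\delta$. Also, your Step~1 proposes taking $X_0$ to be the whole non-strongly-ergodic part plus a maximal hyperfinite extension, whereas the paper simply sets $X_0=X_0'\cup X_0''$ (smooth part plus positive-measure hyperfinite components) and shows there are no positive-measure non-strongly-ergodic non-hyperfinite components left over; the ``maximal hyperfinite invariant set'' exhaustion you invoke is unnecessary once the quotient reduction is in place.
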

\begin{proof}
Let $X_0'$ be the smooth part of the ergodic decomposition of $\cS$, let $X_0''$ be the union of all positive mass hyperfinite ergodic components and let $X_i,i\geq 1$ be the non-hyperfinite positive mass ergodic components. 
We are first going to show that the smooth part $X_0'$ is hyperfinite, i.e. almost every ergodic component of $\cS|_{X_0'}$ is hyperfinite.

Let $F\subset \Gamma$ be a finite subset. We construct a graphing $\Phi_F$ on $X$ by restricting the domain of $\gamma\in F$ to the set $\{x\in X\mid \gamma x\in [x]_{\cS}\}$. Let $\cS_F$ be the equivalence relation on $X$ generated by $\Phi_F$. It is a sub-relation of $\cS$.

Let $\varepsilon>0.$ By Theorem \ref{mthm-approxhyp} we can choose $\delta>0$ such that every set $U\subset [0,1]^\Gamma, {\rm Leb}^\Gamma(U)\leq \delta$ is $(F,k_n,\varepsilon)$-hyperfinite. This is where we used exactness of $\Gamma$. 

Divide $X_0$ into $\cS$-invariant subsets $X_0=X_0^1\sqcup \ldots\sqcup X_0^N$, with $\mu(X_0'^i)\leq \delta$. By Theorem \ref{mthm-approxhyp}, the sets $\cF(X,X_0'^i)$ are $(F,k,\varepsilon)$-hyperfinite. Put $\varepsilon':=-33|F|\varepsilon\log\varepsilon$. By Theorem \ref{thm-pairmodelfree}, there exist $V^i\subset X_0'^i$ such that $\mu(V^i)\leq\varepsilon'\nu(X_0'^i)$ and the equivalence relation generated by $F$ restricted to $X_0'^i\setminus V^i$ is finite with classes of size at most $k$. Put $V:=\bigcup_{i=1}^N V^i$. Since $X_0'^i$ are $\cS$-invariant, they are also $\cS_F$-invariant. We deduce that the relation generated by $x\sim \gamma x, \gamma\in F, \gamma x\in [x]_{\cS}, x,\gamma x\in X_0'\setminus V$ is finite. Since the measure $\mu(V)\leq \varepsilon' \mu(X_0')$ can be made arbitrarily small, the relation $\cS_F$ restricted to $X_0'$ is hyperfinite. The union of subrelations $\cS_F$ for finite symmetric $F$ is $\cS$, so the latter is also hyperfinite on $X_0'$.

Put $X_0=X_0'\cup X_0''.$ The restriction of $\cS$ to $X_0$ is obviously hyperfinite.

It remains to show that $\cS$ is strongly ergodic on the components $X_i, i\geq 1.$ Let $\cS_F, \delta_n,\varepsilon_n$ be as before. Suppose $X_i$ is not strongly ergodic. By \cite{connes1981amenable}, there is class preserving map $\rho\colon (X_1,\cS,\mu)\to (Z,\cQ,\tau)$, where $(Z,\cQ,\tau)$ is a hyperfinite p.m.p. equivalence relation. 
Let $\cQ_n$ be an ascending sequence of finite subrelations of $\cQ$ which sum up to $\cQ.$ Let $\cS_n:=\{(x,x')\in \cS\cap X_i\times X_i\mid (\rho(x),\rho(x')\in \cQ_n\}$. Clearly $\cS_n$ are subrelations of $\cS|_{X_i}$ which sum up to $\cS|_{X_i}$. The ergodic decomposition of $\cQ_n$ is smooth, as the relation is finite, so the same is true for $\cS_n$. By the first part of the proof, $\cS_n$ is hyperfinite. Since these equivalence relations sum up to $\cS|_{Xi},$ the latter is also hyperfinite. This contradicts our assumption that $X_i$ is a non-hyperfinite ergodic component.
\end{proof}

\section{Questions}\label{sec-questions}
\subsection{Other example of infinitesimal containment in $\Gamma$} Let $\Gamma$ be a countable group. We have proved that any p.m.p. action $(X,\mu)$ weakly contained in $([0,1]^\Gamma,{\rm Leb}^\Gamma)$ is infinitesimally contained in $\Gamma$. Several interesting classes of such actions were described in \cite{hayes2019weak,hayes2021max,hayes2021harmonic}. 

On the other hand, non-trivial actions that factor through action of a compact group are never factors of Bernoulli and rarely are weak factors. We don't know if there are any examples of compact actions that are infinitesimally contained in  $\Gamma$.  Here is a candidate for a profinite action of $\Gamma$ that could be infinitesimally embedded in $\Gamma$.

Select a fast growing sequence of natural numbers $t_n$, e.g. $t_n=2^{2^n}$. Construct  a sequence of finite index subgroups $\Lambda_n$ of the free group $F_d$ as follows. Let $\Lambda_0$ be a uniform random subgroup of index $t_0$ and for any $n\geq1$ let $\Lambda_n$ be the uniform random subgroup of $\Lambda_{n-1}$ of index $t_n$ in $\Lambda_{n-1}$. In this way we obtain a nested sequence of finite subgroups $\Lambda_n$. Let $(X,\mu)$ be the inverse limit of the actions on $F_d/\Lambda_n$ with normalized counting measures (see \cite{abert2012rank}). 
\begin{conj}
The action $F_d\curvearrowright (X,\mu)$ is infinitesimally embedded in $F_d.$
\end{conj}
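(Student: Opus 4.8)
\textbf{Proof proposal for Theorem \ref{thm-ESM}.} The plan is to build the map $\cE$ explicitly from the entropy-increment functions $\Delta_U(w,i)$ and then verify the four required properties one at a time, with the bulk of the work going into property (4) via Theorem \ref{thm-almostcontained}. First I would fix a linear order $\prec$ on $I$ and, for each measurable $U \subset \{0,1\}^I$, define the ``local entropy density'' $J_U(w,i) := 1_U(w)\,\Delta_U(w,i)/\varepsilon_U(w,i)$, where $\varepsilon_U(w,i) = \mathbb{E}(1_U \mid Z_{\prec i})(w)$ is the conditional density of $U$ on the cylinder determined by the $\prec$-earlier coordinates of $w$, and $\Delta_U(w,i) = H_{w,i}(U) - H_{w,i}(U \mid Z_i)$ is the information about $U$ revealed by the $i$-th bit. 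One checks $0 \le \Delta_U(w,i) \le \log 2$ (it equals $H_{w,i}(Z_i) - H_{w,i}(Z_i\mid U) \le \log 2$), which is what legitimizes the last factor $[0,\log 2]$. Then $\cE_\prec(U) := \{(w,i,t) : 0 \le t < J_U(w,i)\}$ and $\cE(U) := \{(\prec, w, i, t) : (w,i,t) \in \cE_\prec(U)\}$; measurability is routine once one observes each $\varepsilon_U(\cdot, i)$ and $\Delta_U(\cdot, i)$ is a measurable function of $1_U$ through conditional expectations (Fubini handles the measure-zero conditioning in the infinite case).

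Next I would dispatch properties (1), (2), (3), which are comparatively soft. For (1), integrating $J_U$ over $w$ and summing over $i$ gives $\sum_i \int \Delta_U(w,i)\,d\mu(w) = \sum_i \big(H(U\mid Z_{\prec i}) - H(U \mid Z_{\preceq i})\big)$, a telescoping sum equal to $H(U)$ when $I$ is finite; in the countable case one sandwiches the partial sums using the conditional-independence inequality $H(U\mid\cZ) - H(U\mid \cX,\cZ) \le H(U\mid\cY,\cZ) - H(U\mid\cX,\cY,\cZ)$ for independent $\sigma$-algebras (proved by reducing to finite $\cX,\cY$ via martingale convergence and expanding via $H(U,\cX) = H(U) + H(\cX\mid U)$). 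For (2), $\cE_\prec(U) \subset \{0,1\}^I \times I \times [0,\log 2]$ restricted to the fibre over $U$ via the factor $1_U(w)$, so disjoint $U,V$ force $\cE_\prec(U) \cap \cE_\prec(V) = \emptyset$. For (3), the expression $1_U(w)\Delta_U(w,i)/\varepsilon_U(w,i)$ is manifestly $\Sym(I)$-natural: relabelling coordinates by $\sigma$ and the order by $\sigma\prec$ carries $\cE_\prec(U)$ to $\cE_{\sigma\prec}(\sigma U)$, which is exactly the stated commuting square. I would also prove at this point the $L^1$-continuity lemma: $V \mapsto J_V(\cdot, i)$ is continuous into $L^1$, hence $U_n \to U$ in total variation implies $\cE_\prec(U_n) \to \cE_\prec(U)$; this is what will let me reduce property (4) to finite $I$.

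The heart of the argument is Theorem \ref{thm-almostcontained}: for $V \subset U$ with $\mu(U)$ small, $m(\cE_\prec(U) \cup \cE_\prec(V)) - m(\cE_\prec(U)) \le \delta\, m(\cE_\prec(U))$. After reducing to finite $I = \{1,\dots,N\}$ (using continuity), I would write the difference as $\int \sum_i r(w,i)\,d\mu$ with $r(w,i) = \max\{0, \Delta_V(w,i) - \tfrac{\varepsilon_V}{\varepsilon_U}\Delta_U\}(w,i)$, and partition $\{0,1\}^I \times I$ into five regions $\Omega_1,\dots,\Omega_5$ according to the sign of the integrand, whether $\varepsilon_U(w,i)$ exceeds a small threshold $\kappa_0$, and whether the ratio $\varepsilon_V/\varepsilon_U$ is near $0$, near $1$, or bounded away from both. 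On $\Omega_1$ the contribution is $0$; on $\Omega_2$ and $\Omega_3$ I bound $r \le \Delta_V$ and use stopping-time arguments plus the telescoping identity $H(V\mid Z_{\prec\tau}) = \int_{\tau<\infty} h(\varepsilon_V(w,\tau))\,d\mu$ together with Markov-type bounds $\mu(\{\tau<\infty\}) \le \kappa_0^{-1}\mu(U)$; on $\Omega_4$ I use $r \le (1 - \varepsilon_V/\varepsilon_U)\Delta_U + s_1$ with $\sum_i\int s_1 = H(V\mid U) \le \mu(U)$. The genuinely delicate region is $\Omega_5$ (density small, ratio bounded away from $\{0,1\}$), where I must expand $r = s_2 + s_3$, bound $|s_3| \lesssim \varepsilon_U\Delta_U + \varepsilon_V\Delta_V$ directly, and control $|s_2| = \varepsilon_V|h(\varepsilon_U^0/\varepsilon_U) - h(\varepsilon_V^0/\varepsilon_V)|$ by a mean-value estimate $|h'(t)| \lesssim -\log\kappa_2\sqrt{h(1/2) - h(t)}$ combined with the lower bound $|s_1(w,i)| \ge 2\kappa_1 \varepsilon_U (\varepsilon_U^0/\varepsilon_U - \varepsilon_V^0/\varepsilon_V)^2$ coming from strict concavity of $h$; Cauchy--Schwarz then turns $\sum\sqrt{\Delta_U s_1}$ into $\sqrt{H(U)\,H(V\mid U)} \le \sqrt{H(U)\mu(U)}$. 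Summing the five bounds and optimizing $\kappa_0 = \kappa_1 = (-\log\mu(U))^{-1/3}$, $\kappa_2 = (-\log\mu(U))^{-1}$ gives an error of order $\frac{\log|\log\mu(U)|}{|\log\mu(U)|^{1/3}}\,m(\cE_\prec(U)) = o(m(\cE_\prec(U)))$, as needed. Finally, property (4) follows from Theorem \ref{thm-almostcontained} by applying it to $U,V,U\cap V$ inside $U\cup V$ and using disjointness from (2) and the mass formula from (1): the symmetric differences $\cE_\prec(U) \mathbin{\Delta} (\cE_\prec(U\setminus V) \cup \cE_\prec(U\cap V))$ and its analogues are all $o$ of the corresponding masses, and the triangle inequality closes the estimate. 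The main obstacle, as the author flags, is the $\Omega_5$ estimate: keeping track of the three error terms $s_1, s_2, s_3$ and choosing the auxiliary thresholds $\kappa_0,\kappa_1,\kappa_2$ so that every piece is simultaneously controlled is where all the technical weight lies.
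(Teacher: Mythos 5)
The statement you were assigned is not a theorem but a \emph{conjecture}: the paper itself provides no proof of it, and in fact explicitly flags that the existence of \emph{any} profinite action of $\Gamma$ infinitesimally contained in $\Gamma$ is an open question. The conjecture proposes a specific candidate, namely the inverse limit of $F_d/\Lambda_n$ where the $\Lambda_n$ are a nested random chain of finite-index subgroups with index jumps $t_n = 2^{2^n}$, but the author offers no argument for it. So there is no ``paper's own proof'' to compare against.

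More importantly, your proposal does not address this statement at all. What you wrote is a (reasonably accurate) reconstruction of the proof strategy for Theorem~\ref{thm-ESM}, the entropy support map theorem: you define $\cE_\prec(U)$ from the local entropy densities $J_U(w,i)$, verify properties (1)--(3), reduce (4) to Theorem~\ref{thm-almostcontained} by $L^1$-continuity, and then work through the five-region decomposition $\Omega_1,\dots,\Omega_5$ with the auxiliary thresholds $\kappa_0,\kappa_1,\kappa_2$. That matches the paper's own treatment of Theorem~\ref{thm-ESM} quite closely. But Theorem~\ref{thm-ESM} concerns conformal homomorphisms on $\{0,1\}^I$ and ultimately yields that the \emph{Bernoulli shift} is infinitesimally contained in $\Gamma$; it says nothing about profinite actions. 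Profinite actions factor through compact-group actions and are never weakly contained in the Bernoulli shift (barring trivialities), so the Bernoulli result does not transfer. There is a genuine gap of the most basic kind: you proved the wrong statement. To address the conjecture you would need an entirely different argument --- likely one exploiting the rapidly growing index sequence $t_n$ to show that small sets in the inverse-limit action have orbit statistics approximated by finite sets of $F_d$ --- and no such argument appears in your write-up or in the paper.
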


Taking ultra-products of finite actions provides another source of candidates. We refer to \cite{carderi2021non} for the definition of ultralimits of p.m.p. actions. Let $\cU$ be a non principal ultrafilter on $\mathbb N$. The ultralimit $(X_{\cU},\mu_{\cU})$ of p.m.p. actions $(X_n,\mu_n)$  is again a p.m.p. action, with the property that the dynamics of any finite tuple of sets $A_1,\ldots,A_k$ in $(X_{\cU},\mu_{\cU})$ can be approximated in $(X_n,\mu_n)$ for some $n\in\mathbb N.$ 
\begin{conj}
Let $\cU$ be a non-principal ultrafilter. Let $(X_n,\mu_n)$ be the uniform random action of $F_d$ on an $n$-element set, with normalized counting measure. 
Then  $(X_{\cU},\mu_{\cU})\xhookrightarrow{inf}F_d$ almost surely.
\end{conj}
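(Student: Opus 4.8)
The plan is to prove the sharper statement that, for every non-principal ultrafilter $\cU$, almost surely the ultraproduct $(X_\cU,\mu_\cU)$ is \emph{isomorphic} as a $\Gamma$-system to the Bernoulli shift $([0,1]^{F_d},{\rm Leb}^{F_d})$; the conjecture then follows at once from Theorem \ref{mthm-infcontBernouilli}. To realize each approximant inside a Bernoulli shift, fix the injective labelling $f_n\colon[n]\to[0,1]$, $f_n(p)=p/n$, and embed the $n$-th action $F_d$-equivariantly into $[0,1]^{F_d}$ (with the shift $(g\cdot x)_\gamma=x_{\gamma g}$, a copy of the Bernoulli shift) via $\Phi_n(p):=(f_n(\gamma p))_{\gamma\in F_d}$. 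Since the $e$-coordinate of $\Phi_n$ is the injective map $f_n$, the map $\Phi_n$ is injective, so $([n],\mu_n)\cong([0,1]^{F_d},\nu_n)$ with $\nu_n:=(\Phi_n)_*\mu_n$ an $F_d$-invariant atomic probability measure, and $X_\cU=\prod_\cU([0,1]^{F_d},\nu_n)$ carries the distinguished coordinate functions $x_\gamma\in L^\infty(X_\cU)$.

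The first point is that the family $(x_\gamma)_{\gamma\in F_d}$ is jointly i.i.d.\ uniform. This is where one uses that a uniform random action of $F_d$ on $[n]$ has an almost tree-like Schreier graph: for each fixed radius $R$ the expected number of $p\in[n]$ whose $R$-ball in the Schreier graph (with respect to the standard generators) contains a cycle is bounded by a constant $C_R$ independent of $n$, because for every non-trivial reduced word $w$ with $|w|\le 2R$ the expected number of $w$-fixed points stays bounded; hence by Markov's inequality and Borel--Cantelli, almost surely $\#\{p:B_R(p)\not\cong B_R(e)\}\le\sqrt n$ for all large $n$, simultaneously for every $R$. Equivalently, the rooted Schreier graph at a uniform root Benjamini--Schramm converges to the $2d$-regular Cayley tree. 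Since $f_n$ equidistributes distinct points, it follows that for each fixed finite $R$ the marginal $\nu_n|_{B_R(e)}$, which is the law of $(f_n(\gamma p))_{\gamma\in B_R(e)}$ for a uniform $p$, converges in total variation to ${\rm Leb}^{B_R(e)}$. Passing to the ultraproduct, the joint law of $(x_\gamma)_{\gamma\in B_R(e)}$ in $X_\cU$ is exactly ${\rm Leb}^{B_R(e)}$ for every $R$, i.e.\ $(x_\gamma)_{\gamma\in F_d}$ is i.i.d.\ uniform and $F_d$ acts on it by the Bernoulli shift.

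It remains to show that these coordinates \emph{generate} $\cB(X_\cU)$ --- equivalently, that $X_\cU$ is not a proper extension of the Bernoulli shift --- and this is the step I expect to be the main obstacle, since it is exactly here that the coarse nature of ultraproduct $\sigma$-algebras must be handled. The mechanism is that in the $n$-th system the single coordinate $x_e^{(n)}=f_n$ already generates the whole measure algebra $2^{[n]}$, because $f_n$ is injective, and one wants to transport ``a generating coordinate remains generating'' through the measure-algebra ultraproduct of \cite{carderi2021non}: one verifies that conditional expectation commutes with this ultraproduct, so that for $A=[A_n]\in\cB(X_\cU)$ one has $\mathbb E(A\mid\sigma(x_e))=[\mathbb E(\mathbf 1_{A_n}\mid\sigma(f_n))]=[\mathbf 1_{A_n}]=A$, whence $A\in\sigma(x_e)\subseteq\sigma((x_\gamma)_{\gamma\in F_d})$. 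Granting this, $\cB(X_\cU)$ is generated by an i.i.d.\ uniform $F_d$-indexed family carrying the shift action, so $X_\cU\cong([0,1]^{F_d},{\rm Leb}^{F_d})$, and Theorem \ref{mthm-infcontBernouilli} gives $(X_\cU,\mu_\cU)\xhookrightarrow{inf}F_d$. One cannot shortcut the generation step by only proving weak containment $X_\cU\xhookrightarrow{w}[0,1]^{F_d}$ and invoking Lemma \ref{lem-transitivity}(3), because a proper extension of a Bernoulli shift need not be weakly contained in it.
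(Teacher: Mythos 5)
The statement you are proving is stated in the paper as an open \emph{conjecture} (Section \ref{sec-questions}); the paper offers no proof and cites \cite{carderi2021non} only for the definition of the ultraproduct, so there is no argument of the paper to compare against, and your proposal must be judged on its own.

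Your proposal has a genuine gap at the generation step, and that gap is irreparable as stated: the stronger claim you set out to prove, that $(X_\cU,\mu_\cU)\cong([0,1]^{F_d},{\rm Leb}^{F_d})$, is simply false. The measure-algebra ultraproduct of the finite spaces $([n],2^{[n]},\mu_n)$ is nonseparable (its density character is $2^{\aleph_0}$; this is the Loeb-measure phenomenon), whereas $\cB([0,1]^{F_d},{\rm Leb}^{F_d})$ is separable. Consequently $\sigma(x_e)$, a separable sub-$\sigma$-algebra generated by a single $[0,1]$-valued function, is a \emph{proper} subalgebra of $\cB(X_\cU)$. Your justification that conditional expectation commutes with the ultraproduct conflates two different objects: the identity
$$\bigl[\mathbb E(\mathbf 1_{A_n}\mid\sigma(f_n))\bigr]=\mathbb E\bigl(A\,\bigm|\,\textstyle\prod_\cU\sigma(f_n)\bigr)$$
is indeed an instance of ``conditional expectation commutes with ultraproducts,'' but the algebra on the right is the ultraproduct of the $\sigma$-algebras $\sigma(f_n)=2^{[n]}$, which is the \emph{entire} algebra $\cB(X_\cU)$. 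This is not the same as $\sigma(x_e)=\sigma([f_n])$: the $\sigma$-algebra generated by the ultralimit of the functions is in general vastly smaller than the ultralimit of the $\sigma$-algebras generated by the functions. Evaluating the conditional expectation on the ultraproduct algebra of course gives $A$ back, but that tells you nothing about $\sigma(x_e)$. So the displayed chain in your argument breaks at its first equality, and with it the conclusion that $x_e$ generates.

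Two secondary remarks. First, the intermediate assertion that $\nu_n|_{B_R(e)}$ converges in \emph{total variation} to ${\rm Leb}^{B_R(e)}$ cannot be right, since each $\nu_n$ is purely atomic and its total variation distance to any nonatomic measure equals $2$; you presumably mean weak-* convergence, which is what the Benjamini--Schramm argument gives and which does suffice to identify the law of $(x_\gamma)_{\gamma\in F_d}$ as i.i.d.\ uniform in $X_\cU$. Second, your closing observation --- that one cannot conclude by proving $X_\cU\xhookrightarrow{w}[0,1]^{F_d}$ and invoking Lemma \ref{lem-transitivity}(3), because an extension of a Bernoulli shift need not be weakly contained in it --- is correct, and in fact is exactly the kind of obstacle that makes this a conjecture rather than a corollary of Theorem \ref{mthm-infcontBernouilli}: the separable Bernoulli factor you produced inside $X_\cU$ does not by itself control the dynamics of small subsets of the (non-separable) ambient ultraproduct, which is what infinitesimal containment is about.
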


We now take a closer look at actions of $\Gamma$ that factor through the action of a compact group. Let $G$ be a compact group with the Haar probability measure $\mu$. Let $\rho\colon \Gamma\to G$ be a homomorphism. Group $\Gamma$ acts on $(G,\mu)$ via $\gamma g:=\rho(\gamma)g.$

Thanks to Example \ref{ex-compactactions}, we know that $\Gamma/\rho^{-1}(H)\xhookrightarrow{proj}(G,\mu)$ for any closed subgroup $H\subset G$ with empty interior. This provides an obstruction to infinitesimal containment in $\Gamma$. Let $\Sigma$ be any countable set with $\Gamma$ action and $(Z,\tau)$ any measure preserving action. By Lemma \ref{lem-transitivity}, $(G,\mu)\xhookrightarrow{inf}\Sigma  \times (Z,\tau)$ implies $\Gamma/\rho^{-1}(H)\xhookrightarrow{proj}\Sigma \times (Z,\tau).$ In particular, $\Sigma\times (Z,\tau)$ should have $\rho^{-1}(H)$ almost invariant subsets. This is impossible unless $\Sigma$ has $\rho^{-1}(H)$ almost invariant sets. 

It follows, for example, that $(G,\mu)$ is not infinitesimally contained in $\Gamma$ unless $\rho^{-1}(H)$ is amenable for all proper closed subgroups $H\subset G$. We state the following as a Conjecture, but they are more like questions. The only evidence is our inability to find counter-examples. 

\begin{conj}\label{conj-compact}
Let $\rho\colon \Gamma\to  G$ be as above. Then 
$$(G,\mu)\xhookrightarrow{inf}\bigsqcup_{H\subset G} \left[\Gamma/\rho^{-1}(H)\times (Z_H,\tau_H)\right],$$ where $H$ runs over conjugacy classes of closed subgroups of $G$ with empty interior and $(Z_H,\tau_H)$ is an auxiliary probability measure preserving action.
\end{conj}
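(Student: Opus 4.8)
The plan is to reduce the conjecture to a structure theorem for the small-scale dynamics of $(G,\mu)$ and then package that structure as a single $\Gamma$-equivariant conformal homomorphism, exactly as Theorem \ref{mthm-confhom} is deduced from Theorem \ref{thm-ESM}. First I would normalize: replacing $G$ by $\overline{\rho(\Gamma)}$ one may assume $\rho$ has dense image (the ambient $G$-cosets contribute only a harmless partition), and if $G$ is finite the conclusion is vacuous, since for $\delta<1/|G|$ there are no nonempty sets of measure $\le\delta$; so assume $G$ infinite. By the results of Section \ref{sec-conf}, together with the transitivity properties of Lemma \ref{lem-transitivity} and Lemma \ref{lem-gammaprod} (which give some slack in the target), it suffices to produce, for a suitable family of auxiliary p.m.p. actions $W_H$, a $\Gamma$-equivariant conformal homomorphism
\[
\cE_G\colon\ \cB(G,\mu)\ \longrightarrow\ \cB\Big(\textstyle\bigsqcup_{[H]} \Gamma/\rho^{-1}(H)\times W_H\Big),
\]
where $[H]$ ranges over $G$-conjugacy classes of closed subgroups with empty interior; equivalently, by Lemma \ref{lem-InfContCond}, to approximate, uniformly over small tuples, the intersection numbers $\mu\big(\bigcap_\gamma\gamma A_i\big)$ inside that right-hand side.

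The geometric core is a renormalization ("zooming in") analysis of small subsets of $G$, organized by the Chabauty space $\Sub(G)$ of closed subgroups. The point of departure is the elementary computation that if $U=g_0H$ is a coset of a closed subgroup, then the return set $\cF(G,U)=\{\gamma:\ g\in\rho(\gamma)U\}$ of Section \ref{sec-urs} is, deterministically, the subgroup $\rho^{-1}(g_0Hg_0^{-1})$, whose model space is the coset action $\Gamma/\rho^{-1}(g_0Hg_0^{-1})$ --- and $g_0Hg_0^{-1}$ lies in the class $[H]$. The substance of the argument is to show that for an arbitrary shrinking family $\mu(U_n)\to0$ the local picture of $U_n$ around a $\mu|_{U_n}$-random point renormalizes, so that every weak-$*$ subsequential limit of the unimodular random subsets $\cF(G,U_n)$ is an integral, over a probability measure on the classes $[H]$, of unimodular random subsets each weakly contained in $\Gamma/\rho^{-1}(H)\times W_H$. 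The auxiliary factor $W_H$ has to be rich enough to absorb two things: the residual "transverse" randomness of $U_n$ that fails to collapse onto $H$ (the analogue of the extra $\mathfrak O\times I\times[0,\log 2]$ coordinates in the entropy support map), and the full translation/conjugation orbit of $H$ within its class --- for which a copy of the homogeneous space $(G/H,\mu_{G/H})$, or simply of $(G,\mu)$ itself, should be enough. The quantitative, non-asymptotic form of this statement --- for each $\varepsilon$ a $\delta$ making the approximation already good at scale $\delta$, uniformly in the tuple --- is the exact analogue here of Theorem \ref{thm-almostcontained}, and combined with the freedom to rescale and with the pairs-to-$k$-tuples mechanism of Lemma \ref{lem-confmult} it produces $\cE_G$.

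I expect the renormalization step to be the main obstacle. The group $G$ carries no canonical product coordinatization, so the entropy support map of Section \ref{sec-esmaps} cannot be transplanted verbatim; worse, a generic small set need not self-similarize under zooming, so no single "limiting closed subgroup" can be attached to a shrinking family, and one is forced to study the whole orbit closure of the renormalization dynamics on pointed small sets, decompose it, and keep track of how the effective subgroup $H$ drifts as the scale decreases, all while ensuring that the entropy-type error terms telescope across a hierarchy of scales of unbounded depth. A concrete line of attack is to build $W_H$ itself out of an entropy support map applied to a product coordinatization of a neighborhood of $H$ transverse to $H$ --- for instance along a filtration of $G$ by closed subgroups refining $H$ --- so that Theorem \ref{thm-ESM} can be used as a black box on the transverse part while the subgroup combinatorics is treated separately. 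It is worth stressing how strong the statement is: already for $G=\mathbb Z_p$ and $\Gamma=\mathbb Z$ it asserts that the profinite $\mathbb Z$-action on $\mathbb Z_p$ is infinitesimally contained in an auxiliary enlargement of the regular action, which would be a new instance of precisely the kind of containment the paper otherwise leaves open --- so a successful proof must genuinely go beyond the product-space techniques developed here.
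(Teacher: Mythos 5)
This statement is an open conjecture in the paper, not a theorem: the author explicitly writes that Conjectures~\ref{conj-compact}--\ref{conj-lattices} ``are more like questions. The only evidence is our inability to find counter-examples.'' There is no proof in the paper to compare against, and your proposal, read carefully, is not a proof either --- it is a plan of attack that (to your credit) honestly flags the gap: you say ``I expect the renormalization step to be the main obstacle'' and that a proof ``must genuinely go beyond the product-space techniques developed here.'' That self-assessment is correct.

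On the substance of the plan: the reduction you propose is reasonable and mirrors how Theorem~\ref{mthm-infcontBernouilli} is proved --- build a $\Gamma$-equivariant conformal $\sigma$-algebra homomorphism into the target, then invoke Section~\ref{sec-conf} together with Lemmas~\ref{lem-transitivity}, \ref{lem-gammaprod}, \ref{lem-InfContCond}. The two observations you single out are the right ones: (i) for a coset $U=g_0H$ the return set $\cF(G,U)$ is exactly the subgroup $\rho^{-1}(g_0Hg_0^{-1})$, so the coset actions $\Gamma/\rho^{-1}(H)$ must appear in any target; and (ii) unlike $\{0,1\}^I$, a compact group carries no canonical product coordinatization indexed by a $\Gamma$-set, so the entropy support map does not transplant and the key estimate analogous to Theorem~\ref{thm-almostcontained} is missing. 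But these are exactly the places where the argument is not yet an argument: you do not construct the candidate conformal homomorphism $\cE_G$, you do not prove that weak-$*$ limits of $\cF(G,U_n)$ decompose over conjugacy classes $[H]$ as claimed (the ``renormalization'' and ``telescoping across a hierarchy of scales'' steps are named but not carried out), and the choice of auxiliary factors $W_H$ is left entirely open. So there is a genuine gap, and it is precisely where you said it is. Since the paper leaves this open too, the correct takeaway is that your sketch is a plausible heuristic roadmap consistent with the paper's framework, but the conjecture remains unproven in both places, and nothing you wrote should be presented as a proof.
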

Note that when $\rho^{-1}(H)$ is amenable, then $\Gamma/\rho^{-1}(H)\times (Z_H,\tau_H)\xhookrightarrow{proj}\Gamma$, by Proposition \ref{prop-amenableproj}.
A special case of Conjecture \ref{conj-compact} is:
\begin{conj}\label{conj-compactnonam}
Suppose $\rho(\Lambda)$ is open for all non-amenable subgroups $\Lambda$ of $\Gamma$. Then 
$(G,\mu)\xhookrightarrow{inf}\Gamma.$
\end{conj}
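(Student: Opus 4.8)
The plan is to deduce Conjecture~\ref{conj-compactnonam} from the (still open) Conjecture~\ref{conj-compact} by a soft argument, and then to describe how one would attack Conjecture~\ref{conj-compact} by building an entropy support map adapted to compact groups. Throughout we may assume $\rho(\Gamma)$ is dense in $G$ (otherwise replace $G$ by $\overline{\rho(\Gamma)}$) and that $\Gamma$ is non-amenable, since if $\Gamma$ is amenable then $(G,\mu)$ is an amenable action and $(G,\mu)\xhookrightarrow{proj}\Gamma$ by Proposition~\ref{prop-amenableproj}, which already gives $(G,\mu)\xhookrightarrow{inf}\Gamma$.

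\emph{Step 1: reduction to Conjecture~\ref{conj-compact}.} Under the hypothesis of Conjecture~\ref{conj-compactnonam}, every $\rho^{-1}(H)$ with $H\subsetneq G$ closed of empty interior is amenable: were it non-amenable, $\rho(\rho^{-1}(H))$ would be an open subgroup of $G$ contained in $H$, forcing $H$ to contain the non-empty open set $\rho(\rho^{-1}(H))\ni 1$ and contradicting empty interior. A transitive action $\Gamma\curvearrowright\Gamma/\Lambda$ is amenable in the sense of Zimmer exactly when $\Lambda$ is amenable, so each $\Gamma/\rho^{-1}(H)$ is an amenable action; by Lemma~\ref{lem-amenableextensions} so is $\Gamma/\rho^{-1}(H)\times(Z_H,\tau_H)$, hence $\Gamma/\rho^{-1}(H)\times(Z_H,\tau_H)\xhookrightarrow{proj}\Gamma$ by Proposition~\ref{prop-amenableproj}. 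One then needs the elementary fact that a countable disjoint union of $\sigma$-finite actions, each projectively contained in $\Gamma$, is projectively contained in $\Gamma$: given finite-measure sets $A_1,\dots,A_k$ in the union one reduces to the case where they live on finitely many summands, projectively approximates each summand separately with its own scaling $\lambda_n$, then picks a single small $\lambda>0$ and integers $q_n$ with $\lambda q_n$ close to $\lambda_n$, translating the approximating subsets of $\Gamma$ so that their $F$-thickenings are pairwise disjoint — exactly the bookkeeping of Lemmas~\ref{lem-gammaprod} and~\ref{lem-projergodic}. Granting Conjecture~\ref{conj-compact} we obtain $(G,\mu)\xhookrightarrow{inf}\bigsqcup_H[\Gamma/\rho^{-1}(H)\times(Z_H,\tau_H)]\xhookrightarrow{proj}\Gamma$, and Lemma~\ref{lem-transitivity}(2) gives $(G,\mu)\xhookrightarrow{inf}\Gamma$.

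\emph{Step 2: towards Conjecture~\ref{conj-compact}.} Here one wants a $\Gamma$-equivariant conformal homomorphism from $\cB(G,\mu)$ into the $\sigma$-algebra of $\bigsqcup_H\Gamma/\rho^{-1}(H)\times(Z_H,\tau_H)$, in the spirit of the entropy support map of Section~\ref{sec-esmaps}. The guiding picture is that, by the Lebesgue density theorem, a small $U\subset G$ looks at scale $\mu(U)$ like a union of balls, and the $\Gamma$-action on small balls is the left translation action of $\rho(\Gamma)$ seen at vanishing scale; whether $\gamma U$ meets $\gamma' U'$ is governed, as $\mu(U),\mu(U')\to 0$, by how close $\rho(\gamma^{-1}\gamma')$ is to a fixed element of $G$. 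To turn ``scale'' into ``coordinates'' one fixes, $G$-equivariantly, a random decreasing chain of closed subgroups of $G$ (realizing $G$ as an inverse limit of compact Lie groups and choosing at each level a random chain of closed subgroups refining the previous), so that the coset spaces $G/H$ along the chain are $\Gamma$-equivariantly the $\Gamma$-sets $\Gamma/\rho^{-1}(H)$, with the chain space, the scale parameter, and an extra $[0,\log 2]$-coordinate forming the auxiliary factor $(Z_H,\tau_H)$. One then defines, in analogy with $\cE$, a map sending $U$ to the tuples $(\text{chain},g,H,t)$ with $0\le t<1_U(g)\,\Delta_U(g,H)/\varepsilon_U(g,H)$, where $\Delta_U(g,H)$ is the conditional entropy increment of $1_U$ upon revealing the next coset past $H$ in the chain. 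Properties (1)--(3) of Theorem~\ref{thm-ESM} — total mass $H(U)$, disjointness on disjoint sets, equivariance — should carry over with only notational changes, and then the argument of Theorem~\ref{mthm-confhom} upgrades this to a conformal homomorphism as soon as the analogue of Theorem~\ref{thm-almostcontained}/property~(4) is in hand.

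\emph{The main obstacle} is exactly this last point: the near-additivity $m(\cE_\prec(U)\cup\cE_\prec(V))-m(\cE_\prec(U))=o(m(\cE_\prec(U)))$ for $U,V$ of small measure. In the Bernoulli case this used the sharp two-bit estimates of Lemmas~\ref{lem-s1bound}--\ref{lem-s2bound}, which relied on each coordinate being a single unbiased bit. When $G$ is totally disconnected one can use a chain of open subgroups, so the successive quotients are finite and one is again revealing finitely-valued coordinates; the combinatorics is heavier but plausibly tractable. The genuinely new difficulty is connected $G$ (already $G=\mathrm{SO}(3)$): there are no proper open subgroups, the chain of closed subgroups is ``continuous'', and the entropy-increment analysis must be run for a continuous filtration of the $\sigma$-algebra of $(G,\mu)$. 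Controlling the excess term $r(w,i)$ uniformly along such a continuous filtration — in particular preventing the entropy increment from concentrating on exceptional scales — is the crux, and I expect it to require an idea beyond the techniques of Section~\ref{sec-esmaps}.
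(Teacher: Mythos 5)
This statement is an open conjecture in the paper, not a theorem: immediately before it the author writes ``We state the following as a Conjecture, but they are more like questions. The only evidence is our inability to find counter-examples.'' The paper offers no proof of Conjecture~\ref{conj-compactnonam}, so there is nothing to compare your proposal against except that explicit admission of open status.

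Your Step~1 is correct as far as it goes, and it essentially reproduces the remark the paper makes in the sentence preceding the conjecture (``Note that when $\rho^{-1}(H)$ is amenable, then $\Gamma/\rho^{-1}(H)\times (Z_H,\tau_H)\xhookrightarrow{proj}\Gamma$, by Proposition~\ref{prop-amenableproj}. A special case of Conjecture~\ref{conj-compact} is:''). The open-subgroup argument showing each $\rho^{-1}(H)$ is amenable is right, as is the equivalence of amenability of $\Gamma\curvearrowright\Gamma/\Lambda$ with amenability of $\Lambda$, and the closure of $\xhookrightarrow{proj}\Gamma$ under countable disjoint unions is a small lemma in the spirit of Lemmas~\ref{lem-gammaprod} and~\ref{lem-projergodic} that one can indeed prove by the translated-copies bookkeeping you describe. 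But this only establishes the conditional implication Conjecture~\ref{conj-compact} $\Rightarrow$ Conjecture~\ref{conj-compactnonam}, which the paper already asserts.

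Step~2 is not a proof and you are candid about that: you sketch an analogue of the entropy support map for a random decreasing chain of closed subgroups, observe that properties (1)--(3) of Theorem~\ref{thm-ESM} should transfer, and then name the genuine obstruction --- obtaining the near-additivity estimate analogous to Theorem~\ref{thm-almostcontained} (property~(4)) for a continuous filtration, where the two-bit entropy estimates of Lemmas~\ref{lem-s1bound}--\ref{lem-s2bound} have no obvious counterpart, especially for connected $G$. That gap is exactly where the conjecture remains open, and your proposal does not close it. In short: your reduction is sound and matches the paper's implicit intent, but the conjecture is still a conjecture and nothing in your Step~2 upgrades it to a theorem.
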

We can specialize even further 
\begin{conj}\label{conj-lattices}
\begin{enumerate}
    \item Let $\Gamma=\SL_2(\mathbb Z).$ For any prime $p$, $(\SL_2(\mathbb Z_p),\mu)\xhookrightarrow{inf}\SL_2(\mathbb Z)$.
    \item Let $\Gamma$ be a cocompact arithmetic lattice in $\SL_2(\mathbb C)$ of the second type (so that $\Gamma$ does not intersect any conjugate $H$ of $\SL_2(\mathbb R)$ as a lattice in $H$). Let $\widehat{\Gamma}^c$ be the congruence completion of $\Gamma$. Then $(\widehat{\Gamma}^c,\mu)\xhookrightarrow{inf}\Gamma.$
\end{enumerate}   
\end{conj}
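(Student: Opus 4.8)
These two statements are special cases of Conjecture \ref{conj-compactnonam}, so the natural strategy is first to establish that conjecture for the pair $(\Gamma,G)$ at hand and then verify the hypothesis on non-amenable subgroups. For part (1), $\Gamma=\SL_2(\mathbb Z)$, $G=\SL_2(\mathbb Z_p)$, and the hypothesis is exactly the classical congruence-type dichotomy: every non-amenable (equivalently, non-elementary) subgroup $\Lambda\subset\SL_2(\mathbb Z)$ is Zariski-dense, and by strong approximation its closure $\overline{\rho(\Lambda)}$ in $\SL_2(\mathbb Z_p)$ is open. Dually, if $H\subsetneq\SL_2(\mathbb Z_p)$ is a proper closed subgroup with empty interior, then $\rho^{-1}(H)\subset\SL_2(\mathbb Z)$ is elementary, hence virtually cyclic, hence amenable. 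For part (2), $\Gamma$ is a cocompact arithmetic lattice in $\SL_2(\mathbb C)$ ``of the second type'' and $G=\widehat\Gamma^c$ is its congruence completion; the second-type hypothesis is precisely what guarantees that $\Gamma$ has no proper infinite algebraic subgroups arising as lattices in conjugates of $\SL_2(\mathbb R)$, so again the congruence subgroup property / strong approximation forces every non-amenable subgroup to have open image in $\widehat\Gamma^c$, while $\rho^{-1}(H)$ for $H$ of empty interior is virtually abelian, hence amenable.

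\textbf{Reduction to the profinite approximation.} Granting that setup, the core analytic task is to prove $(G,\mu)\xhookrightarrow{inf}\Gamma$ for these $G,\Gamma$. I would follow the template suggested by Theorem \ref{thm-exactprojcont} and the remark after Conjecture \ref{conj-compact}. Write $G=\varprojlim G/G_n$ for a descending chain of open normal subgroups $G_n$ with $\bigcap_n G_n=\{1\}$; then $(G,\mu)$ is the inverse limit of the finite transitive actions $\Gamma\curvearrowright \rho(\Gamma)/(\rho(\Gamma)\cap G_n)$, which for $\Gamma$ having the congruence subgroup property is $\Gamma\curvearrowright\Gamma/\Gamma_n$ for the principal congruence subgroups $\Gamma_n$. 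The plan is to show that for any finite tuple of Borel sets $A_1,\dots,A_k\subset G$ of small measure $\mu(A_i)\le\delta$, the statistics of the $\Gamma$-action on the $A_i$ over a finite window $F\subset\Gamma$ can be modelled, up to scaling by some $\lambda>0$ and additive error $\varepsilon\max_i\mu(A_i)$, inside the counting-measure action on $\Gamma$ itself. One first approximates each $A_i$ by a cylinder set descending from level $n$, so the problem is transferred to the finite action $\Gamma/\Gamma_n$; then one realises the finite coset action inside $\Gamma$ by spreading out disjoint shifted copies of finite subsets, exactly as in the proof of Lemma \ref{lem-gammaprod} and the amenable case of Proposition \ref{prop-amenableproj}. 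The scaling $\lambda$ absorbs the discrepancy between the fractional measures $\mu(A_i)$ and integer cardinalities, and the key point is that because $\mu(A_i)\le\delta$ is small, the relative error incurred by rounding is $O(\delta)$-controlled uniformly over the window $F$.

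\textbf{Main obstacle.} The crucial difficulty is that this last step is \emph{not} merely projective containment of an amenable action: we genuinely need the window $F$ and the tolerance $\varepsilon$ to be decoupled from the combinatorial complexity of $\Gamma/\Gamma_n$, and the sets $A_i$ are arbitrary Borel sets, not $\Gamma$-invariant under a large subgroup. For the amenable-action argument of Proposition \ref{prop-amenableproj} one exploits hyperfiniteness of the orbit equivalence relation; here the orbit relation of $\Gamma\curvearrowright(G,\mu)$ is very much \emph{not} hyperfinite when $\Gamma$ is non-amenable, so that route is unavailable. The honest path is to build, by hand, an $F$-equivariant ``approximate conformal homomorphism'' $\cB(G,\mu)\to\cB(\Gamma)$ valid on sets of measure $\le\delta(\varepsilon,F)$: given $A\subset G$ small, one picks a level $n$ large enough that $A$ is $\varepsilon$-approximated by a union $A'$ of $\Gamma_n$-cosets, the cosets forming a subset $S\subset\Gamma/\Gamma_n$ with $|S|/[\Gamma:\Gamma_n]\le 2\delta$; then $A'$ is mapped to $\bigsqcup_{j}S_j\subset\Gamma$ where the $S_j$ are $F$-separated lifts of $S$ arranged so that translation by $F$ acts the same way on the lift as on $S$ modulo $\Gamma_n$ --- and here the sparsity $|S|/[\Gamma:\Gamma_n]\to 0$ is exactly what makes the Følner-type packing of $F$-separated lifts possible in the non-amenable group $\Gamma$ via a greedy argument. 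I expect the genuinely hard analytic estimate to be controlling how the errors compound across the tuple $A_1,\dots,A_k$ and across all of $F$ simultaneously, for which the machinery of Lemmas \ref{lem-projmetric}--\ref{lem-confmult} (going from pairwise to $k$-fold approximations via metrics on $\mathbb P(\mathbb R^N)$) should be reused verbatim. Finally, the verification that $\rho^{-1}(H)$ is amenable for every $H$ of empty interior --- which closes the loop back to $(G,\mu)\xhookrightarrow{inf}\Gamma$ rather than something weaker --- is a soft consequence of the Tits alternative in $\SL_2$ together with strong approximation, and should be recorded as a short lemma.
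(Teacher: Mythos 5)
There is nothing in the paper to compare your argument against: the statement you were asked about is stated as Conjecture \ref{conj-lattices}, and the author explicitly offers no proof --- the only ``evidence'' given is the failure to find counterexamples, together with the observation that part (1) combined with Theorem \ref{mthm-approxhyp} would force a negative answer to Question \ref{q-subexpanders}. So your text should be judged as a proposed proof of an open problem, and as such it has a genuine gap rather than a verifiable argument.

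The soft part of your plan (reduce to Conjecture \ref{conj-compactnonam}; check via strong approximation and the structure of proper algebraic subgroups of $\SL_2$ that $\rho^{-1}(H)$ is virtually solvable, hence amenable, for every proper closed $H$ with empty interior) is fine, but it only verifies the hypothesis of another \emph{open} conjecture in the paper, so it proves nothing by itself. The entire content lies in the step you compress into one sentence: given a sparse subset $S\subset\Gamma/\Gamma_n$, produce $F$-separated lifts inside $\Gamma$ ``arranged so that translation by $F$ acts the same way on the lift as on $S$ modulo $\Gamma_n$'' via a ``greedy argument''. No such unconditional lifting can exist, and this is exactly where the conjecture is hard. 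If arbitrary sparse subsets of the congruence quotients could have their $F$-window statistics reproduced by finite subsets of $\Gamma=\SL_2(\Z)$, then in particular a hypothetical sparse expander family $Q_n\subset\Cay(\SL_2(\Z/p^n\Z),S)$ (Question \ref{q-subexpanders}) would be modelled by finite subsets of a virtually free group; but finite subsets of $\Gamma$ induce quasi-tree subgraphs, and more generally sparse unimodular random subsets weakly contained in $\Gamma$ are approximately hyperfinite (Theorem \ref{mthm-approxhyp}, Lemma \ref{lem-urscontingamma}), so they can never carry expander statistics. In other words, your lifting step is essentially equivalent to the conjecture itself, and any correct proof must first rule out small-scale expanders in the congruence quotients --- something your greedy packing does not touch. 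Note also that you correctly observe that the hyperfiniteness mechanism behind Proposition \ref{prop-amenableproj} is unavailable here, but you put nothing in its place; and for part (2) the appeal to the congruence subgroup property is a red herring (it is expected to fail for lattices in $\SL_2(\C)$; the paper works with the congruence completion $\widehat{\Gamma}^c$ precisely to sidestep this), though strong approximation does suffice for the group-theoretic verification.
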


\begin{question}
    Conjecture \ref{conj-compactnonam} together with Theorem \ref{thm-QCIinf} suggests that Chifan-Ioana's theorem might extend to certain compact actions - ones that are infinitesimally contained in $\Gamma$. Could one prove it independently of infinitesimal containment?
\end{question}

We conclude with a question on expander subgraphs of congruence quotients of $\SL_2(\mathbb Z).$

\begin{question}\label{q-subexpanders}
Let $S\subset \SL_2(\mathbb Z)$ be a finite symmetric subset generating a Zariski sense subgroup and let $p$ be a prime. Let $G_n$ be the graph $\Cay(\SL_2(\mathbb Z/p^n\mathbb Z),S).$ Does there exist a sequence of subsets $Q_n\subset G_n$ such that the induced subgraphs form an expander family and $\lim_{n\to\infty}|Q_n|/|G_n|=0$?
\end{question}

A positive answer to Conjecture \ref{conj-lattices} (1) together with Theorem \ref{mthm-approxhyp} would imply a negative answer to Question \ref{q-subexpanders}.




\bibliographystyle{abbrv}
\bibliography{ref}
\end{document}